\newcounter{thm}
\newcounter{ex}
\newcounter{re}
\newtheorem{Theorem}[thm]{Theorem}
\newtheorem{Lemma}[thm]{Lemma}
\newtheorem{Proposition}[thm]{Proposition}
\newtheorem{Example}[ex]{Example}
\newtheorem{Remark}[re]{Remark}
\newtheorem{Corollary}[thm]{Corollary}
\newtheorem{Definition}[thm]{Definition}
\newtheorem{Conjecture}[thm]{Conjecture}
\definecolor{revision}{rgb}{1,0,0}
\def\SG{$\cal{S}$}
\def\RG{$\cal{R}$}
\newcommand{\te}{\tilde{e}}
\newcommand{\tu}{\tilde{u}}
\newcommand{\tv}{\tilde{v}}
\newcommand{\bp}{\mathbf{p}}
\newcommand{\bu}{\mathbf{u}}
\def\Ddots{\mathinner{\mkern1mu\raise\p@
\vbox{\kern7\p@\hbox{.}}\mkern2mu
\raise4\p@\hbox{.}\mkern2mu\raise7\p@\hbox{.}\mkern1mu}}
\begin{document}
\title {Symmetry adapted Assur Decompositions}

\author{Tony Nixon\\Department of Mathematics and Statistics, York University\\ 4700 Keele Street, Toronto, ON M£J1P3, Canada\\Bernd Schulze\\Department of Mathematics and Statistics, Lancaster University\\ Lancaster, LA1 4YF, U.K.\\
Adnan Sljoka\\Department of Physics, Ryerson University, Toronto\\ ON M5B 2K3, Canada and\\
Department of Psychology and Neuroscience, University of Colorado\\ Boulder, CO 80309, USA
and\\ Walter Whiteley\footnote{Supported by a grant from NSERC (Canada).}\\
 Department of Mathematics and Statistics, York University\\ 4700 Keele Street, Toronto, ON M3J1P3, Canada
}

\maketitle

\begin{abstract}  Assur graphs are a tool originally developed by mechanical engineers to decompose mechanisms for simpler analysis and synthesis. Recent work has connected these graphs to strongly directed graphs, and decompositions of the pinned rigidity matrix.  Many mechanisms have initial configurations which are symmetric, and other recent work has exploited the orbit matrix as a symmetry adapted form of the rigidity matrix.  This paper explores how the decomposition and analysis of symmetric frameworks and their symmetric motions can be supported by the new symmetry adapted tools.

\end{abstract}

\section{Introduction}\label{sec:intro}

Assur decompositions of mechanisms dates back to the work of the engineer Leonid Assur \cite{Assur} as a tool to simplify the analysis and synthesis of mechanisms. These techniques are widely used in the kinematical community. Several recent mathematical papers have reworked this approach using tools from rigidity theory, extending results from the plane to 3-space (as well as higher $d$-dimensional space) and providing algorithms for decomposing pinned isostatic frameworks into these minimal components \cite{SSW1,SSW2,3directed,pebbleassur,adnanthesis}.   Other recent papers have developed mathematical tools and algorithms to analyze the behaviour of symmetric frameworks \cite{KG1,BS6,BSWWorbit,jkt}. Given the examples of symmetric mechanisms, with fully symmetric motions, it is natural to consider how the symmetry-adapted tools can be used to decompose symmetric mechanisms for analysis, for synthesis and for control of these mechanisms.

Figure~\ref{fig:Mechanisms} shows two common mechanisms.  The Stewart Platform of Figure~\ref{fig:Mechanisms}(a,b,c) is a pinned isostatic structure which is widely studied in mechanical engineering and robotics, and has appeared in previous papers on symmetry and rigidity \cite{gsw,schtan}.  The symmetry analysis adds further information about when coordinated drivers give a fully symmetric motion.  The Grab Bucket (see, for example, \cite[P270]{kinematics}) depicted in Figure~\ref{fig:Mechanisms}(d) is a common machine which can be analyzed as a plane structure (see Section \ref{sec:variants}); this structure is pinned  with mirror symmetry and does not fit the previous work on Assur decompositions of plane mechanisms \cite{SSW1,SSW2}. 
\begin{figure}[ht]
    \begin{center}
\subfigure[] {\includegraphics [width=.25\textwidth]{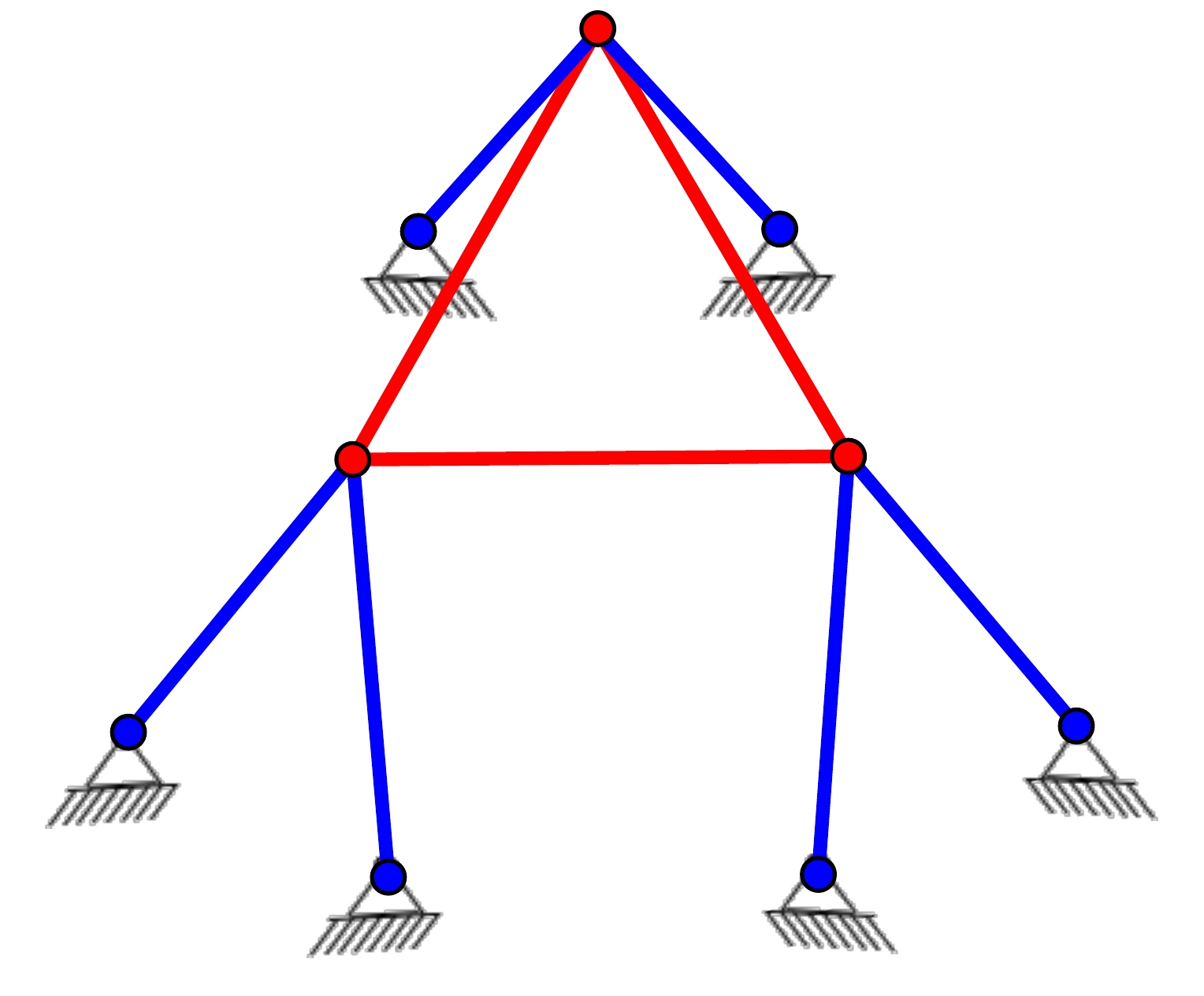}}
\subfigure[] {\includegraphics [width=.25\textwidth]{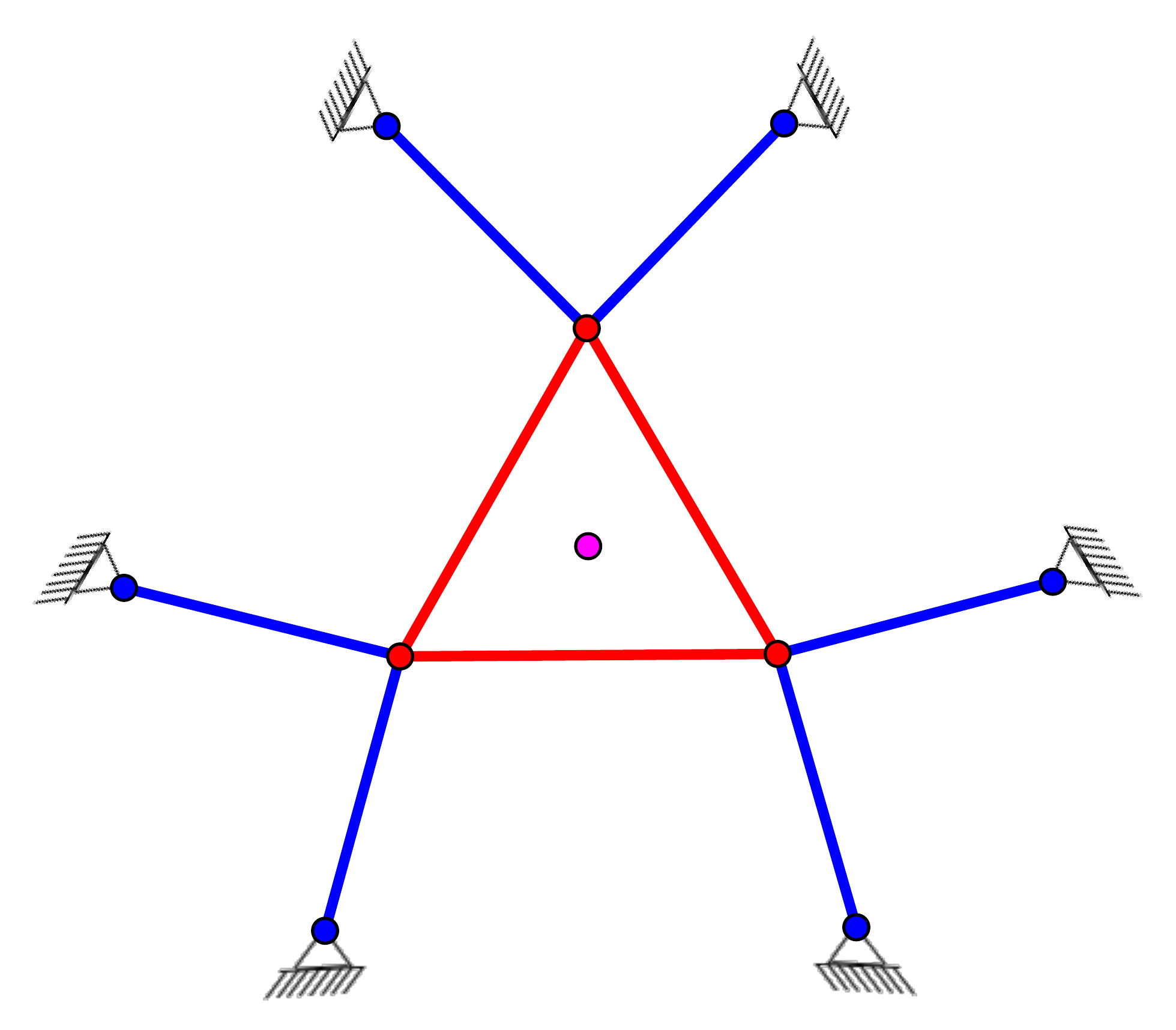}}
  \subfigure[] {\includegraphics [width=.25\textwidth]{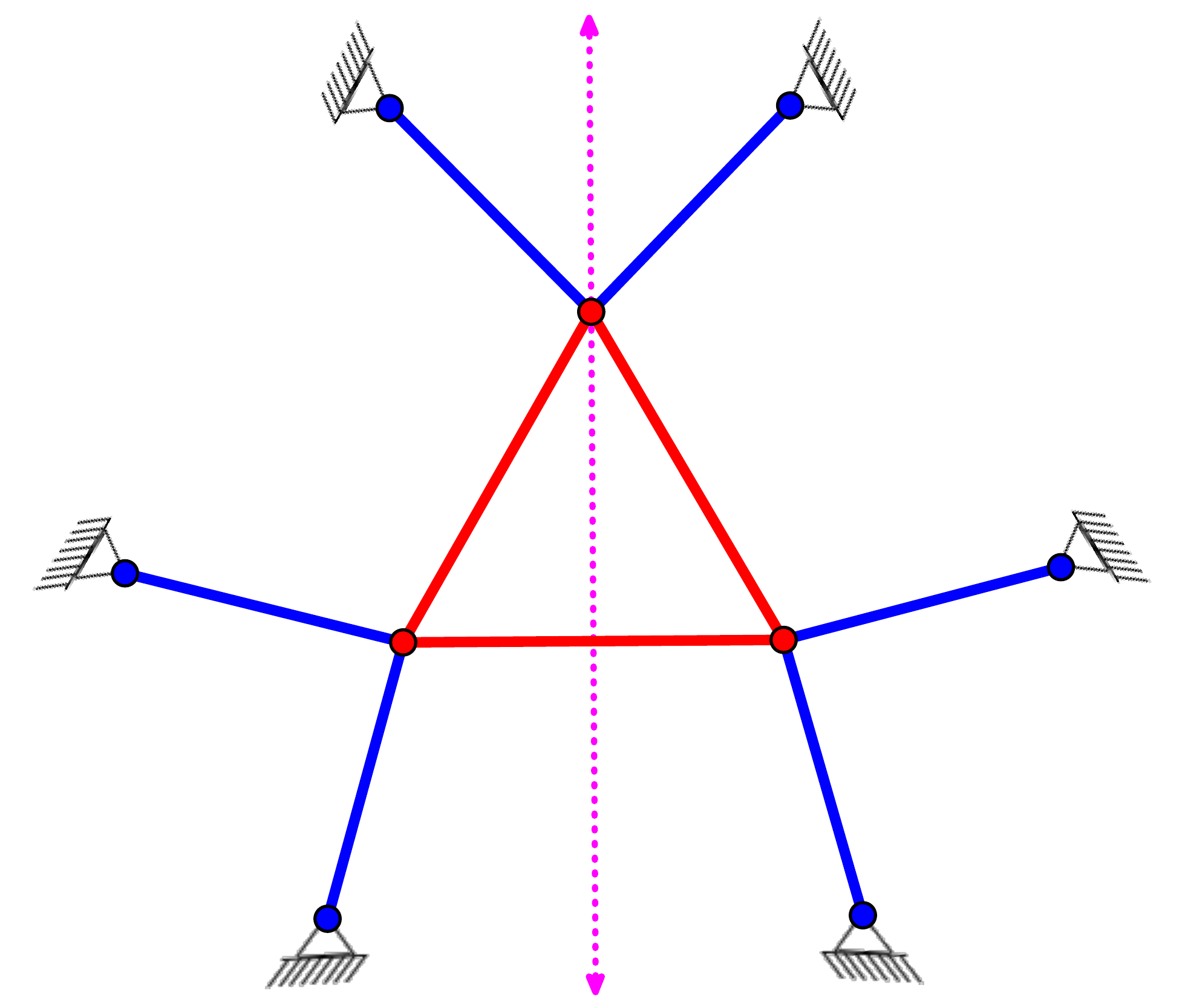}}\quad
    \subfigure[] {\includegraphics [width=.20\textwidth]{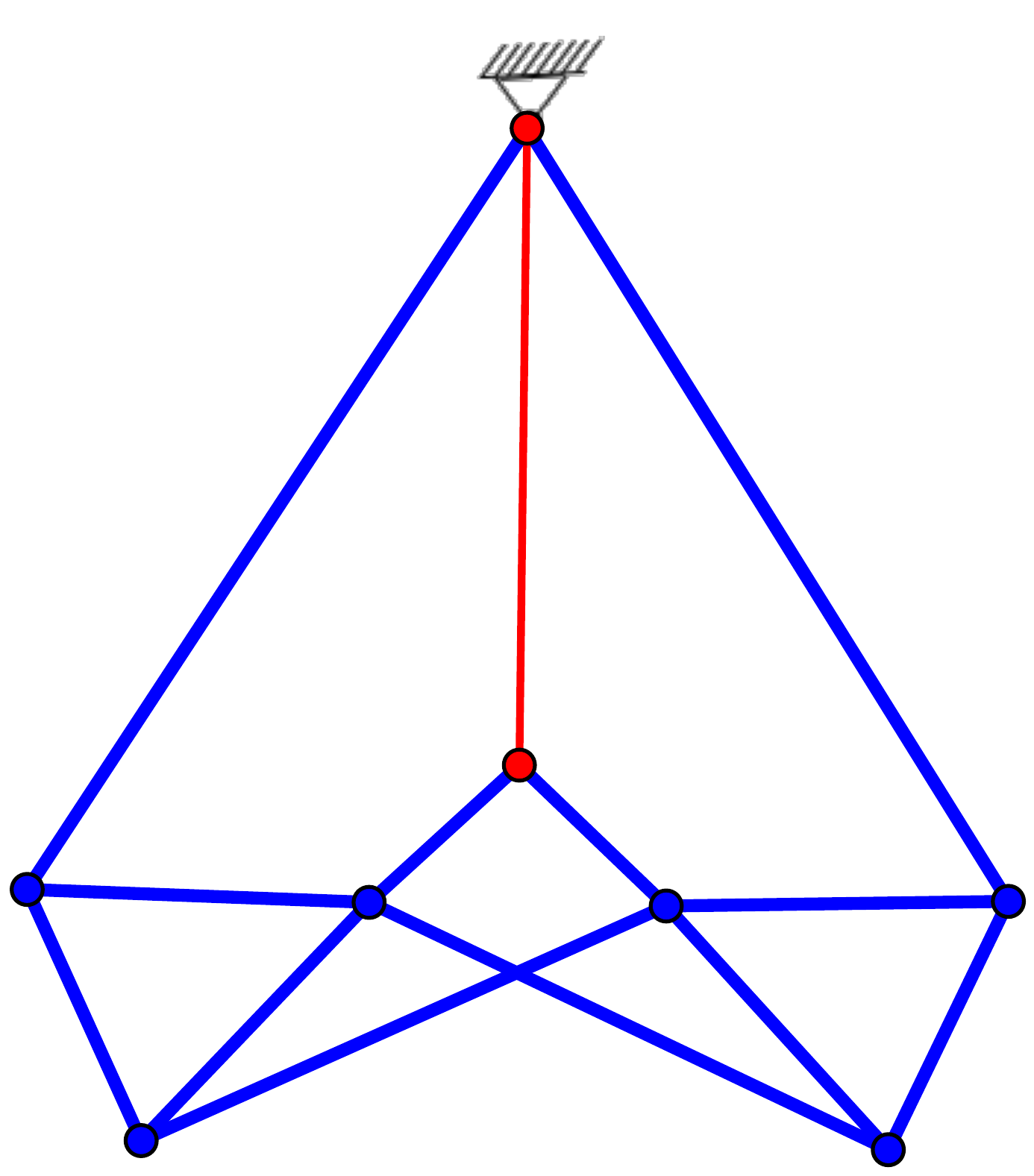}} \quad
    \end{center}
    \caption{Schematics of some sample mechanisms which are built with symmetry: (a) the Stewart Platform which can have $\mathcal{C}_3$ (i.e., $3$-fold rotational) symmetry (b) or mirror symmetry (c); the Grab Bucket (d) has mirror symmetry.}
    \label{fig:Mechanisms}
    \end{figure}

We will apply the techniques developed for isostatic pinned graphs (which generate square matrices) to orbit rigidity matrices which have independent rows and maximal rank among symmetry regular configurations.  Consider the pinned framework in Figure~\ref{fig:TwoMirror3D1}(a).  Although this has the graph of a generically rigid framework in 3-space, with the symmetry of the two mirrors, generating a dihedral symmetry group, the inner points not on the ground all move continuously, preserving this symmetry (b).  In mechanical engineering, one can ask which drivers (say pistons) can be inserted to control this motion.   Figure~\ref{fig:TwoMirror3D1}(c) shows one set of drivers which, if they expand in a synchronized way, drive the framework along the path of the symmetric motion.  The graph of this extended framework, or symmetric scheme, is generically redundant, but in the symmetry analysis of the associated orbit matrix, this is minimally rigid for symmetric motions, or \SG-isostatic.

\begin{figure}[ht]
    \begin{center}
  \subfigure[] {\includegraphics [width=.29\textwidth]{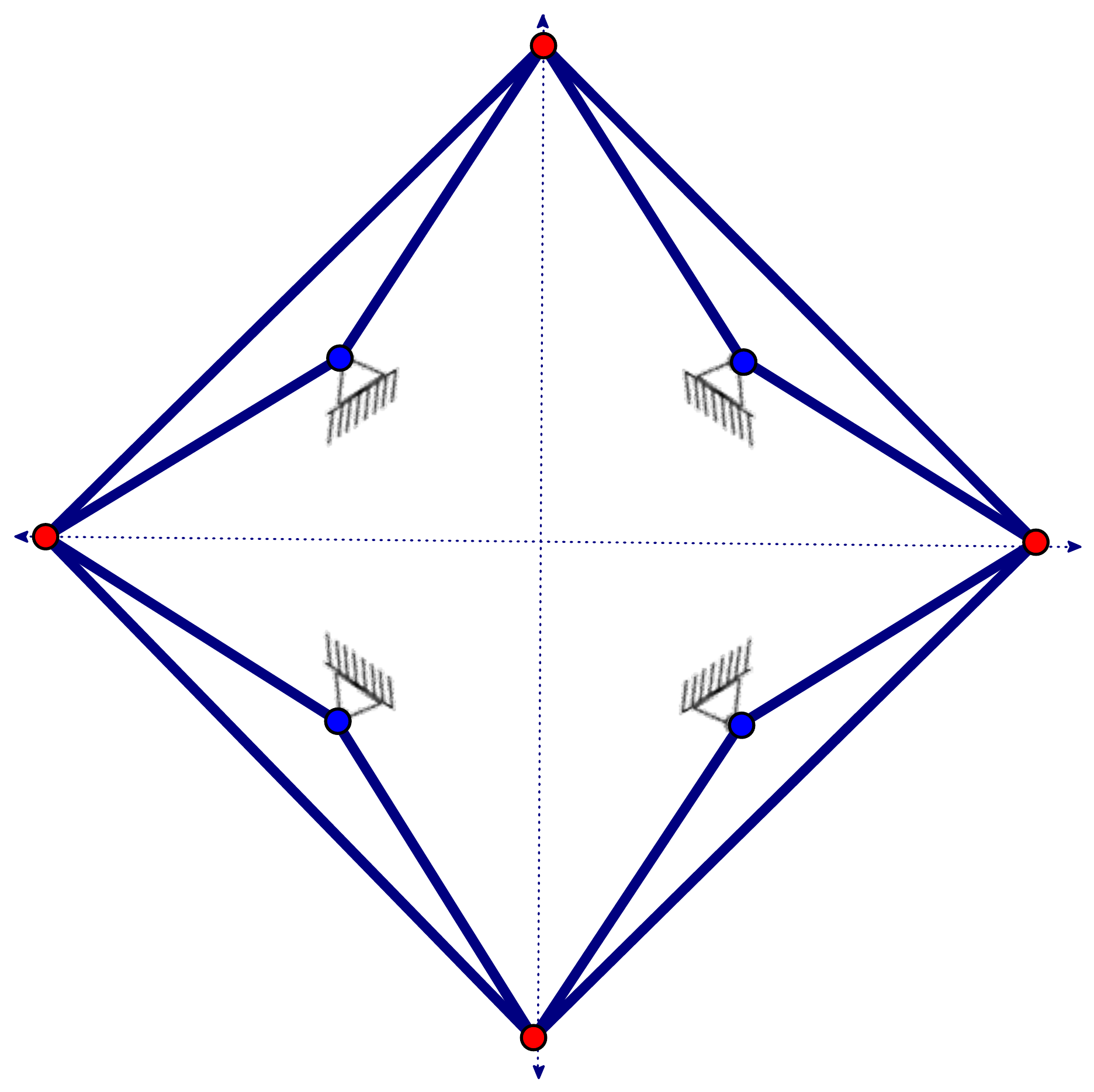}} \quad
\subfigure[] {\includegraphics [width=.33\textwidth]{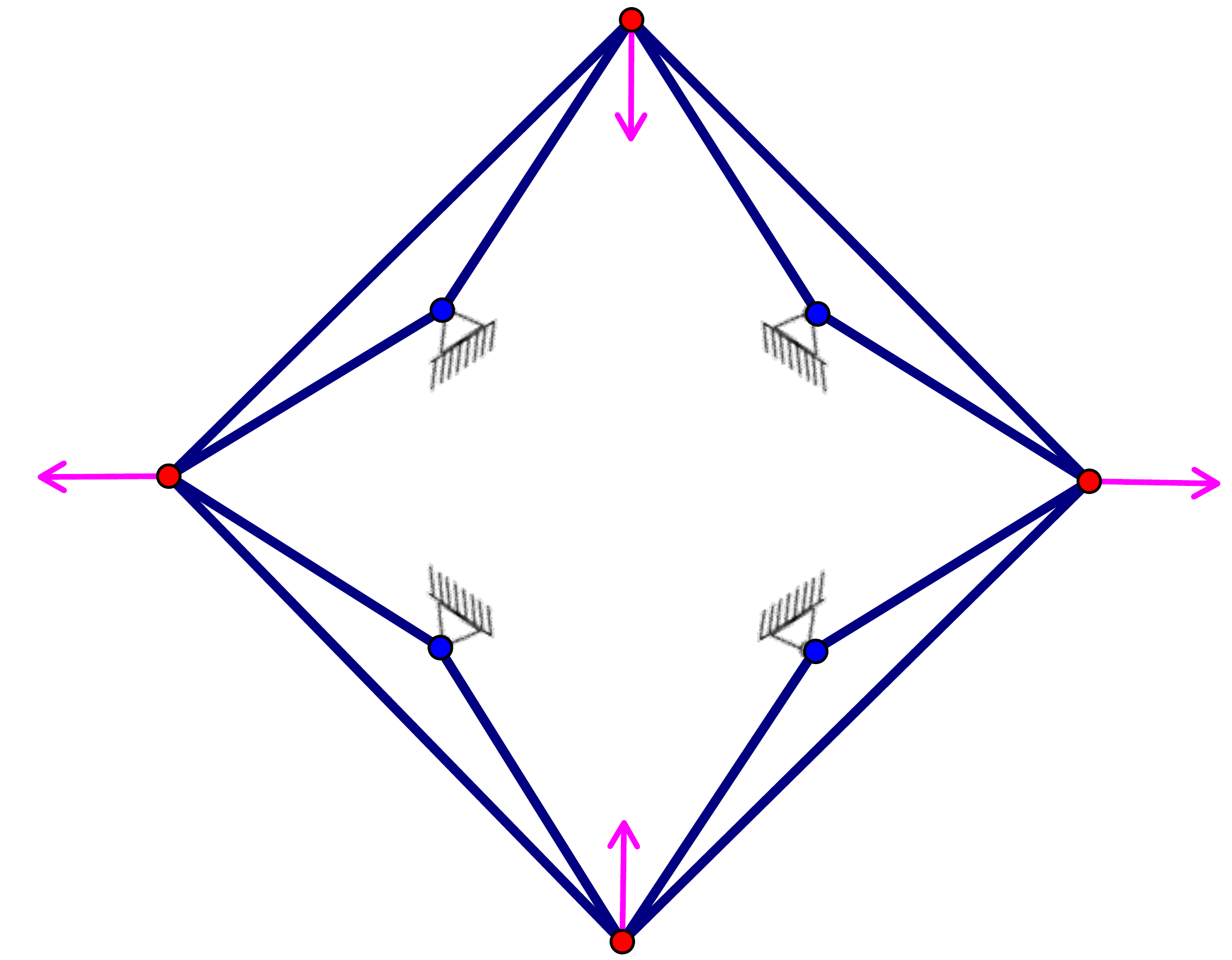}}\quad
\subfigure[] {\includegraphics [width=.29\textwidth]{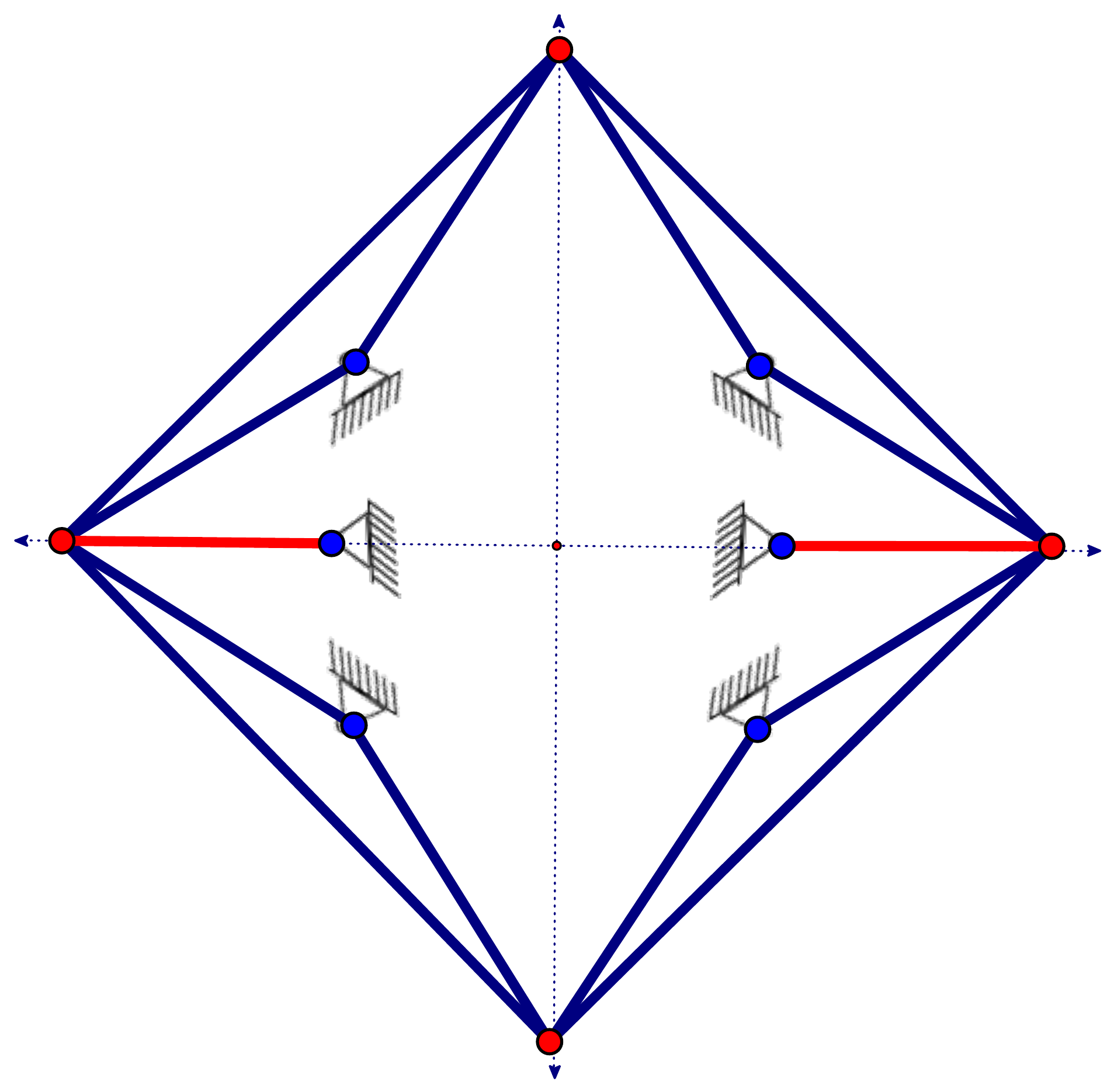}}\quad
      \end{center}
    \caption{A framework in $3$-space  with dihedral symmetry which is generically rigid but flexible with this symmetry (a), (b).  When symmetric drivers (in red) are added (c), changing the lengths of the added edges (as coordinated pistons) moves the framework in a controlled symmetric fashion.}
    \label{fig:TwoMirror3D1}
    \end{figure}


In our context, given a symmetry group \SG, we can consider a framework which is minimally rigid for symmetric motions or  {\it pinned \SG-isostatic} (Figures~\ref{fig:Mechanisms}, \ref{fig:TwoMirror3D1}(c), \ref{fig:desargues}(a)).  Graphs which are minimal pinned \SG-isostatic will be called {\it \SG-Assur}.  Selecting any orbit of edges (i.e. each edge together with all its symmetric copies) under the action of the symmetry group as coordinated drivers will now generate a symmetric motion. There is a residual question whether this is the only residual motion - and we will return to such questions below.



Overall, the paper extends the basic techniques of Assur decompositions, and the impact of selected drivers to a new context, whenever the associated pinned orbit matrix is square and full rank.  This analysis confirms that, if the underlying graph is also pinned isostatic, we get the equivalent Assur decomposition to an analysis on the underlying graph, confirming how symmetric orbits of drivers can drive symmetric motions.  The symmetry analysis is simpler, because the group-labeled quotient graph (also called gain graph) and the resulting algebra of the orbit matrix are smaller.  The symmetry based analysis also extends other structures where the underlying graph is not pinned isostatic, extending the insight of Assur decompositions to new mechanisms and posing some new possibilities for synthesis of mechanisms with symmetric motions.

We finish the introduction by outlining the paper.
\begin{itemize}
\item In Section \ref{sec:back} we review the key concepts and definitions on pinned frameworks, Assur graphs and symmetric frameworks.
\item In Section \ref{sec:pinsymfw} we develop combinatorial conditions on the pinned \SG-gain graph that ensure \SG-isostaticity.
\item In Section \ref{sec:sassur} we consider the broad class of `pinned \SG-isostatic graphs'.  With this square invertible pinned orbit matrix, we are guaranteed an appropriate set of directions on the edges, of out-degree equal to the number of columns of that vertex in this pinned orbit matrix.  Any such \SG-directed orientation gives a strongly-connected directed graph decomposition, and the components, along with their outgoing edges, are the \SG-Assur graphs.
\item Section \ref{sec:isostatic} examines the special subclass where the underlying graph is also generically isostatic, and remains isostatic at some (almost all) \SG-regular configurations.  This extra assumption allows us to map between the $d$-Assur decomposition and the \SG-Assur decomposition.
\item Section  \ref{sec:variants} examines examples of other  types of pinned \SG-isostatic frameworks where the underlying graph is not pinned isostatic, but redundant and rigid or is flexible. In those cases, only the \SG-Assur decomposition is possible, and this new decomposition of the underlying graph provides additional insight for analysis and synthesis.
\item In the final two sections we consider inductive constructions for \SG-Assur graphs (Section \ref{sec:Inductions}) and outline some extensions of our work based on the philosophy of applying the decomposition techniques to constraint systems that generate square matrices (Section \ref{sec:conclusions}).
\end{itemize}

\section{Background}\label{sec:back}

In this section we recap relevant ideas from the literature on pinned frameworks, Assur graphs and symmetric frameworks.

\subsection{Pinned frameworks}


We set $\hat G=(I,P;E)$ where $\hat G$ is the (finite simple) graph with vertex set $I\cup P$ and edge set $E$. Vertices in $I$ are referred to as \emph{inner} and vertices in $P$ are referred to as \emph{pinned}.
A \emph{pinned framework} $(\hat G,\bp)$ is the combination of a pinned graph $\hat G$  with a map $\bp:I \cup P \rightarrow \mathbb{R}^d$. For simplicity, we will denote $\bp(v)$ by $\bp_v$ for $v\in I \textrm{ or } P$  (Figures~\ref{fig:Mechanisms}(a,d), \ref{fig:TwoMirror3D1}(c), \ref{fig:desargues}(a)).


For a pinned framework $(\hat G,\bp)$ in $\mathbb{R}^d$, we define the \emph{pinned rigidity matrix} to be the $|E|\times d|I|$ matrix with one row per edge and d columns per inner vertex as follows:
\setcounter{MaxMatrixCols}{20}
\[R(\hat G,\bp)=
\begin{bmatrix}
\ddots & \vdots & & & & \vdots & & & &  \vdots&\Ddots\\
0 & \dots & 0& (\bp_i-\bp_j)&0 & \dots& 0& (\bp_j-\bp_i)& 0& \dots& 0\\
 & \vdots & & & & \vdots& & & & \vdots& \\
0 & \dots & 0& (\bp_i-\bp_k)& 0& \dots& 0& 0& 0& \dots& 0 \\
\Ddots & \vdots& & & &\vdots & & & & \vdots &\ddots
\end{bmatrix}\]
In the matrix the displayed rows correspond to edges $ij$ and $ik$ where $i,j \in I$ and $k\in P$.
Unlike the standard rigidity matrix, note that the pinned rigidity matrix only has columns for inner vertices.

We call solutions $U$ to the equation $R(\hat G, \bp) \times U^T=0$ \emph{pinned infinitesimal motions} of $(\hat G,\bp)$. If the only such motion is the zero motion then $(\hat G,\bp)$ is said to be \emph{pinned infinitesimally rigid}. Equivalently $(\hat G,\bp)$ is pinned infinitesimally rigid if rank $R(\hat G,\bp)=d|I|$.
Moreover $(\hat G,\bp)$ is \emph{pinned $d$-independent} if the rows of $R(\hat G,\bp)$ are linearly independent and $(\hat G,\bp)$ is \emph{pinned $d$-isostatic} if $(\hat G,\bp)$ is pinned infinitesimally rigid and pinned $d$-independent.

A \emph{pinned self-stress} $\mathbf{\omega}$ is an assignment of real weights to the edges of $\hat G$ such that the following equilibrium condition holds:
$$\sum_{j} \omega_{ij}(\bp_i-\bp_j)=0$$
where $\mathbf{\omega}$ is not the zero vector, $\omega_{ij}=\mathbf{\omega}(ij)$ is taken to be  equal to $0$ if $ij\not\in E$ and the inner vertices are denoted by  $1\leq i \leq |I|$.

For a given pinned graph $\hat G$, let $\bp$ vary over all of $\mathbb{R}^{d(|I|+|P|)}$. For all $\mathbf{q}$ in an open subset $U$, $R(\hat G,\mathbf{q})$ has maximal possible rank. Any pinned framework $(\hat G,\bp)$ which achieves this rank is said to be \emph{regular}.

We refer the reader to \cite{SSW1, SSW2, 3directed, adnanthesis} for more detailed definitions and discussions on pinned frameworks.


\subsection{Assur decompositions}\label{subsec:Assur}

In this subsection, we review Assur decompositions of pinned $d$-isostatic frameworks. The reader can refer to \cite{3directed} for further details, equivalent definitions and more examples.

A pinned  graph $\hat G$ is \emph{pinned $d$-isostatic} if there exists a pinned $d$-isostatic realisation of $\hat G$ in $d$-space. A \emph{$d$-Assur graph} (in mechanical engineering also known as Assur group) is a minimal pinned $d$-isostatic graph, where minimal means that no proper subgraph (containing at least one inner vertex) is also a pinned $d$-isostatic graph.
A \emph{$d$-directed orientation} of a pinned graph $\hat G$ is an assignment of directions to the edges of $\hat G$ such that every inner vertex has out-degree exactly $d$ and every pinned vertex (ground vertex) has out-degree exactly 0 (a sink in the directed graph). In  \cite[Theorem 3.6]{3directed} the natural necessary counting conditions were given for a pinned graph to be isostatic and it was noted that any graph satisfying these counts has a $d$-directed orientation. However, there exist examples where a pinned graph has a $d$-directed orientation, even for $d=2$, but is not pinned $d$-isostatic.

\begin{figure}[ht]
    \begin{center}
\subfigure[] {\includegraphics [width=.32\textwidth]{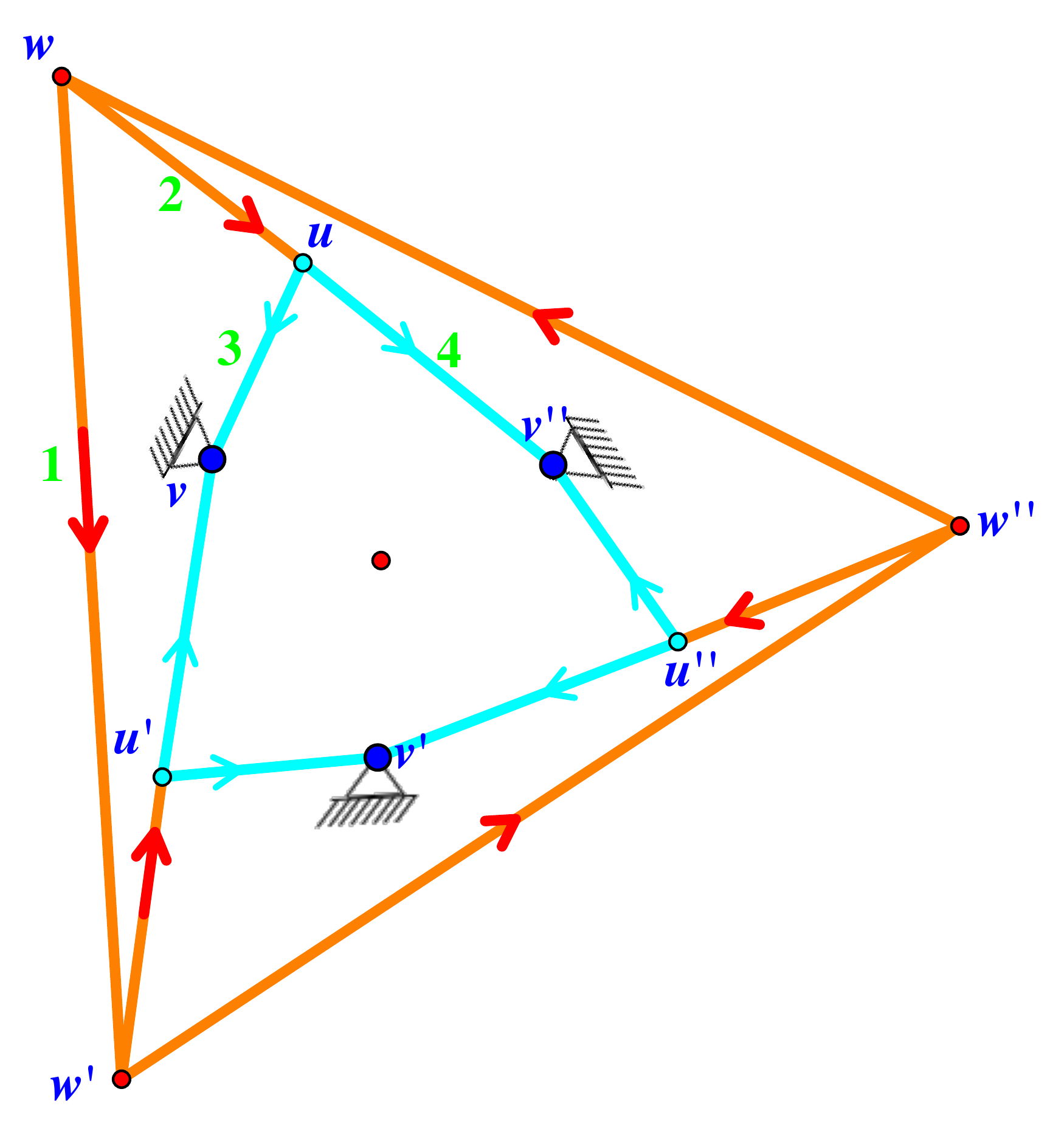}}\quad
\subfigure[] {\includegraphics [width=.35\textwidth]{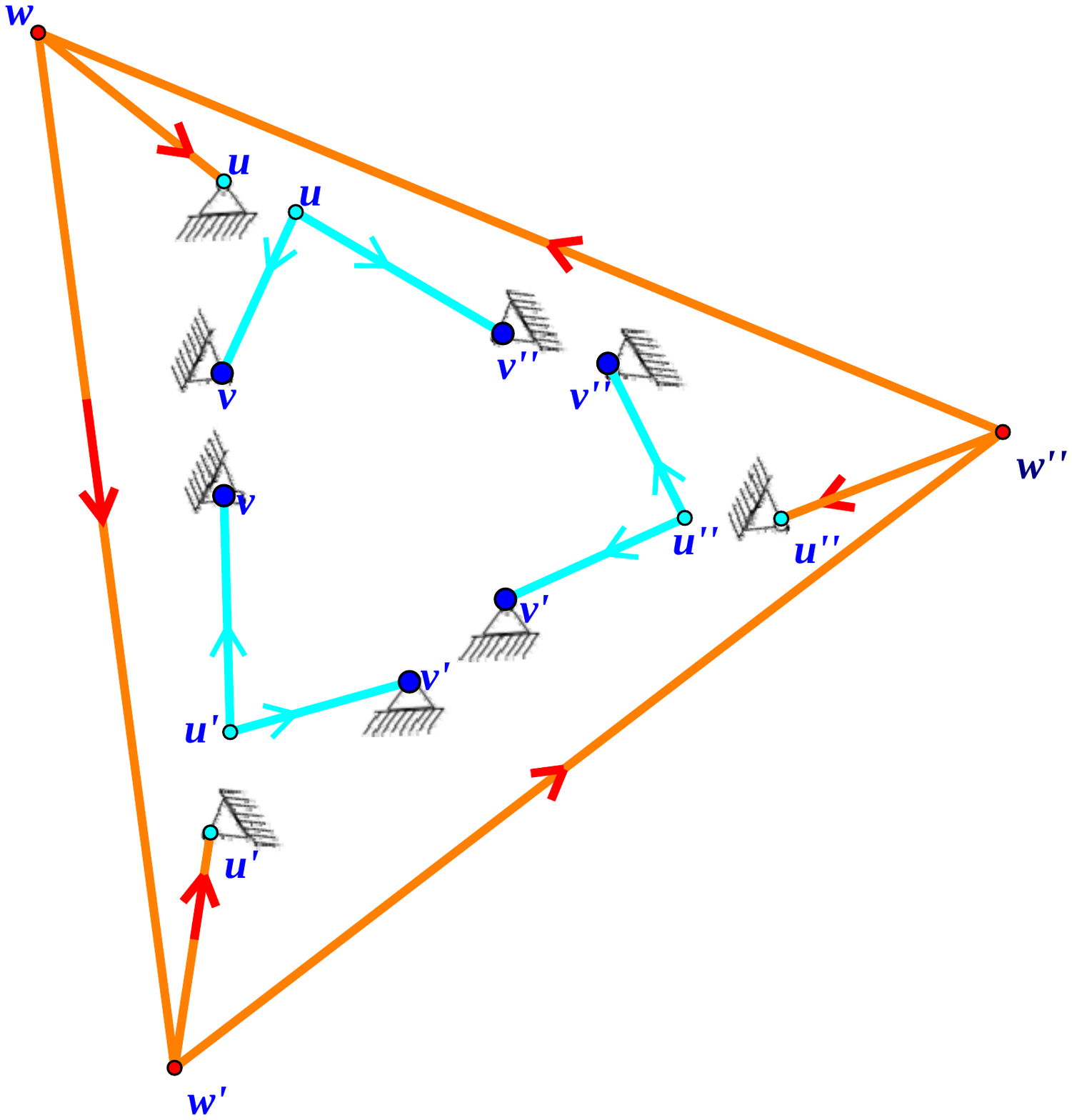}}\quad
  \subfigure[] {\includegraphics [width=.22\textwidth]{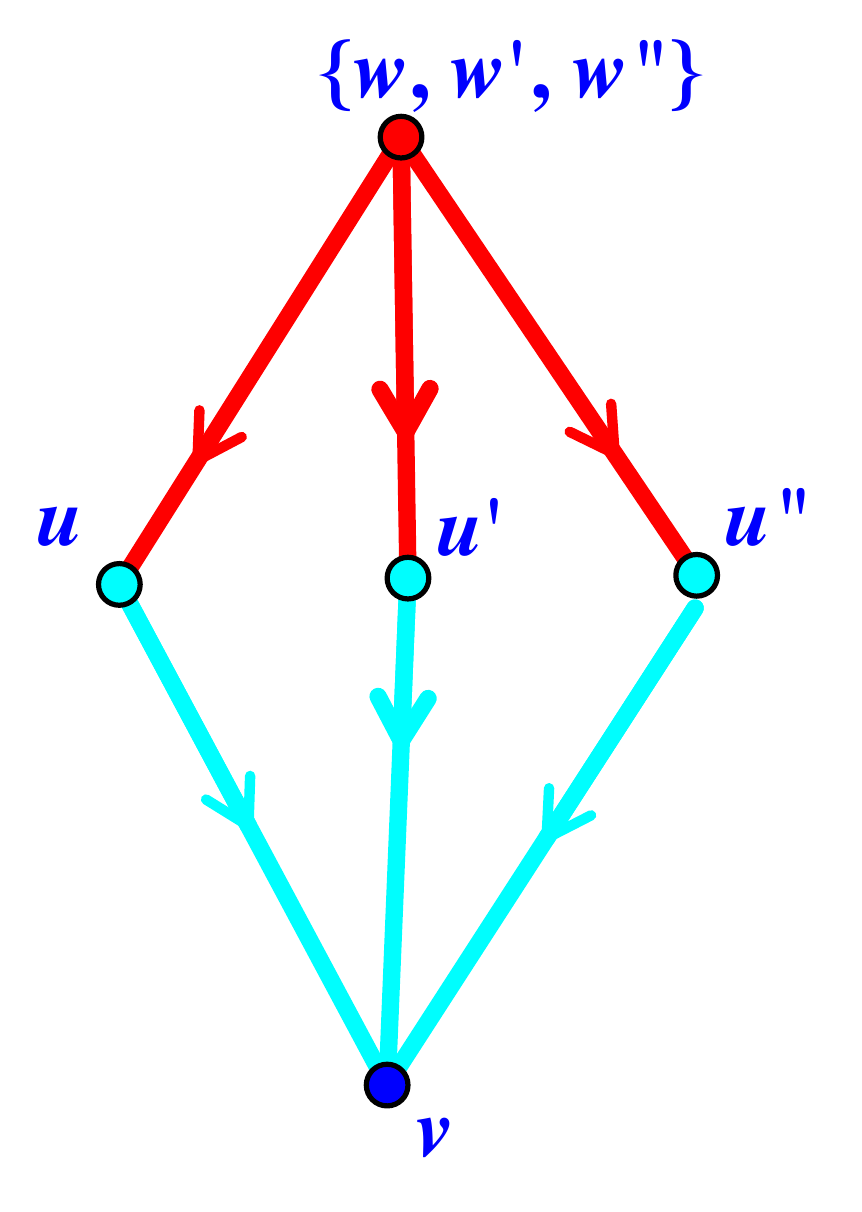}}\quad
\subfigure[] {\includegraphics [width=.35\textwidth]{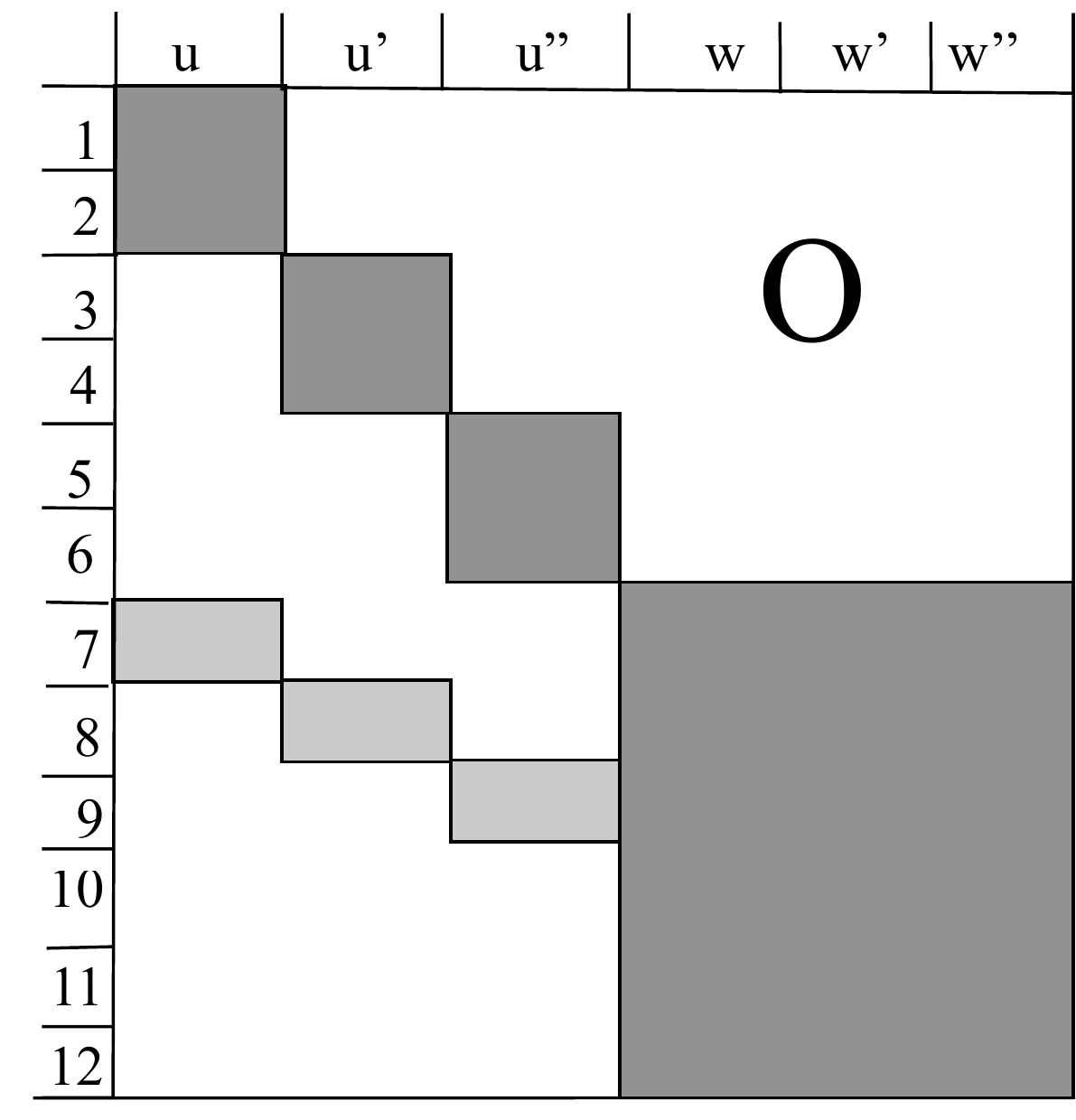}}\quad
  \subfigure[] {\includegraphics [width=.35\textwidth]{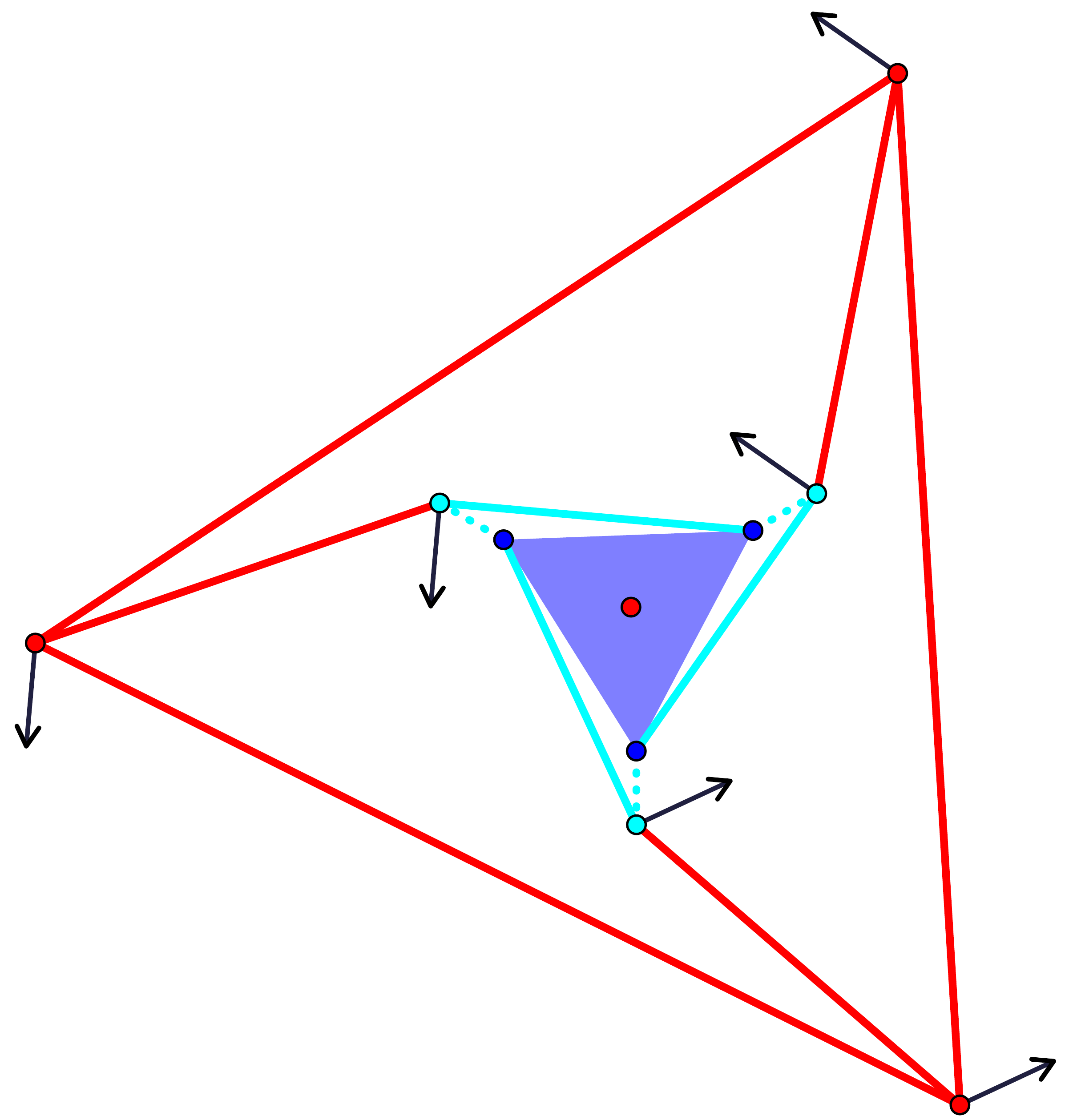}}\quad
    \end{center}
    \caption{A $\mathcal{C}_3$-symmetric pinned $2$-isostatic framework in the plane  with a  $2$-orientation (a). The associated $2$-Assur decomposition gives four $2$-Assur components (b), and an associated Assur partial order (c). The pinned rigidity matrix has a lower triangular block-decomposition as shown in (d). (e) depicts a set of velocities if the dotted edges become drivers.}
    \label{fig:desargues}
    \end{figure}

Starting with a pinned $d$-isostatic graph $\hat G$, a key initial step in the Assur decomposition is to generate a $d$-directed graph of $\hat G$ (for instance via the pebble game \cite{pebbleassur, adnanthesis}), where edges are directed toward the ground. The $d$-directed graph is then decomposed into its strongly connected components (i.e. maximally strongly connected subgraphs) \cite{3directed}. More specifically, a \emph{strongly connected component decomposition} of a pinned $d$-directed graph of $\hat G$ condenses all pins into a sink (ground) vertex, identifies the strongly connected components and condenses each such component to single vertices to obtain a directed acyclic graph and a partial order. Note if any multiple edges arise during this identification process we discard additional copies.

In \cite{3directed} it was noted that if there are two orientations of a graph in which corresponding vertices have the same out-degree then the two directed graphs differ at most by reversing orientations of cycles.  This implies that the strongly directed decomposition is the same for any such pair of orientations of the graph.

\begin{Corollary}[\cite{3directed}]
\label{cor:strong}
Given two equivalent orientations of a graph, the strongly connected components are the same in both orientations.
\end{Corollary}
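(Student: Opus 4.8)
The plan is to deduce Corollary~\ref{cor:strong} directly from the structural fact quoted immediately before it: two equivalent orientations (meaning orientations in which corresponding vertices have the same out-degree) differ by reversing the orientations of a collection of directed cycles. So the first step is to make this precise: fix two equivalent orientations $D_1$ and $D_2$ of $\hat G$, and let $F$ be the set of edges on which they disagree. The claim from \cite{3directed} is that $F$, with the orientation inherited from $D_1$ (say), decomposes as an edge-disjoint union of directed cycles; equivalently, in $D_1$ every vertex has equal in-degree and out-degree when restricted to $F$, which follows from the out-degrees agreeing in $D_1$ and $D_2$ together with a counting argument at each vertex.

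Next I would show that reversing a single directed cycle $C$ in a directed graph $D$ does not change the partition into strongly connected components. The key observation is that for any two vertices $u,v$ lying on $C$, they remain mutually reachable after the reversal, since the reversed cycle still provides a closed directed walk through all of them (traversed in the opposite direction). For vertices not on $C$, any directed path in $D$ that used an arc of $C$ can be rerouted: wherever the old path entered $C$ at a vertex $x$ and left at a vertex $y$, in the new orientation one can still travel from $x$ to $y$ by going around the reversed cycle the other way. Hence reachability between every ordered pair of vertices is preserved, so the strongly connected components — the equivalence classes of mutual reachability — are identical. Iterating this over the (finitely many) cycles whose reversal turns $D_1$ into $D_2$ gives that $D_1$ and $D_2$ have the same strongly connected components.

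Finally I would note that the condensation of pins to a single ground sink is performed identically in both orientations (it depends only on $\hat G$, not on the orientation), and discarding multiple edges is likewise orientation-independent, so the strongly connected component decomposition and the resulting partial order coincide for $D_1$ and $D_2$.

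I expect the main obstacle to be the rerouting argument for paths through a reversed cycle: one must be careful that a directed path in $D_1$ may enter and leave $C$ several times, and at each such ``chord'' segment the detour around the reversed cycle must be stitched in consistently so that the concatenation is still a valid directed walk in $D_2$. Handling this cleanly — perhaps by the cleaner route of arguing that $u \to v$ reachability is equivalent to the existence of a directed path, and that adding a reversed cycle $C$ plus the original $C$ to the ``reachability closure'' yields the same closure — is the delicate point, but it is essentially bookkeeping once the cycle-reversal structure is in hand.
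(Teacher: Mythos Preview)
Your proposal is correct and follows exactly the line the paper indicates: the paper does not give a proof of this corollary but simply cites \cite{3directed}, noting in the sentence before the statement that two equivalent orientations differ only by reversals of directed cycles and that this immediately implies the strongly connected decompositions coincide. Your write-up fleshes out precisely that sketch (cycle-decompose the disagreement set via the equal in/out degree observation, then show a single cycle reversal preserves reachability by rerouting around the reversed cycle), so there is nothing to add.
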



An \emph{Assur decomposition} is a decomposition of a pinned $d$-isostatic graph where the individual components are the $d$-Assur graphs. This is exactly the strongly connected component decomposition. Each strongly connected component and its outgoing edges with ends becoming pins (we will refer to these as \emph{extended components}) are the $d$-Assur components (see Figure~\ref{fig:desargues}).

In \cite{3directed} additional connections and equivalent properties of $d$-Assur decompositions were presented via a lower triangular block decomposition of the pinned rigidity matrix, by permuting rows and columns following the partial order (see Theorem~\ref{3DirectedAssur}).
%

For any pinned isostatic graph $\hat G$, the pinned rigidity matrix $R(\hat G,\bp)$ will be square. If it is not possible to permute the rows and columns into lower triangular blocks then we say that $R(\hat G,\bp)$ is \emph{indecomposable}. In general we consider a \emph{lower triangular block decomposition} of the rigidity matrix $R(\hat G,\bp)$, and hence the graph $\hat G$, by permuting the rows and columns into indecomposable blocks. This is illustrated in Figure \ref{fig:desargues}.


A \emph{driver} (in mechanical engineering) is an edge in a pinned isostatic graph that is removed and hence its length is allowed to change. This drives a unique motion in the framework (e.g. a piston) and hence can easily be controlled.
Such motions can be understood by looking at the $d$-Assur decomposition.

We  say that a $d$-Assur graph is \emph{strongly} $d$-Assur if removing any edge puts all inner vertices into motion
. For $d=2$ this coincides with the definition of $2$-Assur but for $d\geq 3$ it is an inequivalent notion (see \cite{3directed} for examples that are $3$-Assur but not strongly $3$-Assur). These examples are based on the fact that for $d\geq 3$, the length of a non-edge may be determined by a non-rigid component (i.e. the lack of a combinatorial (counting) characterization of rigidity).

Removal of an edge (driver) from a component $C$ of a strongly $d$-Assur graph makes every vertex in every component above $C$ (i.e. above in the partial order - see below) go into motion and keeps every component below $C$ fixed; such edges in \cite{3directed} were called \emph{regular drivers}. Note, however, that removal of an edge (i.e. \emph{weak driver} \cite{3directed}) from a component $C$ of a $d$-Assur graph may leave entire components above $C$ fixed.


We finish this subsection with a key result on $d$-Assur graphs. Since we will provide a different style of proof, to \cite{3directed}, for the symmetric analogue in Section \ref{sec:decomp} we first prove a proposition that allows us to state \cite[Theorems 3.3 and 3.4]{3directed} in a single theorem.

Given a maximal lower triangular block decomposition of $R(\hat G,\bp)$, the {\it induced directed block graph} has one vertex per block plus a vertex $Z$ for the ground.  There is a directed block graph edge if there is a directed edge $(A,B)$ that goes from the block $A$ to a block $B$ which is upper left from it, i.e., if there is an edge with start vertex in $A$ and end vertex in $B$.  There is a directed edge $(A,Z)$ to the ground if there is an edge in block $A$ which goes to a pinned vertex.  See Figure \ref{fig:desargues}(c), (d).

\begin{Proposition} Given a pinned isostatic graph $\hat G$ in dimension $d$ and a maximal lower triangular block decompositon of $R(\hat G,\bp)$, the induced block graph is an acyclic directed graph, with the ground $Z$ on the bottom. Therefore it forms a partial order.
\label{Proposition:inducedblockgraph}
\end{Proposition}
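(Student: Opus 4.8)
The plan is to prove the two claims — that the induced block graph is acyclic, and that the ground $Z$ sits at the bottom — separately, since the second is almost immediate from the definitions and the first is the substantive part. For the ground claim, recall that in a $d$-directed orientation every pinned vertex is a sink, so no edge of $\hat G$ starts at a vertex of $P$. Hence the induced block graph has no edge leaving $Z$, which means $Z$ is a minimal element; so it can safely be placed at the bottom of the diagram once acyclicity is established.

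For acyclicity, I would argue by contradiction using the block-triangular structure. Suppose the induced block graph contains a directed cycle $A_1 \to A_2 \to \cdots \to A_k \to A_1$ among blocks (we already know $Z$ has out-degree zero, so no block in such a cycle is $Z$). By the definition of the induced block graph edge, a directed edge $A \to B$ occurs precisely when $B$ is strictly to the upper-left of $A$ in the maximal lower triangular block decomposition, i.e. when the column block of $B$ lies strictly before the column block of $A$ in the chosen permutation order (equivalently, the diagonal block of $B$ is strictly above and to the left of that of $A$). Assigning to each block its position index $1,2,\dots,m$ along the diagonal in the lower-triangular arrangement, an edge $A \to B$ forces $\mathrm{ind}(B) < \mathrm{ind}(A)$. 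Chaining this around the purported cycle gives $\mathrm{ind}(A_1) < \mathrm{ind}(A_k) < \cdots < \mathrm{ind}(A_1)$, a contradiction. Therefore no directed cycle exists and the induced block graph is acyclic; an acyclic directed graph determines a partial order in the usual way (take the reflexive-transitive closure), so the conclusion follows.

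The one point that needs care — and is, I expect, the main obstacle — is making precise the claim that an off-diagonal edge $A\to B$ of the block graph can only go "upper-left," i.e. that it is genuinely impossible for an edge to point into a block that is lower-right. This is where the word \emph{maximal} in the hypothesis does real work: if the decomposition were not maximal we would not get a clean partial order, only a preorder. I would justify this by appealing to the construction of the maximal lower triangular block decomposition from the strong component condensation (as recalled in the excerpt): the blocks correspond to the strongly connected components of the $d$-directed graph, the partial order on components is the condensation DAG, and the row/column permutation realising lower-triangular form is exactly a linear extension of that condensation order. An edge of $\hat G$ from component $A$ to a distinct component $B$ corresponds to a directed edge $A\to B$ in the condensation, hence $B$ precedes $A$ in the linear extension, which is precisely the upper-left position; an edge staying within a component contributes only to the diagonal block. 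Once this correspondence is spelled out, acyclicity of the induced block graph is just acyclicity of the condensation DAG, and the partial order statement is immediate. I would therefore organise the write-up as: (1) identify blocks with strong components via the cited decomposition; (2) observe edges respect the condensation order, placing the ground as the unique sink; (3) conclude acyclicity and hence a partial order.
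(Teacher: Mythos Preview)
Your core argument---assign each diagonal block its position index along the diagonal and observe that every block-graph edge strictly decreases this index, hence no cycle---is exactly the paper's proof. The paper phrases it as: the column positions of the blocks give a linear order, place $Z$ at the bottom, all block-graph edges point down this order ``because of the block lower-triangular pattern of the matrix,'' so the graph is acyclic and the linear order is a linear extension of the resulting partial order. Your first two paragraphs already constitute a complete and correct proof matching the paper's.

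Where you go astray is the final paragraph. The claim that an edge $A\to B$ forces $B$ to lie upper-left of $A$ does \emph{not} depend on maximality, and it should certainly not be justified via the strongly connected component condensation: that is the wrong direction logically, since this proposition is part of the scaffolding for Theorem~\ref{3DirectedAssur}, which is what establishes the equivalence of the block decomposition with the strong-component decomposition in the first place. The upper-left property is immediate from the matrix being in lower-triangular block form. The row of an edge with tail in $A$ lies in $A$'s row group (this is how the square diagonal blocks arise: each inner vertex contributes $d$ columns and its $d$ out-edge rows), and that row has a nonzero entry in the columns of the head vertex. Lower-triangularity then forces those columns to sit at or above $A$'s position, so a distinct head block $B$ is strictly upper-left. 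The paper takes this as evident and simply asserts it. Drop your third paragraph; the first two already do the job.
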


\begin{proof}   Because the block-decomposition is lower triangular, there is a linear order of the blocks - by the position of their columns.  Place the ground vertex at the bottom of the linear order.   Observe that all directed block edges point down this linear order, because of the block lower-triangular pattern of the matrix.   Therefore, there cannot be a cycle in this graph.   The graph is a partial order, and the linear order of the blocks is an extension of this partial order.
\end{proof}

\begin{Remark}
\label{rem:inducedblockgraph}
The lower triangular block decomposition is not unique.  Given the partial order, any linear extension produces a lower-triangular block decomposition.    Any two blocks which are incomparable in the partial order can be switched in the linear order.  They can also be switched in the block decomposition by permuting the corresponding rows and columns.
\end{Remark}

\begin{Theorem}[ \cite{3directed}]\label{3DirectedAssur}
For a pinned $d$-isostatic graph $\hat G$ and any $d$-directed orientation of $\hat G$ the following are equivalent:
\begin{enumerate}
\item the $d$-Assur decomposition of $\hat G$;
\item the decomposition into strongly connected components associated with the $d$-directed orientation;
\item the induced block graph from a maximal block-triangular decomposition of the pinned rigidity matrix.
\end{enumerate}
\end{Theorem}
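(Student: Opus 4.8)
The plan is to establish the three-way equivalence by a cycle of implications, leveraging the structural correspondences that have already been set up. I would prove $(2)\Leftrightarrow(3)$ first, since this is where the combinatorics of the directed graph meets the linear algebra of the pinned rigidity matrix, and then obtain $(1)\Leftrightarrow(2)$ essentially by recalling that an Assur decomposition is \emph{defined} to be the strongly connected component decomposition (this is stated in Subsection~\ref{subsec:Assur}), with the remaining content being that minimality of the $d$-Assur components matches irreducibility of the blocks.

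For $(2)\Rightarrow(3)$: fix a $d$-directed orientation and its strongly connected component decomposition, condensing all pins into the ground vertex $Z$. The key observation is that a $d$-directed orientation assigns to each inner vertex out-degree exactly $d$, so the $d$ columns of the pinned rigidity matrix belonging to that vertex can be associated to its $d$ outgoing edges; this gives a bijective pairing between rows (edges) and columns (vertex-coordinate slots) compatible with the orientation. Now order the strongly connected components by a linear extension of the partial order from the condensation (ground at the bottom), and within each component list its inner vertices together with their outgoing edges. I claim that permuting rows and columns of $R(\hat G,\bp)$ according to this order puts the matrix in lower block-triangular form with one diagonal block per component: an entry of the matrix outside the diagonal blocks would correspond to an edge $(u,v)$ with $u$ in component $A$ and $v$ in component $B$ where $B$ is \emph{later} in the linear order than $A$, but since all edges in the condensation point downward in the partial order, no such edge exists. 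Maximality of the block decomposition follows because each strongly connected component, being strongly connected, cannot itself be split into lower-triangular sub-blocks — any proper nonempty subset $S$ of a strongly connected component has an edge leaving $S$ and an edge entering $S$, so the corresponding rows/columns cannot be isolated into a triangular pattern. Conversely, for $(3)\Rightarrow(2)$, given a maximal lower triangular block decomposition, Proposition~\ref{Proposition:inducedblockgraph} gives the induced block graph as a partial order; one shows each indecomposable block corresponds to a strongly connected component by checking that within a block every pair of vertices lies on a directed cycle (otherwise the block would split), and that distinct blocks cannot be merged into one strongly connected component (otherwise a directed cycle would cross a block boundary, contradicting lower-triangularity). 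By Corollary~\ref{cor:strong} the strongly connected component decomposition is independent of the choice of (equivalent) $d$-directed orientation, so the statement is well-posed.

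For the link to $(1)$: by the definitions recalled in Subsection~\ref{subsec:Assur}, the $d$-Assur decomposition \emph{is} the strongly connected component decomposition, so $(1)=(2)$ as decompositions; the content to verify is that the minimality condition in the definition of a $d$-Assur graph (no proper subgraph with an inner vertex is pinned $d$-isostatic) is exactly what makes each component strongly connected, i.e. indecomposable as a block. This is the standard fact that a pinned $d$-isostatic graph is a $d$-Assur graph if and only if its $d$-directed orientation has a single non-ground strongly connected component, which follows from the counting/orientation results cited from \cite{3directed} (Theorem~3.6 there).

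The main obstacle I anticipate is the bookkeeping in $(2)\Rightarrow(3)$: correctly matching the $d$ columns of each inner vertex to its $d$ outgoing edges so that the row-column permutation genuinely yields a \emph{square} lower-triangular block for each component (rather than merely block-triangular in some looser sense), and then arguing \emph{maximality} of that decomposition cleanly. The subtlety is that a strongly connected component $C$ together with its outgoing edges forms the extended component, and one must be careful that the diagonal block for $C$ uses exactly the rows of $C$'s internal edges and the columns of $C$'s inner vertices — the outgoing edges of $C$ contribute off-diagonal entries (to blocks below $C$), and this is consistent with the pinned isostatic count only because each inner vertex of $C$ has $d$ outgoing edges, some internal to $C$ and some leaving it, but the \emph{number} of columns of $C$ equals $d|I(C)|$ which must equal the number of internal edges of $C$ plus zero — wait, that forces all $d$ out-edges to be internal, which is false in general; the resolution is that the correct accounting pairs columns with out-edges globally, so the square diagonal block of $C$ has its rows indexed by \emph{all} out-edges originating at inner vertices of $C$ (internal and external alike) and its columns by the coordinate slots of $I(C)$, and lower-triangularity says the end-vertices of those out-edges that leave $C$ land in columns strictly to the left. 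Getting this indexing precisely right, and reconciling it with the informal ``extended component'' picture, is where I would spend the most care; everything else is a direct unwinding of definitions together with Proposition~\ref{Proposition:inducedblockgraph} and Corollary~\ref{cor:strong}.
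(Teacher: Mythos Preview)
The paper does not actually give its own proof of Theorem~\ref{3DirectedAssur}; the result is quoted from \cite{3directed}. What the paper \emph{does} prove is the symmetric analogue, Theorem~\ref{thm:decomp}, advertised beforehand as ``a different style of proof'' from \cite{3directed}, so that is the natural point of comparison.

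Against that proof, your proposal is correct and close in spirit but organised differently. The paper argues in the cycle $(1)\Rightarrow(2)\Rightarrow(3)\Rightarrow(1)$: for $(1)\Rightarrow(2)$ it shows that if an Assur component failed to be strongly connected, a bottom strongly connected piece of it would already be pinned isostatic, contradicting minimality; for $(2)\Rightarrow(3)$ it places the bottom strongly connected component (with its out-edges) at the top-left of the matrix and iterates; for $(3)\Rightarrow(1)$ it takes the contrapositive, showing that a proper pinned isostatic subgraph yields a nontrivial block split. You instead do $(2)\Leftrightarrow(3)$ directly and then invoke $(1)=(2)$ as essentially definitional, which is fair given that Subsection~\ref{subsec:Assur} explicitly states ``This is exactly the strongly connected component decomposition.'' Your $(2)\Rightarrow(3)$ is the paper's argument; your treatment of maximality (a strongly connected block admits no further lower-triangular split because some out-edge leaves any proper subset) is more explicit than the paper's, which absorbs this into the contrapositive step.

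The mid-argument wobble about the row--column accounting is resolved correctly at the end of your proposal, and it is worth stating the clean version once: the diagonal block for a component $C$ has its rows indexed by \emph{all} $d|I(C)|$ out-edges originating at inner vertices of $C$ (both those internal to $C$ and those leaving $C$), and its columns by the $d|I(C)|$ coordinate slots of $I(C)$. For an out-edge leaving $C$, the tail-vertex entry sits in the diagonal block while the head-vertex entry lands in the columns of a component strictly earlier in the linear order, i.e.\ strictly below-left of the diagonal. With that said plainly, the ``main obstacle'' you flag evaporates; this is exactly the accounting the paper uses (implicitly) in its $(2)\Rightarrow(3)$ step.
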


We call the shared partial order from the three equivalent decompositions of Theorem~\ref{3DirectedAssur} the \emph{$d$-Assur block graph}.

\subsection{Symmetric graphs}

For a finite simple graph $G=(V,E)$, we let $\textrm{Aut}(G)$ denote the automorphism group of $G$. An \emph{action} of a group $\mathcal{S}$ on $G$ is a group homomorphism $\theta: \mathcal{S} \rightarrow \textrm{Aut}(G)$.
An action $\theta$ is called \emph{free} on the vertices (resp., edges)
if  $\theta(x)(v)\neq v$ for every $v\in V$ (resp., $\theta(x)(e)\neq e$ for every $e\in E$) and
every  non-trivial element $x\in \mathcal{S}$.
We say that a graph $G$ is \emph{$\mathcal{S}$-symmetric} (with respect to $\theta$)
if $\mathcal{S}$ acts on $G$ via $\theta$.
Throughout the paper, we will omit to specify the action $\theta$ if it is clear from the context. In that case we also denote $\theta(x)(v)$ by  $x v$.

For an $\mathcal{S}$-symmetric graph $G=(V,E)$, the \emph{quotient graph} $G/\mathcal{S}$ is a multigraph whose vertex set is the set $V/\mathcal{S}$
of vertex orbits and whose edge set is the set $E/\mathcal{S}$ of edge orbits. Note that an edge orbit may be represented by a loop in $G/\mathcal{S}$.

While several distinct graphs may have the same quotient graph, a gain labeling makes this relation one-to-one if
 the underlying group action is free on $V$ \cite{schtan}.
To see this,  choose a  representative vertex $v$ from each vertex orbit under the group action.
Then each orbit is of the form $\mathcal{S} v=\{xv\mid x\in \mathcal{S}\}$.
If the action is free,
an edge orbit connecting $\mathcal{S} u$ and $\mathcal{S} v$ in $G/\mathcal{S}$
can be written as $\{\{xu,xyv\}\mid x\in \mathcal{S} \}$
for a unique $y\in \mathcal{S}$.
We then orient the edge orbit from $\mathcal{S} u$ to $\mathcal{S} v$ in $G/\mathcal{S}$
and assign to it the gain $y$.
This yields the \emph{quotient $\mathcal{S}$-gain graph}  $(G/\mathcal{S},\psi)$ of $G$, which
 is unique up to choices of representative vertices.  The map $\psi$ is also called the \emph{gain function} of $(G/\mathcal{S},\psi)$. Note that a gain graph is a directed graph, but its orientation is only
 used as a reference orientation, and may be changed, provided that we also modify $\psi$ so that if an edge has gain $x$ in one orientation, then it has gain $x^{-1}$ in the other direction.

 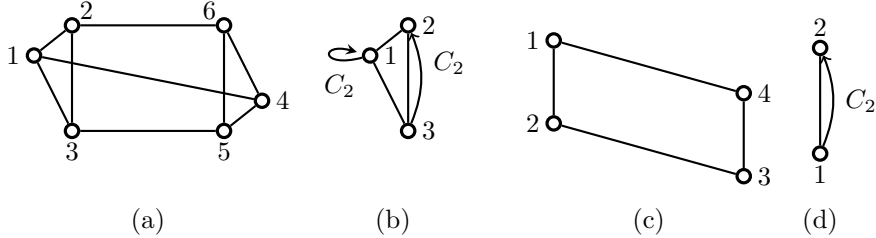
\begin{figure}[htp]
\begin{center}
\begin{tikzpicture}[very thick,scale=1]
       \tikzstyle{every node}=[circle, draw=black, fill=white, inner sep=0pt, minimum width=5pt];
   \path (0,-1.2) node (p3) [label = below: $3$] {} ;
    \path (2,-1.2) node (p5) [label = below: $5$] {} ;
    \path (2,0.2) node (p6) [label = above left: $6$] {} ;
   \path (0,0.2) node (p2) [label = above right: $2$] {} ;
   \path (-0.5,-0.2) node (p1) [label = left: $1$] {} ;
   \path (2.5,-0.8) node (p4) [label = right: $4$] {} ;
   \draw[thick] (p2) -- (p3);
     \draw[thick] (p1) -- (p2);
     \draw[thick] (p1) -- (p3);
     \draw[thick] (p1) -- (p4);
     \draw[thick] (p2) -- (p6);
     \draw[thick] (p5) -- (p3);
     \draw[thick] (p6) -- (p4);
     \draw[thick] (p6) -- (p5);
     \draw[thick] (p5) -- (p4);
       \node [rectangle, draw=white, fill=white] (b) at (0,-2.2) {$\quad$};
    \node [rectangle, draw=white, fill=white] (b) at (1,-2.4) {(a)};
        \end{tikzpicture}
          \hspace{0.1cm}
     \begin{tikzpicture}[very thick,scale=1]
\tikzstyle{every node}=[circle, draw=black, fill=white, inner sep=0pt, minimum width=5pt];
   \path (0,-1.2) node (p3) [label = right: $3$] {} ;
   \path (0,0.2) node (p2) [label = right: $2$] {} ;
   \path (-0.5,-0.2) node (p1) [label = right: $1$] {} ;
\path[thick]
(p1) edge (p3);
\path[thick]
(p2) edge (p3);
\path[thick]
(p2) edge (p1);
\path[thick]
(p1) edge [loop left,->, >=stealth,shorten >=2pt,looseness=26] (p1);
\path[thick]
(p3) edge [->,bend right=22] (p2);
\node [rectangle, draw=white, fill=white] (b) at (0.54,-0.3) {$C_2$};
\node [rectangle, draw=white, fill=white] (b) at (-0.9,-0.6) {$C_2$};

\node [rectangle, draw=white, fill=white] (b) at (-0.2,-2.4) {(b)};
\end{tikzpicture}
       \hspace{0.5cm}
    \begin{tikzpicture}[very thick,scale=1]
\tikzstyle{every node}=[circle, draw=black, fill=white, inner sep=0pt, minimum width=5pt];
        \path (0,0) node (p1) [label = left: $1$] {} ;
       \path (0,-1.1) node (p2) [label = left: $2$] {} ;
   \path (2.5,-1.8) node (p3) [label = right: $3$] {} ;
   \path (2.5,-0.7) node (p4) [label = right: $4$] {} ;
   \draw[thick] (p1) -- (p4);
      \draw[thick] (p3) -- (p4);
     \draw[thick] (p2) -- (p3);
      \draw[thick] (p2) -- (p1);
\node [rectangle, draw=white, fill=white] (a) at (1.25,-0.7) {};
\node [rectangle, draw=white, fill=white] (b) at (1.25,-2.4) {(c)};
         \end{tikzpicture}
               \hspace{0.1cm}
\begin{tikzpicture}[very thick,scale=1]
   \tikzstyle{every node}=[circle, draw=black, fill=white, inner sep=0pt, minimum width=5pt];
   \path (0,-1.5) node (p1) [label = below: $1$] {} ;
   \path (0,-0.1) node (p2) [label = above: $2$] {} ;
  \draw[thick] (p1) -- (p2);
\path[thick]
(p1) edge [->,bend right=22] (p2);
\node [rectangle, draw=white, fill=white] (b) at (0.54,-0.8) {$C_2$};
\node [rectangle, draw=white, fill=white] (b) at (0,-2.4) {(d)};
\end{tikzpicture}\end{center}
\vspace{-0.3cm}
\caption{$\mathcal{C}_2$-symmetric graphs ((a), (c)) and their quotient $\mathcal{C}_2$-gain graphs ((b), (d)), where $\mathcal{C}_2$ denotes half-turn symmetry.}
\label{c2gaingraphs}
\end{figure}

The map $c:G\to H$ defined by $c(xv)=v$ and $c(\{xu,x\psi (e) v\})=(u,v)$ is called the \emph{covering map}. In order to avoid confusion, throughout the paper, a vertex or an edge in a quotient gain graph $H$
is denoted with the mark tilde (e.g., $\tilde{v}$ or $\tilde{e}$) and the vertex and edge set of $H$ is denoted by $\tilde{V}$ and  $\tilde{E}$, respectively.
Then the fiber $c^{-1}(\tilde{v})$ of a vertex $\tilde{v}\in \tilde{V}$ and the fiber $c^{-1}(\tilde{e})$
of an edge $\tilde{e}\in \tilde{E}$
coincide with a vertex orbit and an edge orbit, respectively, in $G$. For simplicity, the gain $\psi(\te)$ of an edge $\te \in \tilde{E}$ will sometimes also be denoted by $\psi_{\te}$.

Finally, a graph $\hat G=(I,P;E)$, where $I$ is the set of inner vertices and $P$ is the set of pinned vertices, is called \SG-symmetric  (with respect to $\theta$) if for every $x\in \mathcal{S}$, $\theta(x)$ maps pinned vertices to pinned vertices and inner vertices to inner vertices. The construction of the
quotient \SG-gain graph of $\hat G$ is of course completely analogous to the construction described above.

\subsection{Symmetric frameworks and orbit matrices}

In this subsection we introduce the basic terminology for symmetric frameworks and summarize the key results concerning `symmetry-forced' rigidity of frameworks. We begin with a discussion of `unpinned' symmetric frameworks.

A \emph{symmetry operation} of a framework $(G,\bp)$ in $\mathbb{R}^d$, where $G=(V,E)$, is an isometry $x$ of $\mathbb{R}^d$ such that for some $\alpha_x\in \textrm{Aut}(G)$, we have
$x\big(\bp(v))=\bp({\alpha_x(v)})\, \textrm{for all } v\in V.$ The set of all symmetry operations of a framework $(G,\bp)$ in $\mathbb{R}^d$ forms a group under composition, called the \emph{point group} of $(G,\bp)$. Clearly, we may assume wlog that the point group of a framework is always a \emph{symmetry group}, i.e., a subgroup of the orthogonal group $O(\mathbb{R}^{d})$.


 We use the Schoenflies notation for the symmetry operations and symmetry
groups in dimensions 2 and 3 considered in this paper, as this is one of the standard notations in
the literature about symmetric structures (see \cite{bishop,FGsymmax,BS1,BSWWorbit}, for example).  The relevant groups in this paper are $\mathcal{C}_s$, $\mathcal{C}_n$ and $\mathcal{C}_{nv}$. $\mathcal{C}_s$ is a group of order $2$ generated by a single reflection, $\mathcal{C}_n$, $n\geq 1$, is a cyclic group
generated by a rotation $\mathcal{C}_n$ about the origin (in the plane) or an axis through the origin  (in $3$-space) by an angle of $\frac{2\pi}{n}$, and $\mathcal{C}_{nv}$ is a dihedral group that is generated by a rotation $\mathcal{C}_n$ and
a reflection (whose reflectional plane contains the rotational axis of $\mathcal{C}_n$ in $3$-space).

Let $\mathcal{S}$ be an abstract group, and   $G=(V,E)$ be an  $\mathcal{S}$-symmetric graph with respect to an action $\theta:\mathcal{S}\rightarrow \textrm{Aut}(G)$.
Suppose also that $\mathcal{S}$ acts on $\mathbb{R}^d$ via the homomorphism $\tau:\mathcal{S}\rightarrow O(\mathbb{R}^d)$.
Then we say that a framework $(G,\bp)$ is \emph{$\mathcal{S}$-symmetric} (with respect to $\theta$ and $\tau$) if
\begin{equation}
\label{eq:symmetric_func}
\tau(x) (\bp(v))=\bp(\theta(x) v) \qquad \text{for all } x\in \mathcal{S} \text{ and all } v\in V.
\end{equation}
Note that if $(G,\bp)$ is $\mathcal{S}$-symmetric, then the point group of $(G,\bp)$ is either equal to $\tau(\mathcal{S})$ or contains $\tau(\mathcal{S})$ as a subgroup \cite{BS1}.

Let $H=(\tilde{V},\tilde{E})$ be the quotient graph of $G$ with the covering map $c:G\rightarrow H$. Then it is convenient to fix a representative vertex $v$ of each vertex orbit $\mathcal{S} v=\{xv\colon x\in \mathcal{S}\}$,
and define the \emph{quotient} of $\bp$ to be $\tilde{\bp}:\tilde{H}\rightarrow \mathbb{R}^d$,
so that there is a one-to-one correspondence between $\bp$ and $\tilde{\bp}$ given by
$\bp(v)=\tilde{\bp}(c(v))$ for each representative vertex $v$.

For a point group $\mathcal{S}$ in $O(\mathbb{R}^{d})$, let $\mathbb{Q}_{\mathcal{S}}$ be the field
 generated by $\mathbb{Q}$ and the entries of the matrices in $\mathcal{S}$.
We say that $\bp$ (or $\tilde{\bp}$) is \emph{\SG-generic}
if the set of coordinates of the image of $\tilde{\bp}$ is algebraically independent over $\mathbb{Q}_{\mathcal{S}}$.
Note that this definition does not depend on the choice of representative vertices. An $\mathcal{S}$-symmetric framework $(G,\bp)$ is called
\emph{$\mathcal{S}$-generic} if $\bp$ is $\mathcal{S}$-generic (see also \cite{BS1}).

An infinitesimal motion $\bu:V\rightarrow \mathbb{R}^d$ of an $\mathcal{S}$-symmetric framework $(G,\bp)$ (with respect to $\theta$ and $\tau$) is called \emph{fully $\mathcal{S}$-symmetric} if
\begin{equation}
\tau(x)\bu(v)=\bu(\theta(x)v) \qquad \textrm{ for all } v\in V \textrm{ and } x\in\mathcal{S},
\end{equation}
i.e., if the velocity vectors of $\bu$ satisfy the same symmetry constraints as the joints of $(G,\bp)$ (see also Figure~\ref{fulsym}). Similarly, a self-stress $\mathbf{\omega}$ of $(G,\bp)$ is called \emph{fully $\mathcal{S}$-symmetric} if $\omega_e=\omega_f$ for all edges $e$ and $f$ belonging to the same edge orbit
under the action of $\theta$ (see also \cite{BSWWorbit,schtan}, for example).

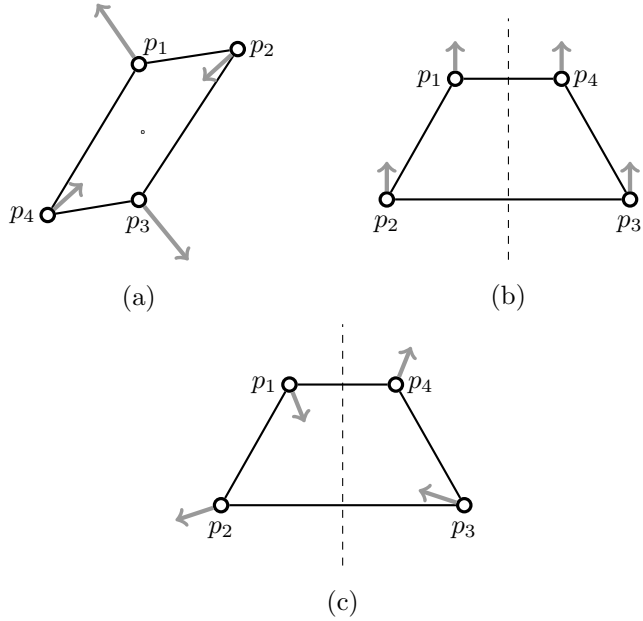
\begin{figure}[htp]
\begin{center}
\begin{tikzpicture}[rotate=90, very thick,scale=1]
\tikzstyle{every node}=[circle, draw=black, fill=white, inner sep=0pt, minimum width=5pt];
    \path (0.1,1.2) node (p1) [label = left: $p_{4}$] {} ;
    \path (2.1,0) node (p4) [label = above right: $p_{1}$]{} ;
    \path (2.3,-1.3) node (p3) [label = right: $p_{2}$] {} ;
     \path (0.3,0) node (p2) [label = below: $p_{3}$] {} ;
        \draw[thick] (p1) -- (p4);
      \draw[thick] (p3) -- (p4);
     \draw[thick] (p2) -- (p3);
      \draw[thick] (p2) -- (p1);
            \draw [ultra thick, ->, black!40!white](p1) -- (0.52,0.74);
      \draw [ultra thick, ->, black!40!white](p3) -- (1.88,-0.84);
      \draw [ultra thick, ->, black!40!white](p2) -- (-0.5,-0.65);
      \draw [ultra thick, ->, black!40!white](p4) -- (2.9,0.55);
      \filldraw[fill=black, draw=black]
    (1.2,-0.05) circle (0.004cm);
     \node [draw=white, fill=white] (b) at (-1,-0) {(a)};
              \end{tikzpicture}
 \hspace{1cm}
            \begin{tikzpicture}[very thick,scale=1]
\tikzstyle{every node}=[circle, draw=black, fill=white, inner sep=0pt, minimum width=5pt];
    \path (-0.7,0.8) node (p1) [label = left: $p_{1}$] {} ;
    \path (0.7,0.8) node (p4) [label = right: $p_{4}$]{} ;
    \path (-1.6,-0.8) node (p2) [label = below: $p_{2}$] {} ;
     \path (1.6,-0.8) node (p3) [label = below: $p_{3}$] {} ;
      \draw[thick] (p1) -- (p4);
    \draw[thick] (p1) -- (p2);
    \draw[thick] (p3) -- (p4);
    \draw[thick] (p2) -- (p3);
     \draw [dashed, thin] (0,-1.6) -- (0,1.6);
     \draw [ultra thick, ->, black!40!white] (p1) -- (-0.7,1.3);
      \draw [ultra thick, ->, black!40!white] (p4) -- (0.7,1.3);
      \draw [ultra thick, ->, black!40!white] (p2) -- (-1.6,-0.3);
      \draw [ultra thick, ->, black!40!white] (p3) -- (1.6,-0.3);
      \node [draw=white, fill=white] (b) at (0,-2.1) {(b)};
        \end{tikzpicture}
                \hspace{1cm}
                \begin{tikzpicture}[very thick,scale=1]
\tikzstyle{every node}=[circle, draw=black, fill=white, inner sep=0pt, minimum width=5pt];
    \path (-0.7,0.8) node (p1) [label = left: $p_{1}$]  {} ;
    \path (0.7,0.8) node (p4)[label = right: $p_{4}$] {} ;
    \path (-1.6,-0.8) node [label = below: $p_{2}$](p2)  {} ;
     \path (1.6,-0.8) node [label = below: $p_{3}$](p3)  {} ;
      \draw[thick] (p1) -- (p4);
    \draw[thick] (p1) -- (p2);
    \draw[thick] (p3) -- (p4);
    \draw[thick] (p2) -- (p3);
     \draw [dashed, thin] (0,-1.6) -- (0,1.6);
     \draw [ultra thick, ->, black!40!white] (p1) -- (-0.5,0.3);
      \draw [ultra thick, ->, black!40!white] (p4) -- (0.9,1.3);
      \draw [ultra thick, ->, black!40!white] (p2) -- (-2.2,-1);
      \draw [ultra thick, ->, black!40!white] (p3) -- (1,-0.6);
      \node [draw=white, fill=white] (b) at (0,-2.1) {(c)};
        \end{tikzpicture}
       \end{center}
\vspace{-0.3cm}
\caption{Infinitesimal motions of frameworks in the plane: (a) a fully $\mathcal{C}_2$-symmetric non-trivial infinitesimal motion; (b) a fully $\mathcal{C}_s$-symmetric trivial infinitesimal motion; (c) a non-trivial infinitesimal motion which is not  fully $\mathcal{C}_s$-symmetric, but `anti-symmetric' wtih respect to $\mathcal{C}_s$.}
\label{fulsym}
\end{figure}

We say that $(G,\bp)$ is \emph{\SG-symmetric (infinitesimally) rigid} if every
fully $\mathcal{S}$-symmetric infinitesimal motion of $(G,\bp)$ is trivial (i.e. if it corresponds to a translation or rotation (or a combination of those) of $(G,\bp)$) \cite{BSWWorbit,schtan}. Moreover, the framework $(G,\bp)$ is called \emph{\SG-isostatic} if it is minimally \SG-symmetric infinitesimally rigid, i.e., if  $(G,\bp)$ is \SG-symmetric infinitesimally rigid and has no non-zero \SG-symmetric self-stress.
To simplify the detection of fully $\mathcal{S}$-symmetric motions and self-stresses of $(G,\bp)$,
the orbit rigidity matrix  of  $(G,\bp)$ was introduced in \cite{BSWWorbit}.

\begin{Definition} [Schulze and Whiteley~\cite{BSWWorbit}]\label{orbitmatrixdef}
Let $(G,\bp)$ be an $\mathcal{S}$-symmetric framework with respect to $\theta$ and $\tau$, where $G=(V,E)$ and $\theta$ acts freely on $V$. Further, let $(H,\psi)$ be the quotient $\mathcal{S}$-gain graph of $G$, where $H=(\tilde{V},\tilde{E})$. For each edge $\tilde{e}=(\tilde{u},\tilde{v})\in \tilde{V}$, the \emph{orbit (rigidity) matrix} $\mathcal{O}(H,\psi,\tilde{\bp})$ of $(G,\bp)$
 has the following corresponding ($d|\tilde{V}|$-dimensional) row vector:
\begin{equation}\renewcommand{\arraystretch}{0.8} \label{orbitmatrixrow}
        \bordermatrix{& & \tilde{u} & &  \tilde{v} &  \cr & 0\dots0 & \big(\tilde{\bp}(\tilde{u})-\tau(\psi_{\tilde{e}})\tilde{\bp}(\tilde{v})\big) & 0\dots 0 & \big(\tilde{\bp}(\tilde{v})-\tau(\psi_{\tilde{e}})^{-1}\tilde{\bp}(\tilde{u})\big) & 0\dots 0}\textrm{,}
    \end{equation}
where each vector is assumed to be transposed. If $\tilde{e}$ is a loop at $\tilde{u}$, then $\mathcal{O}(H,\psi,\tilde{\bp})$  has the following corresponding ($d|\tilde{V}|$-dimensional) row vector:
\begin{equation}\renewcommand{\arraystretch}{0.8} \label{orbitmatrixrow}
        \bordermatrix{& & \tilde{u} & &  \tilde{v} &  \cr & 0\dots0 & \big(2\tilde{\bp}(\tilde{u})-\tau(\psi_{\tilde{e}})\tilde{\bp}(\tilde{v})- \tau(\psi_{\tilde{e}})^{-1}\tilde{\bp}(\tilde{u})  \big) & 0\dots 0 & 0& 0\dots 0}\textrm{,}
    \end{equation}
\end{Definition}

See also Example~\ref{ex:orbitmatrix} in Section~\ref{sec:pinsymfw} for an example of an  orbit matrix (for a pinned framework).

\begin{Remark} \label{rem:fixorb}
When the action $\theta$ is not free on the vertices of $G$, the number of columns in  $\mathcal{O}(H,\psi,\tilde{\bp})$, corresponding to vertices that are fixed by non-trivial symmetry operations, are reduced accordingly. Specifically, let $\bp(u)$ be a joint of the  $\mathcal{S}$-symmetric framework $(G,\bp)$ (with respect to $\theta$ and $\tau$), let $x$ be a symmetry operation in $\mathcal{S}$ which
fixes $\bp(u)$ (i.e., $ \tau(x)(\bp(u))=\bp(u)$), and
let  $F_x$ be the linear subspace of
$\mathbb{R}^d$ which consists of all points $a\in \mathbb{R}^d$ with $\tau(x)(a) = a$. (In the applied sciences, the space $F_x$ is sometimes also referred to as the \emph{symmetry element of $x$} \cite{bishop}.) Then the joint $\bp(u)$ of any
$\mathcal{S}$-symmetric framework $(G,\bp)$ (with respect to $\theta$ and $\tau$) must lie in the linear subspace $$U_{\bp(u)}=\bigcap_{x\in \mathcal{S}: \, \tau(x)(\bp(u))=\bp(u)} F_x.$$
Therefore, the number of columns corresponding to $\tilde{u}$ in the orbit matrix is reduced from $d$ to $\text{dim} U_{\bp(u)}$. This is achieved by multiplying the corresponding $d$-dimensional row vectors in $\mathcal{O}(H,\psi,\tilde{\bp})$ with the $d\times \text{dim} U_{\bp(u)}$ matrix whose columns are the coordinates of the basis vectors of $U_{\bp(u)}$ relative to the canonical basis (see also \cite{BSWWorbit} for details).

For example, if a joint $\bp(u)$ of a $\mathcal{C}_s$-symmetric framework $(G,\bp)$ in $3$-space is fixed by the reflection $s$ in $\mathcal{C}_s$, and the symmetry element (reflection plane) $F_s$ of $s$ is the $xy$-plane, then the orbit matrix has only two columns corresponding to $\bp(u)$ (since $F_{id}\cap F_s=\mathbb{R}^3\cap F_s=F_s$) and the two entries of each row are obtained by deleting the third coordinate of the corresponding row vectors $\tilde{\bp}(\tilde{u})-\tau(\psi_{\tilde{e}})\tilde{\bp}(\tilde{v})$ in (\ref{orbitmatrixrow}).
\end{Remark}

The following result summarizes the key properties of the orbit matrix.

\begin{Theorem}[\cite{BSWWorbit,schtan}] \label{thm:orbitmatrxprop}
Let $(G,\bp)$ be an $\mathcal{S}$-symmetric framework (with respect to $\theta$ and $\tau$) and let $(H,\psi)$ be the quotient \SG-gain graph of $G$. Then  the solutions to $\mathcal{O}(H,\psi,\tilde{\bp})u=0$ are isomorphic to the space of fully  $\mathcal{S}$-symmetric infinitesimal motions of $(G,\bp)$. Moreover, the solutions to $\omega^T \mathcal{O}(H,\psi,\tilde{\bp}) = 0$ are isomorphic to the space of fully  $\mathcal{S}$-symmetric self-stresses of  $(G,\bp)$.
\end{Theorem}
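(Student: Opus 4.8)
The plan is to relate the orbit matrix $\mathcal{O}(H,\psi,\tilde{\bp})$ directly to the ordinary rigidity matrix $R(G,\bp)$ of the (unpinned) framework $(G,\bp)$. The starting observation is that a fully $\mathcal{S}$-symmetric infinitesimal motion $\bu$ is completely determined by its restriction $\tilde{\bu}$ to a chosen set of vertex-orbit representatives, via $\bu(\theta(x)v)=\tau(x)\tilde{\bu}(\tv)$; likewise a fully $\mathcal{S}$-symmetric self-stress $\omega$ is determined by its restriction $\tilde{\omega}$ to edge-orbit representatives. These give linear maps from the fully symmetric motions (resp.\ self-stresses) to subspaces of $\mathbb{R}^{d|\tilde{V}|}$ (resp.\ $\mathbb{R}^{|\tilde{E}|}$), and the theorem reduces to checking that, under these identifications, the equations $R(G,\bp)\bu=0$ (restricted to symmetric $\bu$) become exactly the equations $\mathcal{O}(H,\psi,\tilde{\bp})\tilde{\bu}=0$, and dually for self-stresses.

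For the motion part, first note that if $\bu$ is fully symmetric then the rigidity equation $(\bp_a-\bp_b)\cdot(\bu_a-\bu_b)=0$ for an edge $\{a,b\}$ and the one for $\theta(x)\{a,b\}$ carry the same information, because $\tau(x)$ is an isometry; hence the system collapses to one equation per edge orbit. For an edge orbit of gain $y=\psi_{\te}$ joining the orbits of representatives $u$ and $v$ I choose the representative edge $\{u,\theta(y)v\}$, substitute $\bp_{\theta(y)v}=\tau(y)\bp_v$ and $\bu_{\theta(y)v}=\tau(y)\bu_v$, and use the orthogonality identity $a\cdot\tau(y)b=\tau(y)^{-1}a\cdot b$; a short computation rewrites the single equation as
\[
\big(\tilde{\bp}(\tu)-\tau(\psi_{\te})\tilde{\bp}(\tv)\big)\cdot\tilde{\bu}(\tu)+\big(\tilde{\bp}(\tv)-\tau(\psi_{\te})^{-1}\tilde{\bp}(\tu)\big)\cdot\tilde{\bu}(\tv)=0,
\]
which is precisely the row of $\mathcal{O}(H,\psi,\tilde{\bp})$ indexed by $\te$ evaluated at $\tilde{\bu}$; the loop case is the analogous single-vertex computation yielding the second row type in Definition~\ref{orbitmatrixdef}. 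Conversely, given $\tilde{\bu}$ in the kernel of $\mathcal{O}$, one sets $\bu(\theta(x)v)=\tau(x)\tilde{\bu}(\tv)$ and verifies this is a well-defined, fully symmetric infinitesimal motion. The self-stress statement is the transpose of the same dictionary: for fully symmetric $\omega$ the equilibrium condition $\sum_w\omega_{vw}(\bp_v-\bp_w)=0$ at $\theta(x)v$ is $\tau(x)$ applied to the one at $v$, so $\omega^T R(G,\bp)=0$ collapses to one vector equation per vertex orbit; matching the neighbours of a representative vertex $u$ in $G$ with the half-edges at $\tu$ in $H$ (an outgoing edge $(\tu,\tv)$ of gain $y$ contributes the neighbour $\theta(y)v$, an incoming edge $(\tilde{w},\tu)$ of gain $y$ contributes $\theta(y^{-1})w$, and a loop of gain $y$ contributes both $\theta(y)u$ and $\theta(y^{-1})u$), one reads off that $\sum_w\omega_{uw}(\bp_u-\bp_w)$ is exactly the $\tu$-block of $\tilde{\omega}^T\mathcal{O}(H,\psi,\tilde{\bp})$, so $\omega$ is in equilibrium everywhere iff $\tilde{\omega}^T\mathcal{O}=0$.

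The only place where anything beyond bookkeeping is needed is the non-free case of Remark~\ref{rem:fixorb}, where a representative vertex $u$ has non-trivial stabiliser. Here one must check: (i) for motions, that $\tilde{\bu}(\tu)\in U_{\bp(u)}$ is exactly the condition making the extension $\bu(\theta(x)v)=\tau(x)\tilde{\bu}(\tv)$ well-defined, which is precisely what the reduction of the $\tu$-column block of $\mathcal{O}$ to $\dim U_{\bp(u)}$ columns encodes; and (ii) for self-stresses, that the equilibrium vector $\sum_w\omega_{uw}(\bp_u-\bp_w)$ at such a $u$ automatically lies in $U_{\bp(u)}$ — because it is fixed by every symmetry operation fixing $\bp(u)$, using the full symmetry of $\omega$ together with the fact that $\theta(x)$ permutes the neighbours of $u$ — so that requiring only its projection onto $U_{\bp(u)}$ to vanish (which is what the reduced orbit matrix imposes) already forces the whole vector to vanish. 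I expect this compatibility of the column reduction with the fixed subspaces $U_{\bp(u)}$ to be the main (and essentially the only) obstacle; once it is verified, the two linear isomorphisms above are complete, and I would refer to \cite{BSWWorbit} for the arithmetic of the column reduction rather than reproduce it in full.
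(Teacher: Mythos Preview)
The paper does not supply its own proof of this theorem: it is quoted from \cite{BSWWorbit,schtan} as background and used without argument. Your sketch is essentially the argument carried out in those references --- identify a fully \SG-symmetric motion (resp.\ self-stress) with its restriction to orbit representatives, observe that the rigidity equations within an edge orbit are all $\tau(x)$-images of one another and hence redundant, and then rewrite the single surviving equation per edge orbit using $a\cdot\tau(y)b=\tau(y)^{-1}a\cdot b$ to recover exactly the row of $\mathcal{O}(H,\psi,\tilde{\bp})$; the equilibrium computation for self-stresses is the transpose of this. Your handling of the non-free case (that the column reduction to $\dim U_{\bp(u)}$ encodes precisely the well-definedness constraint on $\tilde{\bu}$, and that the equilibrium vector at a fixed vertex is automatically stabilised and so lies in $U_{\bp(u)}$) is also the point isolated in \cite{BSWWorbit}. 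So there is nothing to compare: your proposal reproduces the literature proof that the paper simply cites.
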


We say that $(G,\bp)$ is \emph{$\mathcal{S}$-regular} if the orbit matrix  $\mathcal{O}(H,\psi,\tilde{\bp})$ has
maximal rank  among all $\mathcal{S}$-symmetric realisations of $G$ (see also \cite{BSWWorbit,schtan}). Clearly, all \SG-regular realisations of a given graph share the same \SG-symmetric infinitesimal rigidity properties. Note that if a framework is $\mathcal{S}$-generic, then it is clearly also $\mathcal{S}$-regular.
Moreover, it was shown in \cite{BS6,KG1} that for $\mathcal{S}$-regular frameworks, the existence of a non-trivial fully $\mathcal{S}$-symmetric infinitesimal motion guarantees the existence of a symmetry-preserving mechanism. Thus, for $\mathcal{S}$-regular frameworks, \SG-symmetric infinitesimal
rigidity and \SG-symmetric (finite) rigidity are equivalent, and hence a graph $G$ is called \emph{\SG-rigid (\SG-isostatic)} if there exists a \SG-regular realisation of $G$ which is \SG-symmetric (infinitesimally) rigid (\SG-isostatic).

For \SG-rigidity in the plane, there are Laman type theorems for all groups except the `even order' dihedral groups of the form $\mathcal{C}_{2nv}$, $n\geq 1$ \cite{M&T,jkt}.
To state these theorems we need the following definitions (see also \cite{jkt,schtan}).

Let $(H,\psi)$ be an $\mathcal{S}$-gain graph with $H=(\tilde{V},\tilde{E})$.
A cycle in $H$ is called \textrm{balanced} if the product of its edge gains is equal to the identity.
(If $\mathcal{S}$ is an additive group, we take the sum instead of the product.)
More precisely, a cycle of the form $\tilde{v}_1,\tilde{e}_1,\tilde{v}_2,\tilde{e}_2,\tilde{v}_3,\dots,\tilde{v}_k,\tilde{e}_k,\tilde{v}_1$, is balanced if $\Pi_{i=1}^k \psi(\tilde{e}_i)^{\textrm{sign}(\tilde{e}_i)}=id$,
where $\textrm{sign}(\tilde{e}_i)=1$ if $\tilde{e}_i$ is directed from $\tilde{v}_i$ to $\tilde{v}_{i+1}$, and $\textrm{sign}(\tilde{e}_i)=-1$  otherwise.

We say that an edge subset $F\subseteq \tilde{E}$ is \emph{balanced} if all cycles in $F$ are balanced;
otherwise it is called \emph{unbalanced}.

\begin{Definition}\label{def:gainsparse}
Let $(H,\psi)$ be an $\mathcal{S}$-gain graph with  $H=(\tilde{V},\tilde{E})$ and let  $k, \ell, m$ be nonnegative integers with $m\leq \ell$.
$(H,\psi)$ is called $(k,\ell,m)$-gain-sparse if
\begin{itemize}
\item $|F|\leq k|V(F)|-\ell$ for any nonempty balanced $F\subseteq \tilde{E}$;
\item $|F|\leq k|V(F)|-m$ for any nonempty $F\subseteq \tilde{E}$.
\end{itemize}
 A  $(k,\ell,m)$-gain-sparse graph $(H,\psi)$ which satisfies $|\tilde E|=k|\tilde V|-m$ is called \emph{$(k,\ell,m)$-gain-tight}.
\end{Definition}

\begin{Example} Consider, for example, the gain-graph $H$ shown in Figure \ref{c2gaingraphs} (b).   $H=(\tilde{V},\tilde{E})$ is $(2,3,1)$-gain tight, since  $|\tilde E|=5=2|\tilde V|-1$ and all the conditions in Definition~\ref{def:gainsparse}
are satisfied. However, $H$ is not $(2,3,2)$-gain sparse, for example, since there exists a loop at vertex $1$ with gain $C_2$, and for this subgraph, we have  $|F|=1>0= 2|V(F)|-2$.
\end{Example}

For $\mathcal{S}$-symmetric frameworks in the plane, where the action $\theta$ is free on the vertex set, we have the following elegant characterizations of \SG-symmetric rigid graphs for the groups $\mathcal{C}_s$ and $\mathcal{C}_n$.

\begin{Theorem}[Malestein and Theran~\cite{M&T}, Jord{\'a}n et al. \cite{jkt}]
\label{thm:symmetry_reflection}
Let $\mathcal{S}$ be $\mathcal{C}_s$ or $\mathcal{C}_n$ for some $n\geq 2$, $\tau:\mathbb{Z}/n\mathbb{Z} \rightarrow \mathcal{S}$ be a homomorphism,
$G=(V,E)$ be an $\mathcal{S}$-symmetric graph with $\theta:\mathcal{S}\rightarrow \textrm{Aut}(G)$, where $\theta$ acts freely on $V$,
and $(G,\bp)$ be a $2$-dimensional $\mathcal{S}$-regular  framework with respect to $\theta$ and $\tau$.
Then $(G,\bp)$ is $\mathcal{S}$-isostatic if and only if
the quotient $\mathcal{S}$-gain graph $(H,\psi)$ is  $(2,3,1)$-gain-tight.
\end{Theorem}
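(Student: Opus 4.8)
The plan is to establish the two implications separately: the \emph{necessity} of the $(2,3,1)$-gain-tight condition is a rank count on the orbit matrix, while the \emph{sufficiency} is proved by an inductive (Henneberg-type) construction of $(2,3,1)$-gain-tight gain graphs together with a proof that each construction move preserves $\mathcal{S}$-isostaticity at $\mathcal{S}$-regular configurations.

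For necessity, I would first note that since $\theta$ acts freely on $V$, no column reductions as in Remark~\ref{rem:fixorb} occur, so $\mathcal{O}(H,\psi,\tilde{\bp})$ is an $|\tilde{E}|\times 2|\tilde{V}|$ matrix. By Theorem~\ref{thm:orbitmatrxprop}, $(G,\bp)$ is $\mathcal{S}$-isostatic precisely when the rows of the orbit matrix are independent and its rank equals $2|\tilde{V}|-t$, where $t$ is the dimension of the space of fully $\mathcal{S}$-symmetric trivial infinitesimal motions. A short computation shows $t=1$ in both cases considered: for $\mathcal{C}_n$ with $n\geq 2$ the only fully symmetric trivial motions are infinitesimal rotations about the centre, and for $\mathcal{C}_s$ they are the translations parallel to the mirror line (an infinitesimal rotation is anti-symmetric under a reflection, and a non-zero translation is never fixed by $\mathcal{C}_n$, $n\geq 2$). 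Hence $\mathcal{S}$-isostaticity forces $|\tilde{E}|=2|\tilde{V}|-1$. For the sub-counts, I would use gain-switching to normalise gains on a balanced subset $F$ to the identity, so that the rows of the orbit matrix indexed by $F$ become (a copy of) the ordinary rigidity matrix of one lift component on the vertex set $V(F)$; $\mathcal{S}$-regularity makes this an ordinary generic framework, so independence of $F$ forces $|F|\leq 2|V(F)|-3$. For an unbalanced $F$, the restriction of the orbit matrix still contains the one-dimensional space of symmetric trivial motions of the sub-configuration in its kernel, giving $|F|\leq 2|V(F)|-1$. Together these yield that $(H,\psi)$ is $(2,3,1)$-gain-tight.

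For sufficiency, the plan is in two parts. (a) \emph{Combinatorial step}: every $(2,3,1)$-gain-tight graph can be reduced, by reversing a sequence of gain-graph Henneberg operations, to a fixed small base graph (e.g. a single vertex with one unbalanced loop, which is readily seen to be $\mathcal{S}$-isostatic). The operations are the gain-assigned $0$-extension (add a vertex of degree $2$, its two incident edges being either two edges to existing vertices or one edge plus an unbalanced loop at the new vertex), the gain-assigned $1$-extension (delete an edge or loop $\tilde{e}$, add a vertex of degree $3$ joined to the endpoints of $\tilde{e}$ and to a third vertex, with gains chosen so the relevant paths reproduce $\psi(\tilde{e})$), and, if needed, the additional loop/gain operations used in the references. (b) \emph{Geometric step}: each such operation, applied at an $\mathcal{S}$-regular (equivalently, $\mathcal{S}$-generic) realisation, preserves $\mathcal{S}$-isostaticity. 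For a $0$-extension this is an immediate local rank argument on the orbit matrix (the two new rows are generically independent of the old ones and span the two new columns); for a $1$-extension one uses the standard degeneration argument — place the new vertex at a carefully chosen special position (e.g. coinciding with one endpoint of the deleted edge, or on a line through it), check that the orbit matrix there has no symmetric self-stress, and conclude by lower semicontinuity of rank together with the matching edge count.

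The main obstacle is the combinatorial step (a): showing that the Henneberg-type operations suffice to generate all $(2,3,1)$-gain-tight graphs. The difficulty is the interplay of the two sparsity conditions in Definition~\ref{def:gainsparse} — balanced subsets must obey the tighter bound $2|V(F)|-3$ — so reversing an operation at a minimum-degree vertex $\tilde{v}$ may create a subgraph violating the balanced bound, and one must argue that some admissible reverse operation (a different choice of edge to re-add, or a different $1$-extension, possibly after a gain switch normalising the gains around $\tilde{v}$) is always available. Handling the degree-$3$ case, loops, and the possible presence of multiple edges at $\tilde{v}$ requires the careful case analysis and the balanced/unbalanced component bookkeeping developed by Malestein--Theran and by Jord\'an--Kaszanitzky--Tanigawa, and I would follow their structure. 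The geometric step, by contrast, is routine once the operation list is fixed.
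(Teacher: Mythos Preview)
The paper does not prove this theorem; it is quoted as a result of Malestein--Theran and Jord\'an--Kaszanitzky--Tanigawa and used as a black box (for instance in the proof of Theorem~\ref{thm:Scounts}). So there is no ``paper's own proof'' to compare against.

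That said, your outline is an accurate summary of how the cited references establish the result: necessity via a rank count on the orbit matrix (with gain-switching to reduce balanced subsets to ordinary rigidity matrices), and sufficiency via an inductive construction of $(2,3,1)$-gain-tight gain graphs by $0$-extensions, $1$-extensions, and loop-$1$-extensions, each shown geometrically to preserve $\mathcal{S}$-isostaticity. You have also correctly located the real work --- the combinatorial case analysis showing that every $(2,3,1)$-gain-tight graph admits an inverse move, with the balanced-count $2|V(F)|-3$ being the obstruction to manage --- and deferred it to the original sources, which is appropriate here.
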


Note that the condition $|F|\leq 2|V(F)|-1$ for any nonempty $F\subseteq \tilde{E}$ in the quotient graph  $H=(\tilde{V},\tilde{E})$ in Theorem~\ref{thm:symmetry_reflection} reflects the fact that there is only a $1$-dimensional space of \SG-symmetric trivial infinitesimal motions for the groups $\mathcal{C}_s$ and $\mathcal{C}_n$ in dimension 2.
In general, for any point group $\mathcal{S}$ in dimension 2 or 3, the dimension of the space of trivial \SG-symmetric ininitesimal motions can easily be read off from the character table of $\mathcal{S}$ \cite{bishop}.

A similar, but slightly more complicated characterization of \SG-rigid graphs for dihedral groups $\mathcal{S}$  of the form  $\mathcal{C}_{(2n+1)v}$, $n\geq 1$, was also established in \cite{jkt}. However,
 for dihedral groups of the form $\mathcal{C}_{2nv}$, $n\geq 1$, a combinatorial
characterization of \SG-rigid graphs in the plane is not known. For example, it was shown in \cite{jkt} that Bottema's mechanism (a realisation of the complete bipartite
graph $K_{4,4}$ with $\mathcal{C}_{2v}$ symmetry in the plane) is falsely predicted to be $\mathcal{C}_{2v}$-symmetric rigid by the matroidal counts for the orbit matrix.

\begin{Remark}
For \SG-symmetric graphs, where the action $\theta$ is not free  on the vertex set, no combinatorial characterizations for \SG-rigidity have been derived yet. This is because for such graphs the structure of the orbit matrix and the corresponding combinatorial counts
 become significantly more messy (recall Remark~\ref{rem:fixorb}). However, in principle we do not expect any major new difficulties to arise when making the extension of Theorem~\ref{thm:symmetry_reflection} to symmetric graphs with non-free group actions.

 Clearly, in $3$-space, no combinatorial characterizations for \SG-rigidity are known, since
the problem of finding a combinatorial charaterization of rigid graphs (without symmetry) in dimensions $d\geq 3$ remains a long-standing open problem in discrete geometry \cite{W1}.
\end{Remark}

\section{Symmetric Pinned Frameworks} \label{sec:pinsymfw}

Based on the discussion in the previous section, we are now ready to introduce the key concepts of pinned \SG-isostatic graphs and \SG-Assur graphs.

\subsection{Basic definitions}
\label{subsec:basicdef}

Let $\hat G=(I,P;E)$ be a \SG-symmetric pinned graph (with respect to the action $\theta: \mathcal{S}\to \textrm{Aut}(G)$) and let $\hat H=(\hat{I},\hat{P};\hat{E})$ be the associated quotient \SG-gain graph.
Given a pinned \SG-symmetric  realisation of  $\hat G$ (with respect to the action $\theta$ and the homomorphism $\tau:\mathcal{S}\rightarrow \mathcal{O}(\mathbb{R}^d)$), we define the \emph{(pinned) orbit matrix} $\mathcal{O}_{pin}(\hat H,\psi,\tilde{\bp})$ of $(\hat G,\bp)$ as follows. There are $d$ columns for every inner vertex $\tilde{v}\in \hat{I}$ and no columns for any pinned vertex. For an edge $(\tilde{u},\tilde{v})$ with $\tilde{u},\tilde{v} \in \hat{I}$ there is a row in the pinned orbit matrix exactly like in the orbit matrix (see Definition \ref{orbitmatrixdef}). Moreover if an edge $(\tilde{u},\tilde{v})$ has $\tilde{u}\in \hat{I}$ and  $\tilde{v}\in \hat{P}$ then the corresponding row in $\mathcal{O}_{pin}(\hat H,\psi, \tilde{\bp})$ is:
\begin{displaymath}\renewcommand{\arraystretch}{0.8}
        \bordermatrix{  & &  \tilde{u} &  & \tilde{v} &  \cr & 0 \ldots 0 &  \big( \tilde{\bp}(\tilde{u})-\tau(\psi_{\tilde{e}})\tilde{\bp}(\tilde{v})\big) & 0  \ldots  0 & 0 & 0  \ldots  0}\textrm{.}
    \end{displaymath}

All solutions $U$ to $\mathcal{O}_{pin}(\hat H,\psi, \tilde{\bp})\times U^{T}=0$ are called \emph{pinned fully \SG-symmetric infinitesimal motions} of $(\hat G,\bp)$. If the only such motion is the zero motion then  $(\hat G,\bp)$  is said to be \emph{pinned \SG-symmetric infinitesimally rigid}. Equivalently,
$(\hat G,\bp)$ is pinned \SG-symmetric infinitesimally rigid if $\textrm{rank }\mathcal{O}_{pin}(\hat H,\psi, \tilde{\bp})=d|\hat{I}|$.

  A  \emph{pinned fully $\mathcal{S}$-symmetric self-stress} of $(\hat G,\bp)$ is a pinned self-stress $\mathbf{\omega}$ of  $(\hat G,\bp)$ with the property that  $\mathbf{\omega}_e=\mathbf{\omega}_f$ whenever $e$ and $f$ belong to the same edge orbit under the action of $\theta$. Note that it follows immediately from \cite[Theorem 8.3]{BSWWorbit} that there exists a one-to-one correspondence between the pinned fully $\mathcal{S}$-symmetric self-stresses of $(\hat G,\bp)$ and the row dependencies of  $\mathcal{O}_{pin}(\hat H,\psi, \tilde{\bp})$, i.e., the    solutions $\mathbf{\omega}$ to  $\mathbf{\omega}^T\mathcal{O}_{pin}(\hat H,\psi, \tilde{\bp})=0$.
The framework  $(\hat G,\bp)$ is called \emph{pinned \SG-independent} if the rows of  $\mathcal{O}_{pin}(\hat H,\psi, \tilde{\bp})$ are linearly independent and $(\hat G,\bp)$ is \emph{pinned \SG-isostatic} if  $(\hat G,\bp)$ is both pinned \SG-independent and pinned \SG-symmetric infinitesimally rigid.

Finally,   $(\hat G,\bp)$ is  called \emph{pinned $\mathcal{S}$-regular} if  $\mathcal{O}_{pin}(\hat H,\psi, \tilde{\bp})$ has maximal rank among all pinned \SG-symmetric realisations of $\hat G$, and  $(\hat G,\bp)$ is called \emph{pinned \SG-generic} if the set of the coordinates of the image of $\tilde\bp$ is algebraically independent over $\mathbb{Q}_{\mathcal{S}}$.

The following lemma summarises the definitions above for pinned \SG-regular realisations.

\begin{Lemma}
\label{lem:SGisostatic}
Let $\hat G$ be a $\mathcal{S}$-symmetric pinned graph (with respect to $\theta$ and $\tau$) and let $\hat H = (\hat I,\hat P;\hat E)$ be the corresponding quotient \SG-gain graph. Further, suppose the action $\theta$ is free on the vertices of $\hat G$. Then the following are equivalent:
\begin{enumerate}
\item There exists a pinned \SG-isostatic realisation of $\hat G$ in $d$-space;
\item Every \SG-regular pinned realisation of $\hat G$ in $d$-space is \SG-isostatic;
\item pinned \SG-regular  realisations of $\hat G$ are pinned \SG-rigid and $|\hat E|=d|\hat I|$;
\item pinned \SG-regular  realisations  of $\hat G$ are pinned \SG-independent and $|\hat E|=d|\hat I|$.
\end{enumerate}
\end{Lemma}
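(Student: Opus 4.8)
The plan is to prove the four equivalences by establishing a cycle of implications, leveraging the fact that the pinned orbit matrix $\mathcal{O}_{pin}(\hat H,\psi,\tilde\bp)$ is a matrix of size $|\hat E| \times d|\hat I|$ whose entries are polynomial functions of the coordinates of $\tilde\bp$ over the field $\mathbb{Q}_{\mathcal{S}}$. First I would record the standard "genericity" observation: since the rank of a matrix with polynomial entries achieves its maximum on a Zariski-open (hence dense) set, and since this maximum is attained at any \SG-generic point (where the coordinates are algebraically independent over $\mathbb{Q}_{\mathcal{S}}$), the \SG-regular realisations are exactly those attaining maximal rank, and \SG-generic realisations are \SG-regular. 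In particular, rank is constant across all \SG-regular realisations, so statements (2), (3), (4) are genuinely properties of the graph $\hat G$ once we know (1) holds at even a single realisation.

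Next I would run the implications. For (1)$\Rightarrow$(2): if some realisation is pinned \SG-isostatic, then $\mathcal{O}_{pin}$ has full rank $d|\hat I|$ (rigidity) and full row rank (independence) there; both force $|\hat E| = d|\hat I|$ and maximal rank $= d|\hat I| = |\hat E|$. Since \SG-regular realisations also attain this maximal rank and have the same number of rows and columns, they are square of full rank, hence pinned \SG-isostatic. For (2)$\Rightarrow$(3) and (2)$\Rightarrow$(4): isostatic implies rigid, independent, and (by squareness of $\mathcal{O}_{pin}$ when it is both full rank and full row rank) $|\hat E| = d|\hat I|$. For (3)$\Rightarrow$(1) and (4)$\Rightarrow$(1): if $|\hat E| = d|\hat I|$ then $\mathcal{O}_{pin}$ is square, so full row rank (independence) and full column rank (rigidity) are equivalent conditions on the square matrix; either one at a \SG-regular realisation makes that realisation pinned \SG-isostatic, which is a realisation of the type required in (1). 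This closes the cycle (1)$\Rightarrow$(2)$\Rightarrow$(3)$\Rightarrow$(1) and (2)$\Rightarrow$(4)$\Rightarrow$(1).

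Throughout, I would use Theorem~\ref{thm:orbitmatrxprop} (adapted to the pinned setting via the stated consequence of \cite[Theorem 8.3]{BSWWorbit}) only implicitly: the point of this lemma is purely linear-algebraic once one accepts that pinned \SG-symmetric infinitesimal rigidity means $\operatorname{rank}\mathcal{O}_{pin} = d|\hat I|$ and pinned \SG-independence means the rows of $\mathcal{O}_{pin}$ are independent. The freeness assumption on $\theta$ is used to guarantee that the quotient \SG-gain graph and hence the orbit matrix are well-defined in the clean form of Definition~\ref{orbitmatrixdef} (no column reductions as in Remark~\ref{rem:fixorb}), so that "$d$ columns per inner vertex" is literally correct and the count $|\hat E| = d|\hat I|$ is the right squareness condition.

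The only mildly delicate point — and the one I would write out most carefully — is the genericity/semicontinuity argument: namely that "there exists a pinned \SG-isostatic realisation" is equivalent to "every \SG-regular realisation is pinned \SG-isostatic." The forward direction needs that an \SG-isostatic realisation attains maximal rank (which it does, since its rank $d|\hat I|$ cannot be exceeded, the matrix having only $d|\hat I|$ columns), so it is itself \SG-regular, and then all \SG-regular realisations share its rank and its row-independence pattern because full rank of a fixed-size matrix is a Zariski-open condition preserved across the maximal-rank locus. I expect no real obstacle beyond stating this cleanly; the rest is bookkeeping with the equality $|\hat E| = d|\hat I|$ and the fact that a square matrix has full row rank if and only if it has full column rank.
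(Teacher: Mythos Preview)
Your proposal is correct. The paper does not actually give a proof of this lemma at all: it introduces the statement with the sentence ``The following lemma summarises the definitions above for pinned \SG-regular realisations'' and then moves on, treating the equivalences as immediate consequences of the definitions of \SG-regular, \SG-rigid, \SG-independent, and \SG-isostatic together with the fact that (under a free action) the pinned orbit matrix has size $|\hat E|\times d|\hat I|$. Your argument simply spells out what the paper leaves implicit---the polynomial/Zariski-open nature of the maximal-rank locus and the square-matrix bookkeeping---so there is nothing substantively different between your approach and the paper's (absent) one.
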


In particular, for any pinned \SG-isostatic framework $(\hat G,\bp)$, the orbit matrix $\mathcal{O}_{pin}(\hat H,\psi,\tilde{\bp})$ is square and invertible. (Note that this remains true even if $\theta$ does not act freely on the vertices of $\hat G$.)

Any \SG-symmetric graph $\hat G$  (or equivalently its quotient \SG-gain graph)  satisfying the equivalent conditions in Lemma~\ref{lem:SGisostatic} is \emph{pinned \SG-isostatic}. Analogous to the non-symmetric situation, a \emph{\SG-Assur graph} is a minimal pinned \SG-isostatic graph. Further,  a \SG-Assur graph is  \emph{strongly \SG-Assur} if the removal of any edge of its quotient \SG-gain graph (i.e., the removal of an edge orbit in the covering graph) puts all inner vertices of the covering graph into (a symmetry-preserving) motion.

\begin{Example} \label{ex:orbitmatrix} Consider the $2$-dimensional $\mathcal{C}_3$-symmetric (with respect to $\theta$ and $\tau$) pinned framework $(\hat G,\bp)$ and the corresponding quotient  $\mathcal{C}_3$-gain graph of $\hat G$ depicted in Figure~\ref{fig:desargues} (and Figure~\ref{fig:desargues2}). Let $\tau:\mathcal{C}_3\to O(\mathbb{R}^2)$ be the homomorphism defined by $\tau(C_3)=\left(\begin{array} {cc} -\frac{1}{2} & -\frac{\sqrt{3}}{2}\\ \frac{\sqrt{3}}{2} &  -\frac{1}{2} \end{array}\right)$. Suppose $p_w=(-2,3)$,  $p_u=(-1,2)$ and $p_v=(-\frac{\sqrt{3}}{4},\frac{1}{4})$.  Then the pinned orbit matrix $\mathcal{O}_{pin}(\hat H,\psi,\tilde{\bp})$ of  $(\hat G,\bp)$ is
\begin{displaymath}\bordermatrix{
                &\tilde u&\tilde w \cr
 (\tilde u,\tilde v) &(p_u-p_v)  & 0 \, 0 \cr
 (\tilde u,C_3^{-1}\tilde v) &(p_u-\tau(C_3^{-1})(p_v)) &0 \, 0 \cr
                (\tilde w,\tilde u)&(p_u-p_w) &  (p_w-p_u)\cr
                 (\tilde w,C_3\tilde w)& 0 \, 0 &(2p_w-\tau(C_3)(p_w)-\tau(C_3^{-1})(p_w) ) \cr
             }
\end{displaymath}
\begin{displaymath}
=\bordermatrix{
                 &&&& \cr
 &  -1+\frac{\sqrt{3}}{4} & \frac{7}{4} & 0 & 0\cr
                 &  -1-\frac{\sqrt{3}}{4} & \frac{7}{4}  & 0 & 0\cr
                 &1 & -1 & -1 & 1\cr
                 & 0 & 0 & -6 & 9\cr
                            }.
\end{displaymath}
$\mathcal{O}_{pin}(\hat H,\psi,\tilde{\bp})$ is a square matrix of full rank, and hence $\hat G$ is pinned $\mathcal{C}_3$-isostatic. However,  $\hat G$ is not $\mathcal{C}_3$-Assur.  The lower triangular block-decomposition of $\mathcal{O}_{pin}(\hat H,\psi,\tilde{\bp})$ (into two blocks)
corresponds to the $\mathcal{S}$-Assur decomposition of $\hat G$ (i.e., the decomposition of the graph into two components which are both $\mathcal{S}$-Assur) shown in Figure~\ref{fig:desargues} (c) (and Figure~\ref{fig:desargues2} (c)).
\end{Example}


\subsection{Counting conditions for pinned \SG-isostatic graphs}
\label{subsec:counting}

In this subsection we prove an analogue of \cite[Theorem 3.6]{3directed} giving necessary counting conditions for a pinned \SG-symmetric graph, under a group action that is free on the vertices, to be pinned \SG-isostatic.
We then consider, for plane symmetry groups, when these counts are sufficient; that is, for the groups in Theorem \ref{thm:symmetry_reflection} we prove analogues of \cite[Theorem 4]{SSW1}.

We begin with an observation which follows immediately from Lemma~\ref{lem:SGisostatic} and Theorem~\ref{thm:orbitmatrxprop}.

\begin{Theorem}\label{thm:Scounts3D}
Let \SG~be a symmetry group in dimension $d$, and let $\hat G$ be a pinned \SG-symmetric graph (with respect to an action $\theta$) with quotient \SG-gain graph  $\hat H = (\hat I,\hat P;\hat E)$. Further, let $triv_{\mathcal{S}}$ denote the dimension of the space of fully \SG-symmetric trivial infinitesimal motions. Suppose $\theta$ acts freely on the vertices of $\hat G$. Then, if $\hat G$ is pinned \SG-isostatic, the following hold:
\begin{itemize}
\item $|\hat E|=d |\hat I|$;
\item every subgraph  $H'=(I',P';E')$ of $\hat H$ satisfies  $| E'| \leq d |I'|$.
\item every subgraph of $\hat H$ with no pinned vertices is $(d,\binom{d+1}{2},triv_{\mathcal{S}})$-gain sparse.
\end{itemize}
\end{Theorem}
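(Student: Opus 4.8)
The plan is to derive the three conditions from the structure of the pinned orbit matrix $\mathcal{O}_{pin}(\hat H,\psi,\tilde{\bp})$ together with Lemma~\ref{lem:SGisostatic} and Theorem~\ref{thm:orbitmatrxprop}. First, the equality $|\hat E|=d|\hat I|$ is immediate: by Lemma~\ref{lem:SGisostatic}, pinned \SG-isostatic implies the orbit matrix is square, and it has $|\hat E|$ rows and $d|\hat I|$ columns. Second, for the inequality $|E'|\leq d|I'|$ on an arbitrary subgraph $H'=(I',P';E')$, I would argue that the rows of $\mathcal{O}_{pin}(\hat H,\psi,\tilde{\bp})$ indexed by $E'$ must be linearly independent (since the whole matrix has independent rows by pinned \SG-independence), and these rows have nonzero entries only in the $d|I'|$ columns corresponding to inner vertices of $H'$ (a pinned endpoint contributes no column, and an inner vertex outside $I'$ gets no support from an edge of $E'$). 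Hence $|E'|\leq \operatorname{rank}\leq d|I'|$.

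For the third condition I would restrict to a subgraph $H'=(I',\emptyset;E')$ with no pinned vertices. Such a subgraph, viewed on its own, is exactly a quotient \SG-gain graph of an \emph{unpinned} \SG-symmetric framework, so the relevant orbit matrix rows are precisely the rows of the (unpinned) orbit matrix $\mathcal{O}(H',\psi,\tilde{\bp}')$ from Definition~\ref{orbitmatrixdef} — the pinned orbit matrix row and the unpinned orbit matrix row agree when both endpoints are inner. By Theorem~\ref{thm:orbitmatrxprop}, the left kernel of $\mathcal{O}(H',\psi,\tilde{\bp}')$ corresponds to fully \SG-symmetric self-stresses, and since these rows are a subset of the independent rows of the full pinned orbit matrix, they are independent, so $\mathcal{O}(H',\psi,\tilde{\bp}')$ has no fully \SG-symmetric self-stress; equivalently $|E'|\leq d|I'| - (\text{corank})$. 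The corank here is governed by the space of fully \SG-symmetric trivial motions: on any subframework the column span complement has dimension at least $triv_{\mathcal{S}}$ if $H'$ is "large enough", but more carefully, for a \emph{balanced} subgraph $F$ the associated orbit matrix behaves like an ordinary rigidity matrix and its rank is at most $d|V(F)| - \binom{d+1}{2}$ (the generic rigidity bound, giving the $(d,\binom{d+1}{2},\cdot)$ sparsity), while for an arbitrary (possibly unbalanced) subgraph the rank is at most $d|V(F)| - triv_{\mathcal{S}}$, because a \SG-symmetric framework always carries a $triv_{\mathcal{S}}$-dimensional space of trivial fully-symmetric motions sitting in the kernel of the orbit matrix. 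Combining these two bounds with row-independence yields exactly $(d,\binom{d+1}{2},triv_{\mathcal{S}})$-gain-sparseness.

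The main obstacle I anticipate is the balanced case: one must show that for a balanced subgraph $F$ of the gain graph, the orbit matrix $\mathcal{O}(F,\psi,\tilde{\bp})$ is (up to a change of basis induced by "switching" the gains to the identity) genuinely a submatrix of an ordinary rigidity matrix of a quotient framework placed freely in $\mathbb{R}^d$, so that the rank drop of at least $\binom{d+1}{2}$ from the full $d|V(F)|$ rigid-body motions applies. This requires invoking the switching operation on \SG-gain graphs and the known fact (e.g. from \cite{BSWWorbit,jkt,schtan}) that a balanced component's orbit matrix is row-equivalent to a copy of the rigidity matrix of a single representative framework; one should be careful that this uses the freeness of the action on the vertices (so that Definition~\ref{orbitmatrixdef} applies without the column-reduction of Remark~\ref{rem:fixorb}). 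Everything else is bookkeeping about which columns the rows of a subgraph can occupy, and the trivial-motion count $triv_{\mathcal{S}}$ which is read off the character table as noted after Theorem~\ref{thm:symmetry_reflection}.
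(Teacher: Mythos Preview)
Your proposal is correct and follows essentially the same approach as the paper: both derive the counts from the row-independence of the pinned orbit matrix (equivalently, the absence of a fully \SG-symmetric self-stress), invoking Lemma~\ref{lem:SGisostatic} and Theorem~\ref{thm:orbitmatrxprop}. The paper's proof is a two-line contrapositive (``if a count fails then there is a self-stress''), whereas you unpack the rank bounds explicitly, in particular the switching argument for balanced subgraphs that yields the $\binom{d+1}{2}$ term; this extra detail is sound and is exactly what the paper leaves implicit.
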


\begin{proof}
By the definition of pinned \SG-isostatic, we have $|\hat E|=d|\hat I|$.
If either of the other two conditions fails, then any $\mathcal{S}$-symmetric realisation of  $\hat G$ has a pinned fully $\mathcal{S}$-symmetric self-stress.
\end{proof}

For example, if \SG~is a symmetry group $\mathcal{C}_s$ in dimension $3$, then  $triv_{\mathcal{C}_s}=3$ and every subgraph of $\hat H$ with no pinned vertices must be $(3,6,3)$-gain sparse. Similarly, if \SG~is a symmetry group $\mathcal{C}_n$, $n\geq 2$, in dimension $3$, then  $triv_{\mathcal{C}_s}=2$ and every subgraph of $\hat H$ with no pinned vertices must be $(3,6,2)$-gain sparse (see also \cite{BSWWorbit,gsw, bishop}).

For the plane symmetry groups  in Theorem \ref{thm:symmetry_reflection} we have both necessary and sufficient conditions for a pinned symmetric graph to be \SG-isostatic.

\begin{Theorem}\label{thm:Scounts}
Let \SG~be a symmetry group $\mathcal{C}_s$ or $\mathcal{C}_n$, $n\geq 2$, in dimension $2$, and let $\hat G$ be a pinned \SG-symmetric graph (with respect to an action $\theta$) with quotient \SG-gain graph  $\hat H = (\hat I,\hat P;\hat E)$. Further, suppose $\theta$ acts freely on the vertices of $\hat G$. Then $\hat G$ is pinned \SG-isostatic if and only if the following hold:
\begin{itemize}
\item $|\hat E|=2 |\hat I|$;
\item every subgraph of $\hat H$ with no pinned vertices is $(2,3,1)$-gain sparse;
\item every subgraph $H'=(I',P';E')$ of $\hat H$  with $P'\neq \emptyset$ satisfies  $| E'| \leq 2 |I'|$.
\end{itemize}
\end{Theorem}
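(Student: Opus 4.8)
The plan is to prove this as a combined necessity–sufficiency statement, leaning on Theorem~\ref{thm:symmetry_reflection} (the Malestein--Theran / Jord\'an et al.\ Laman-type theorem for $\mathcal{C}_s$ and $\mathcal{C}_n$ in the plane) together with the pinning reduction that is already standard in the non-symmetric Assur theory \cite{SSW1}. The necessity of the three conditions is essentially Theorem~\ref{thm:Scounts3D} specialised to $d=2$: the edge-count $|\hat E| = 2|\hat I|$ and the two sparsity bounds are exactly the necessary conditions in that theorem, since for $\mathcal{C}_s$ and $\mathcal{C}_n$ in dimension $2$ we have $triv_{\mathcal{S}} = 1$ and $\binom{d+1}{2} = 3$. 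So the content is the sufficiency direction.

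For sufficiency, I would use the classical "coning/adding the pinned part back" trick adapted to the gain-graph setting. Given the quotient $\mathcal{S}$-gain graph $\hat H = (\hat I, \hat P; \hat E)$ satisfying the three counts, form an auxiliary (unpinned) $\mathcal{S}$-gain graph $H^+$ by, roughly, identifying all the pinned orbits into the appropriate fixed structure (or by adding a generic rigid "ground" gadget on the pinned vertices) so that the pinned orbit matrix of $\hat H$ becomes a submatrix of the ordinary orbit matrix of $H^+$, with the columns of the ground gadget spanning exactly the $triv_{\mathcal{S}}$-dimensional trivial motion space. The goal is to show $H^+$ is $(2,3,1)$-gain-tight precisely when $\hat H$ satisfies the listed counts, and then invoke Theorem~\ref{thm:symmetry_reflection} to conclude $H^+$ is $\mathcal{S}$-isostatic, which transfers back to $\hat H$ being pinned $\mathcal{S}$-isostatic via Lemma~\ref{lem:SGisostatic}. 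One has to check that the ground gadget can be chosen so that (i) it contributes no unbalanced-subgraph violations of its own, (ii) the count $|\hat E| = 2|\hat I|$ together with the pinned-subgraph inequality $|E'| \le 2|I'|$ matches the $|\tilde E| = 2|\tilde V| - 1$ tightness of $H^+$, and (iii) balanced subgraphs of $H^+$ that meet the ground gadget inherit the right $-3$ bound.

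The main obstacle I anticipate is bookkeeping around the ground: ensuring that the single "$-1$" of trivial motions for $\mathcal{C}_s$/$\mathcal{C}_n$ is correctly accounted for, and that subgraphs straddling the inner part and the ground gadget satisfy the balanced and unbalanced sparsity counts of Definition~\ref{def:gainsparse}. In particular, a balanced subgraph $F$ of $H^+$ that includes part of the ground needs $|F| \le 2|V(F)| - 3$, which must follow from the pinned inequality $|E'| \le 2|I'|$ for the part of $F$ in $\hat H$; the arithmetic is $|E'| + (\text{ground edges}) \le 2|I'| + (2 \cdot \text{ground vertices} - 3)$, and verifying this in all the relevant cases (how many ground vertices $F$ touches, whether $F$ is balanced or not) is the delicate step. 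An alternative, perhaps cleaner, route avoids the explicit gadget: directly run an inductive/matroidal argument on the pinned gain-sparsity matroid, showing the three counts are equivalent to independence plus the right rank in the pinned orbit rigidity matroid, mirroring the proof of \cite[Theorem 4]{SSW1} but over the $(2,3,1)$-gain matroid rather than the $(2,3)$-sparsity matroid; here the obstacle moves to establishing that the pinned orbit rigidity matroid coincides with this combinatorial matroid, which again reduces to Theorem~\ref{thm:symmetry_reflection} plus a localisation/union argument over the pinned vertices.

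Either way, the skeleton is: (1) necessity from Theorem~\ref{thm:Scounts3D}; (2) reduce the pinned problem to the unpinned one via a ground construction or a localisation argument; (3) translate the three pinned counts into $(2,3,1)$-gain-tightness of the auxiliary unpinned gain graph; (4) apply Theorem~\ref{thm:symmetry_reflection} to get $\mathcal{S}$-isostaticity there; (5) pull back through Lemma~\ref{lem:SGisostatic} to conclude pinned $\mathcal{S}$-isostaticity of $\hat G$. I expect step (3) — the straddling-subgraph count — to absorb most of the technical work.
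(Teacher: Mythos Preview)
Your proposal is correct and takes essentially the same approach as the paper: replace the pinned vertices by an $\mathcal{S}$-isostatic ground gadget on $\hat P$ (the paper's $H^*$, your $H^+$), then invoke Theorem~\ref{thm:symmetry_reflection} to match $(2,3,1)$-gain-tightness of the auxiliary unpinned gain graph with the three listed counts on $\hat H$. The paper's proof is in fact considerably terser than you anticipate---it simply asserts the equivalence without carrying out the straddling-subgraph bookkeeping you flag in step~(3)---so your plan is if anything more careful than what the paper records.
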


\begin{proof}
Replace the pinned vertices $\hat P$ of  $\hat H$ with the quotient \SG-gain graph of a non-pinned $\mathcal{S}$-isostatic graph with vertex set  $\hat P$ and edge set $F$. (For example, if  $\hat P$ contains only a single vertex $\tilde v$, then we attach a loop with a non-trivial gain  to $\tilde v$.) Let $H^*$ be the (non-pinned)  quotient \SG-gain graph with vertex set $\hat I \cup \hat P$ and edge set $\hat E \cup F$.  Then, by Theorem~\ref{thm:symmetry_reflection},
the covering graph of $H^*$ is \SG-isostatic if and only if the conditions in Theorem~\ref{thm:Scounts} are satisfied. This gives the result.
\end{proof}

Any pinned graph $\hat G$ satisfying the counts in the first part of Theorem \ref{thm:Scounts} is said to be \emph{pinned $(2,3,1)$-gain-tight}.

Note that we can easily obtain analogous necessary counts for  pinned \SG-symmetric graphs to be pinned \SG-isostatic in the case where the action $\theta$ is not free on the vertices. For example,  for a symmetry group $\mathcal{C}_n$ which acts freely on the inner vertices of a pinned  $\mathcal{C}_n$-symmetric graph $\hat G$, but not freely on the pinned vertices of $\hat G$  (i.e., any $\mathcal{C}_n$-symmetric framework  has a pinned vertex at the origin), we obtain the necessary condition that every subgraph $H'=(I',P';E')$ of the quotient $\mathcal{C}_n$-gain graph $\hat H$ which contains the pinned vertex that is fixed by $\mathcal{C}_n$ must  satisfy  $| E'| \leq 2 |I'|-1$. This is because rotations about the origin are the only fully $\mathcal{C}_n$-symmetric infinitesimal motions (i.e., $triv_{\mathcal{C}_n}=1$) and the fixed pinned vertex does not prevent such a trivial motion.

Similarly, the conditions in Theorem~\ref{thm:Scounts} remain unchanged for a group $\mathcal{C}_s$ in the plane which acts freely on the inner, but not freely on the pinned vertices of a $\mathcal{C}_s$-symmetric graph, since a pinned vertex removes all fully $\mathcal{C}_s$-symmetric infinitesimal motions (translations along the mirror), regardless of whether it is fixed by the reflection or not.


\section{Decomposing pinned \SG-isostatic  graphs}\label{sec:sassur}
\label{sec:decomp}

In subsection \ref{subsec:Assur} we recalled a description of the $d$-Assur graph decomposition of the pinned $d$-isostatic graphs that appeared in \cite{3directed}. In this section we extend the key results from $d$-Assur to \SG-Assur graphs and introduce the \SG-Assur decomposition via similar techniques. For any \SG-isostatic graph $\hat G$, the pinned orbit matrix $\mathcal{O}_{pin}(\hat H,\psi,\tilde{\bp})$ is square and invertible, and we are assured to get an appropriate set of directions on the associated gain graph (out-degree of a vertex is equal to the number of columns of that vertex in the pinned orbit matrix). Any orientation of the gain graph gives a unique strongly-connected directed graph decomposition, whose components with their outgoing edges are the \SG-Assur graphs. We will state this precisely, and also introduce strongly \SG-Assur graphs.

Before we state the main results, we outline the \SG-Assur decomposition and illustrate it on some examples, following the approach of $d$-Assur decompositions.

Let $\hat G$ be a pinned \SG-isostatic graph  with a free group action on the vertices and let $\hat H$ be  its quotient pinned \SG-gain graph. We first seek a minimal pinned \SG-isostatic subgraph (i.e. a \SG-Assur graph), where the ground is the bottom layer. This will be above the ground component and is then collapsed into the ground. We then seek another minimal pinned \SG-isostatic subgraph, collapsing it into the ground. This is then repeated unitl each vertex orbit belongs to some minimal pinned \SG-isostatic subgraph. This is the \SG-Assur decomposition. The \SG-Assur decomposition will be carried out via the decomposition into strongly connected components of the directed gain graph.

An \emph{\SG-directed orientation} of a pinned \SG-gain graph $\hat H$ is an assignment of directions to the edges of $\hat H$ such that every inner vertex has out-degree exactly dim$U_{\bp(u)}$ (recall Remark~\ref{rem:fixorb}) and every pinned vertex has out-degree exactly 0 (see Proposition~\ref{Proposition:dorientation}). Recall that when \SG~is free, then dim$U_{\bp(u)}$ is exactly $d$. The strongly connected components in $\hat H$, with its outgoing edges becoming pinned (i.e. the extended components) will correspond to the \SG-Assur decomposition of the \emph{\SG-directed orientation} of a pinned \SG-gain graph $\hat H$ (Theorem~\ref{thm:decomp}).

Recall that there is a one-one correspondence between the covering graph and the \SG-gain graph. This implies that there is a one-one correspondence between a \SG-Assur component of $\hat G$ and a \SG-Assur component of $\hat H$. In $\hat H$ such components are connected, while in $\hat G$ one component may consist of $|\SG|$ disconnected subgraphs. This bijection justifies the terminology `component' for such disconnected subgraphs.

\begin{Example} Consider the pinned framework shown in Figure~\ref{fig:desargues2} with $\mathcal{C}_3$-symmetry in the plane. Note that this is the same framework as in Figure~\ref{fig:desargues} (a). This framework is pinned isostatic in the plane as well as pinned $\mathcal{C}_3$-isostatic. The $\mathcal{C}_3$-Assur decomposition of the covering graph is shown in Figure~\ref{fig:desargues2} (b) and the corresponding decomposition of the quotient $\mathcal{C}_3$-gain graph is shown in Figure~\ref{fig:desargues2} (d). \end{Example}

\begin{figure}[ht]
    \begin{center}
  \subfigure[] {\includegraphics [width=.36\textwidth]{C3Ext}} \quad\quad\quad
    \subfigure[] {\includegraphics [width=.36\textwidth]{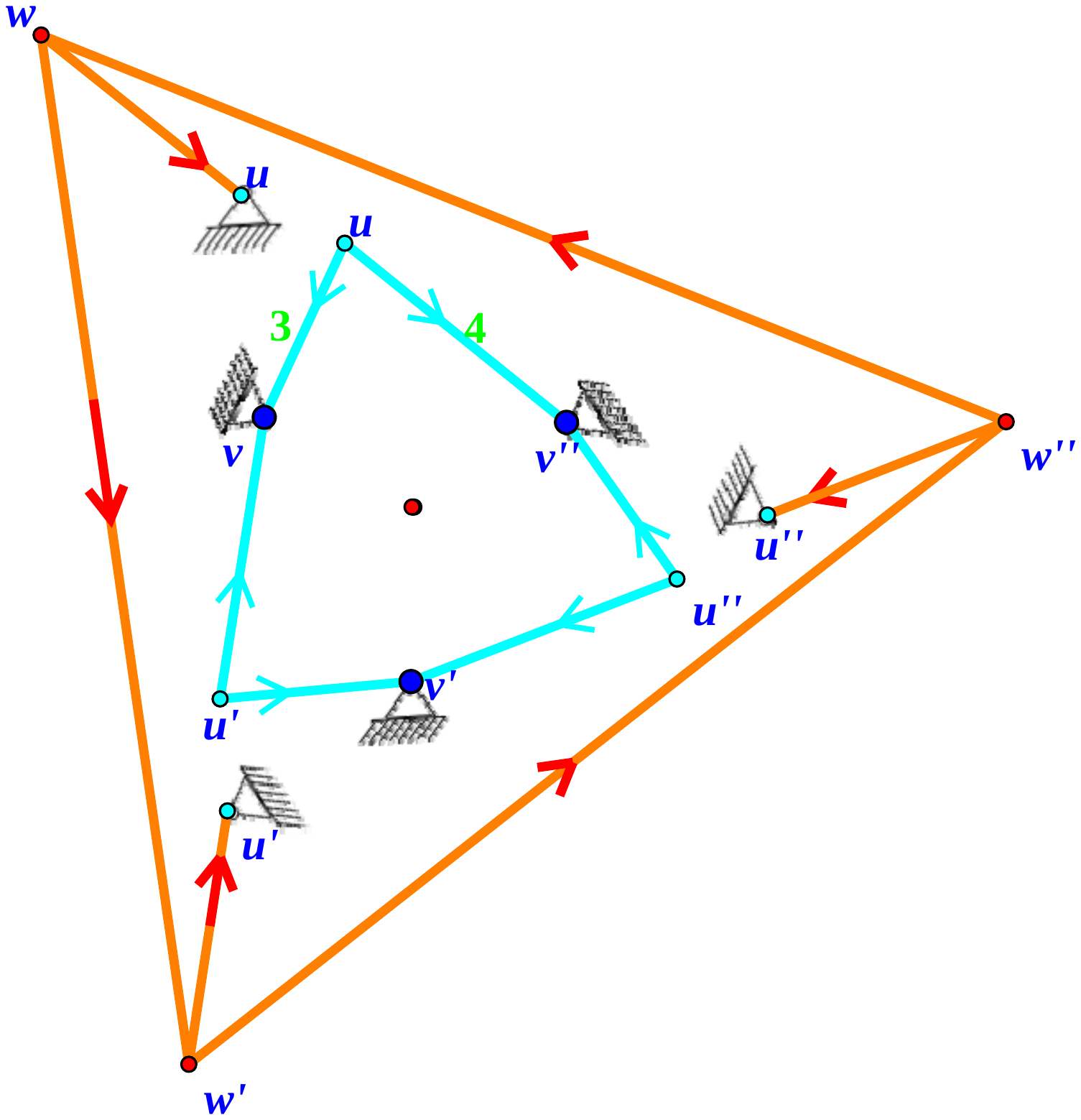}} \quad\quad
    \subfigure[] {\includegraphics [width=.16\textwidth]{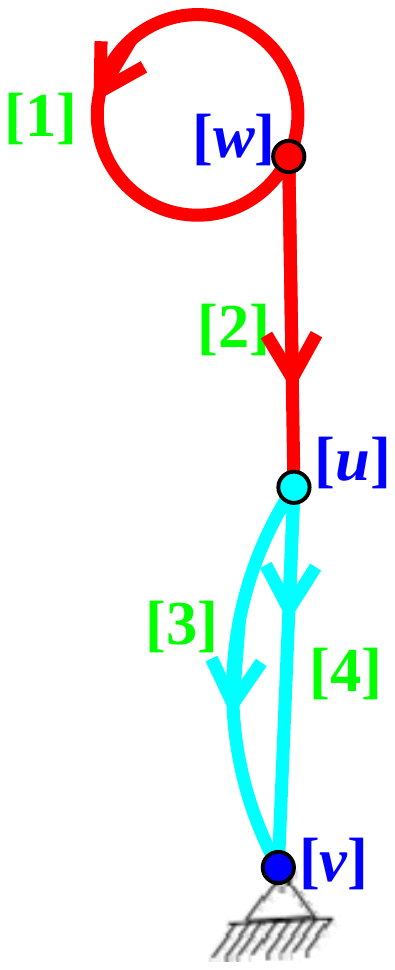}} \quad
    \subfigure[] {\includegraphics [width=.16\textwidth]{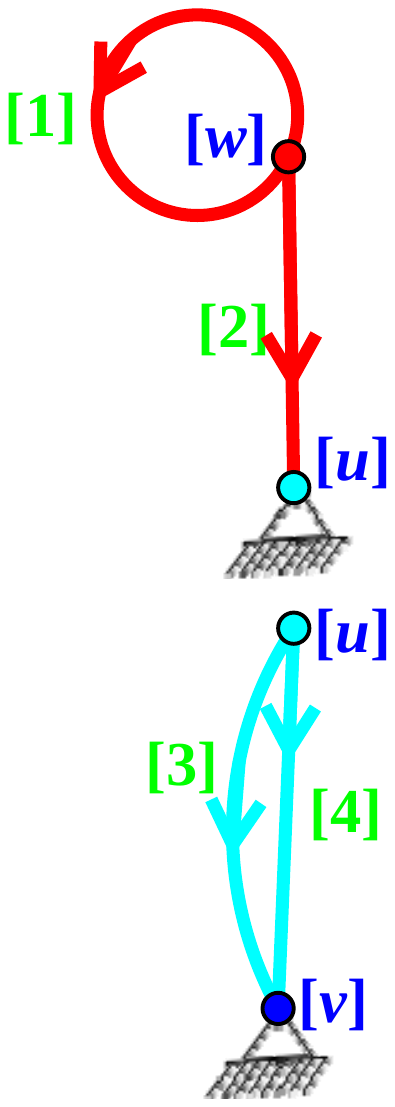}} \quad
    \subfigure[] {\includegraphics [width=.12\textwidth]{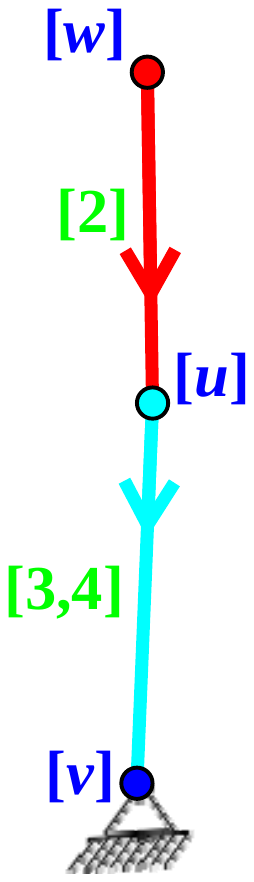}}
       \end{center}
    \caption{A $\mathcal{C}_3$-symmetric pinned isostatic framework  (a) with symmetric directions, the symmetry-adapted $\mathcal{C}_3$-Assur decomposition (b), the $2$-directed quotient $\mathcal{C}_3$-gain graph and its $\mathcal{C}_3$-Assur decomposition (c, d) with $\mathcal{C}_3$-Assur block graph (e). The square brackets in the gain graph correspond to the vertex (and edge) orbits.}
    \label{fig:desargues2}
    \end{figure}


\begin{Example}
Consider the pinned $2$-dimensional framework with $\mathcal{C}_s$ symmetry shown in Figure~\ref{fig:PlaneExample} (a). Unlike the previous example, the $\mathcal{C}_s$-Assur decomposition (shown in  Figure~\ref{fig:PlaneExample} (c)) is not a tree in this case. Note that the framework in Figure~\ref{fig:PlaneExample} (a) has the correct overall pinned count consisting of 28 edges and 14 inner vertices; however, this graph is not pinned $2$-isostatic (and not pinned $2$-rigid), as it has a generically dependent component (the subgraph induced by $A$, $B$, $C$, $A'$, $B'$, $C'$ is overbraced). Alternatively, if we ignore the vertices above this component, we can observe that this component is only attached to the ground via two edges (coloured in orange). Thus, without imposed symmetry this graph does not have a 2-Assur decomposition.
\end{Example}

In standard Assur decompositions, to apply the decomposition and to find the Assur components, the graph $\hat G$ must be pinned isostatic. However, for symmetric frameworks, even if the original underlying graph $\hat G$ is flexible or has redundancy we may still find the symmetry adapted Assur decomposition.



\begin{figure}[ht]
    \begin{center}
  \subfigure[] {\includegraphics [width=.25\textwidth]{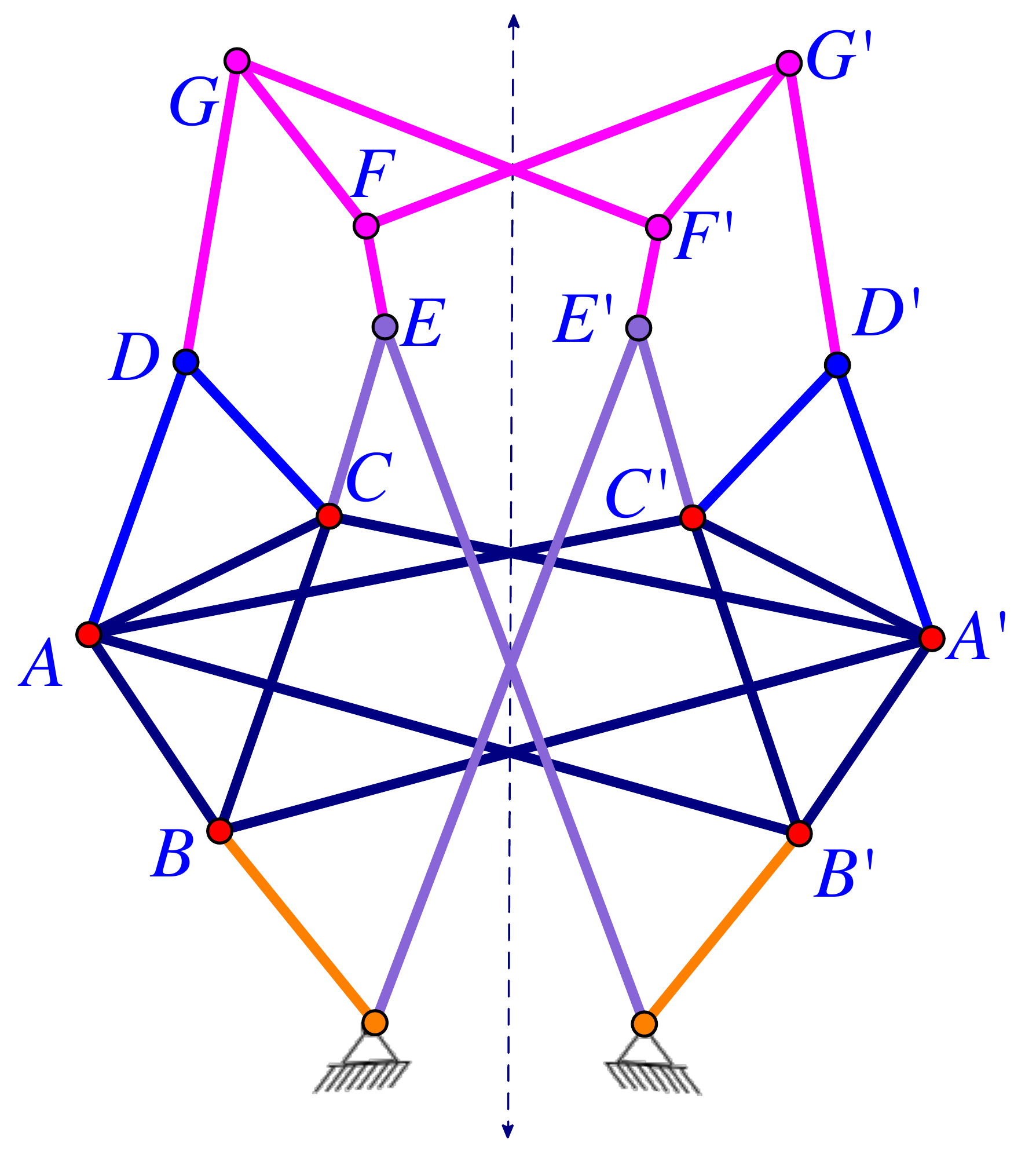}} \quad\quad
\subfigure[] {\includegraphics [width=.15\textwidth]{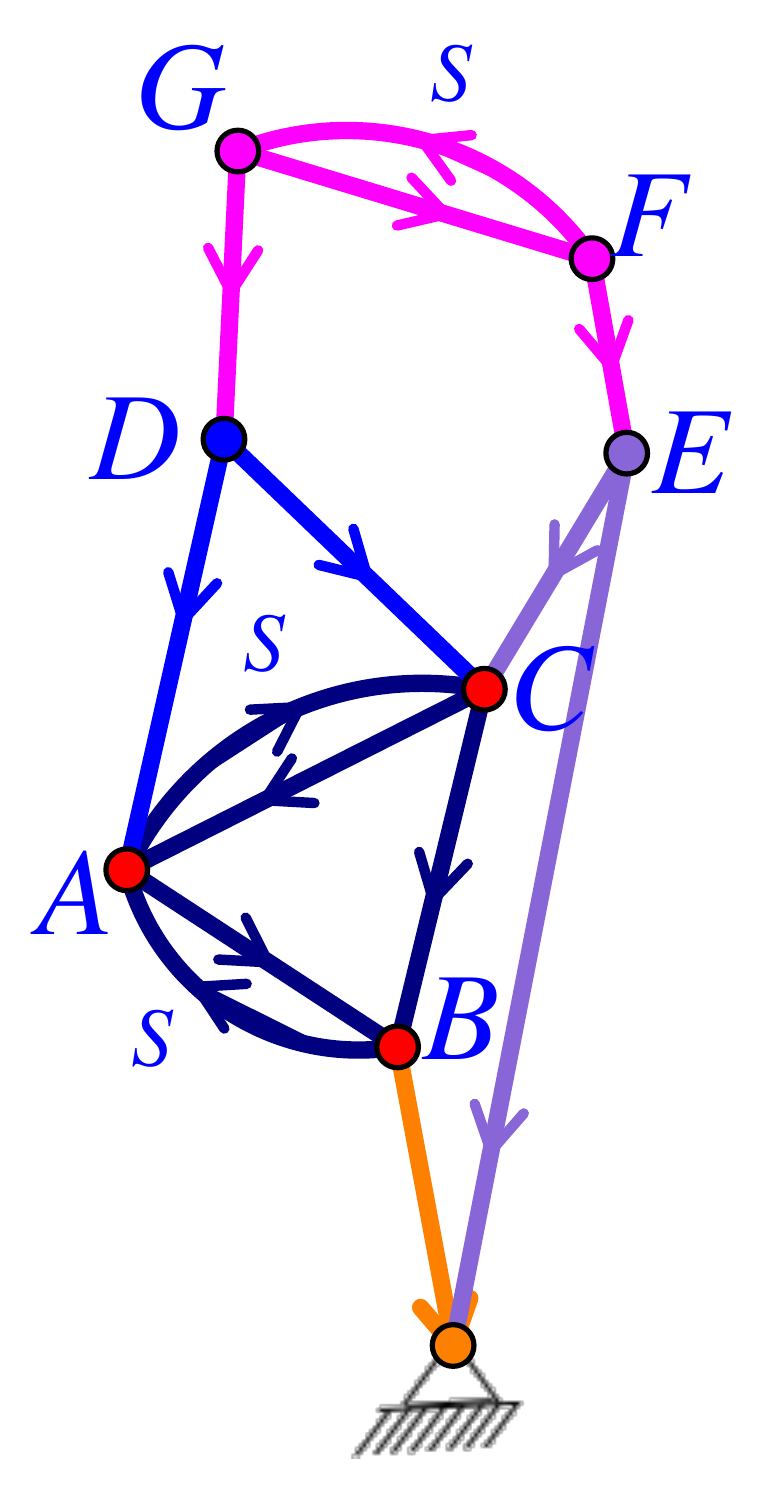}}\quad\quad
  \subfigure[] {\includegraphics [width=.15\textwidth]{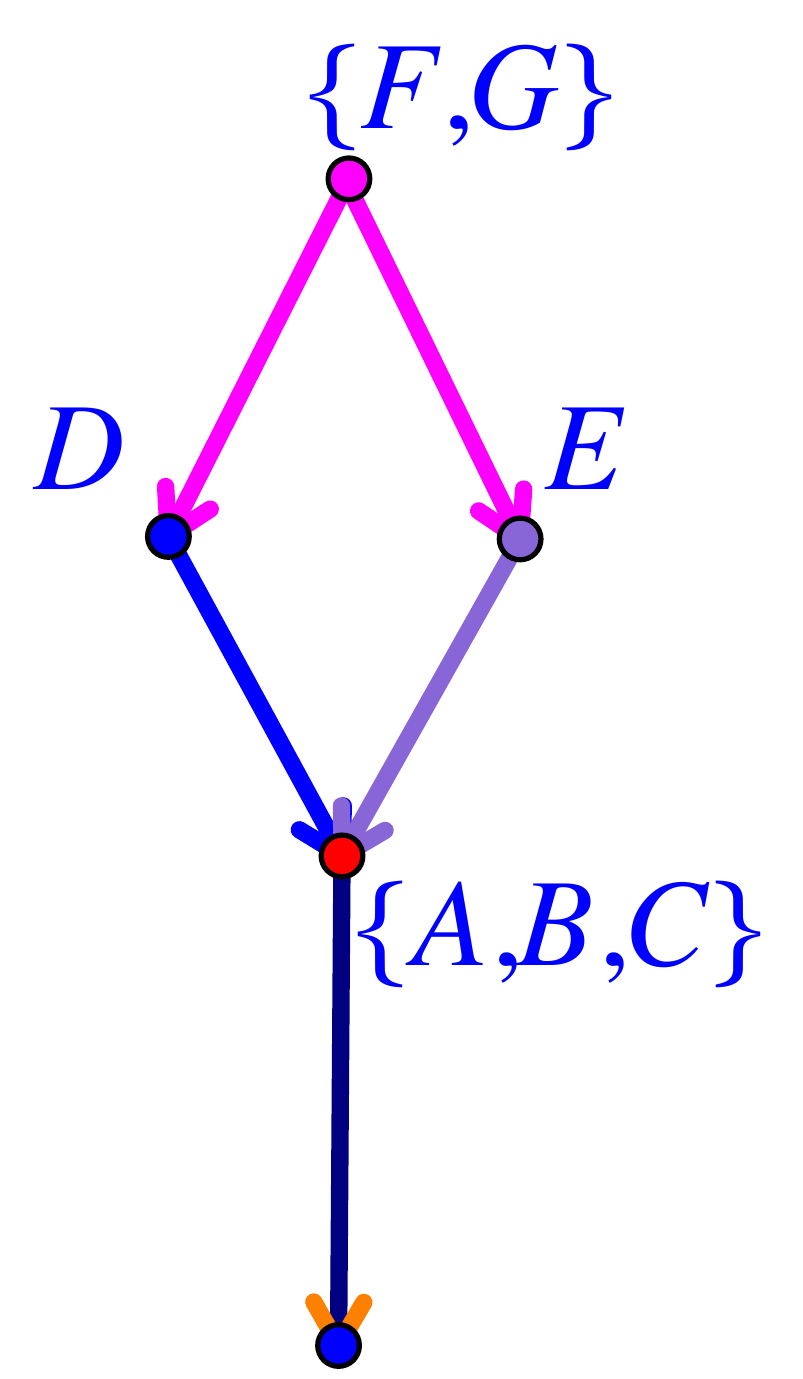}}
  \subfigure[] {\includegraphics [width=.25\textwidth]{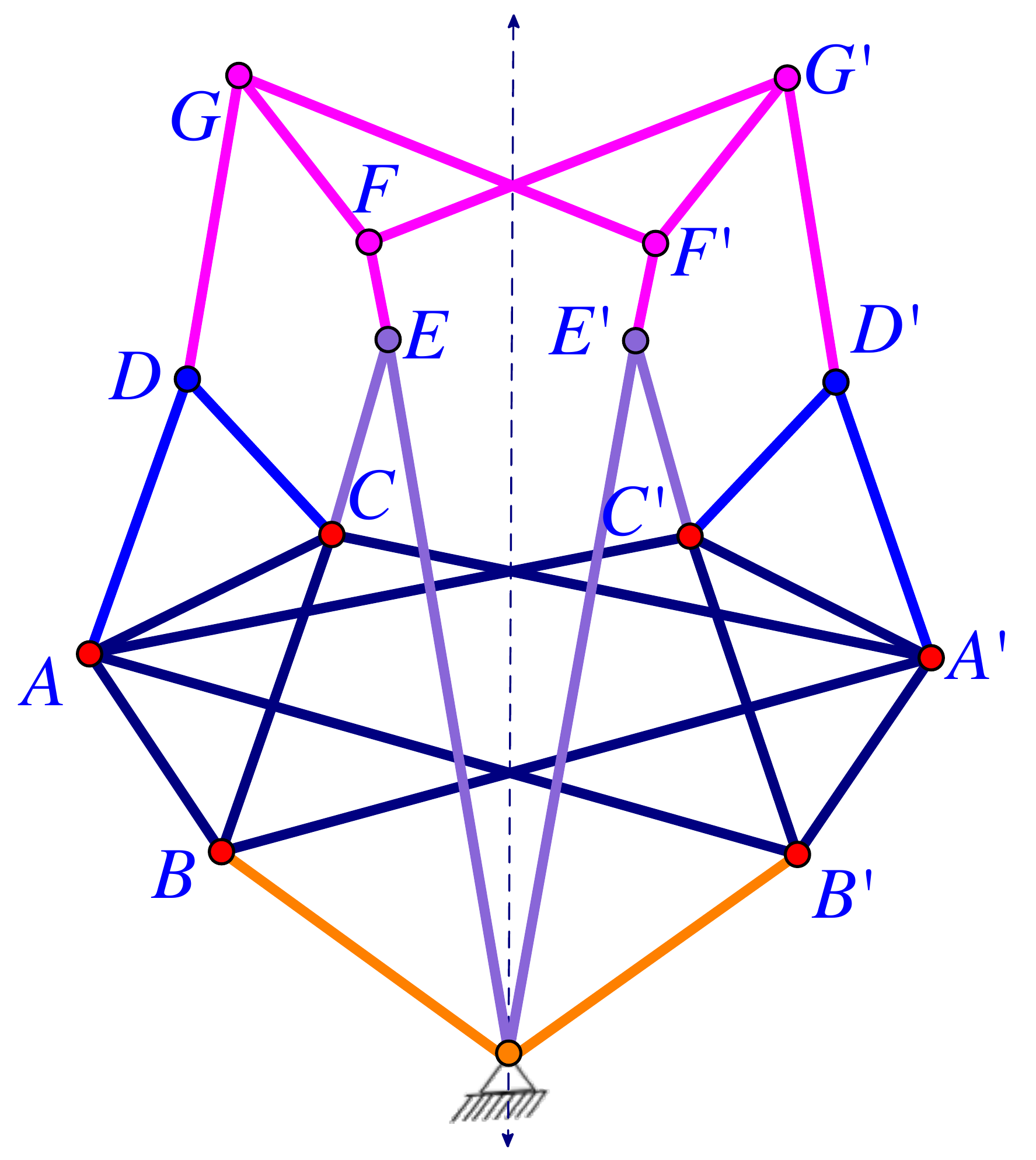}}
      \end{center}
    \caption{A $\mathcal{C}_s$-symmetric pinned framework in the plane (a), with a directed quotient $\mathcal{C}_s$-gain graph (b). The  $\mathcal{C}_s$-Assur block graph is shown in (c).  Figure (d) gives a variant of this with the pins identified on the mirror - but with essentially the same $\mathcal{C}_s$-Assur block graph.}
    \label{fig:PlaneExample}
    \end{figure}


\subsection{Pinned symmetric frameworks with no fixed inner vertices}
\label{subsec:freedecomp}

Analogous to work in \cite{3directed} we use the pinned orbit matrix for any pinned \SG-isostatic graph $\hat G$ to generate the directed quotient \SG-gain graph. Then we relate the directed gain graph strongly connected component decomposition to a block
decomposition of the pinned orbit matrix.

\begin{Proposition} Let $\hat G$  be a pinned \SG-isostatic graph  in dimension $d$, where the group action is free on the vertices. Then there is a \SG-directed orientation of the associated pinned \SG-gain graph $\hat H = (\hat I,\hat P;\hat E)$.
\label{Proposition:dorientation}
\end{Proposition}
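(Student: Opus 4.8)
The plan is to count out-degrees against the structure of the pinned orbit matrix. Since $\hat G$ is pinned \SG-isostatic and the action is free on the vertices, Lemma~\ref{lem:SGisostatic} tells us that $|\hat E|=d|\hat I|$ and that the pinned orbit matrix $\mathcal{O}_{pin}(\hat H,\psi,\tilde\bp)$ is square and invertible. I would first reinterpret the target statement: a \SG-directed orientation is exactly an assignment of each edge of $\hat H$ to one of its endpoints (the "out" endpoint, or equivalently the "head" of the orbit-matrix column block it is charged to), such that each inner vertex $\tilde v\in\hat I$ receives exactly $d$ edges and each pinned vertex $\tilde v\in\hat P$ receives none. (A loop at $\tilde v$ must be charged to $\tilde v$ itself, which is fine since a loop at an inner vertex contributes a row supported only on the $d$ columns of $\tilde v$.)

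The key step is a Hall-type / flow argument. Build a bipartite graph $B$ with one side the edge set $\hat E$ and the other side consisting of $d$ "slots" for each inner vertex $\tilde v\in\hat I$ (and no slots for pinned vertices); connect an edge $\te=(\tilde u,\tilde v)$ to all $d$ slots of $\tilde u$ and all $d$ slots of $\tilde v$ whenever these are inner, and to the $d$ slots of $\tilde u$ only when $\tilde v$ is pinned (a loop at $\tilde u$ connects only to the slots of $\tilde u$). A \SG-directed orientation is precisely a perfect matching of $B$ saturating both sides; since $|\hat E|=d|\hat I|$ the two sides have equal size, so it suffices to verify Hall's condition on the edge side. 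For a set $F\subseteq\hat E$, its neighbourhood consists of $d$ times the number of inner vertices incident to some edge of $F$; so Hall's condition reads $|F|\le d|I(F)|$ where $I(F)$ is the set of inner vertices covered by $F$. This is exactly the subgraph count supplied by Theorem~\ref{thm:Scounts3D} (the bullet "$|E'|\le d|I'|$ for every subgraph $H'=(I',P';E')$"): applying it to the subgraph of $\hat H$ induced by $F$ gives $|F|\le d|I(F)|$, which is Hall's condition. Hence $B$ has a perfect matching, which yields the desired orientation; pinned vertices get out-degree $0$ because they have no slots, and each inner vertex gets out-degree exactly $d$ because all $d$ of its slots are matched.

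The main obstacle — and the point that needs care rather than cleverness — is making sure the necessary counting condition from Theorem~\ref{thm:Scounts3D} is genuinely available in the required form. That theorem's subgraph bound $|E'|\le d|I'|$ applies to \emph{every} subgraph of $\hat H$, and the count only involves inner vertices on the right-hand side, so it transfers directly to the subgraph induced by an arbitrary edge set $F$; one should double-check the loop case (a loop at an inner vertex is an edge with $|I(F)|\ge 1$, and $d\ge 1$, so the bound is not violated) and the case where $F$ touches pinned vertices (those contribute nothing to either side of $|F|\le d|I(F)|$, consistent with their having no slots). Once that is in hand, the Hall/matching argument is routine. An equivalent and perhaps cleaner way to present the same proof is via the matroid union or Nash--Williams orientation theorem: the condition "$\hat H$ orients with all inner out-degrees $d$ and pinned out-degrees $0$" is exactly the statement that $\hat E$ decomposes into $d$ edge sets each of which is a "spanning branching toward the pinned vertices," and the hypergraph/submodularity certificate for this is again $|F|\le d|I(F)|$ for all $F\subseteq\hat E$. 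I would write it with Hall's theorem since that keeps the argument self-contained.
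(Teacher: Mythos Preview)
Your proof is correct and follows essentially the same route as the paper: both arguments derive the counts $|\hat E|=d|\hat I|$ and $|E'|\le d|I'|$ for all subgraphs from Theorem~\ref{thm:Scounts3D}, and then conclude the orientation exists from those counts. The only difference is presentational: the paper outsources the second step to standard orientation results on sparse graphs (Frank--Gy\'arf\'as, or Haas--Lee--Streinu--Theran), whereas you unfold that step into an explicit Hall/matching argument. Your version has the virtue of being self-contained and of making transparent exactly which count is needed (only the subgraph bound $|E'|\le d|I'|$, not the gain-sparsity condition), at the cost of a bit more length; the paper's version is terser but relies on the reader knowing the cited references.
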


\begin{proof}
By Theorem \ref{thm:Scounts3D} we know that $|\hat E|=d|\hat I|$ and $|\hat E'|\leq |\hat I'|$ for all subgraphs $\hat H'$ of $\hat H$. The proposition now follows quickly from standard results on oriented sparse graphs (e.g. \cite[Theorem 1]{FG} or \cite[Lemma 6]{HLST}).
\end{proof}

We remark also that the proof of \cite[Theorem 1]{FG} reveals an efficient polynomial-time algorithm for obtaining the \SG-directed orientation. The algorithm, being based around reversing directed paths, is similar to the well known pebble game algorithm \cite{H&J, adnanthesis}.
Clearly this algorithm is independent of the dimension, group or action.

Given a maximal lower triangular block decomposition of $\mathcal{O}_{pin}(\hat H, \psi, \tilde{\bp})$, the \emph{induced \SG-Assur block graph} has one vertex per block plus a vertex $Z$ for the ground.  There is a directed block graph edge if there is a directed edge $(A,B)$ that goes from the block $A$ to a block $B$ which is upper left from it, i.e., if there is an edge with start vertex in $A$ and end vertex in $B$.  There is a directed edge $(A,Z)$ to the ground if there is an edge in block $A$ which goes to a pinned vertex. Recall Figure \ref{fig:desargues2} and Example \ref{ex:orbitmatrix}.

We now state, without proof, the obvious analogue of Proposition \ref{Proposition:inducedblockgraph} for pinned \SG-isostatic graphs, see also Remark \ref{rem:inducedblockgraph}.

\begin{Proposition} Given a pinned \SG-isostatic graph $\hat G$ with a free group action on the vertices and a maximal lower triangular block decompositon of $\mathcal{O}_{pin}(\hat H, \psi, \tilde{\bp})$, the induced block graph is an acyclic directed graph, with the ground $Z$ on the bottom. Therefore it forms a partial order.
\label{Proposition:SGinducedblockgraph}
\end{Proposition}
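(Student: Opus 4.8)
The plan is to mirror the proof of Proposition~\ref{Proposition:inducedblockgraph} essentially verbatim, since the only structural ingredient used there is that the matrix in question is square and admits a maximal lower triangular block decomposition, and that each directed block-graph edge records the existence of a nonzero entry in an off-diagonal block. For a pinned \SG-isostatic graph $\hat G$ with free group action, Lemma~\ref{lem:SGisostatic} guarantees that $\mathcal{O}_{pin}(\hat H,\psi,\tilde{\bp})$ is square and invertible, so a maximal lower triangular block decomposition exists and the blocks have a well-defined left-to-right column order. This is the substitute for the role played by $R(\hat G,\bp)$ in the non-symmetric case.

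First I would fix a maximal lower triangular block decomposition of $\mathcal{O}_{pin}(\hat H,\psi,\tilde{\bp})$, which induces a linear order on the blocks according to the horizontal position of their column-groups; I place the ground vertex $Z$ at the bottom (i.e., rightmost/last) of this linear order. Next I observe that, by definition of the induced \SG-Assur block graph, a directed edge $(A,B)$ exists precisely when some edge of $\hat H$ has its start vertex $\tilde u$ among the columns of block $A$ and its end vertex $\tilde v$ among the columns of block $B$; the row of $\mathcal{O}_{pin}(\hat H,\psi,\tilde{\bp})$ for that edge lies in block $A$'s row-group (that is where its diagonal-block entry sits, in the $\tilde u$ columns), and it has a nonzero entry in the $\tilde v$ columns of block $B$. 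Because the decomposition is lower triangular, any nonzero entry in a row of block $A$ that lies in the columns of a different block $B$ forces $B$ to be upper-left of $A$, i.e., $B$ strictly precedes $A$ in the linear order. The case of an edge to a pinned vertex is handled the same way, with the convention that $Z$ follows every block. Hence every directed block-graph edge points strictly downward in the linear order, so no directed cycle can exist, the induced block graph is a DAG with $Z$ at the bottom, and the reachability relation is a partial order of which the chosen linear order of blocks is a linear extension.

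I do not anticipate a genuine obstacle here: the argument is formally identical to Proposition~\ref{Proposition:inducedblockgraph}, and the text itself flags this by stating the proposition ``without proof, the obvious analogue.'' The only points warranting a sentence of care are (i) citing Lemma~\ref{lem:SGisostatic} to justify squareness and invertibility of the pinned orbit matrix (so that the block-triangular decomposition and its column order are meaningful), and (ii) noting that the loop rows in Definition~\ref{orbitmatrixdef}, which have entries only in the columns of a single inner vertex $\tilde u$, contribute no off-diagonal block edges and therefore cause no trouble. With those two remarks in place the proof is a direct transcription of the earlier argument, as Remark~\ref{rem:inducedblockgraph} already anticipates for the non-uniqueness of the decomposition.
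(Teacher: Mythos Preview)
Your proposal is correct and is exactly the approach the paper has in mind: the authors explicitly state this proposition ``without proof'' as the ``obvious analogue of Proposition~\ref{Proposition:inducedblockgraph},'' and your argument is a faithful transcription of that earlier proof with $\mathcal{O}_{pin}(\hat H,\psi,\tilde{\bp})$ in place of $R(\hat G,\bp)$. Your added remarks on squareness via Lemma~\ref{lem:SGisostatic} and on loop rows are harmless clarifications that the paper simply leaves implicit.
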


Next we prove one of our main results, giving an extension of  \cite[Theorem 3.4]{3directed} to the symmetric setting.

\begin{Theorem}\label{thm:decomp}
Let $\hat G$ be  a pinned \SG-isostatic graph, where the group action is free on the vertices. For any \SG-directed orientation of the quotient $\mathcal{S}$-gain graph $\hat H$, the following decompositions are equivalent:
\begin{enumerate}
\item the \SG-Assur decomposition of $\hat G$;
\item the decomposition into strongly connected components of the $d$-directed orientation of $\hat H$;
\item the induced block graph  from a  maximal block-triangular decomposition of $\mathcal{O}_{pin}(\hat H, \psi, \tilde{\bp})$.
\end{enumerate}
\end{Theorem}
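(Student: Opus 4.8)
The plan is to mirror the structure of the proof of Theorem~\ref{3DirectedAssur} in \cite{3directed}, but working throughout with the pinned orbit matrix $\mathcal{O}_{pin}(\hat H,\psi,\tilde{\bp})$ in place of the pinned rigidity matrix, and with the quotient \SG-gain graph $\hat H$ in place of $\hat G$. The key structural observation that makes this go through is that, since the group action is free on the vertices, $\hat H$ has exactly $d$ columns per inner vertex and none per pinned vertex, so $\mathcal{O}_{pin}(\hat H,\psi,\tilde{\bp})$ is a square $d|\hat I|\times d|\hat I|$ matrix of full rank (by Lemma~\ref{lem:SGisostatic}), with exactly the same sparsity pattern — row for edge $\tilde e$ has nonzero blocks only in the column groups of its two endpoints — as a pinned rigidity matrix. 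Thus the combinatorial bookkeeping linking nonzero blocks, directed edges, and strongly connected components is formally identical to the non-symmetric case.

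First I would establish $(2)\Leftrightarrow(3)$: given an \SG-directed orientation of $\hat H$ (which exists by Proposition~\ref{Proposition:dorientation}), order the inner vertices so that each vertex's $d$ outgoing edges are ``assigned'' to its own $d$ columns; this produces a block structure on $\mathcal{O}_{pin}$ whose block-triangular refinement, read off by condensing pins to the sink $Z$ and contracting strongly connected components, gives a directed acyclic block graph (Proposition~\ref{Proposition:SGinducedblockgraph}). Conversely, from any maximal lower-triangular block decomposition one recovers an \SG-directed orientation realizing the same partial order. The argument is the standard one: a maximal indecomposable diagonal block corresponds precisely to a set of vertex orbits that cannot be further separated, i.e. a strongly connected component, and Corollary~\ref{cor:strong} (which applies verbatim since two \SG-directed orientations of $\hat H$ differ only by reversing cycles) guarantees independence of the choice of orientation.

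Next I would prove $(1)\Leftrightarrow(2)$, the genuinely new content. A strongly connected component $C$ of the \SG-directed $\hat H$, together with its outgoing edges (whose heads become pinned), has exactly $d|C|$ edges — each vertex orbit in $C$ contributes its $d$ out-edges — and one shows its extended component is pinned \SG-isostatic by invoking the sparsity counts of Theorem~\ref{thm:Scounts3D} (or Theorem~\ref{thm:Scounts} in the plane) on the corresponding subgraph of $\hat H$: strong connectivity forces the relevant inequalities $|E'|\le d|I'|$ to be tight exactly on $C$ and slack on proper subsets, which is precisely pinned \SG-isostaticity plus minimality. Minimality (no proper subgraph with an inner vertex is pinned \SG-isostatic) is equivalent to the component being strongly connected: a proper pinned \SG-isostatic subgraph would, by the same counts, be a union of lower strongly connected components, contradicting that $C$ is a single strong component. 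Then one checks that peeling off bottom strong components and collapsing them to the ground is exactly the iterative ``minimal pinned \SG-isostatic subgraph'' procedure defining the \SG-Assur decomposition, so the two partitions of the vertex orbits, and their partial orders, coincide.

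The main obstacle I expect is verifying that strong connectivity of a subgraph of $\hat H$ really does translate into tightness of the gain-sparsity counts in the correct way — in the symmetric setting one must be careful that the count that matters for isostaticity is $|E'|\le d|I'|$ on pinned subgraphs and the balanced/unbalanced $(d,\binom{d+1}{2},triv_{\mathcal S})$-gain-sparse count on pin-free subgraphs, and that when a strong component $C$ is extended by pinning the heads of its out-edges, the pinned count $|E'|=d|I'|$ is exactly achieved while every proper sub-piece is strictly sparse. This requires knowing that the existence of the \SG-directed orientation is equivalent to these counts holding with the pinned vertices present (which Proposition~\ref{Proposition:dorientation} and Theorem~\ref{thm:Scounts3D} supply) and then a careful matroidal/counting argument that strong components are precisely the ``tight pieces modulo lower tight pieces.'' Once this correspondence between strong connectivity and count-tightness is nailed down, the equivalences $(1)\Leftrightarrow(2)\Leftrightarrow(3)$ — and the claim that they share a common partial order — follow by the same diagram-chasing used in \cite{3directed}, with the pinned orbit matrix playing the role of the pinned rigidity matrix throughout.
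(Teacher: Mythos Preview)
Your proposal is correct and follows essentially the same approach as the paper: both arguments transfer the proof of Theorem~\ref{3DirectedAssur} verbatim to the pinned orbit matrix, exploiting that $\mathcal{O}_{pin}(\hat H,\psi,\tilde{\bp})$ is square, invertible, and has the same row-support pattern as a pinned rigidity matrix. The only organisational difference is that the paper argues a cycle $(1)\Rightarrow(2)\Rightarrow(3)\Rightarrow(1)$ (starting from an \SG-Assur component and showing it must be strongly connected, by peeling off a bottom strong sub-piece and contradicting minimality), whereas you group the implications as $(2)\Leftrightarrow(3)$ and $(1)\Leftrightarrow(2)$.

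One small point: the ``main obstacle'' you anticipate is not really there. The paper's proof never touches the balanced/unbalanced $(d,\binom{d+1}{2},triv_{\mathcal S})$-gain-sparsity count; only the crude pinned count $|E'|\le d|I'|$ from Theorem~\ref{thm:Scounts3D} is needed, together with the fact that a bottom strong component's edges have all their nonzero entries in that component's own columns (so the square submatrix inherits independence, hence invertibility, from the full matrix). Your phrase ``tight exactly on $C$ and slack on proper subsets'' is correct once one is careful that $|E'|$ counts edges of the \emph{induced} subgraph on $C'\cup \hat P$ (preserving the original inner/pin labels), which is strictly less than $d|C'|$ for a proper subset of a strong component precisely because strong connectivity forces out-edges from $C'$ into $C\setminus C'$; it is not about the out-edge count, which is always $d|C'|$.
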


\begin{proof}
$(1) \Rightarrow (2)$. Let $\hat G_1$ be the \SG-Assur component of $\hat G$ containing the ground and let $\hat H_1$ be the corresponding component in $\hat H$. By Proposition \ref{Proposition:dorientation} there is a \SG-orientation of $\hat H_1$. Choose any such orientation and suppose that $\hat H_1$ is not strongly connected. (Remember that Corollary \ref{cor:strong} implies that our choice of \SG-orientation is not important.) Now consider a strongly connected component $\hat H_1^-$ in $\hat H_1$ containing the ground and its covering graph $\hat G_1^-$. Since $\hat H_1$ is pinned \SG-isostatic we have $|E(\hat H_i^-)|\leq d|V(\hat H_i^-)|$ by Theorem \ref{thm:Scounts3D}. Moreover if this inequality were strict then the  strongly connected component above $\hat H_1^-$ , i.e. the graph $\hat H_1 - \hat H_1^-$ together with its edges to $\hat H_1^-$, would have too many edges contradicting the fact that $\hat H_1$ is \SG-isostatic. Thus $|E(\hat H_i^-)|=d|V(\hat H_i^-)|$. Now, since $\hat G$ is \SG-isostatic we know that $\hat G_1^-$ is \SG-isostatic. This contradicts the minimality of $\hat G_1$ since $|V(\hat G_1^-)|< |V(\hat G_1)|$. Repeating this argument for subsequent components completes the argument.

$(2)\Rightarrow (3)$. If there are two or more strongly connected components, then take the bottom component with its edges to the ground. With a permutation of rows and a permutation of column vertices, we place its vertices and edges at the top left of the pinned orbit matrix $\mathcal{O}_{pin}(\hat H, \psi, \tilde{\bp})$. The remaining rows and columns form a second block (which could have several strongly connected components). Continuing this process for each of the blocks up the acyclic strongly connected component decomposition, we find a diagonal matrix block for each component of the decomposition, which gives the matrix the desired lower block triangular form.

$(3)\Rightarrow (1)$. We consider minimal components.
Assume $\hat G$ is not minimal pinned \SG-isostatic, and there is a proper pinned \SG-isostatic subgraph $G^{*}$. We will show the pinned orbit matrix $\mathcal{O}_{pin}(\hat H, \psi, \tilde{\bp})$ decomposes.
If we permute the inner vertices and all the edges associated with $G^{*}$ to the upper left corner of $\mathcal{O}_{pin}(\hat H, \psi, \tilde{\bp})$, the rest of these rows from $G^{*}$ are 0 and the remaining columns and rows form a second block. This gives a block triangular decomposition of the pinned orbit matrix $\mathcal{O}_{pin}(\hat H, \psi, \tilde{\bp})$. The contrapositive says that if $\mathcal{O}_{pin}(\hat H, \psi, \tilde{\bp})$ does not have a proper block triangular decomposition, then the pinned \SG-isostatic graph is minimal.
\end{proof}

As a corollary we observe that the equivalence is true for individual components in the decomposition.

\begin{Corollary}
\label{cor:decomp} Let $\mathcal{S}$ be a symmetry group in dimension $d$, and let  $\hat G$ be  a pinned \SG-isostatic graph, where the group action is free on the vertices.
Further, let $\hat H$ be the quotient $\mathcal{S}$-gain graph of $\hat G$. Then the following are equivalent:
\begin{enumerate}
  \item $\hat G$  contains no proper pinned \SG-isostatic subgraphs;
  \item $\hat H$ is indecomposable for some (any) \SG-directed orientation;
  \item the pinned orbit matrix $\mathcal{O}_{pin}(\hat H, \psi, \tilde{\bp})$ has no proper block triangular decomposition.
\end{enumerate}\label{thm:2blockdecomp}
\end{Corollary}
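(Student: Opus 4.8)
The plan is to derive Corollary~\ref{cor:decomp} directly from Theorem~\ref{thm:decomp} by simply specializing the statement to the case of a single component, i.e., to the question of whether the whole graph $\hat G$ is itself a $\mathcal{S}$-Assur graph. Indeed, the three conditions in the corollary are precisely the ``one block'' instances of the three equivalent decompositions in Theorem~\ref{thm:decomp}: condition (1) says the $\mathcal{S}$-Assur decomposition of $\hat G$ is trivial (a single component plus the ground), condition (2) says the strongly connected component decomposition of any $\mathcal{S}$-directed orientation of $\hat H$ is trivial (one non-trivial strongly connected component plus the sink), and condition (3) says the induced block graph from a maximal block-triangular decomposition of $\mathcal{O}_{pin}(\hat H,\psi,\tilde\bp)$ has a single block (plus the ground $Z$). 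So the first step is to observe that ``$\hat G$ contains no proper pinned $\mathcal{S}$-isostatic subgraph'' is by definition the statement that $\hat G$ is minimal pinned $\mathcal{S}$-isostatic, hence $\mathcal{S}$-Assur, hence its $\mathcal{S}$-Assur decomposition consists of exactly one extended component.

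Next I would spell out the equivalence $(1)\Leftrightarrow(2)$ using the implication $(1)\Rightarrow(2)$ of Theorem~\ref{thm:decomp}: if $\hat G$ is $\mathcal{S}$-Assur, then $\hat G_1 = \hat G$ and the argument in that proof shows $\hat H = \hat H_1$ must itself be strongly connected (a proper strongly connected sub-component containing the ground would contradict minimality of $\hat G$), so the strongly connected component decomposition is indeed a single component; conversely, if $\hat H$ has a single non-trivial strongly connected component for some (hence, by Corollary~\ref{cor:strong}, any) $\mathcal{S}$-directed orientation, then a proper pinned $\mathcal{S}$-isostatic subgraph would, by the $(1)\Rightarrow(2)$ reasoning applied to that subgraph, force a smaller strongly connected piece, a contradiction. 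The independence of the choice of orientation is exactly Corollary~\ref{cor:strong}, which is why the corollary may say ``some (any)''.

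For $(2)\Leftrightarrow(3)$ I would invoke the implications $(2)\Rightarrow(3)$ and $(3)\Rightarrow(1)$ of Theorem~\ref{thm:decomp} together with $(1)\Rightarrow(2)$ to close the loop: $(2)\Rightarrow(3)$ of the theorem, in the one-component case, builds a block-triangular form with a single diagonal block, which is exactly the statement that $\mathcal{O}_{pin}(\hat H,\psi,\tilde\bp)$ is indecomposable; and $(3)\Rightarrow(1)$ of the theorem is literally the contrapositive form ``if the pinned orbit matrix has no proper block-triangular decomposition then $\hat G$ is minimal pinned $\mathcal{S}$-isostatic'', which is precisely $(3)\Rightarrow(1)$ of the corollary. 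Chaining $(1)\Rightarrow(2)\Rightarrow(3)\Rightarrow(1)$ from (the single-component specializations of) Theorem~\ref{thm:decomp} then gives the full cycle of equivalences.

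The only genuinely substantive point — and the thing I would be careful to state explicitly rather than hand-wave — is the matching up of terminology: that ``$\hat G$ contains no proper pinned $\mathcal{S}$-isostatic subgraph'' really is the negation of decomposability, i.e.\ that a maximal lower-triangular block decomposition of $\mathcal{O}_{pin}(\hat H,\psi,\tilde\bp)$ being trivial is equivalent to the $\mathcal{S}$-Assur decomposition having one component. This is immediate from Theorem~\ref{thm:decomp} since the three decompositions there are declared equal, but it is worth noting that a block-triangular decomposition with a single block above the ground contributes one extended component to the $\mathcal{S}$-Assur decomposition, and that the ground vertex $Z$ is never counted as a component. Beyond that bookkeeping there is no new obstacle: the corollary is a restriction of the theorem to its ``indecomposable'' extreme case, so the proof is essentially a one-line appeal to Theorem~\ref{thm:decomp} with the observation that each of (1), (2), (3) here is the single-component version of the corresponding item there.
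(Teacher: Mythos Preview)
Your proposal is correct and matches the paper's approach exactly: the paper simply states the result as an immediate corollary of Theorem~\ref{thm:decomp} with no further argument, noting only that ``the equivalence is true for individual components in the decomposition.'' Your write-up is a careful unpacking of precisely that one-line observation, specializing each item of Theorem~\ref{thm:decomp} to the single-component case.
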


We have shown that the \SG-Assur block graph encodes all the information about the \SG-Assur decomposition. This bijection allows us to be slightly terse in subsequent examples and figures, referring only to the partial order (the \SG-Assur block graph) rather than the \SG-Assur decomposition.

\subsection{Pinned symmetric frameworks with fixed inner vertices }\label{subsec:fixed}

In the previous subsection we assumed that the group action is free on vertices. 
 Here we discuss how the results in Section 3.1 easily generalize to non-free actions.

Recall that if some of the inner vertices are fixed by a non-trivial symmetry operation (i.e. the underlying group action is not free on the vertex set), then not all vertices in the pinned orbit matrix will have the same number of columns (see Remark~\ref{rem:fixorb} above). Since the number of assigned outgoing edges for each vertex is equal to the number of columns under the vertex in the pinned orbit matrix, the directed gain graph will no longer be $d$-directed.

Note that in the case where the group action is free on the vertices, we have dim$U_{\bp(u)}=d$. So this is exactly a $d$-directed orientation of the gain graph and hence a \SG-orientation, as defined for the free case.



The results in Subsection \ref{subsec:freedecomp} will also apply to \SG-directed orientations, where the column size (vertex out-degree) of the gain graph is not uniform among all vertices. To make this evident, we recall from \cite{3directed} that the choices in orientations of edges which conserve a fixed out-degree of each of the vertices do not alter the decomposition. For any two equivalent directed gain grain orientations (i.e. corresponding vertices have same out-degree), the two orientations only differ by reversals on a set of directed cycles. Moreover, since cycle reversals do not change the strongly connected components, the decompositions for two orientations are the same (see Corollary \ref{cor:strong}). We can now generalize the results above without any assumption on the group action. That is, Theorem \ref{thm:decomp} and Corollary \ref{cor:decomp} apply without the assumption that \SG~acts freely on $\hat G$.

\begin{Corollary}\label{thm:decompfixed}
Let $\hat G$ be  a pinned \SG-isostatic graph, where the group action is not free on the vertices. For any \SG-directed orientation of the quotient $\mathcal{S}$-gain graph $\hat H$, the following decompositions are equivalent:
\begin{enumerate}
\item the \SG-Assur decomposition of $\hat G$;
\item the decomposition into strongly connected components of the \SG-directed orientation of $\hat H$;
\item the induced block graph  from a  maximal block-triangular decomposition of $\mathcal{O}_{pin}(\hat H, \psi, \tilde{\bp})$.
\end{enumerate}
\end{Corollary}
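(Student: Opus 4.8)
The plan is to show that \textbf{Corollary~\ref{thm:decompfixed}} follows from \textbf{Theorem~\ref{thm:decomp}} essentially by inspecting which arguments in the latter's proof actually used freeness of the action. The key observation, already signposted in the text immediately preceding the statement, is that the only place freeness entered was in guaranteeing that every inner vertex gets out-degree exactly $d$; for a non-free action the correct notion is an \SG-directed orientation in the sense of Section~\ref{sec:decomp}, where an inner vertex $\tilde u$ has out-degree $\dim U_{\bp(u)}$ and pinned vertices are sinks. So first I would note that the existence of such an orientation still holds: the counting conditions of Theorem~\ref{thm:Scounts3D} (suitably adjusted, as in the paragraph after Theorem~\ref{thm:Scounts}, so that $|\hat E| = \sum_{\tilde u \in \hat I} \dim U_{\bp(u)}$ and every subgraph satisfies the corresponding inequality) together with the standard orientation results (\cite[Theorem 1]{FG} or \cite[Lemma 6]{HLST}) give an orientation realizing exactly those prescribed out-degrees. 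This is the analogue of Proposition~\ref{Proposition:dorientation} for non-free actions, and I would state it as such.

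Next I would verify that the three implications in the proof of Theorem~\ref{thm:decomp} go through verbatim once ``$d$-directed'' is replaced by ``\SG-directed'' in this generalized sense. For $(1)\Rightarrow(2)$: the argument uses only that a strongly connected component $\hat H_1^-$ containing the ground satisfies $|E(\hat H_1^-)| \le \sum \dim U_{\bp(u)}$ over its inner vertices (this is the generalized Theorem~\ref{thm:Scounts3D} count), that equality must hold lest the component above be overbraced, and then minimality of the \SG-Assur component is contradicted --- none of this invokes uniformity of out-degrees. For $(2)\Rightarrow(3)$: the block-triangularization of $\mathcal{O}_{pin}(\hat H,\psi,\tilde\bp)$ by peeling off the bottom strongly connected component is a purely linear-algebraic/combinatorial move on the matrix, indifferent to how many columns sit under each vertex. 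For $(3)\Rightarrow(1)$: again, permuting the rows and columns associated with a proper pinned \SG-isostatic subgraph $G^*$ to the upper-left corner and observing the off-diagonal block vanishes is column-count agnostic. The crucial auxiliary fact --- that the decomposition does not depend on the choice of \SG-directed orientation --- is exactly Corollary~\ref{cor:strong}: any two orientations with the same prescribed vertex out-degrees differ by reversals on directed cycles, and cycle reversals preserve strongly connected components. This is precisely the point the paragraph before the statement emphasizes, and it requires no freeness.

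Concretely, then, the proof I would write is short: ``By the generalized counting conditions for pinned \SG-isostatic graphs with non-free action (adjusting Theorem~\ref{thm:Scounts3D} so that the relevant bound on a subgraph $\hat H'$ is $|E(\hat H')| \le \sum_{\tilde u \in I(\hat H')} \dim U_{\bp(u)}$), an \SG-directed orientation of $\hat H$ exists; by Corollary~\ref{cor:strong} its strongly connected component decomposition is independent of the chosen orientation. Each step of the proof of Theorem~\ref{thm:decomp} now applies after replacing every instance of out-degree $d$ by out-degree $\dim U_{\bp(u)}$, since that proof nowhere used uniformity of the out-degrees --- only the matching counts, minimality of \SG-Assur components, and the block structure of $\mathcal{O}_{pin}(\hat H,\psi,\tilde\bp)$, all of which persist.'' I would also remark that Corollary~\ref{cor:decomp} extends identically, since it is just the one-block case of the theorem.

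The main obstacle --- really the only delicate point --- is making sure the generalized counting conditions are correctly stated and that the off-diagonal blocks genuinely vanish in the non-uniform-column setting. When a vertex $\tilde u$ is fixed by a reflection, its row contributions to the orbit matrix were obtained (Remark~\ref{rem:fixorb}) by post-multiplying by the $d \times \dim U_{\bp(u)}$ matrix of basis vectors of $U_{\bp(u)}$; I would want to double-check that an edge from block $A$ into a block $B$ strictly upper-left still contributes only to the columns of $B$ after this reduction (it does, since the reduction acts independently within each vertex's column group), so the block-triangular pattern is preserved. Once that bookkeeping is acknowledged, the rest is a routine ``inspect-the-previous-proof'' argument, and indeed the statement is labeled a Corollary for exactly this reason.
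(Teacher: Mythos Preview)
Your proposal is correct and follows essentially the same approach as the paper: the paper does not give a separate formal proof of this corollary, but simply observes (in the paragraph immediately preceding the statement) that the proof of Theorem~\ref{thm:decomp} carries over once ``$d$-directed'' is replaced by ``\SG-directed'' with non-uniform out-degrees $\dim U_{\bp(u)}$, invoking Corollary~\ref{cor:strong} to ensure independence from the chosen orientation. Your write-up is in fact more detailed than the paper's own justification, carefully checking each implication and the column-reduction bookkeeping from Remark~\ref{rem:fixorb}.
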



\subsection{\SG-Drivers and strongly \SG-Assur graphs}

 Assume we have a \SG-Assur decomposition of a pinned graph $\hat{G}$.   If we replace an orbit of edges in this graph by a set of symmetric drivers (i.e., a set of edges simultaneously change their lengths in a coordinated, symmetric fashion), then the pinned framework will have a symmetric motion which fixes the ground (and the ground includes an orbit of end-vertices of the edges that were converted to drivers).

\begin{Proposition} Let $\hat G$ be a \SG-Assur graph with quotient \SG-gain graph $\hat H$.
\begin{enumerate}
\item For a \SG-regular realisation, removing any edge from $\hat H$
generates a fully \SG-symmetric finite motion in which some non-pinned vertices (of $\hat G$) are in motion relative to the ground.

\item Given a \SG-Assur decomposition of $\hat G$, if an edge $\tilde e$ of $\hat H$ is removed,
then no vertices in components below or incomparable to the component containing $\tilde e$ move.
\end{enumerate}
\end{Proposition}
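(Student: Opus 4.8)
The plan is to prove the two parts separately, using the established dictionary between the pinned orbit matrix, the $\mathcal{S}$-gain graph, and the strongly connected component structure (Theorem~\ref{thm:decomp} and Corollary~\ref{cor:decomp}).

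For part (1): Since $\hat G$ is $\mathcal{S}$-Assur, it is pinned $\mathcal{S}$-isostatic, so by Lemma~\ref{lem:SGisostatic} the square matrix $\mathcal{O}_{pin}(\hat H,\psi,\tilde\bp)$ is invertible for $\mathcal{S}$-regular $\tilde\bp$. Removing an edge $\tilde e$ deletes the corresponding row, leaving a matrix with $d|\hat I|$ columns and $d|\hat I|-1$ rows whose rows remain independent (a submatrix of an invertible matrix). Hence its kernel is exactly one-dimensional, giving a non-trivial pinned fully $\mathcal{S}$-symmetric infinitesimal motion $\bu$ of the framework with $\tilde e$ removed (via Theorem~\ref{thm:orbitmatrxprop}, in the pinned version). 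I then need to argue this is non-trivial in the strong sense that some non-pinned vertex actually moves: because there are no columns for pinned vertices, any element of the kernel assigns velocities only to inner vertices, so $\bu\neq 0$ forces some inner-vertex velocity to be nonzero — moving that vertex (and its whole symmetry orbit in $\hat G$) relative to the fixed ground. Finally, to upgrade the infinitesimal motion to a finite motion, I would invoke the cited equivalence for $\mathcal{S}$-regular frameworks (the results of \cite{BS6,KG1}, already quoted in the background section: existence of a non-trivial fully $\mathcal{S}$-symmetric infinitesimal motion guarantees a symmetry-preserving finite mechanism). Since $\hat G$ with $\tilde e$ removed is still $\mathcal{S}$-regular, this applies and yields the fully $\mathcal{S}$-symmetric finite motion with some non-pinned vertices moving.

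For part (2): Here I use the block-triangular / partial-order structure. Fix a maximal lower-triangular block decomposition of $\mathcal{O}_{pin}(\hat H,\psi,\tilde\bp)$ aligned with the $\mathcal{S}$-Assur block graph (Theorem~\ref{thm:decomp}, Proposition~\ref{Proposition:SGinducedblockgraph}). Suppose $\tilde e$ lies in component $C$. Removing the row for $\tilde e$ only affects the block rows at or above $C$ in the partial order; all block rows strictly below $C$, and all block rows incomparable to $C$ but appearing earlier in the chosen linear extension, are untouched. The key point is that the columns for vertices in components below or incomparable to $C$ form, together with their block rows, a square invertible sub-block that is unaffected by the deletion. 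An infinitesimal motion of $\hat H$ minus $\tilde e$ is a kernel vector of the reduced matrix; restricting the equations to those untouched block rows — which still have full rank on exactly the corresponding columns — forces the velocities on all those vertices to vanish. Thus no vertex in a component below or incomparable to $C$ moves infinitesimally, and hence (by the same $\mathcal{S}$-regular finite-rigidity equivalence, applied componentwise since those components remain pinned $\mathcal{S}$-isostatic) none moves in the finite motion either.

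I expect the main obstacle to be part (2), specifically making precise that the kernel vector of the reduced orbit matrix has zero entries on the columns for the untouched components. The cleanest way is to observe that those components, collapsed to the ground from the perspective of everything above them, still satisfy the invertibility from Lemma~\ref{lem:SGisostatic}: the sub-block indexed by (columns of vertices in such components) $\times$ (their own block rows) is itself the pinned orbit matrix of an $\mathcal{S}$-Assur component with its outgoing edges turned into pins, hence square and invertible, and it does not contain the deleted row since $\tilde e\in C$ with $C$ not below or incomparable to itself in the relevant direction. Reading the homogeneous system from the bottom of the partial order upward then propagates zeros onto exactly those columns. A secondary subtlety is the interplay with the non-free-action case (Remark~\ref{rem:fixorb}): the argument should be phrased in terms of $\dim U_{\bp(u)}$ columns per vertex so that it covers Corollary~\ref{thm:decompfixed} as well, but this requires no new idea beyond bookkeeping.
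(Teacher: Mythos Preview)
Your proposal is correct and follows essentially the same approach as the paper: for part~(1) you use invertibility of the square pinned orbit matrix to get a one-dimensional kernel after deleting a row (the paper phrases this as taking the $k$-th column of the inverse, i.e.\ solving $\mathcal{O}_{pin}\,U=e_k$), and for part~(2) you use the block lower-triangular structure to see that the sub-blocks for components below or incomparable to $C$ remain square invertible and force the corresponding velocity entries to vanish. Your explicit appeal to \cite{BS6,KG1} to pass from the infinitesimal to the finite motion, and your remark about choosing a linear extension placing incomparable components before $C$, are details the paper leaves implicit but which do no harm to spell out.
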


\begin{proof} The proof is similar to the proof in \cite{3directed}.

1. We know that the orbit matrix $\mathcal{O}_{pin}(\hat H, \psi, \tilde \bp)$ is square and invertible.
Suppose $\tilde e$ corresponds to row $k$. Let $e_k$ denote the $k$-th standard basis vector.
Then
$$
\mathcal{O}_{pin}(\hat H, \psi, \tilde \bp) \times U\ = \ e_k  \qquad \Rightarrow   \qquad U\ =\mathcal{O}_{pin}(\hat H, \psi, \tilde \bp)^{-1} \times\ e_k
$$
for some non-zero $U$.  $U$ is then the required set of velocities for the inner vertices.

2. $\hat{G}$ is \SG-Assur, guaranteeing some inner vertices have non-zero velocities (after the edge orbit is removed).  In particular, to obtain $e_k$ from $\mathcal{O}_{pin}(\hat H, \psi, \tilde \bp) \times U$,
 at least one of the vertices for this edge must have a non-zero velocity.  This in turn converts to a fully \SG-symmetric motion of the original covering graph.

If we consider a block lower-triangular decomposition of the matrix, it  is clear that vertices in blocks above the row $k$ continue to have a full invertible submatrix and all entries above the $k$th entry in $e_k$ are also $0$.  Therefore, the entries in $U$ for orbits of vertices in these components (components below the component of row $k$, or incomparable to the component of row $k$) must be $0$.
\end{proof}

In \cite{3directed} it was shown that not all $d$-Assur graphs (when $d>2$) are strongly $d$-Assur. This is connected to obstacles to a good combinatorial (counting) characterisation for $3$- and higher-dimensional  bar and joint frameworks to be rigid.
The example in Figure \ref{fig:Weakly} is a \SG-Assur graph but it is not {\em strongly \SG-Assur}. Therefore, as for  graphs without imposed symmetry, being strongly \SG-Assur is a stronger property than being \SG-Assur.

\begin{figure}[ht]
    \begin{center}
  \subfigure[] {\includegraphics [width=.38\textwidth]{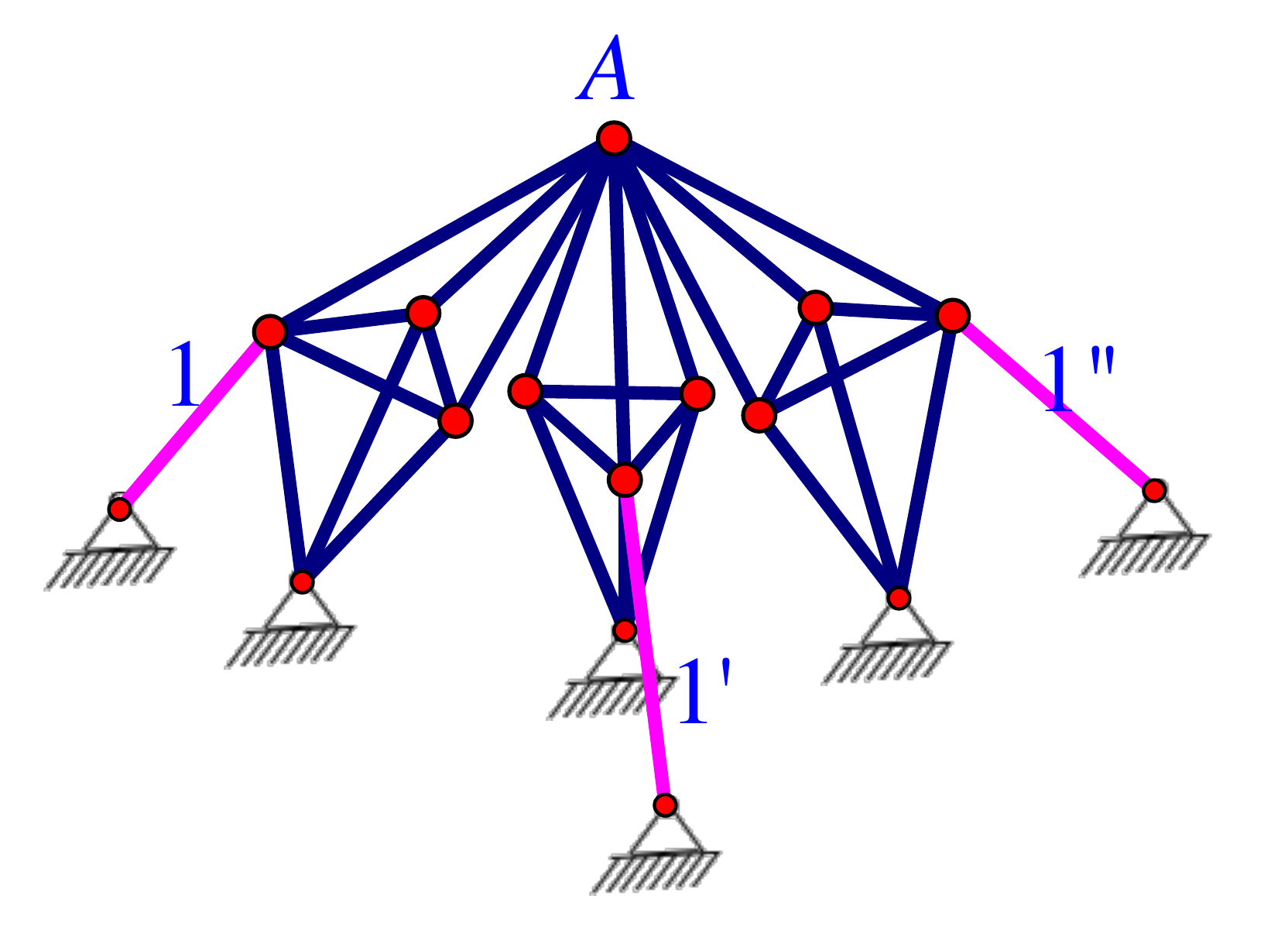}} \quad\quad
    \subfigure[] {\includegraphics [width=.38\textwidth]{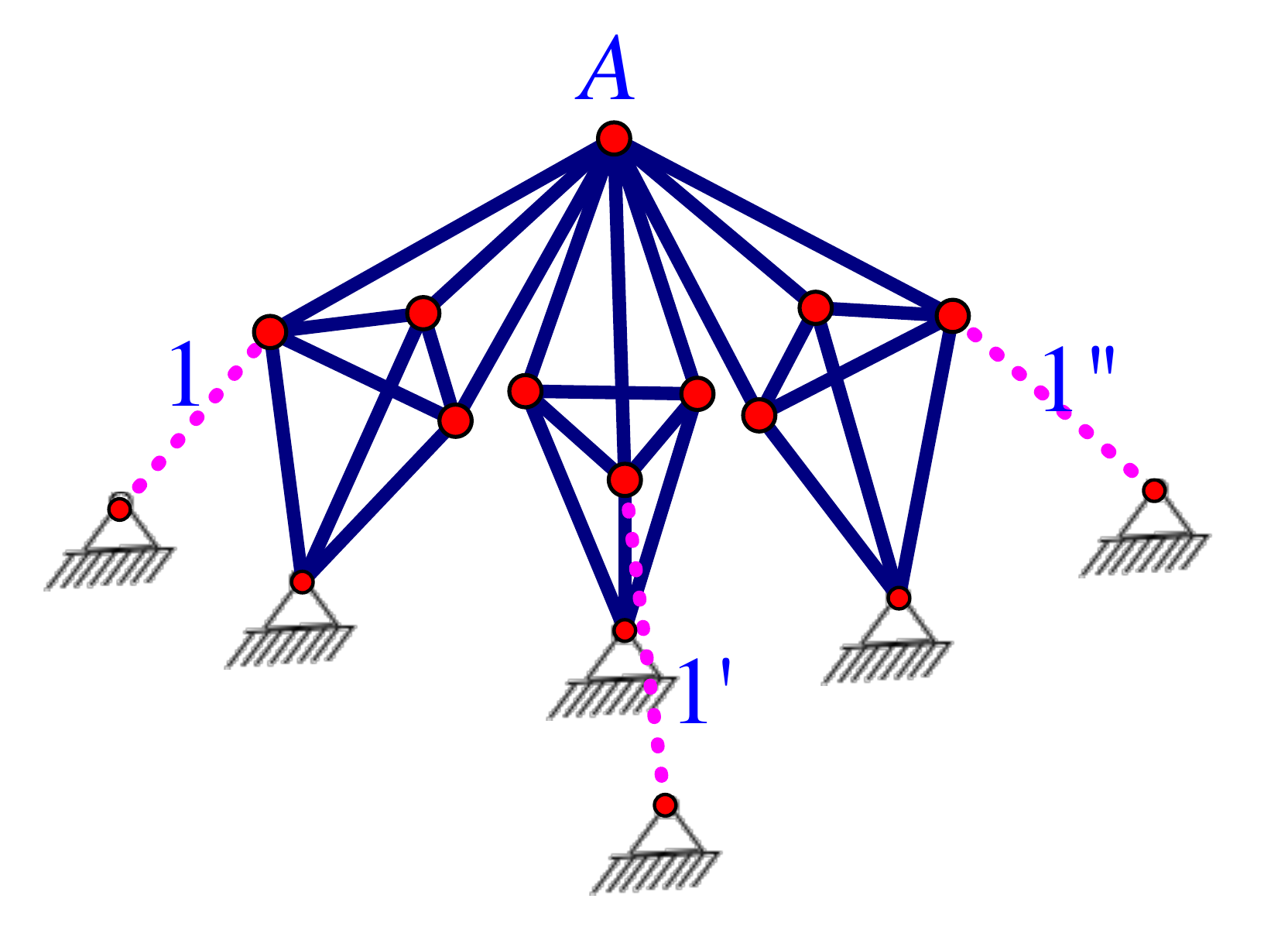}}
       \end{center}
    \caption{A pinned $\mathcal{C}_3$-isostatic graph in 3-space  (a) which is  $\mathcal{C}_3$-Assur but not {\em strongly  $\mathcal{C}_3$-Assur}. If we turn the purple edges into drivers (i.e. remove these edges) (b) vertex A will not move.}
    \label{fig:Weakly}
    \end{figure}

\begin{Corollary}
Let $\hat G$ be pinned \SG-isostatic and let $\hat H$ be the quotient  \SG-gain graph of $\hat G$. If an edge $\tilde e$ is removed from some component of the strongly \SG-Assur decomposition of ($\hat G$ and hence) $\hat H$, then all vertices in this component $\hat H^*$ or in components above the component containing $\tilde e$ move.
\end{Corollary}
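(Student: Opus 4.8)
The plan is to leverage the earlier analysis of the block-triangular structure of $\mathcal{O}_{pin}(\hat H,\psi,\tilde\bp)$ together with the definition of \emph{strongly} \SG-Assur, arguing essentially as in the non-symmetric case of \cite{3directed}. First I would fix a maximal block-triangular decomposition of the pinned orbit matrix corresponding to the strongly \SG-Assur decomposition, as justified by Theorem~\ref{thm:decomp} (or Corollary~\ref{thm:decompfixed}), so that the component $\hat H^*$ containing $\tilde e$ corresponds to one diagonal block $B$, the components below and incomparable to it correspond to blocks strictly to the upper-left of $B$, and the components above correspond to blocks strictly to the lower-right of $B$. As in the proof of the previous Proposition, turning the edge orbit of $\tilde e$ into a set of symmetric drivers amounts to solving $\mathcal{O}_{pin}(\hat H,\psi,\tilde\bp)\,U = e_k$, where $k$ is the row index of $\tilde e$, and the resulting $U$ records the velocities of the inner vertex orbits; converting $U$ to the covering graph gives the fully \SG-symmetric motion.

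Next I would separate the argument into the two directions. For components below or incomparable to $\hat H^*$: the block-triangular form forces the corresponding entries of $U$ to vanish, exactly as in part 2 of the previous Proposition — the relevant principal submatrix is invertible and the relevant coordinates of $e_k$ are zero, so those vertex orbits stay fixed. For the component $\hat H^*$ itself and every component above it: here is where the \emph{strongly} \SG-Assur hypothesis does the work. By definition, $\hat H^*$ (being a component of the strongly \SG-Assur decomposition, hence itself strongly \SG-Assur) has the property that removing any edge orbit puts \emph{all} of its inner vertex orbits into motion. So within $\hat H^*$ at least one — in fact every — inner vertex orbit acquires nonzero velocity. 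Then I would propagate this upward: a component $\hat H'$ strictly above $\hat H^*$ in the \SG-Assur block graph is, after collapsing everything below it, \SG-isostatic relative to its pinned ``floor'', and that floor now includes vertex orbits of $\hat H^*$ that are genuinely moving; since the only \SG-symmetric motion fixing a \SG-isostatic framework's pins is trivial, and the pins of $\hat H'$ are no longer all fixed, $\hat H'$ must also move. Iterating up the partial order covers every component above $\tilde e$'s component.

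The step I expect to be the main obstacle is making the upward-propagation argument rigorous in a way that genuinely uses ``strongly'' rather than just ``\SG-Assur''. One has to be careful that a component directly above $\hat H^*$ might be attached to $\hat H^*$ only through \emph{some} of $\hat H^*$'s vertex orbits, and a priori those particular orbits could be the ones that fail to move — this is precisely the phenomenon behind weak drivers illustrated in Figure~\ref{fig:Weakly}. The strongly \SG-Assur hypothesis is exactly what rules this out: it guarantees \emph{every} inner vertex orbit of $\hat H^*$ moves, so in particular the attachment orbits feeding any higher component move, and \SG-isostaticity of that higher component (with the moved orbits treated as prescribed nonzero boundary velocities) forces it to move too. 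I would phrase this cleanly by working with the block-triangular matrix directly: restricting $\mathcal{O}_{pin}(\hat H,\psi,\tilde\bp)\,U=e_k$ to the rows of a higher component $\hat H'$ gives a square invertible system whose right-hand side is a linear combination of the (nonzero) velocities of the orbits of $\hat H^*$ and lower components that $\hat H'$'s edges reach; since the contribution from $\hat H^*$ is nonzero and cannot be cancelled (the lower and incomparable contributions are zero), the solution for $\hat H'$'s orbit velocities is nonzero. Assembling these observations in order — fix the block decomposition, solve for $U$, kill the lower/incomparable part, invoke ``strongly'' on $\hat H^*$, then induct upward through the partial order — completes the proof.
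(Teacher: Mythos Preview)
Your upward-propagation argument has a genuine gap. Restricting $\mathcal{O}_{pin}(\hat H,\psi,\tilde\bp)\,U=e_k$ to the rows of a higher component $\hat H'$ and observing that the right-hand side is nonzero (even granting that) only tells you that the \emph{vector} $U_{\hat H'}$ is nonzero, i.e.\ that \emph{some} inner vertex orbit of $\hat H'$ moves. The Corollary requires that \emph{every} inner vertex orbit of $\hat H'$ moves. Nothing in your argument bridges that gap: you invoke the strongly \SG-Assur property only for $\hat H^*$, never for the higher components, and \SG-isostaticity of $\hat H'$ alone (prescribed nonzero velocities on some of its pinned heads) does not force every inner vertex to have nonzero velocity. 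This is exactly the weak-driver phenomenon you yourself flag, and your proposal does not actually rule it out for $\hat H'$.

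The paper's proof takes the opposite direction: it is a contradiction argument propagating \emph{downward}. One supposes some higher component $\hat H'$ has a non-moving inner vertex, and then argues (using that the component is \SG-isostatic) that \emph{none} of its inner vertices move and, consequently, that the vertices at the heads of its outgoing edges are also not in motion. Those heads lie in components strictly below $\hat H'$; iterating this step down the partial order eventually produces a non-moving inner vertex in $\hat H^*$ itself, contradicting the first sentence of the proof (the definition of strongly \SG-Assur applied to $\hat H^*$). The key idea you are missing is this downward step: from ``some inner vertex of $\hat H'$ fixed'' to ``all inner vertices of $\hat H'$ fixed, hence the heads of its outgoing edges fixed''. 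Your upward matrix argument never produces an implication of that shape, which is why it stalls at ``some vertex moves'' rather than ``all vertices move''.
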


\begin{proof} The definition of a strongly \SG-Assur graph $\hat H^*$  guarantees that all inner vertices of $\hat H^*$  are in motion.    If there is a higher strongly \SG-component in which some inner vertices are not in motion, then none of them are in motion, including the vertices at the heads of the outgoing edges of the component, since the \SG-Assur component is \SG-isostatic.
This in turn means that some vertices of the components just below this are also not moving.  Going down the \SG-Assur decomposition, we conclude that the vertices of the strongly \SG-component from which the orbit was removed are also not in motion. This is a contradiction.
\end{proof}

For certain groups we can show that there is no distinction between \SG-Assur graphs and strongly \SG-Assur graphs.

\begin{Proposition}
\label{prop:strong}
Let \SG~be the symmetry group $\mathcal{C}_s$ or $\mathcal{C}_n$ in the plane, where $n\geq 2$, and let \SG~act freely on the vertices of a pinned graph $\hat G$. Then $\hat G$ is \SG-Assur if and only if it is strongly \SG-Assur.
\end{Proposition}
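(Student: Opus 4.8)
The plan is to leverage the combinatorial characterization available in the plane for these groups, namely Theorem~\ref{thm:Scounts} (pinned $(2,3,1)$-gain-tightness), to argue that for $\mathcal{C}_s$ and $\mathcal{C}_n$, $n\geq 2$, the length of any non-edge orbit is never determined by a non-rigid subgraph — which, as the discussion preceding the statement indicates, is precisely the obstruction to the equivalence of $d$-Assur and strongly $d$-Assur in dimension $d\geq 3$. In the plane, \SG-rigidity is matroidal (the $(2,3,1)$-gain-sparse edge sets of the quotient gain graph form the independent sets of a matroid), and this matroidal structure is what makes the two notions coincide. One direction is trivial: strongly \SG-Assur implies \SG-Assur by definition, so only the converse needs work.

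For the converse, suppose $\hat G$ is \SG-Assur but not strongly \SG-Assur. Then there is an edge orbit $\te$ in some component of the \SG-Assur decomposition whose removal leaves at least one inner vertex orbit of the covering graph not in motion; equivalently, working with the single component $\hat H^*$ containing $\te$ (which is itself \SG-Assur), removing $\te$ from $\hat H^*$ leaves some inner vertex $\tilde v$ rigidly pinned relative to the ground. First I would reduce to the case of a single \SG-Assur component by Corollary~\ref{cor:decomp} (an \SG-Assur graph is indecomposable), and translate the conclusion into orbit-matrix language: after deleting the row of $\te$ from the square invertible matrix $\mathcal{O}_{pin}(\hat H^*,\psi,\tilde\bp)$, the resulting $(2|\hat I|-1)\times 2|\hat I|$ matrix still has a $2\times 2$ block of full rank in the columns of $\tilde v$ for \SG-regular $\bp$ — i.e. the two $\tilde v$-columns lie in the column space of the other vertices' columns restricted to the surviving rows. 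The key step is then to extract a combinatorial consequence: the non-edge orbit $\te$ (say joining $\tilde u$ and $\tilde v$ in the quotient) is \emph{implied} by $\hat E(\hat H^*)\setminus\{\te\}$ in the rigidity matroid of the orbit matrix, yet the subgraph $\hat H^*-\te$ does not contain $\tilde u,\tilde v$ in a common pinned-\SG-rigid component (otherwise $\tilde v$ would not be the unique stationary orbit, or $\hat H^*$ would fail minimality). This contradicts the fact that in the plane the implied-edge relation of the \SG-rigidity matroid coincides with the closure within a single \SG-rigid subcomponent, which follows from the Laman-type Theorem~\ref{thm:symmetry_reflection} together with the matroidal counting of Theorem~\ref{thm:Scounts}: an edge orbit is in the closure of $F$ precisely when its endpoints lie in a common $(2,3,1)$-gain-tight (equivalently, \SG-rigid) subgraph spanned by $F$.

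I expect the main obstacle to be making the "implied edge = closure within a rigid subcomponent" step fully rigorous in the pinned, gain-graph setting — one must handle pinned vertices (which behave like a rigidly grounded subframework of dimension matching $triv_{\mathcal S}=1$) and confirm that the matroid on $\hat E$ restricted to inner-plus-pinned structure behaves as the $(2,3,1)$-count predicts near the pinned orbit, using the device from the proof of Theorem~\ref{thm:Scounts} of replacing $\hat P$ by a small \SG-isostatic gadget. A clean alternative, which I would pursue in parallel, is a direct argument: if removing $\te$ fixes some inner vertex orbit $\tilde v$, delete $\tilde v$ and its incident edge orbits from $\hat H^*-\te$; a degree/count bookkeeping via Theorem~\ref{thm:Scounts3D} (using $|\hat E|=2|\hat I|$ and the gain-sparsity of pin-free subgraphs, plus the $1$-dimensional trivial space) shows the remaining graph is again pinned $(2,3,1)$-gain-tight but has strictly fewer inner vertex orbits and contains a proper pinned \SG-isostatic subgraph, contradicting minimality of $\hat H^*$. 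The arithmetic there is routine once one verifies that a fixed inner vertex contributes exactly $2$ to both sides of the count, which is automatic since the action is free on vertices by hypothesis.
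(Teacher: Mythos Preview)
Your overall target is right --- the plan is to use Theorem~\ref{thm:Scounts} to produce a proper pinned \SG-isostatic subgraph and contradict minimality --- but both of the concrete routes you describe have genuine gaps.

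In your first route, the central claim is wrong. You write that after deleting $\te$ and observing a fixed vertex orbit $\tilde v$, ``the non-edge orbit $\te$ \ldots is implied by $\hat E(\hat H^*)\setminus\{\te\}$ in the rigidity matroid.'' But $\te$ is \emph{not} in the closure of the remaining edges: $\hat H^*$ is \SG-isostatic, hence independent, so removing $\te$ drops the rank by one and $\te$ is a coloop of $\hat E(\hat H^*)$. The phenomenon you are chasing is not that $\te$ becomes redundant; it is that some \emph{other} vertex orbit $\tilde v$ (which need not be an endpoint of $\te$ --- you conflate these when you write ``say joining $\tilde u$ and $\tilde v$'') stays rigidly tied to the ground in $\hat H^*-\te$. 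So the ``implied edge $=$ closure inside a rigid subcomponent'' framing is aimed at the wrong object.

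Your second ``direct'' route --- delete the fixed vertex $\tilde v$ and do count bookkeeping --- also does not close. After deleting $\tilde v$ and its incident edge orbits from $\hat H^*-\te$ you have no control over how many edges you removed, so there is no reason the result is $(2,3,1)$-gain-tight, nor does this deletion by itself exhibit a pinned \SG-isostatic subgraph of $\hat H^*$.

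The paper's argument is much shorter and avoids both detours. Suppose $\hat G$ is \SG-Assur and remove an edge $\tu\tv$ from $\hat H$. The orbit matrix now has a one-dimensional kernel; if some inner vertex orbit $i$ has $U_i=0$, then $i$ is still \SG-rigidly attached to the ground in $\hat H-\tu\tv$. Since $\hat H-\tu\tv$ is \SG-independent, the rigid piece containing $i$ and the ground is \SG-isostatic, and by Theorem~\ref{thm:Scounts} it is a pinned subgraph $\hat H'$ with $|\tilde E'|=2|\tilde I'|$. This $\hat H'$ cannot contain both $\tu$ and $\tv$ (otherwise adding $\tu\tv$ back would create an \SG-dependence inside the \SG-isostatic $\hat H$), so $\hat H'$ is a proper pinned \SG-isostatic subgraph, contradicting minimality. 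The step you should replace in your write-up is: do not delete $\tilde v$, and do not try to put $\te$ in a closure --- instead, \emph{keep} the fixed vertex and extract the pinned \SG-isostatic subgraph that holds it still.
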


The proof is similar to \cite[Proposition 4.2]{3directed}.

\begin{proof}
By definition, strongly \SG-Assur graphs are \SG-Assur.
For the converse, assume $\hat G$ is \SG-Assur and delete an edge $\tilde u \tilde v$ from the corresponding \SG-gain graph $\hat H$. Theorem \ref{thm:Scounts3D} implies that $\hat H-\tu\tv$ satisfies $|\tilde E|=2|\tilde I|-1$. Hence the orbit matrix $\mathcal{O}_{pin}(\hat H-\tu\tv, \psi, \tilde \bp)$ admits a non-trivial solution $U$ to the equation $\mathcal{O}_{pin}(\hat H-\tu\tv, \psi, \tilde \bp) \times U=0$. Let $U=(U_1,U_2,\dots,U_{|\tilde E|})^T$. If $U_i=0$ for some $i$ then that inner vertex is still \SG-rigidly connected to the ground. Therefore Theorem \ref{thm:Scounts} implies vertex $i$ must be contained in a pinned subgraph $\hat H'$ with $|\tilde E'|=2|\tilde I'|$. The corresponding covering graph $\hat G'$ would then be pinned \SG-isostatic but $\hat H'$ contains at most one of $\tu$ and $\tv$ implying that $|\tilde I'| < |\tilde I|$. This contradicts the minimality of $\hat G$.
\end{proof}

For more exotic symmetry groups, we do not know if the proposition holds.
We do however note that in higher dimensions the proposition fails, and hence, see the difficulty for even order dihedral groups in the plane (see the comments following Theorem \ref{thm:symmetry_reflection}) as a warning that there is the potential for the proposition above to break down in that case.

\section{\SG-Assur graphs with generically pinned-isostatic graphs}\label{sec:isostatic}

In this section we will analyse $\mathcal{S}$-Assur decompositions of $\mathcal{S}$-symmetric pinned graphs which are both pinned $\mathcal{S}$-isostatic and pinned isostatic (i.e. there are also no symmetry-breaking motions or non-symmetric self-stresses).
Examples of such graphs are shown in Figures~\ref{fig:desargues}, \ref{fig:C3Space} and \ref{fig:C4}. Given such a graph (with a free group action), we will show in  Theorem~\ref{thm:map} that there exists a very strong connection between the Assur and the \SG-Assur decompositions of the graph - essentially a bijection between them (see Figures~\ref{fig:desargues}, \ref{fig:desargues2}).

\begin{figure}[ht]
    \begin{center}
  \subfigure[] {\includegraphics [width=.48\textwidth]{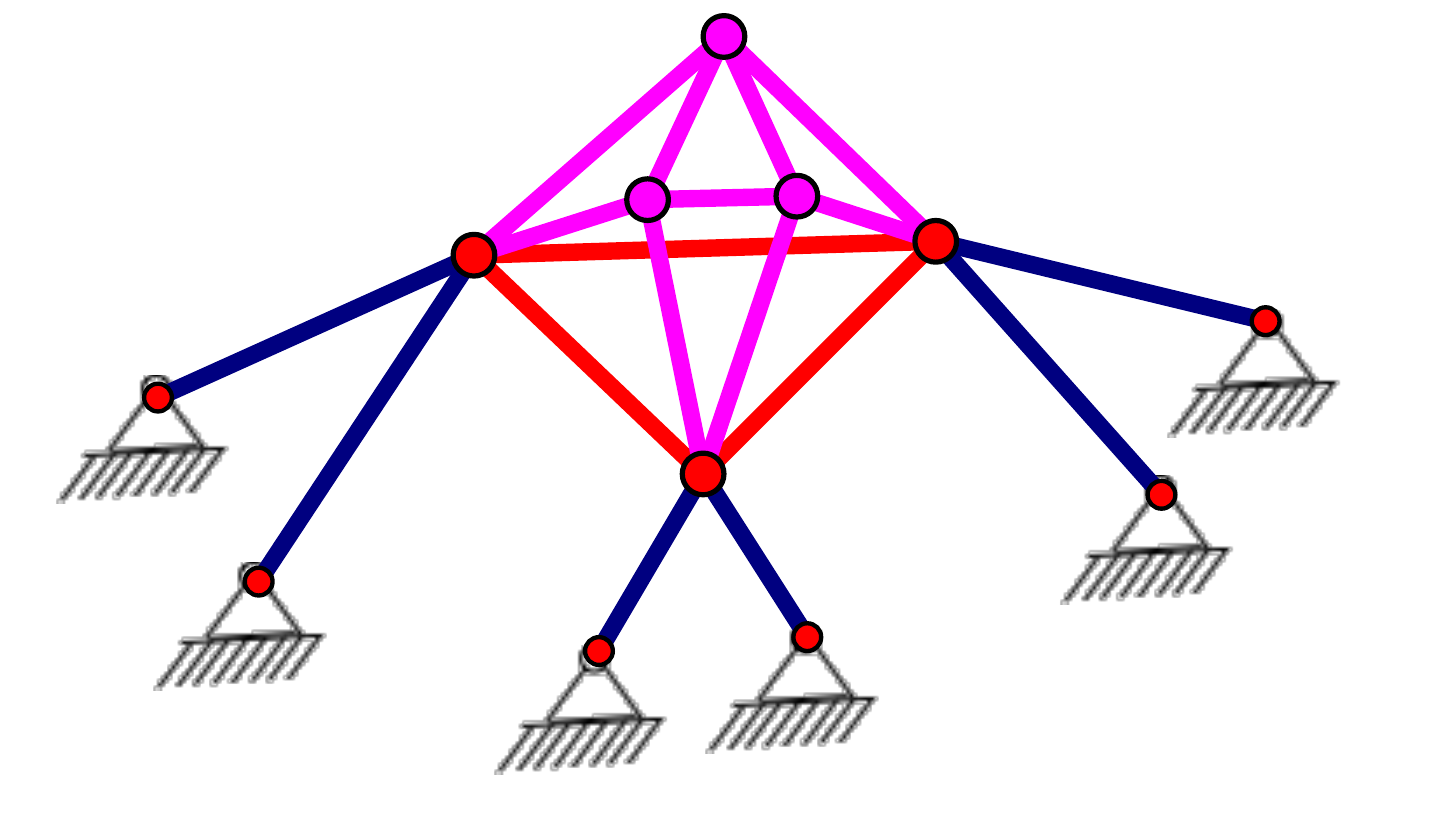}} \quad\quad
    \subfigure[] {\includegraphics [width=.14\textwidth]{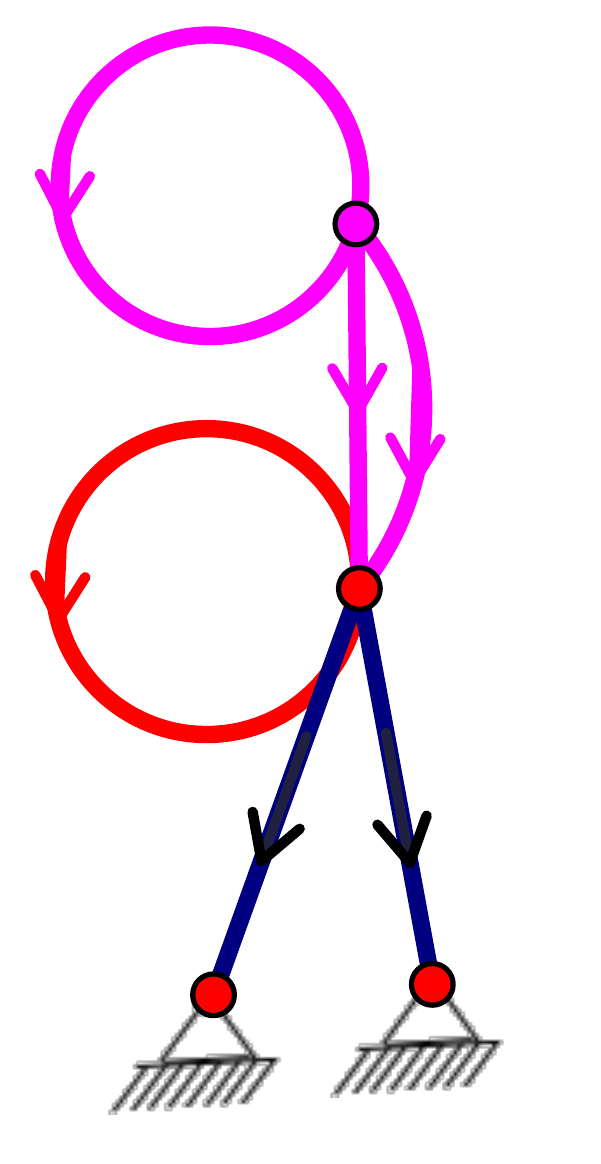}} \quad\quad
\subfigure[] {\includegraphics [width=.09\textwidth]{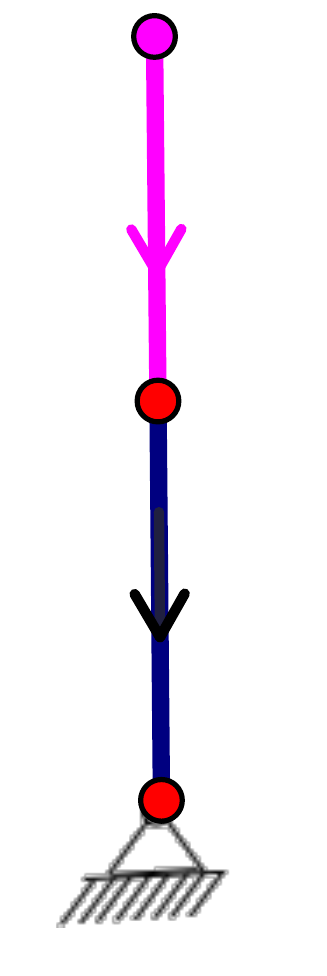}}
       \end{center}
    \caption{A $\mathcal{C}_3$-symmetric isostatic framework in $3$-space (a), its   quotient  $\mathcal{C}_3$-gain graph (b), and its $\mathcal{C}_3$-Assur block graph (c).}
    \label{fig:C3Space}
    \end{figure}

For the symmetry group $\mathcal{C}_3$ in the plane, a pinned $\mathcal{C}_3$-symmetric graph, where $\mathcal{C}_3$ acts freely on the vertices, is pinned $\mathcal{C}_3$-isostatic if and only if it is  pinned  isostatic. This  follows from Theorem~\ref{thm:Scounts} and the fact that for each of the three irreducible representations of $\mathcal{C}_3$,  the rank properties of the corresponding orbit matrix are described by exactly the same counting conditions given  in Theorem~\ref{thm:Scounts} (see \cite{schtan}).
More generally, we conjecture that for the rotational groups $\mathcal{C}_n$, $n>2$, in the plane, pinned $\mathcal{C}_n$-isostaticity is equivalent to pinned isostaticity, provided that the action of   $\mathcal{C}_n$ is free on the vertices and edges. In particular, this includes all odd-order rotational groups which act freely on the vertices.

 However, note that if a pinned graph $\hat G=(I,P;E)$ is $\mathcal{C}_n$-isostatic, $n\geq 2$, or  $\mathcal{C}_s$-isostatic in the plane, where the action is free on the vertices, but not on the edges, then  we must have $|E|<2|I|$, and hence $\hat G$ is not pinned isostatic, but flexible \cite{FGsymmax}. For  $\mathcal{C}_2$ or $\mathcal{C}_s$ in the plane, symmetry-breaking motions can also arise in $\mathcal{C}_2$- or $\mathcal{C}_s$-isostatic pinned graphs   if the action is free on both vertices and edges, due to a violation of the $(2,3,2)$-gain sparsity count (see Figure~\ref{fig:nonisostatic} and \cite{schtan}).





For $\mathcal{S}$-symmetric pinned graphs which are both pinned $\mathcal{S}$-isostatic and pinned isostatic, we {\it conjecture} that if we remove an orbit of edges and convert those edges  into synchronized drivers (actuators), then the resulting symmetry-preserving motion at a \SG-regular configuration is the only motion of the structure. In other words, 
no symmetry-breaking motions can occur.


\begin{figure}[ht]
    \begin{center}
  \subfigure[] {\includegraphics [width=.38\textwidth]{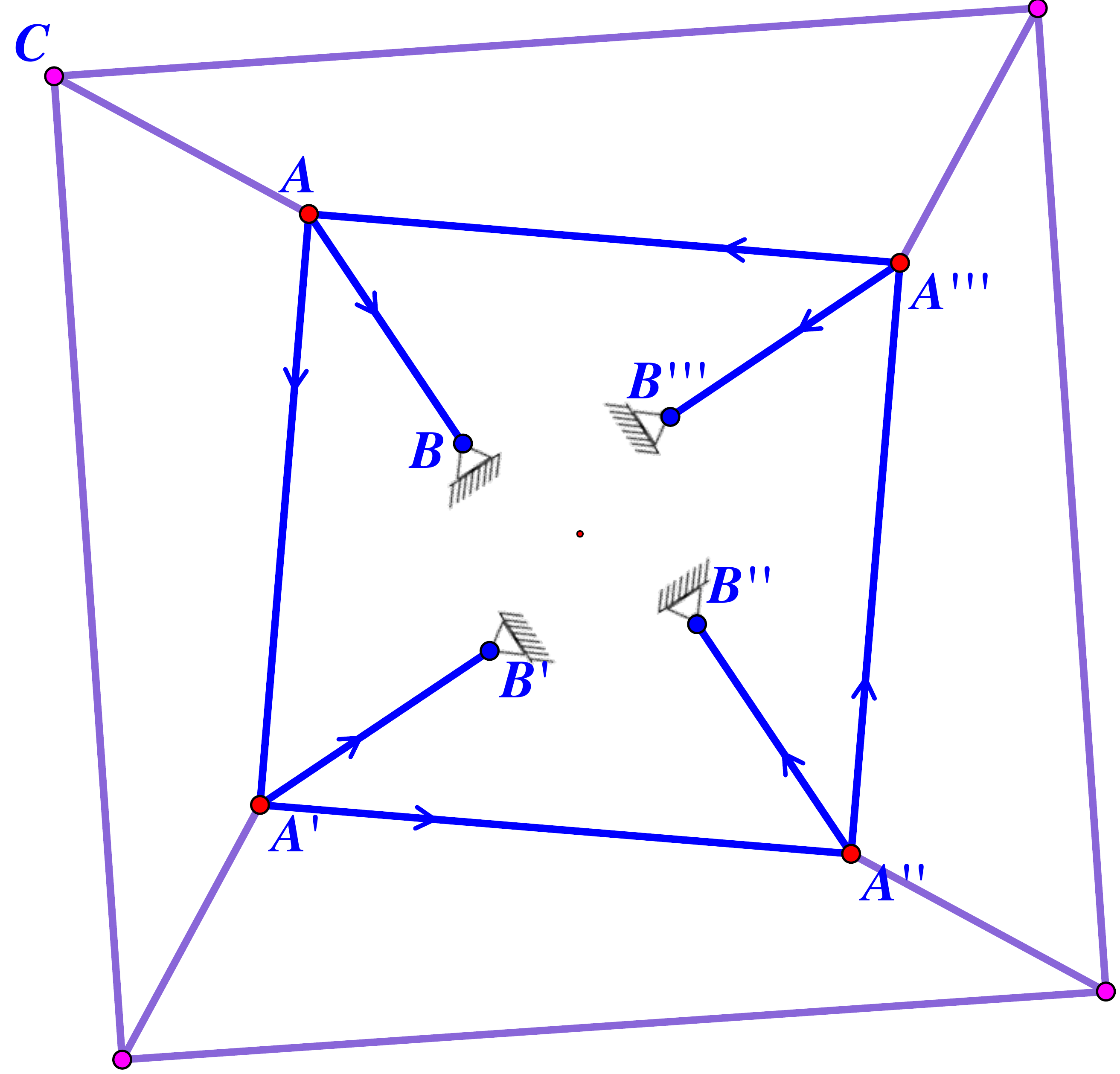}} \quad\quad \quad
  \subfigure[] {\includegraphics [width=.13\textwidth]{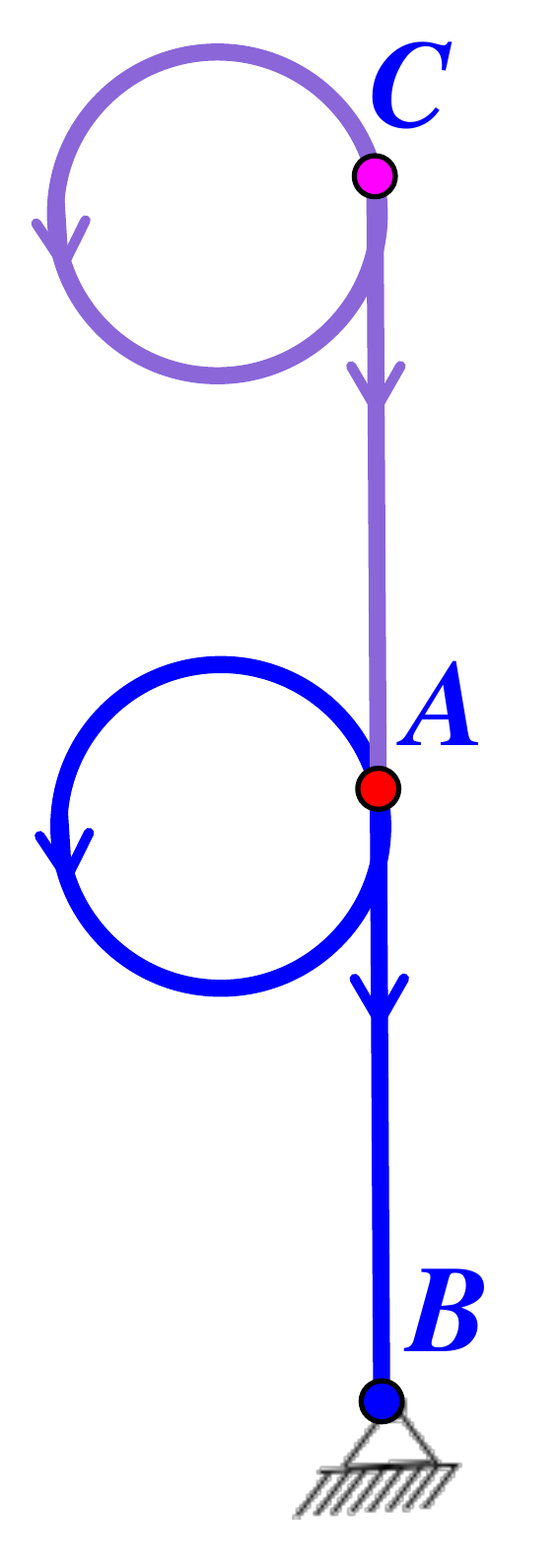}} \quad \quad
  \subfigure[] {\includegraphics [width=.08\textwidth]{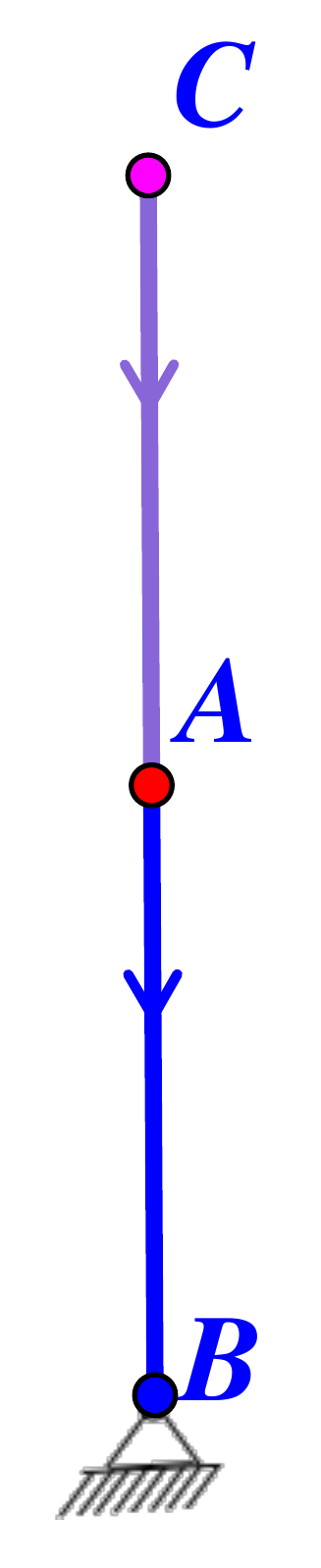}} 
   \end{center}
    \caption{A framework with $\mathcal{C}_4$-symmetry in the plane (a), its   quotient  $\mathcal{C}_4$-gain graph (b), and its $\mathcal{C}_4$-Assur block graph (c). }
    \label{fig:C4}
    \end{figure}

\begin{Example} Consider the example in Figure \ref{fig:C4}. A 2-direction on the gain graph (b) induces a symmetric set of 2-directions on the original graph.
If we shift any orbit of edges from the bottom component into a symmetric set of drivers, then this has a $4$-fold symmetric motion, which moves all vertices.  If we shift an orbit of edges from the top component into a symmetric set of drivers, then there is a $\mathcal{C}_4$-symmetric motion which moves only the vertices in the top component.

If these drivers encounter a `dead end position' in which the drivers no longer can move, then the rank of the orbit matrix has dropped at this singular position and there will be a symmetric self-stress.  We {\it conjecture}  this symmetric self-stress must be non-zero on all edges in this graph. Whether this extends to all \SG-Assur components is a question worth further investigation.
\end{Example}

\subsection{Mapping between gain graphs and covering graphs}

We now formalise the relationship between components in the covering graph and the gain graph.

\begin{Lemma}\label{lem:gaintocover}
Let $\hat G$ be a pinned graph and let the action of \SG~be free on vertices and edges. Then
\begin{enumerate}
\item any \SG-directed orientation of $\hat H$ lifts to a d-directed orientation of $\hat G$ and any d-directed orientation of $\hat G$ projects to a \SG-directed orientation of $\hat H$.
\item any strongly connected component of $\hat H$ lifts to a set of strongly connected components of $\hat G$ and any strongly connected component of $\hat G$ projects to a strongly connected component of $\hat H$.
\end{enumerate}
\end{Lemma}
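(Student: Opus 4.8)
The plan is to exploit the covering-map structure between $\hat G$ and $\hat H$ and check that both orientation-lifting and strong-connectivity are preserved under this map, in both directions. Throughout, recall that the fibre $c^{-1}(\tilde v)$ of a vertex $\tilde v$ is a vertex orbit $\mathcal S\tilde v$ of size $|\mathcal S|$ (since $\theta$ is free on vertices), and similarly $c^{-1}(\te)$ is an edge orbit of size $|\mathcal S|$; in particular the local structure around each vertex $xv$ in $\hat G$ is a faithful copy of the local structure around $\tilde v$ in $\hat H$, so out-degrees are preserved fibre-wise.

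For part (1), I would argue as follows. Given a $\mathcal S$-directed orientation of $\hat H$, orient each edge $e$ of $\hat G$ so that $c(e)$'s orientation is respected: concretely, if the reference edge orbit is $\{\{xu,x\psi(\te)v\}: x\in\mathcal S\}$ and $\te$ is directed from $\tilde u$ to $\tilde v$ in $\hat H$, direct every edge $\{xu,x\psi(\te)v\}$ from $xu$ towards $x\psi(\te)v$. Because the action is free on vertices, the out-degree of $xu$ in $\hat G$ equals the out-degree of $\tilde u$ in $\hat H$ (the edges out of $xu$ are in bijection, via $c$, with the edges out of $\tilde u$), and the same holds for pinned vertices, whose out-degree stays $0$; so we obtain a $d$-directed orientation of $\hat G$. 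Conversely, given a $d$-directed orientation of $\hat G$, one must first observe it can be taken $\mathcal S$-symmetric: applying Proposition~\ref{Proposition:dorientation} (equivalently, the underlying sparse-orientation results) there \emph{exists} a symmetric one, and by Corollary~\ref{cor:strong} any two $d$-directed orientations of $\hat G$ with matching out-degrees differ by reversals of directed cycles, which will not affect the strong-component statement in part (2); alternatively one averages/symmetrises the orientation directly. A symmetric $d$-directed orientation of $\hat G$ then descends, edge-orbit by edge-orbit, to a well-defined orientation of $\hat H$ with out-degrees $\dim U_{\bp(u)}=d$ at each inner vertex and $0$ at each pinned vertex, i.e.\ a $\mathcal S$-directed orientation.

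For part (2), I would use that a directed walk in $\hat H$ lifts, starting from any chosen vertex of a fibre, to a directed walk in $\hat G$ (path lifting for the covering $c$), and conversely $c$ carries any directed walk in $\hat G$ to a directed walk in $\hat H$. Hence if $\tilde u,\tilde v$ lie in a common strong component of $\hat H$, then for each $x\in\mathcal S$ the vertex $xu$ reaches some lift of $\tilde v$ and vice versa, so the fibres $c^{-1}(\tilde u)$ and $c^{-1}(\tilde v)$ together lie in a union of strong components of $\hat G$ — each strong component of $\hat G$ mapping onto the whole strong component of $\hat H$; this is the ``lifts to a set of strongly connected components'' statement (the fibre of one $\hat H$-component is a disjoint union of $\hat G$-components, each a copy of it, permuted by $\mathcal S$). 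Conversely, if $C$ is a strong component of $\hat G$, then $c(C)$ is strongly connected in $\hat H$ (image of directed paths), hence contained in a strong component $\tilde C$ of $\hat H$; maximality of $C$ together with the path-lifting just used forces $c(C)=\tilde C$, since any vertex of $\tilde C$ has a lift reachable from and to $C$.

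The main obstacle is the ``conversely'' direction of part~(1): a $d$-directed orientation of $\hat G$ chosen arbitrarily need not be $\mathcal S$-symmetric, so it does not literally project to an orientation of $\hat H$. The resolution is to first replace it by a symmetric one of the same out-degree profile — using the existence statement of Proposition~\ref{Proposition:dorientation} and Corollary~\ref{cor:strong} to argue that the strong-component decomposition (which is all part~(2) cares about) is unaffected by this replacement, since any two out-degree-equivalent orientations differ only by cycle reversals. Everything else reduces to the standard path-lifting property of the covering map $c$ and the fibre-wise bijection of local neighbourhoods afforded by freeness of the action on vertices and edges.
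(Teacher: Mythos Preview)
Your proposal is essentially correct, and for part~(1) it matches the paper's argument closely: the paper's proof of (1) is a single sentence noting that out-degrees are preserved fibre-wise because the action is free on vertices and edges. You are in fact more careful than the paper in flagging that an arbitrary $d$-directed orientation of $\hat G$ need not be $\mathcal S$-symmetric and so does not literally project. One small caveat: Proposition~\ref{Proposition:dorientation} assumes $\hat G$ is pinned $\mathcal S$-isostatic, which this lemma does not, so strictly speaking you should instead argue directly (via the subgraph counts inherited from the given $d$-orientation of $\hat G$) that $\hat H$ admits a $d$-orientation.

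For part~(2) your route genuinely differs from the paper's. The paper argues by explicit gain-tracking: given a directed cycle $\tilde C_n$ in $\hat H$ with edge gains $g_1,\ldots,g_n$, it writes down the lifted walk in $\hat G$ and shows it closes up into $m/k$ disjoint directed cycles of length $kn$, where $m=|\mathcal S|$ and $k$ is the order of the subgroup generated by $g_1,\ldots,g_n$. This concrete description immediately gives that each lifted cycle surjects onto $\tilde C_n$, which is what supplies the ``onto'' part of the projection statement. Your abstract path-lifting argument is cleaner and more conceptual, but the step ``maximality of $C$ together with path-lifting forces $c(C)=\tilde C$'' hides exactly this point: lifting a closed walk $\tilde u\to\tilde w\to\tilde u$ from $u\in C$ lands at some $u''=gu$ which is not \emph{a priori} in $C$; you need to iterate the lift $|g|$ times (or invoke the $\mathcal S$-action on strong components together with acyclicity of the condensation) to conclude $u''\in C$ and hence $w'\in C$. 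Once that iteration is made explicit, your argument is complete and arguably tidier than the paper's explicit cycle computation.
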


\begin{proof}
For any vertex $\tilde v \in \tilde V$ and any vertex $v \in \hat V$ in the fiber over $\tilde v$ the out-degrees are equal by the definition of a gain graph and the assumption the action is free on the vertices and edges. This proves 1.

Let \SG~have elements $id,a_1,a_2,\dots,a_{m-1}$ (so \SG~has order $m$). The group operation will be written additively in this proof.
Suppose there is a directed cycle $\tilde C_n$ in $\hat H$ on vertices $\tilde v_1,\tilde v_2,\dots, \tilde v_n$ with gain $g_{i}$ on edge $\tilde v_i\tilde v_{i+1}$ for each $i$, and that the corresponding vertex orbits in $\hat G$ are $\{v_i^{id}=v_i,v_i^{a_1},\dots, v_i^{a_{m-1}}\}$ for $1\leq i \leq n$. Moreover, suppose $k$ is the order of the subgroup of \SG~induced by the labels on the edges of $\tilde C_n$. Each edge $\tilde v_i\tilde v_{i+1}$ lifts to an edge $v_i^rv_j^{r+g_i}$ where $r\in\mathcal{S}$ is the sum of the gains on the edges $\tilde v_1\tilde v_2,\dots, \tilde v_{i-1}\tilde v_i$. The path $v_1v_2^{g_1}, v_2^{g_1}v_3^{g_1+g_2}, \dots$ visits $v_1$ again on its $k$th visit to a vertex in the orbit $\tilde v_1$ (and also visits each other vertex $k$ times). For any vertex in the orbit of $\tilde v_1$ not visited by this cycle, repeat the path above.
It will take $m/k$ repetitions to ensure every vertex in the orbit of $\tilde v_1$ has been visited. Hence $\tilde C_n$ corresponds to $m/k$ directed cycles $\hat C_{kn}^1,\hat C_{kn}^2,\dots, \hat C_{kn}^{m/k}$ in $\hat G$. This proves the first part of 2, lifting from $\hat H$ to $\hat G$.

Conversely, take any two vertex orbits in $\hat H$  that project from the same strongly connected component in $\hat G$.  Since there is a directed cycle in $\hat G$ connecting elements of the two orbits, the projection of this cycle into $\hat H$ will be a cycle (not necessarily simple) connecting the two orbits.  The completes the proof of 2.
\end{proof}

We can now prove the second main result of the paper.

\begin{Theorem}
\label{thm:map}
Let $\hat G$ be a pinned \SG-isostatic  graph which is pinned isostatic at every pinned \SG-regular realisation and let $\hat H$ be the corresponding \SG-gain graph. Moreover suppose the action of \SG~is free on vertices and edges. Then the projection of a $d$-Assur graph is a \SG-Assur graph, and the lift of a \SG-Assur graph is a set of inner-vertex disjoint sets of $d$-Assur graphs (possibly only one).
\end{Theorem}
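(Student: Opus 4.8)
The plan is to reduce the statement to a fact about strongly connected components and then quote the two decomposition theorems together with Lemma~\ref{lem:gaintocover}. Since $\hat G$ is pinned isostatic at every (hence at some) pinned \SG-regular realisation, $\hat G$ is in particular pinned $d$-isostatic, so the $d$-Assur decomposition of $\hat G$ is defined and Theorem~\ref{3DirectedAssur} applies; likewise $\hat G$ is pinned \SG-isostatic, so Theorem~\ref{thm:decomp} applies to $\hat H$. First I would fix a compatible pair of orientations: take a \SG-directed orientation of $\hat H$ (Proposition~\ref{Proposition:dorientation}) and, by Lemma~\ref{lem:gaintocover}(1), lift it to a \SG-symmetric $d$-directed orientation of $\hat G$. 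Because the $d$-Assur decomposition is independent of the chosen $d$-directed orientation (Corollary~\ref{cor:strong}, Theorem~\ref{3DirectedAssur}) and the \SG-Assur decomposition is independent of the chosen \SG-directed orientation (Theorem~\ref{thm:decomp}), it suffices to prove the theorem for this one pair. For this pair, Theorem~\ref{3DirectedAssur} identifies the $d$-Assur components of $\hat G$ with the (extended) strongly connected components of the $d$-directed orientation of $\hat G$, and Theorem~\ref{thm:decomp} identifies the \SG-Assur components of $\hat H$ with the (extended) strongly connected components of the \SG-directed orientation of $\hat H$.

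Next I would apply Lemma~\ref{lem:gaintocover}(2): under the covering map $c$, every strongly connected component of $\hat G$ is carried onto a strongly connected component of $\hat H$, and every strongly connected component of $\hat H$ pulls back to a non-empty \SG-invariant union of strongly connected components of $\hat G$. Combining this with the identifications above gives immediately that the projection of a $d$-Assur component of $\hat G$ is a \SG-Assur component of $\hat H$, and the lift of a \SG-Assur component of $\hat H$ is a union of $d$-Assur components of $\hat G$. Two easy checks remain. First, the ``extension'' operation --- adjoining the outgoing edges of a strongly connected component and turning their heads into pins --- commutes with $c$, because $c$ restricts to a bijection on vertex orbits and on edge orbits and sends pinned vertices to pinned vertices, so extended components project and lift to extended components. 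Second, strongly connected components partition the vertex set of a directed graph, so distinct $d$-Assur components of $\hat G$ are inner-vertex disjoint, and hence so are the $d$-Assur components occurring in the lift of a single \SG-Assur component (with the possibility that this lift consists of a single $d$-Assur component, as happens when the \SG-Assur component lifts to a connected subgraph of $\hat G$).

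The genuinely substantive work sits in Lemma~\ref{lem:gaintocover} rather than in this theorem; given that lemma, what remains is essentially bookkeeping. The point I would be most careful about is the exact form of the statement extracted from Lemma~\ref{lem:gaintocover}(2): I need that a strongly connected component of $\hat G$ maps \emph{onto} --- not merely into --- a strongly connected component of $\hat H$, so that pulling back a \SG-Assur component yields a union of \emph{whole} $d$-Assur components rather than fragments of them. This is exactly what makes the ``inner-vertex disjoint sets of $d$-Assur graphs'' description accurate, and it is also where the hypothesis that \SG~acts freely on both vertices and edges is used (through Lemma~\ref{lem:gaintocover}). It is worth noting explicitly where the remaining hypotheses enter: ``pinned isostatic at every pinned \SG-regular realisation'' is used only to ensure $\hat G$ is pinned $d$-isostatic, so that the $d$-Assur decomposition exists and Theorem~\ref{3DirectedAssur} is available, while pinned \SG-isostaticity plays the corresponding role for Theorem~\ref{thm:decomp}.
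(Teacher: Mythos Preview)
Your proposal is correct and follows essentially the same route as the paper: invoke Theorem~\ref{3DirectedAssur} and Theorem~\ref{thm:decomp} to identify the $d$-Assur and \SG-Assur components with strongly connected components of $\hat G$ and $\hat H$ respectively, then apply Lemma~\ref{lem:gaintocover} to transfer between them. The paper's own proof is just those three sentences; your version simply makes explicit the choice of compatible orientations and the checks on extensions and inner-vertex disjointness that the paper leaves implicit.
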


\begin{proof}
Theorem \ref{3DirectedAssur} implies the strongly connected components of $\hat G$ are the $d$-Assur components. Theorem \ref{thm:decomp} implies the strongly connected components of $\hat H$ are the \SG-Assur components. Thus Lemma \ref{lem:gaintocover} implies the result.
\end{proof}

We expect that the map between quotient and covering graphs contains enough information for the following stronger conclusion to be possible.

\begin{Conjecture}Let $\hat G$ be a pinned \SG-isostatic graph which is isostatic at every \SG-regular realisation and let the action of \SG~be free on the vertices and edges. Then a \SG-Assur component is strongly \SG-Assur if and only if the corresponding d-Assur components are strongly d-Assur.
\end{Conjecture}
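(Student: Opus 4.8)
The plan is to exploit the near-bijection of Theorem~\ref{thm:map} together with the driver-propagation results for $d$-Assur graphs from~\cite{3directed}. I would first reduce to a single component: replace $\hat G$ by the extended \SG-Assur component $\hat G^*$ under consideration. This $\hat G^*$ is itself pinned \SG-isostatic (being \SG-Assur) and, by the hypothesis on $\hat G$ together with Theorem~\ref{thm:map}, pinned $d$-isostatic at \SG-regular configurations, with $d$-Assur components $D_1,\dots,D_s$ that are precisely the ``corresponding $d$-Assur components'' of the conjecture and are pairwise inner-vertex disjoint. A structural lemma I would establish here is that two $d$-Assur components of $\hat G^*$ lying in one \SG-orbit are incomparable in the $d$-Assur partial order: if $bD$ were strictly above $D$ for some $b\in\mathcal{S}$, iterating $b$ and using transitivity would place $D$ strictly above itself. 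I would also note that $\mathcal{S}$ permutes $\{D_1,\dots,D_s\}$, and in the generic situation where this action is free --- equivalently, every $D_i$ has trivial \SG-stabiliser, a condition readable off the cycle gains as in Lemma~\ref{lem:gaintocover} --- the components form a single \SG-orbit of size $|\mathcal{S}|$ and each edge orbit of $\hat G^*$ meets each $D_i$ in exactly one edge.

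For the implication ``all $D_i$ strongly $d$-Assur $\Rightarrow$ $\hat G^*$ strongly \SG-Assur'', fix an edge $\tilde e$ of the quotient $\hat H^*$ and let $\{e_1,\dots,e_t\}$ be its fibre, with $e_j$ having tail in $D_{(j)}$. Since $\hat G^*$ is pinned $d$-isostatic, $R(\hat G^*-e_j,\bp)$ has a one-dimensional kernel, spanned by $V_j$; because $D_{(j)}$ is strongly $d$-Assur, the ``regular driver'' statement of~\cite{3directed} says $V_j$ moves precisely the inner vertices of $D_{(j)}$ and of the components strictly above it, fixing all others. These $V_j$ lie in $\ker R(\hat G^*-\mathcal{S}e,\bp)$, whose dimension equals the orbit size; by the incomparability lemma they are linearly independent, hence a basis, and $\mathcal{S}$ permutes the lines $\mathbb{R}V_j$ exactly as it permutes the $D_{(j)}$, so after rescaling the $V_j$ the module is a permutation module whose fully \SG-symmetric line is spanned by the average $U=\sum_j V_j$. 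Every $d$-Assur component of $\hat G^*$ equals some $D_{(k)}$, and for $j\neq k$ the component $D_{(k)}$ is incomparable to $D_{(j)}$, hence fixed by $V_j$; thus $U(v)=V_k(v)\neq 0$ for every inner vertex $v\in D_{(k)}$, so $U$ moves every inner vertex and $\hat G^*$ is strongly \SG-Assur.

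For the converse, assume $\hat G^*$ is strongly \SG-Assur, let $f$ be an edge of $D:=D_{(0)}$, and let $\tilde e$ be the image of $\mathcal{S}f$. The unique fully \SG-symmetric motion $U$ of $\hat G^*-\mathcal{S}f$ moves all inner vertices, and lifts to an infinitesimal motion $\mathbf u$ of the covering graph $\hat G^*-\mathcal{S}f$. Restrict $\mathbf u$ to the down-set $\hat G^*_{\downarrow D}$ consisting of $D$ and all $d$-Assur components strictly below it, which is pinned $d$-isostatic (its rigidity matrix is a square invertible principal submatrix of the block-triangular $R(\hat G^*,\bp)$): the only removed edge meeting this down-set is $f$, since the other $e_j$ have tails in components incomparable to $D$, so $\mathbf u$ restricts to an element of the one-dimensional motion space of $\hat G^*_{\downarrow D}-f$. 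The components strictly below $D$ retain all their bars, hence are rigid, so $\mathbf u$ vanishes there; in particular the heads of the outgoing edges of $D$ are fixed, so the restriction of $\mathbf u$ to $D$ is a genuine infinitesimal motion of the standalone extended component $D-f$, and it is nonzero since it moves all inner vertices of $D$; as $D-f$ has a one-dimensional motion space, every generator of that space then moves all inner vertices of $D$. Since $f$ was arbitrary, $D$ is strongly $d$-Assur, and the same follows for each $D_i$ by applying a group element.

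The step I expect to be the main obstacle is dropping the assumption that \SG~acts freely on the $d$-Assur components. Freeness of \SG~on the vertices and edges of $\hat G$ does not forbid a $d$-Assur component $D$ from being fixed by a non-trivial symmetry --- the gains around the cycles inside $D$ may generate a proper subgroup, exactly the mechanism of Lemma~\ref{lem:gaintocover} --- and then an edge orbit meets $D$ in several edges, so deleting the orbit $\tilde e$ on the \SG-side deletes \emph{several} edges of $D$ at once, with no direct comparison to ``$D-f$'' for a single edge $f$. Overcoming this will likely require passing to the stabiliser subgroup of $D$ acting on $D$, an inductive reduction, and careful bookkeeping when several \SG-orbits of $d$-Assur components are interleaved in the two partial orders; this is essentially the non-free phenomenon flagged in Remark~\ref{rem:fixorb}, which is presumably why the statement is posed here only as a conjecture.
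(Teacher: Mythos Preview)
The statement you are addressing is labelled a \emph{Conjecture} in the paper, and the paper does not supply a proof.  Immediately after stating it, the authors only remark that it holds for the plane groups $\mathcal{C}_s$ and $\mathcal{C}_n$ because there Proposition~\ref{prop:strong} makes ``\SG-Assur'' and ``strongly \SG-Assur'' coincide, so the claim collapses to Theorem~\ref{thm:map}; they explicitly leave the general case open.  There is therefore no paper proof to compare your proposal against.

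Your proposal goes well beyond what the paper attempts: you give a genuine two-sided argument under the extra hypothesis that \SG\ acts freely on the set of $d$-Assur components of the chosen \SG-Assur piece.  Under that hypothesis the outline is sound --- the incomparability lemma, the permutation-module description of the kernel of $R(\hat G^*-\mathcal{S}e,\bp)$, and the restriction-to-down-set step all work as you describe, and your observation that the $D_i$ form a single \SG-orbit is correct (a maximal orbit of components has no incoming edges, hence its \SG-invariant vertex set projects onto all of $\hat H$, forcing it to be everything).  What you have, then, is a proof of a strict special case of the conjecture that is already stronger than anything the paper establishes.

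You have also correctly located the obstruction to a full proof.  When a $d$-Assur component $D$ has nontrivial \SG-stabiliser $\mathcal{R}$, removing a single edge $\tilde e$ from the quotient removes $|\mathcal{R}|$ edges from $D$, so neither direction reduces to the single-driver analysis of \cite{3directed}: on the forward side the motion space of $D$ after removing $|\mathcal{R}|$ edges is $|\mathcal{R}|$-dimensional and one must isolate the $\mathcal{R}$-symmetric line inside it and argue it is everywhere nonzero; on the converse side the restriction of the lifted motion to $D$ is a motion of $D$ with $|\mathcal{R}|$ edges deleted, which says nothing directly about $D-f$ for a single $f$.  This is precisely why the statement remains a conjecture in the paper, and your suggested reduction to the stabiliser subgroup acting on $D$ is the natural next move, but it does not fall out of the machinery assembled here.
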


Recall that for reflection and rotation symmetry in the plane pinned graphs are \SG-Assur if and only if they are strongly \SG-Assur (see Propposition \ref{prop:strong}). Hence the conjecture follows in those cases from Theorem \ref{thm:map}. However we still believe the conjecture in the cases where \SG-Assur is a strictly weaker property than strongly \SG-Assur.

\subsection{Subgroup Assur decompositions}\label{subsec:subgroup}

In Figure \ref{fig:C6},  we observe that there is a correspondence among the distinct \SG-block graphs induced by  various subgroups.  Specifically, when a symmetry group \SG$'$ is a subgroup of \SG, then  the gain graph of \SG is a projection of the gain graph of \SG$'$  and there is an induced projection from the \SG$'$-block graph to the \SG-block graph.  Conversely, there is a lifting of  the  gain graph of \SG to the gain graph of \SG$'$.   This illustrates a general principle.
If $\hat G$ is \SG-isostatic and \RG-isostatic for some subgroup \RG~of \SG~then the analysis in the previous subsection can be adapted to reveal the \RG-Assur decomposition.

\begin{figure}[ht]
    \begin{center}
  \subfigure[] {\includegraphics [width=.25\textwidth]{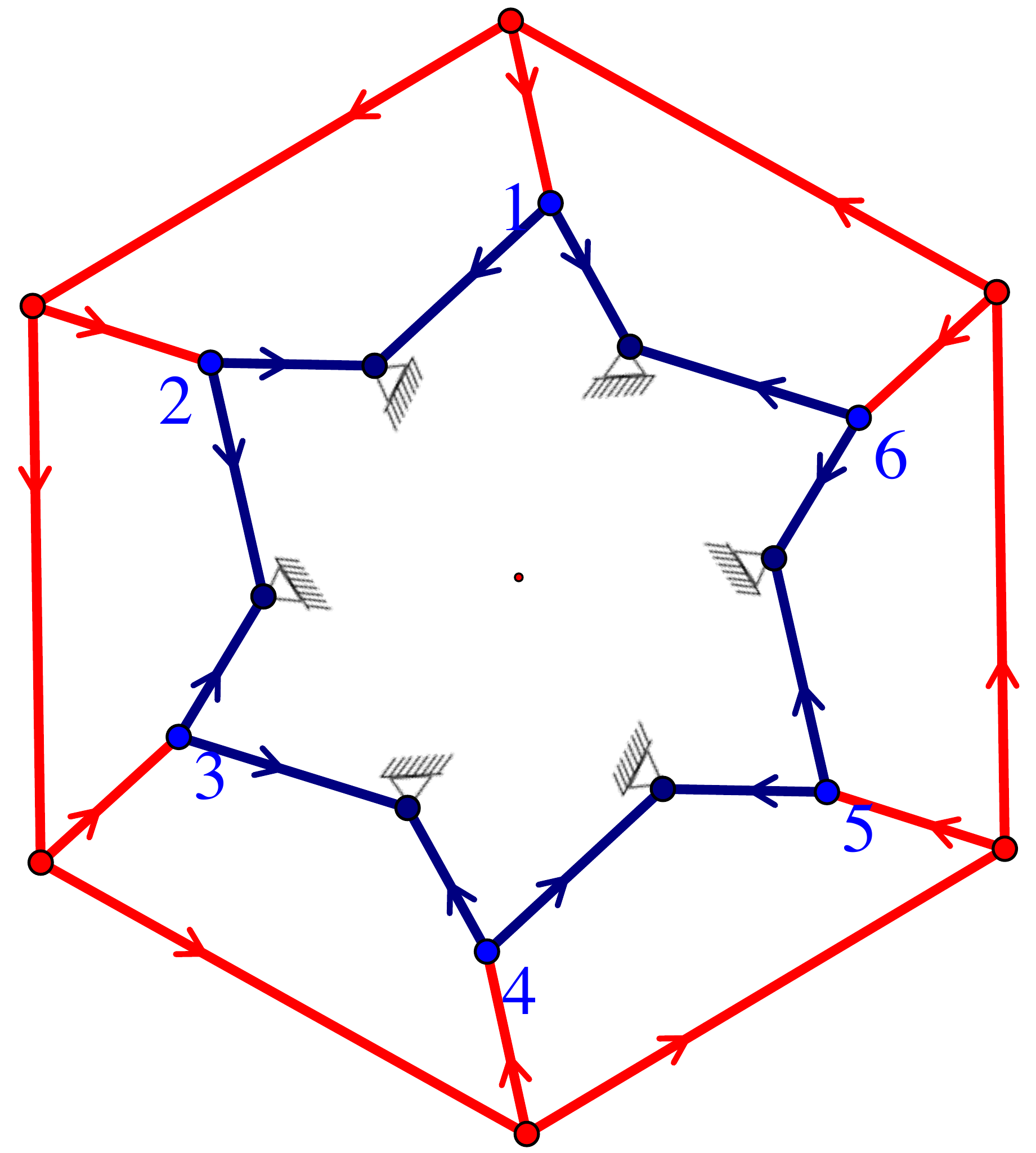}} \quad
  \subfigure[] {\includegraphics [width=.33\textwidth]{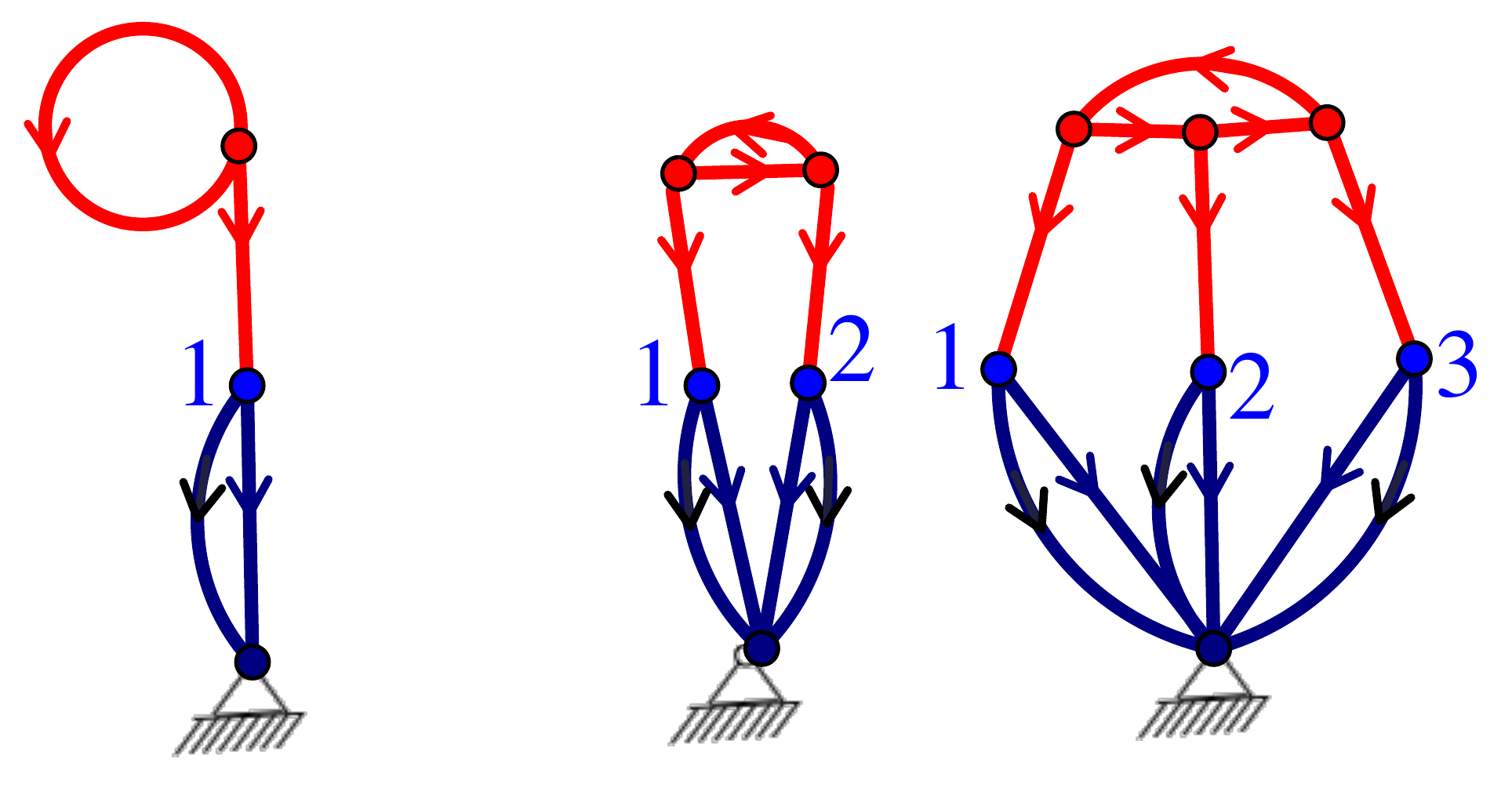}} \quad
  \subfigure[] {\includegraphics [width=.33\textwidth]{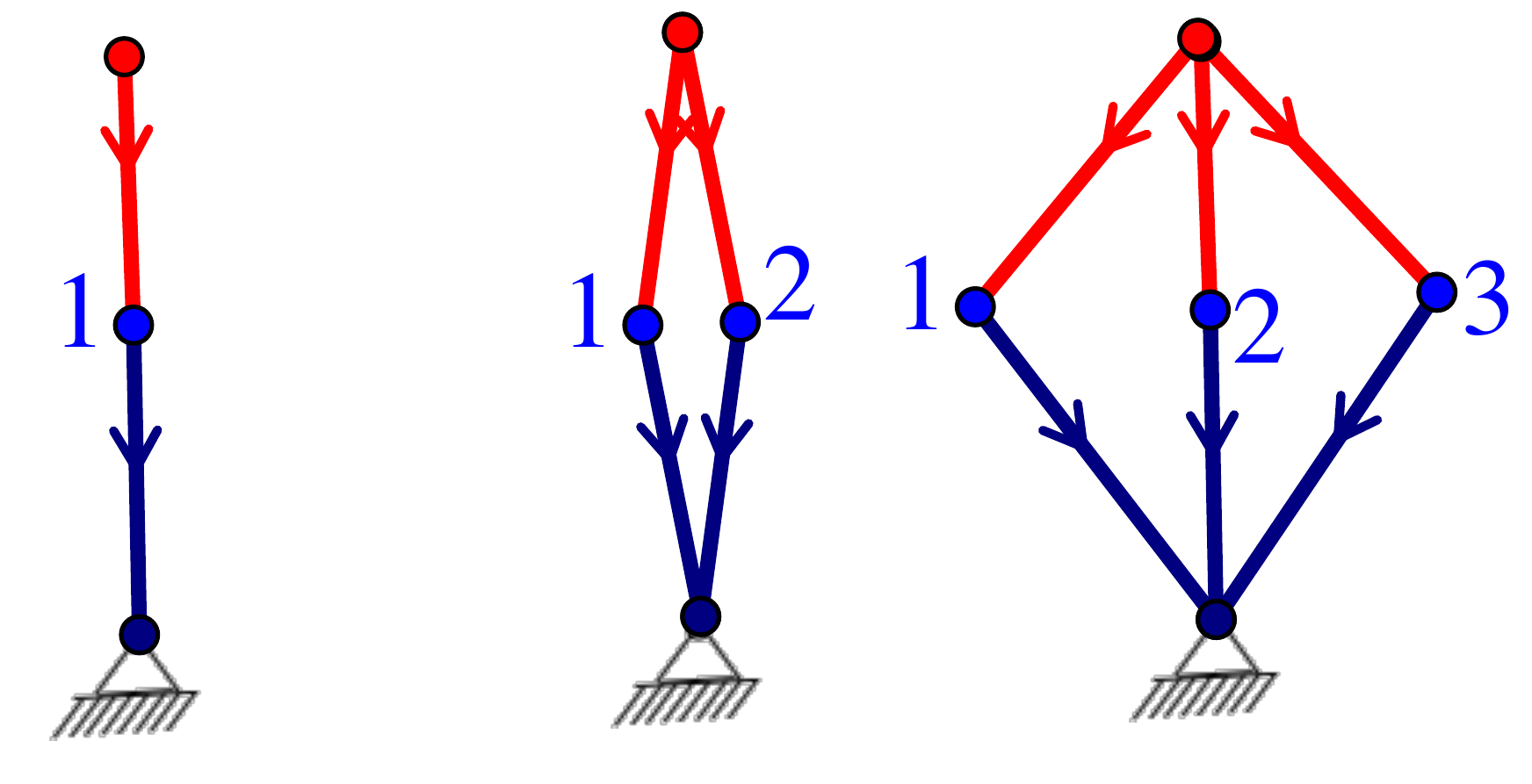}} 
   \end{center}
    \caption{(a) A pinned framework in the plane with $\mathcal{C}_6$ symmetry, (b) the \SG-gain graphs where \SG~corresponds to $\mathcal{C}_6, \mathcal{C}_3$ and $\mathcal{C}_2$-symmetry and (c) the corresponding \SG-Assur block graphs.}
    \label{fig:C6}
    \end{figure}

Let us mention one straightforward way in which this can be done. If $\hat G$ is pinned isostatic we can simply apply Theorem~\ref{thm:map} to \SG~and to \RG~separately. Here composing the \SG-quotient map with the \RG-lifting map gives the quotient map from \SG~to \RG.  Similarly composing the \RG-quotient map with the \SG-lifting map gives us the lifting map from \RG~to \SG.

Consider again Figure~\ref{fig:C6}, and how this might be applied. Since we now know that we can choose any subgroup \SG~of $\mathcal{C}_6$ to apply a \SG-Assur decomposition, we can make different choices of edge orbits to take as drivers depending on whether we want to see a $\mathcal{C}_2$, $\mathcal{C}_3$ or $\mathcal{C}_6$-symmetric motion.  We can pick a set of drivers of size 6, 3, 2 (or 1 for the identity subgroup).  Each choice will guarantee a motion with at least the symmetry of the subgroup we chose.   It is more subtle to realise in this example that the motion being driven will not have additional symmetry!

\section{\SG-Assur graphs which are not pinned isostatic at \SG-regular configurations}\label{sec:variants}

In Section~\ref{sec:isostatic} we focused on pinned \SG-isostatic graphs which were also pinned isostatic.  This assumption ensured that all \SG-Assur components were also pinned isostatic and $d$-Assur.  In that section, a key assumption was that the action of the group was free on the vertices and edges.
Some previous \SG-isostatic examples included components which were not isostatic without symmetry - and therefore the underlying graph does not have a $d$-Assur decomposition without symmetry (Figures~\ref{fig:Mechanisms}(d), \ref{fig:PlaneExample}(d)).  There are simple examples which illustrate the possible  impact of fixed vertices generating redundance (Figure~\ref{fig:nonisostatic}(a,b)) and the possible impact of fixed edges giving flexible frameworks (Figure~\ref{fig:nonisostatic}(c,d)).

\begin{figure}[ht]
    \begin{center}
\subfigure[] {\includegraphics [width=.32\textwidth]{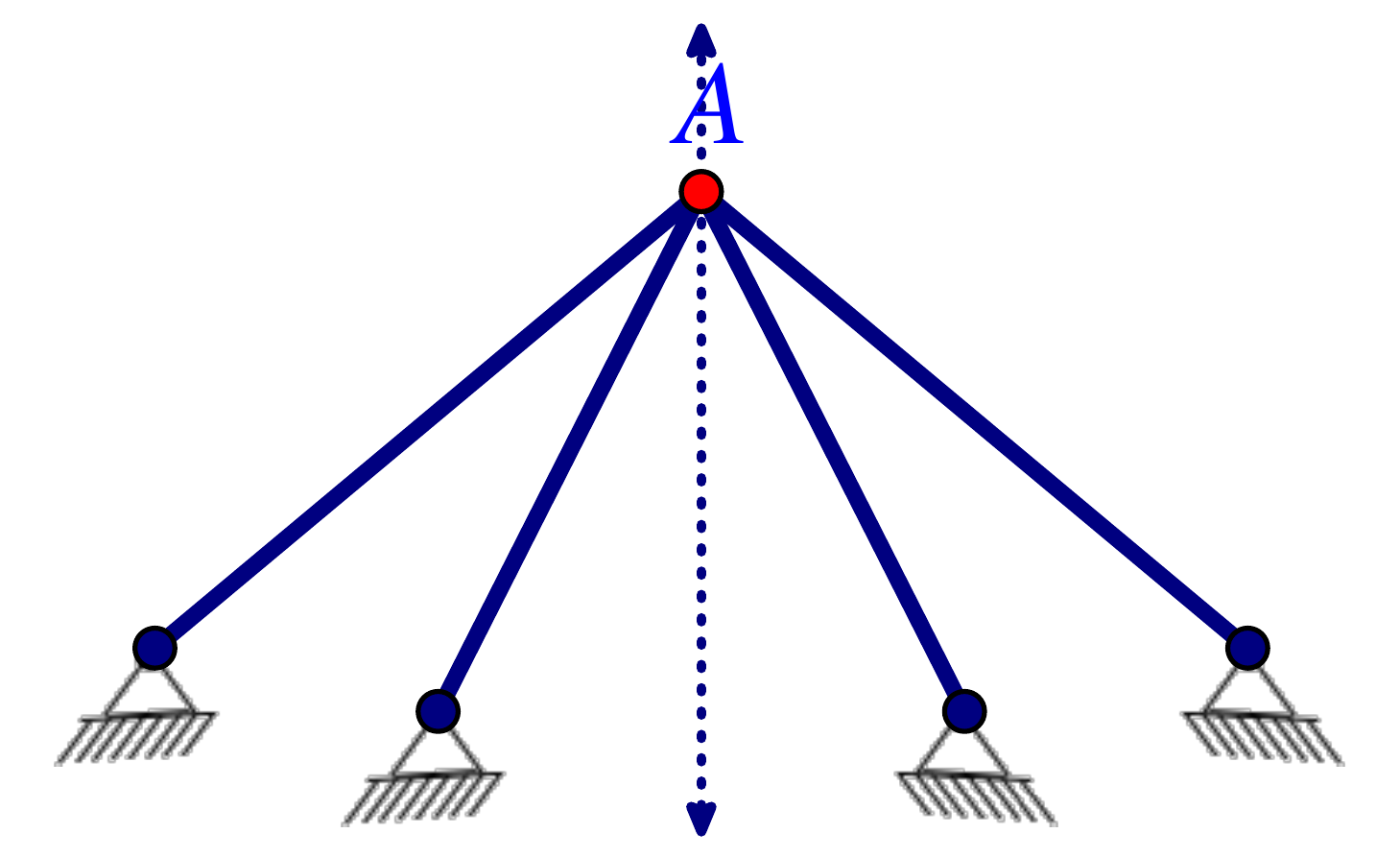}} \quad
  \subfigure[] {\includegraphics [width=.18\textwidth]{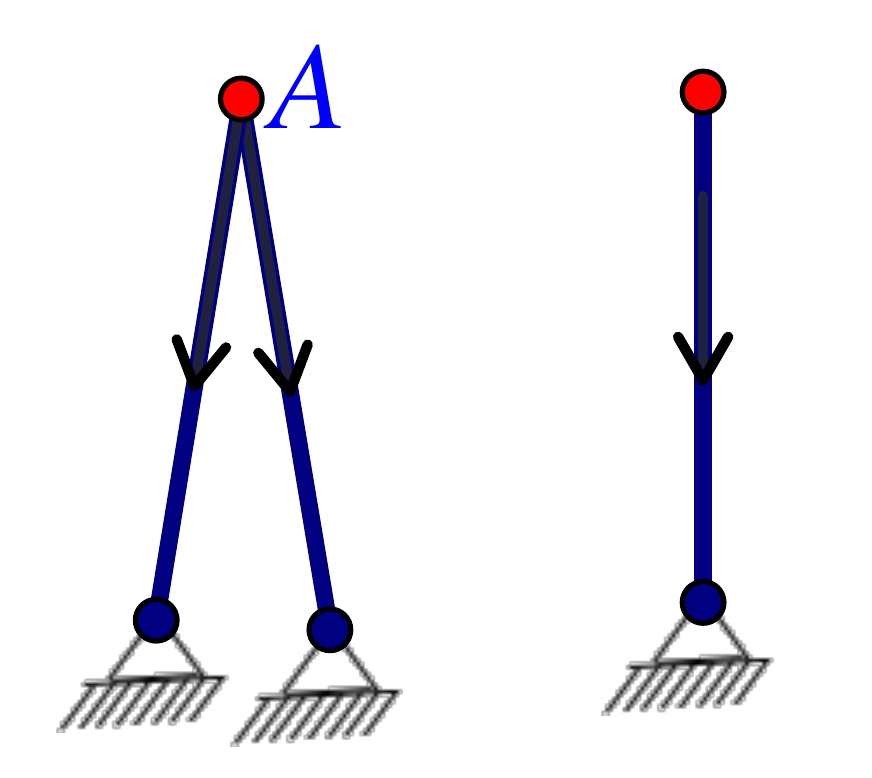}} \quad\quad
 \subfigure[] {\includegraphics [width=.20\textwidth]{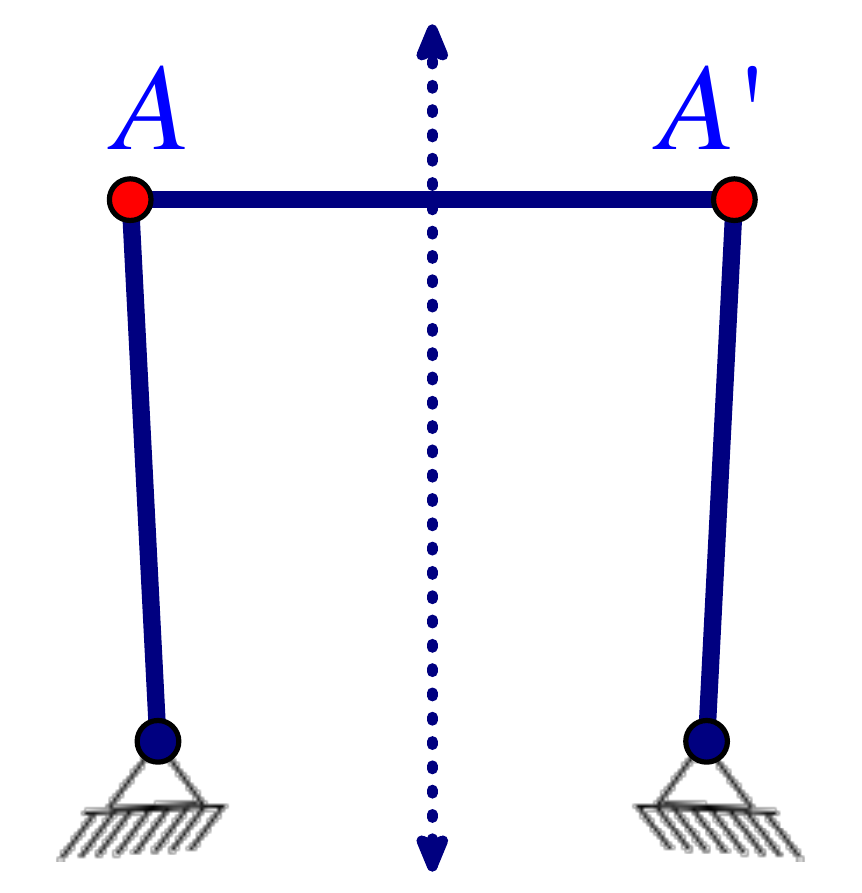}} \quad
  \subfigure[] {\includegraphics [width=.18\textwidth]{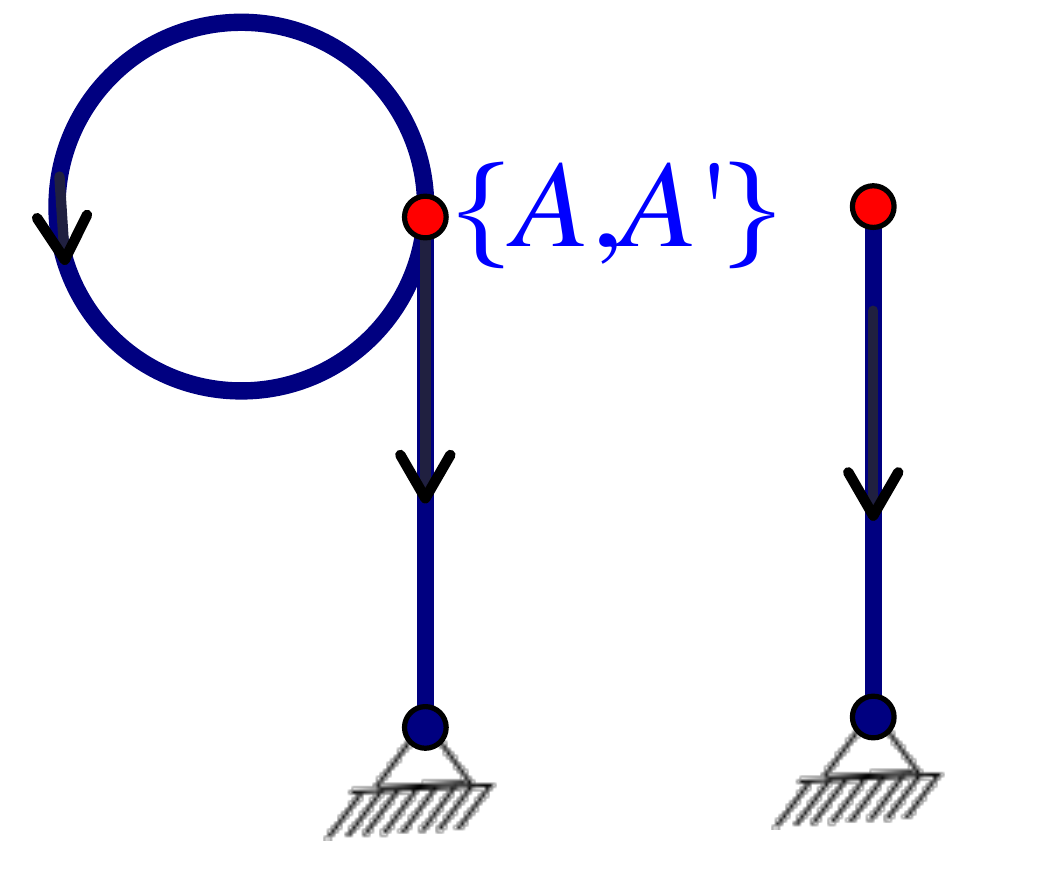}}
  \end{center}
    \caption{(a, b) A $\mathcal{C}_s$-Assur graph which is redundant without symmetry (in $3$-space); (c, d) A $\mathcal{C}_s$-Assur graph  with asymmetric motions (in the plane).}
    \label{fig:nonisostatic}
    \end{figure}

More generally, there are symmetry groups and associated group actions which fix vertices, edges, or both, where: (i) the \SG-Assur components may be pinned redundant, but rigid at \SG-regular configurations;   (ii) the \SG-Assur components may be pinned flexible (with symmetry breaking flexes) but independent at \SG-regular configurations;  or (iii)  a \SG-Assur component may be both redundant and flexible.  We may also have a \SG-Assur decomposition in which some components are isostatic, some are redundant, and some are flexible.  In all of these cases, the covering graph does not have a $d$-Assur decomposition, for full comparison of Assur decompositions of the type described in Subsection \ref{subsec:subgroup}.   However, through exploring subgroups, we will see that two \SG-Assur components for one group may now combine to a single \RG-Assur component for a subgroup \RG~of \SG.

While some of these examples are extreme, there are also several key examples, already mentioned in Figure~\ref{fig:Mechanisms}(d) and \ref{fig:PlaneExample}(d)), which are used, or are usable, for controlling stable symmetric motions in mechanical engineering.  We will return to the analysis of these examples below, as evidence that these additional types do contribute to the analysis of mechanical linkages and can contribute to the synthesis of new mechanical linkages.

\subsection{\SG-Assur graphs which are redundant and rigid at \SG-regular realisations}\label{sec:redundant}

Variants of the graph in Figure~\ref{fig:TwoMirror3D2} 
are generically redundant and rigid with different choices of drivers.  They also have different \SG-Assur decompositions depending on the symmetry group used for the analysis.

\begin{Example} Consider the $3$-dimensional pinned framework of Figures~\ref{fig:TwoMirror3D1}, \ref{fig:TwoMirror3D2}.  The $\mathcal{C}_{2v}$-symmetric graphs all have $|\hat{I}| = 2$ and the number of orbit matrix columns is equal to $4$. In (a) $|\hat{E}| = 3$ so there is a fully $\mathcal{C}_{2v}$-symmetric motion, while in (b), (c) $|\hat{E}| = 4$ and the graphs are \SG-isostatic.  However, they have generic counts such as $|E|=3|I|$ (Figure~\ref{fig:TwoMirror3D2} (a)),  as $|E|=3|I|+1$ (Figure~\ref{fig:TwoMirror3D1} (c)) and as $|E|=3|I|+4$ (Figure~\ref{fig:TwoMirror3D2} (c)).
\begin{figure}[ht]
    \begin{center}
  \subfigure[] {\includegraphics [width=.22\textwidth]{TwoMirror3DFlex}} \quad
\subfigure[] {\includegraphics [width=.22\textwidth]{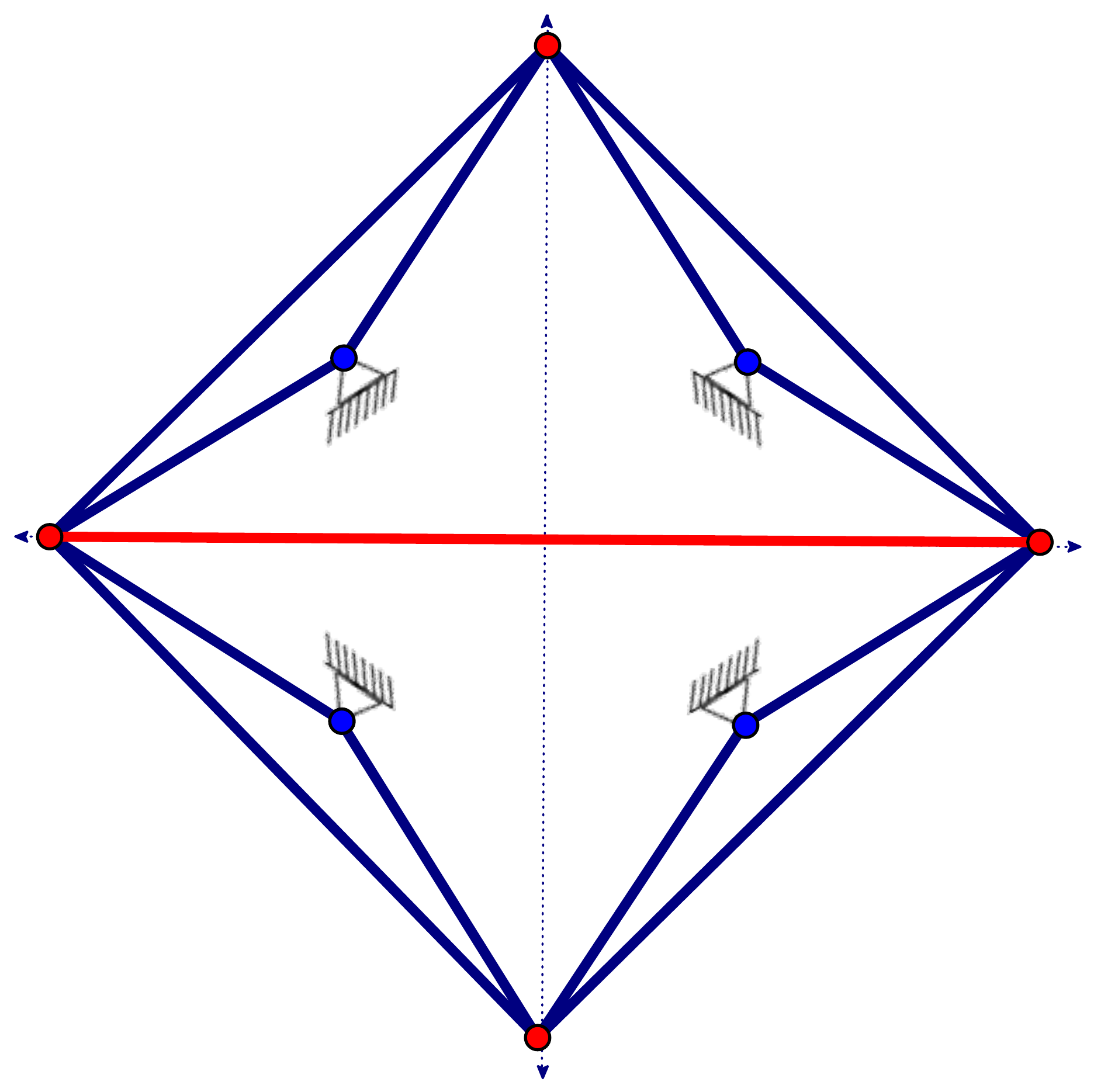}}\quad
\subfigure[] {\includegraphics [width=.22\textwidth]{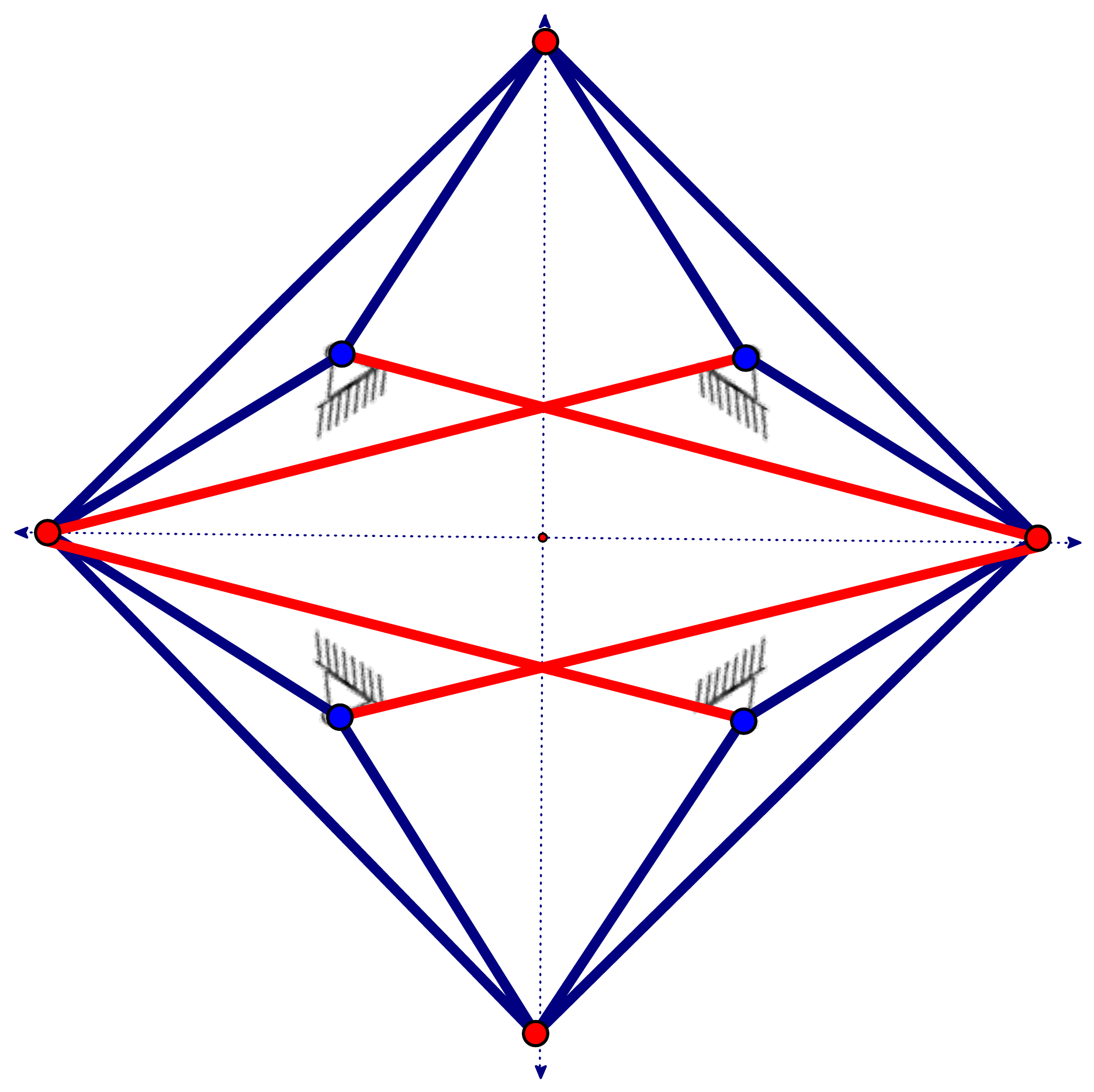}}\quad
\subfigure[] {\includegraphics [width=.22\textwidth]{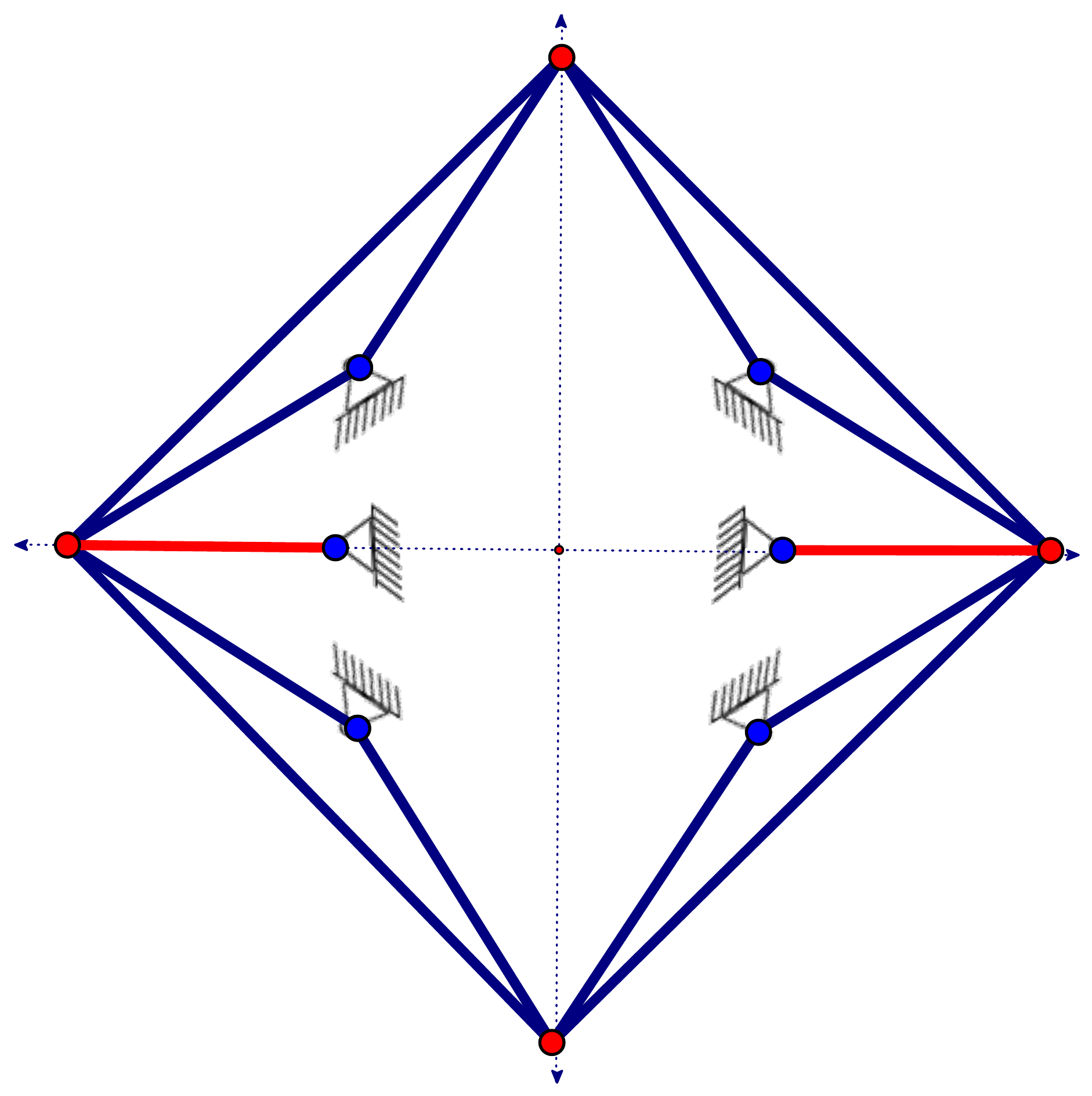}}
      \end{center}
    \caption{The framework in 3-space with dihedral symmetry $\mathcal{C}_{2v}$ (a) has a fully  $\mathcal{C}_{2v}$-symmetric motion.  Symmetric drivers (in red) can be  added in several ways (b) (c) creating distinct $\mathcal{C}_{2v}$-Assur graphs for controlling this symmetric motion.}
    \label{fig:TwoMirror3D2}
    \end{figure}

 These examples leave questions about their \SG-Assur decompositions, as well as what happens when we focus on one of the mirrors for the subgroup $\mathcal{C}_{s}$.  Figure~\ref{fig:TwoMirror3D2Orbit} illustrates that when we move down to a subgroup \RG~ of \SG, two \SG-Assur components can combine into a single \RG-Assur component.  This is quite different from  the behaviour guaranteed in Section~\ref{subsec:subgroup} when there is a free action.

 \begin{figure}[ht]
    \begin{center}
\subfigure[] {\includegraphics [width=.24\textwidth]{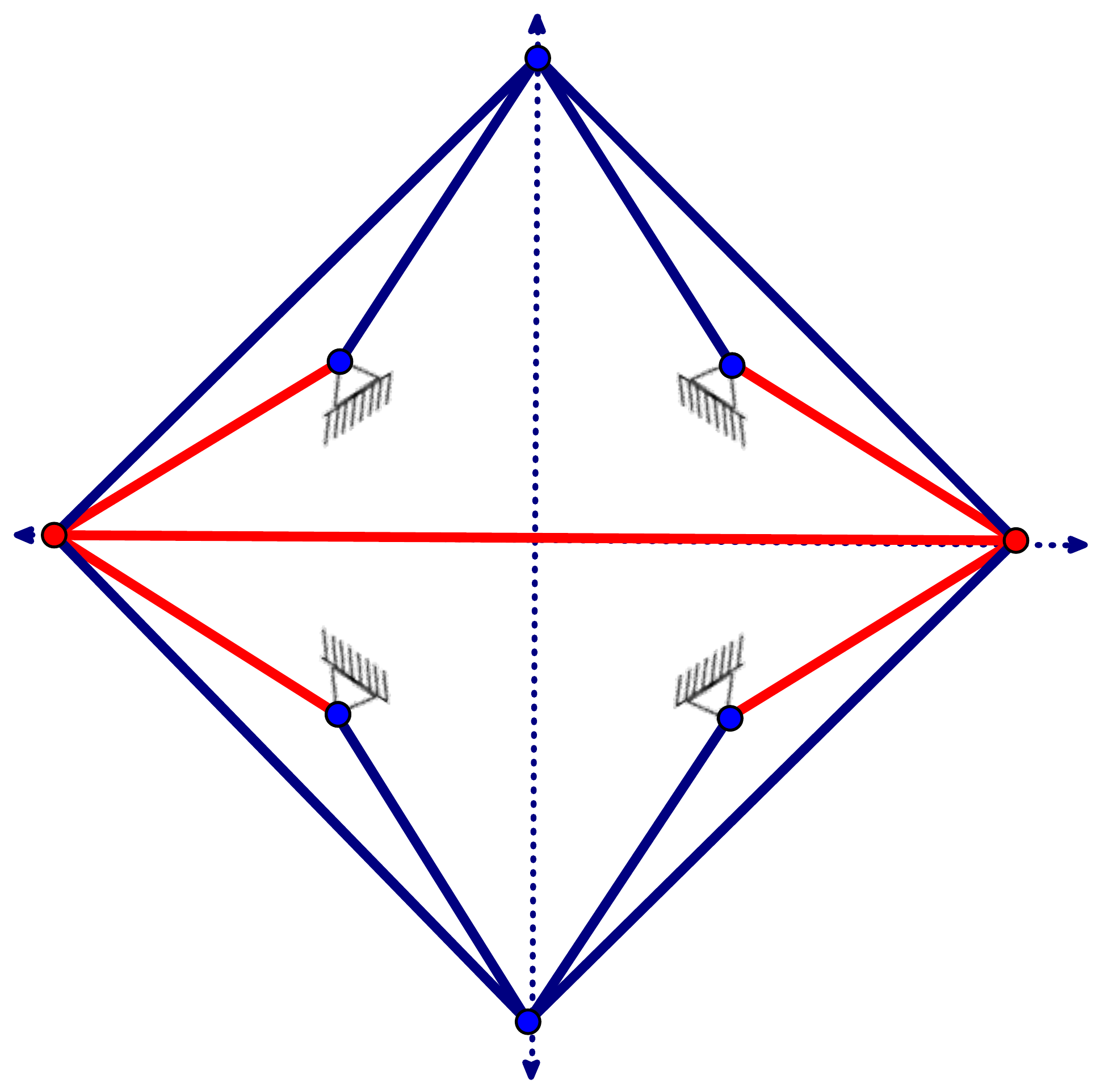}}
  \subfigure[] {\includegraphics [width=.20\textwidth]{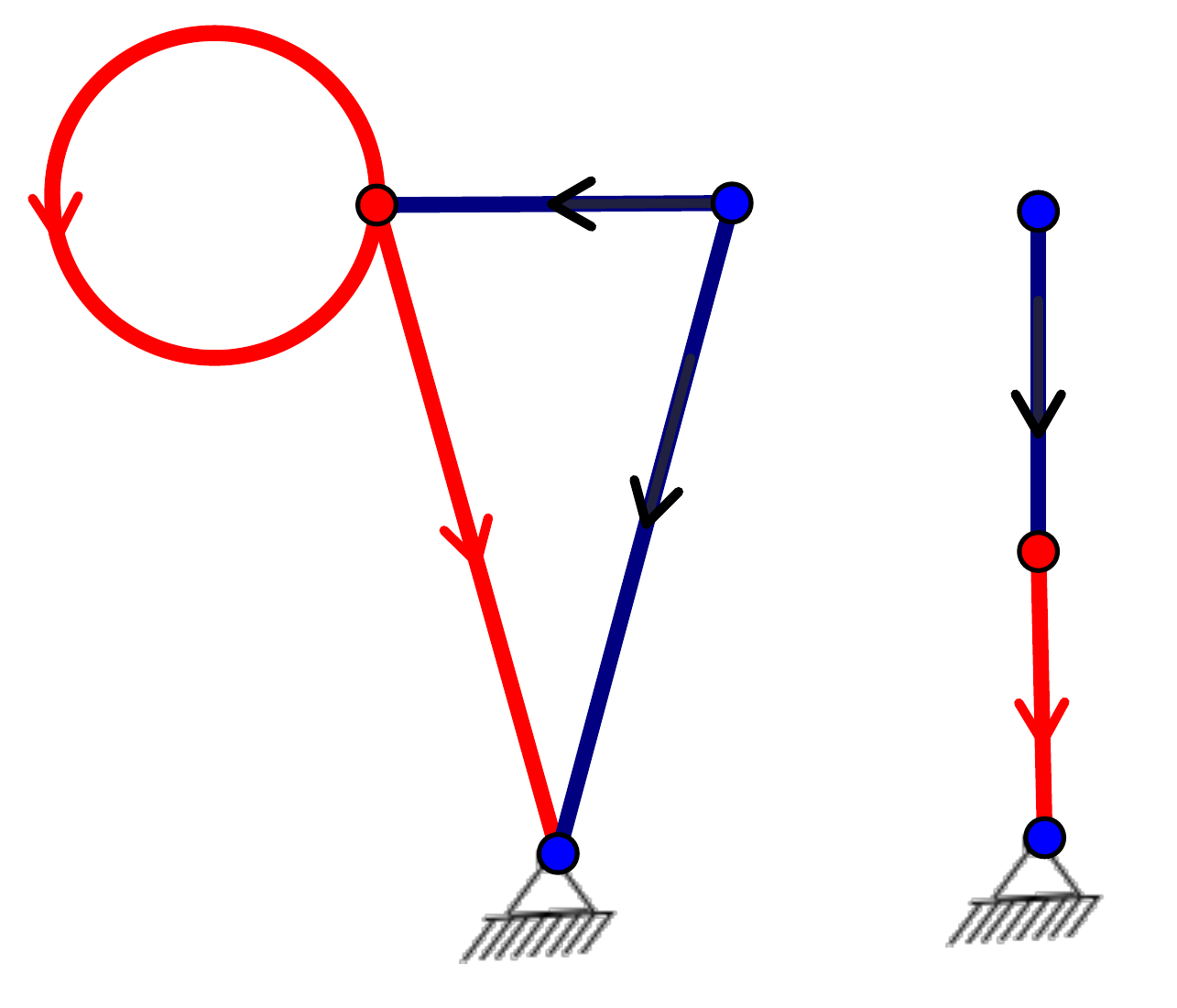}} \quad\quad
\subfigure[] {\includegraphics [width=.24\textwidth]{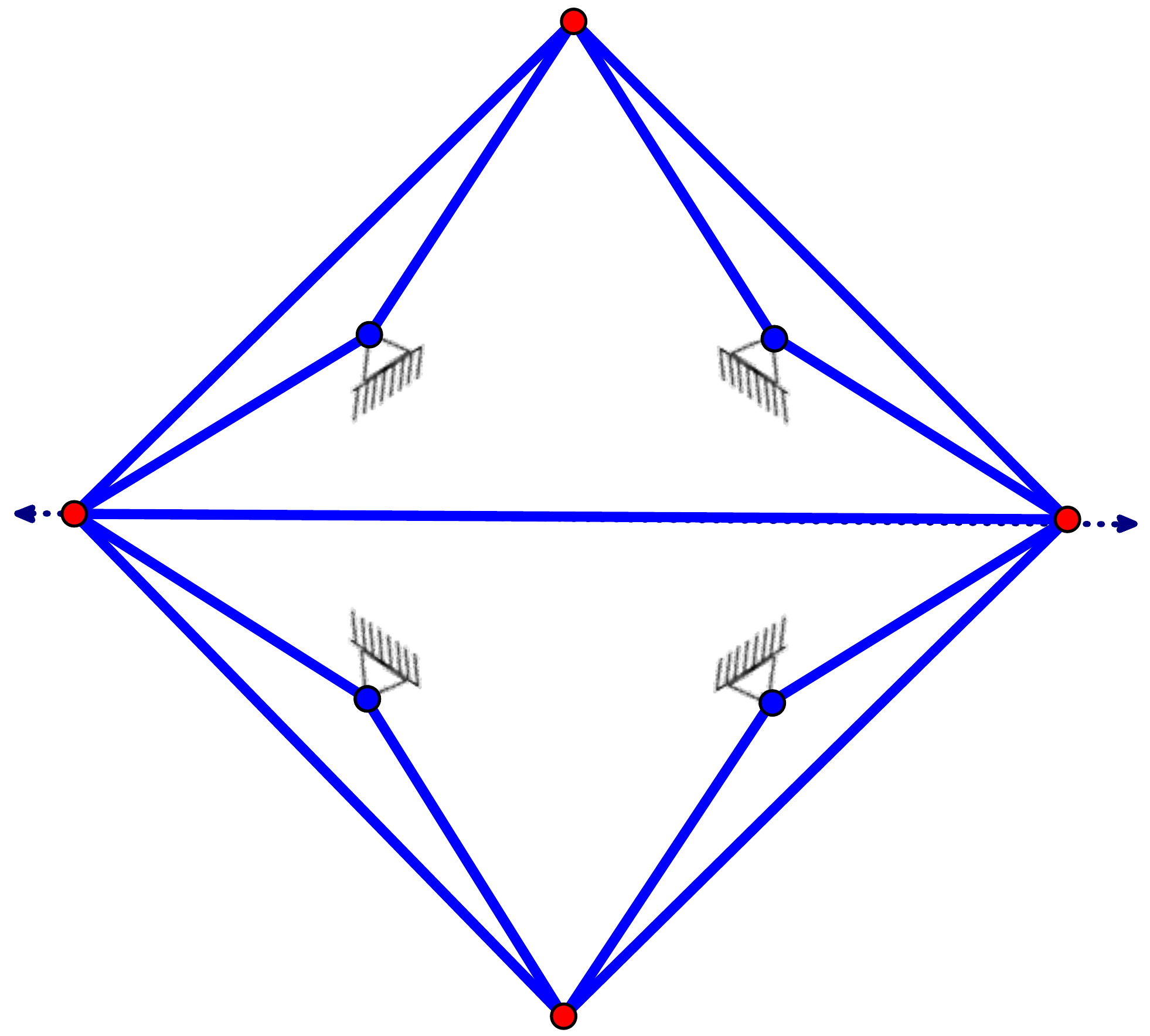}}\quad
\subfigure[] {\includegraphics [width=.20\textwidth]{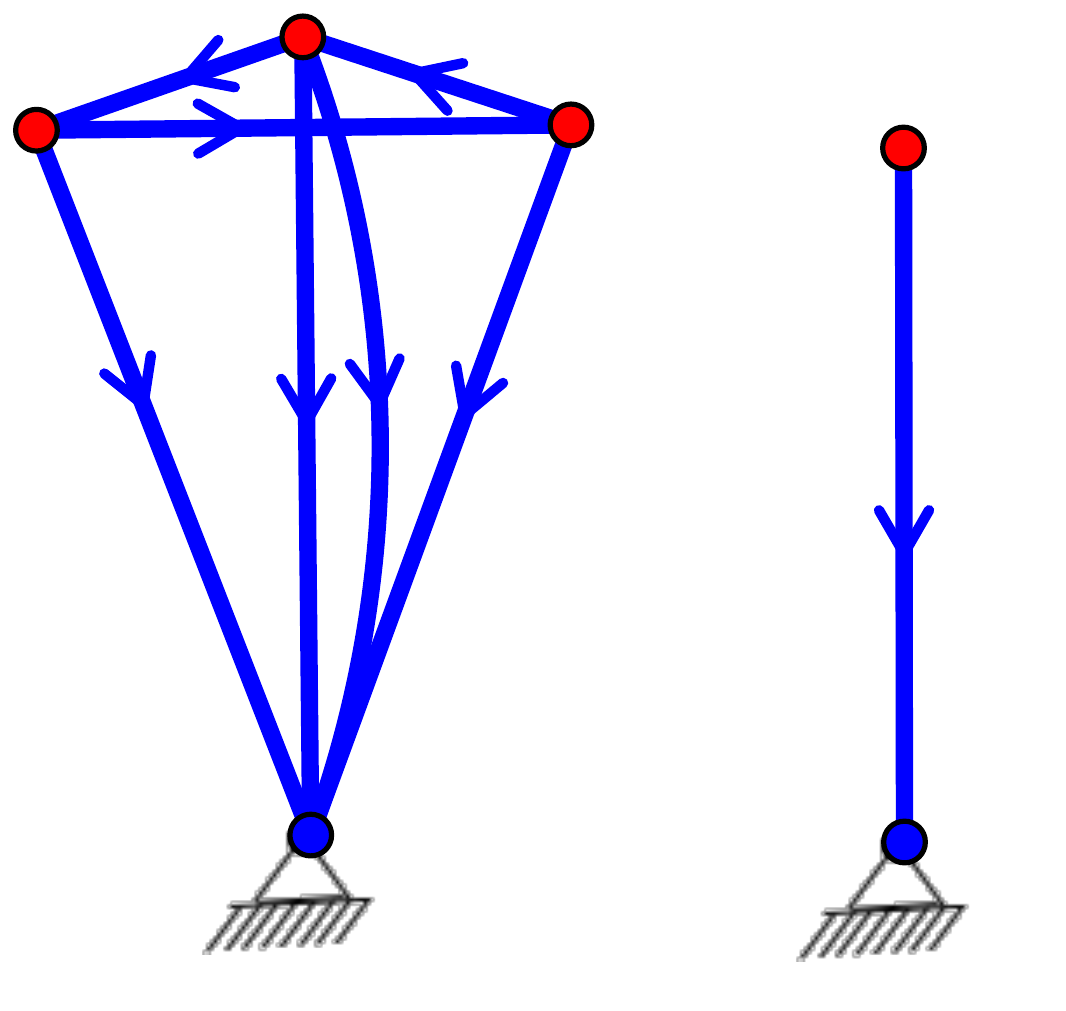}}\quad
      \end{center}
    \caption{The framework in 3-space with dihedral symmetry $\mathcal{C}_{2v}$ (a) has a $\mathcal{C}_{2v}$-Assur decomposition (b). The same framework in 3-space with a single horizontal mirror (c) is a $\mathcal{C}_{s}$-Assur graph (d). }
    \label{fig:TwoMirror3D2Orbit}
    \end{figure}

\end{Example}



\subsection{\SG-isostatic graphs with all \SG-regular realisations flexible without forced symmetry}\label{subsec:forcedflex}

We already saw that a \SG-isostatic graph may be flexible without forced symmetry, due to a fixed edge (Figure~\ref{fig:nonisostatic}~(c,d)).
There is a slightly different type of flexibility which happens when the framework, along with the ground, is generically isostatic, but the number of pins needed under symmetry is too small to eliminate all trivial motions.  The following example illustrates this.

\begin{Example}\label{ex:GrabBucket}

Consider a schematic of a Grab Bucket in Figure~\ref{fig:grab} which has a single mirror in the plane \cite[p~270]{kinematics}.
The quotient $\mathcal{C}_s$-gain graph in Figure~\ref{fig:grab} (b) confirms this has the correct count to be pinned $\mathcal{C}_s$-isostatic with $|\hat{E}|=7$,  $|\hat{I}|=4$, and, with the fixed vertex $u$, 7 columns in the orbit matrix.     The  $\mathcal{C}_s$-Assur block graph in (c) illustrates that the red edge is a driver that moves all the inner vertices.

\begin{figure}[ht]
    \begin{center}
 \subfigure[] {\includegraphics [width=.20\textwidth]{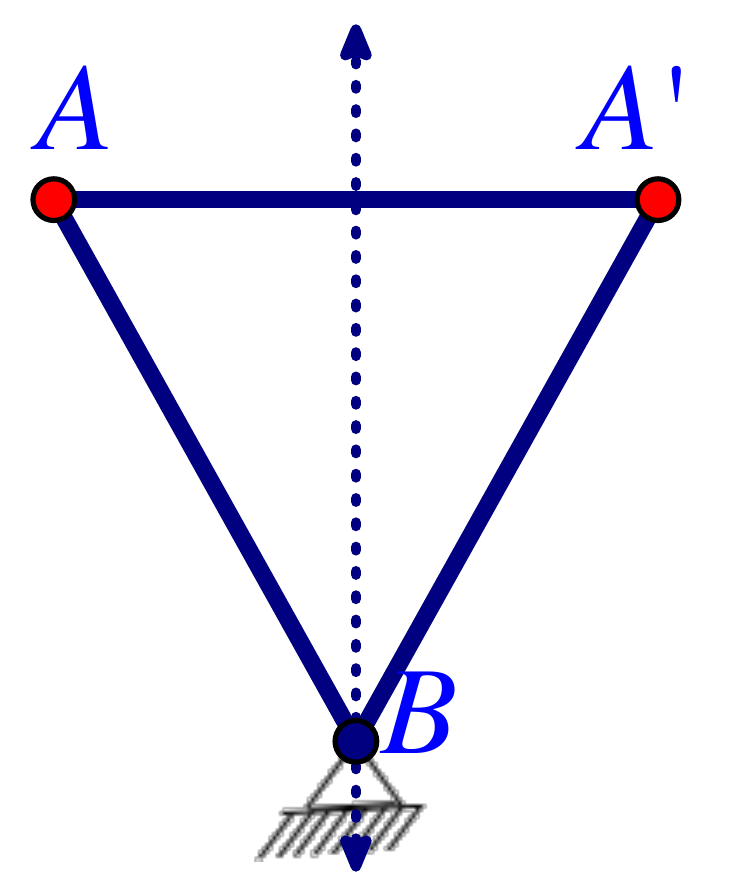}} \quad
  \subfigure[] {\includegraphics [width=.32\textwidth]{GrabBucketS}} \quad
  \subfigure[] {\includegraphics [width=.22\textwidth]{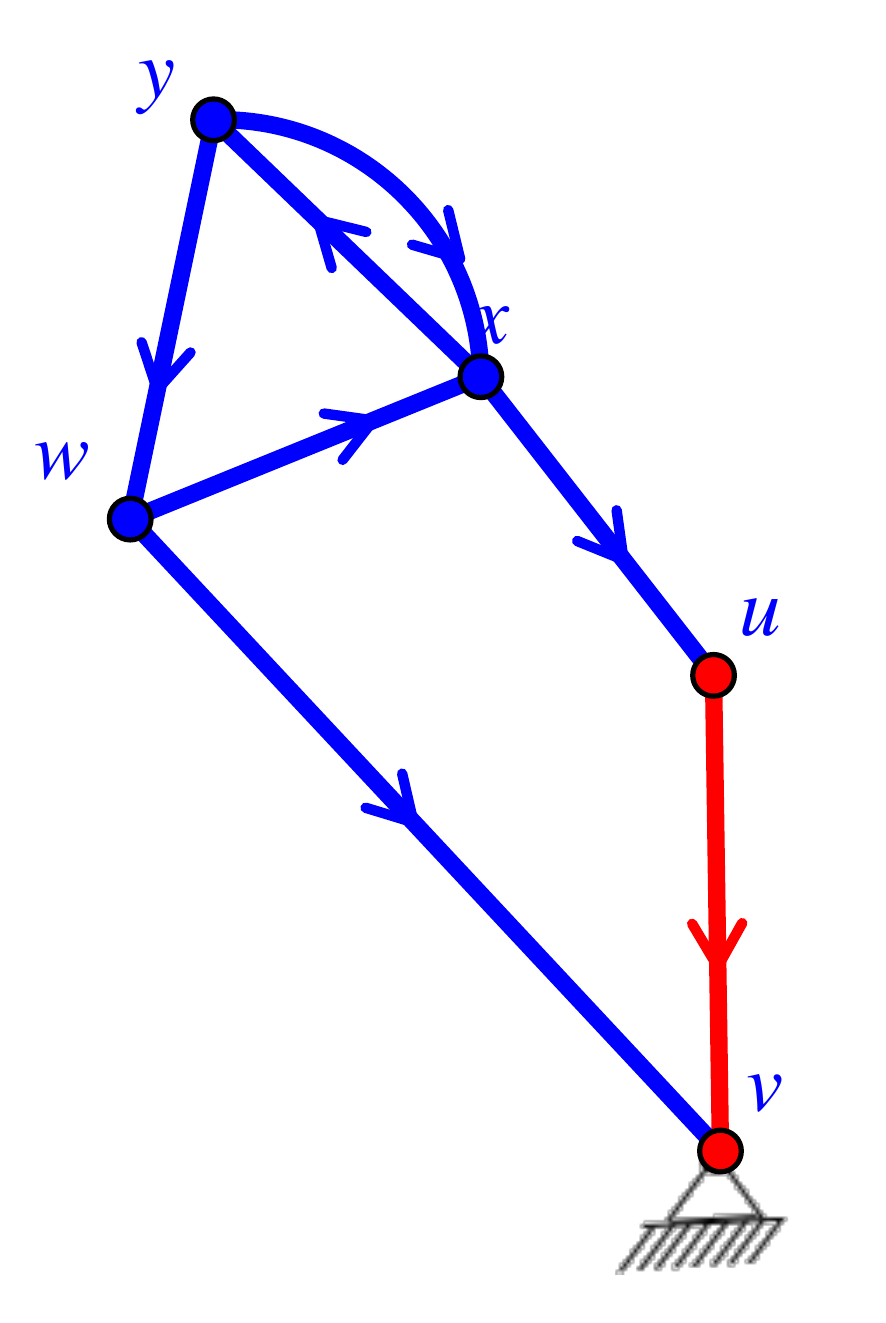}} \quad
  \subfigure[] {\includegraphics [width=.09\textwidth]{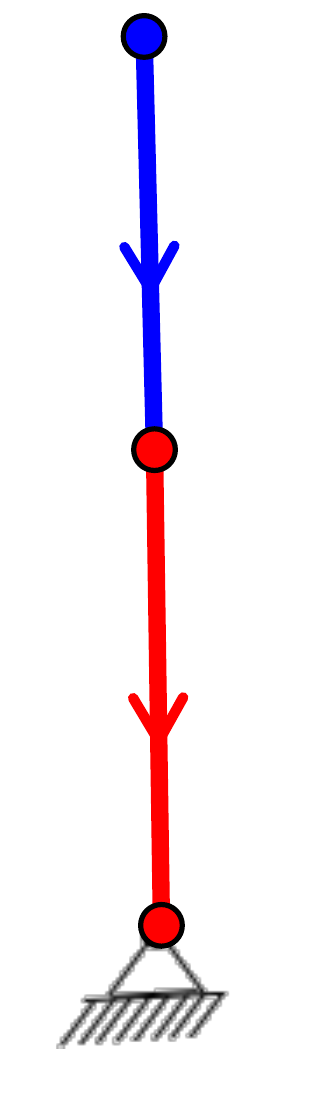}}
      \end{center}
    \caption{Figure (a) gives the simple case of a triangle with a fixed vertex as the single pin, which is not pinned isostatic.  The rest of the figure is a plane presentation of a side of the Grab Bucket which has a single mirror, and a `ground' vertex at the top (where it would be attached to a crane by a cable).  The red edge is used as a driver for a single mirror symmetric motion.}
    \label{fig:grab}
    \end{figure}

The covering graph, without forced symmetry is independent, but does not have sufficient pins to make it pinned 2-isostatic.  There will be a trivial `swing'   of the bucket around the top pin - something that is observed on any construction site.  This swing is controlled by gravity pulling down on the bucket - and the desired motion is the mirror symmetric operation of the bucket which is controlled by the driver.  Our analysis does contribute clarifying information about the symmetric motion.

If we modify the pinning and make both of the two vertices on the mirror into the ground, then what remains is a pinned $2$-isostatic graph ($|E|=12, \ |I|=6$), without forced symmetry.  What this analysis, not based on the symmetry, misses is the symmetric impact of changing the distance between these two pins!  From the point of view of the \SG-Assur components, it is the bottom component - a single bar - which is pinned \SG-isostatic, but not pinned isostatic.

In practice, this image is one side of a $3$-dimensional framework, which has an added mirror symmetry between the two sides.  In some images on the internet, the `ground' is expanded to two grounding sites, and the bucket retains a single swinging motion in addition to the driver-controlled motion which preserves both mirrors.  In others, the ground is pulled to a single pin, and the second vertex $u$ may be on the intersection of the two mirrors or doubled (symmetrically).  There is still a single fully-symmetric motion, and then other incidental motions due to having insufficient ground vertices.
\end{Example}

There is a variant of the Grab Bucket which is the bottom component of Figure~\ref{fig:PlaneExample}~(d).
\begin{Example}\label{ex:PlaneMirrorGrab}

Consider the graph in Figure \ref{fig:PlaneMirrorGrab} (a), with $\mathcal{C}_s$ symmetry and one fixed vertex on the mirror.
The direct analysis of the gain graph, and the orbit matrix, shows this is a $\mathcal{C}_s$-Assur graph.   However, a direct count of the covering graph shows $|{E}|=2|{I}|$, while there is a non-trivial symmetry breaking motion with just the one vertex pinned.  This guarantees an anti-symmetric self-stress.   With the red edges as coordinated drivers, we see an alternative mechanism  for a Grab Bucket.

\begin{figure}[ht]
    \begin{center}
  \subfigure[] {\includegraphics [width=.30\textwidth]{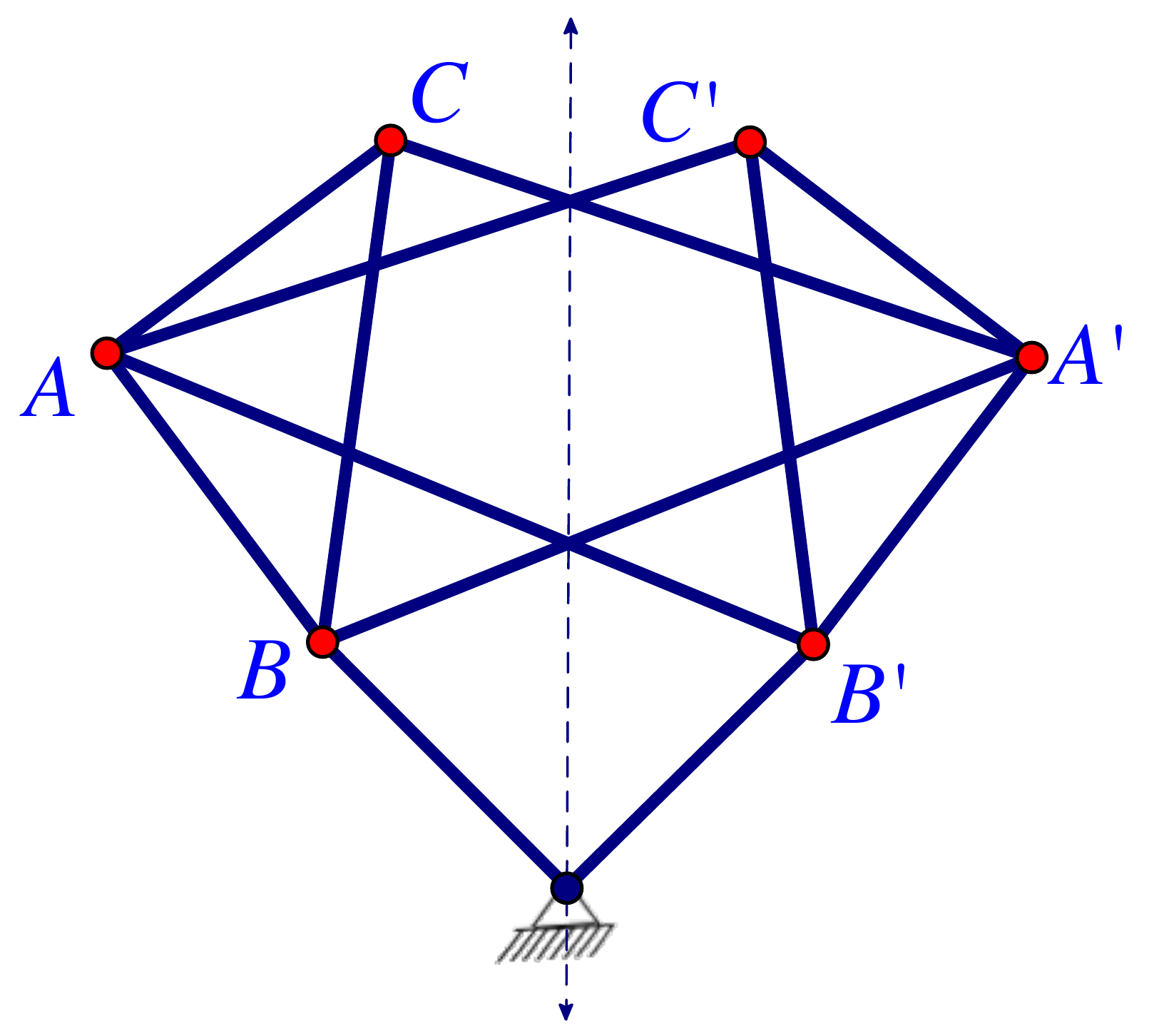}} \quad
  \subfigure[] {\includegraphics [width=.30\textwidth]{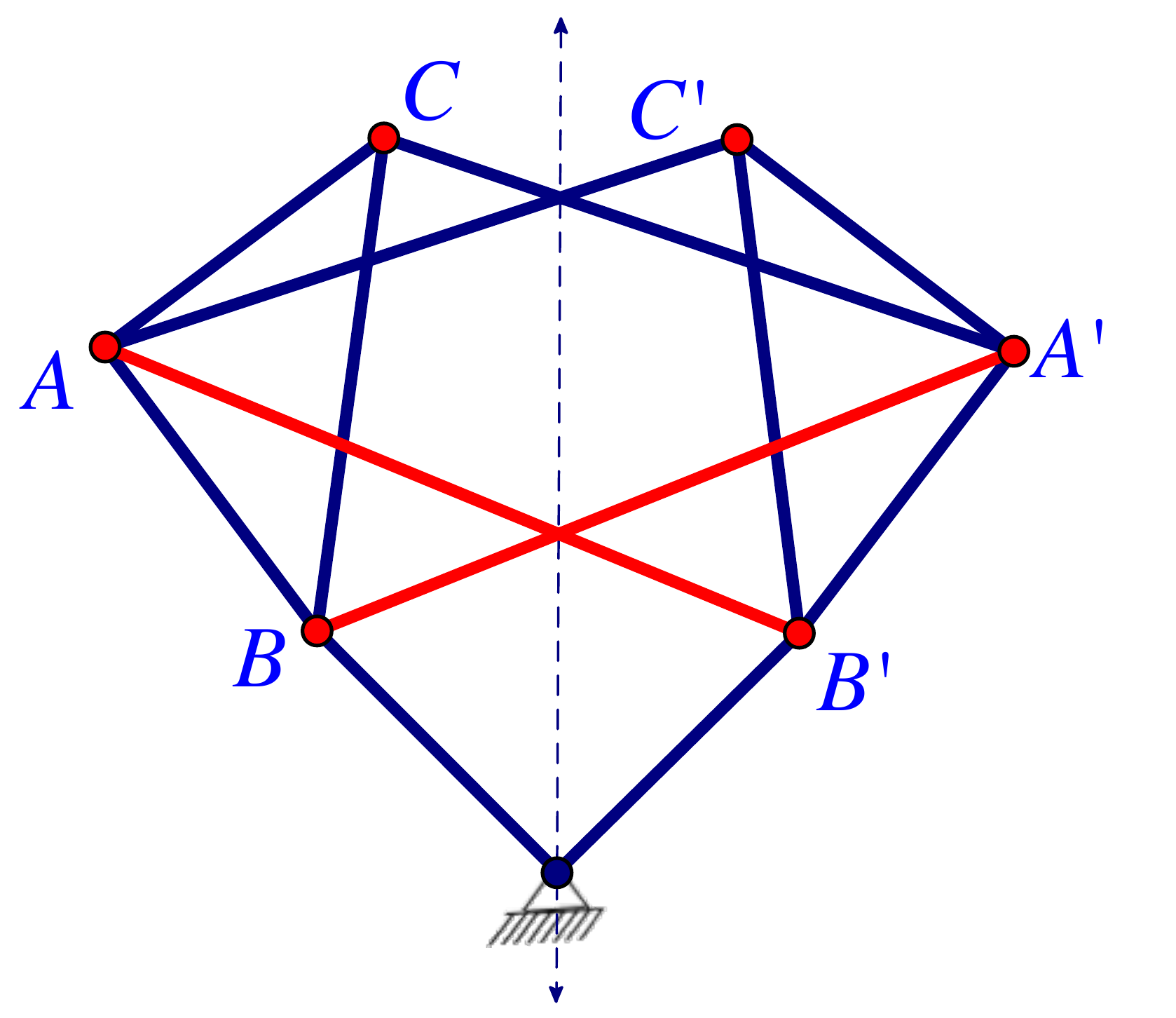}} \quad
      \end{center}
    \caption{The plane framework with $\mathcal{C}_s$ symmetry (a) is $\mathcal{C}_s$-Assur, but has a self-stress and is not pinned isostatic.  Figure (b) illustrates a possible choice of drivers in red for a $\mathcal{C}_s$-symmetric motion. }
    \label{fig:PlaneMirrorGrab}
    \end{figure}
\end{Example}

Finally, we present an extreme example of where our definitions of \SG-Assur can  take us.

\begin{Example}
The framework in Figure~\ref{fig:D_2}(a) is pinned $\mathcal{C}_{2v}$-isostatic - with no edge orbits to the ground.   This occurs because there are no fully $\mathcal{C}_{2v}$-symmetric trivial motions.
\begin{figure}[ht]
    \begin{center}
\subfigure[] {\includegraphics [width=.20\textwidth]{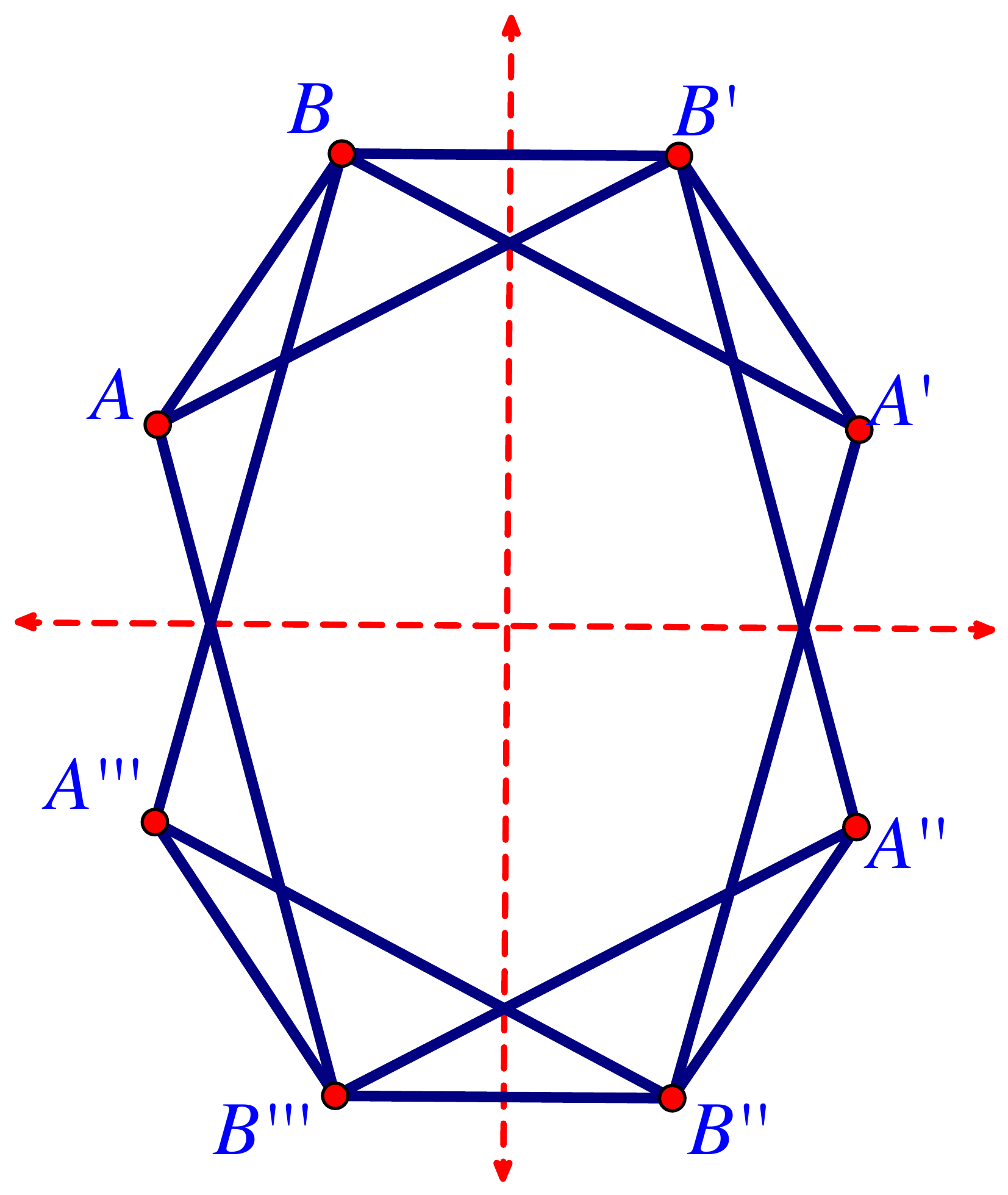}} \quad
 \subfigure[] {\includegraphics [width=.15\textwidth]{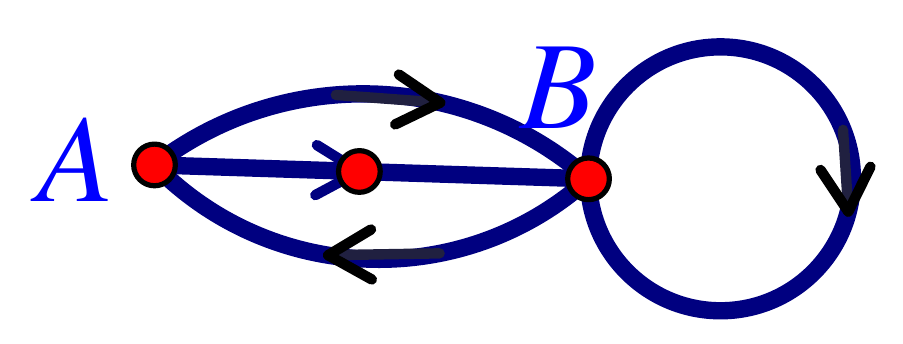}} \quad
  \subfigure[] {\includegraphics [width=.25\textwidth]{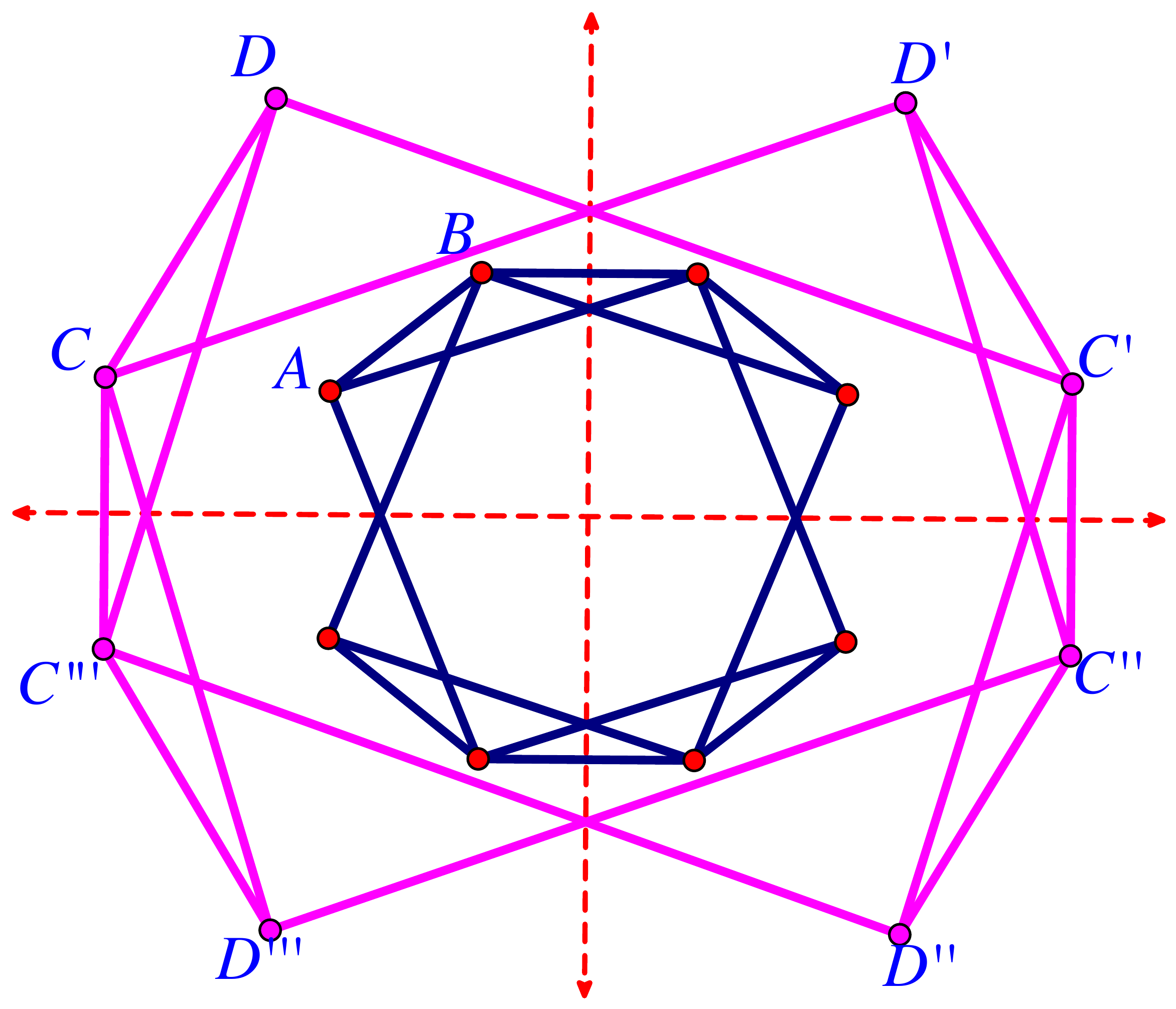}} \quad
  \subfigure[] {\includegraphics [width=.15\textwidth]{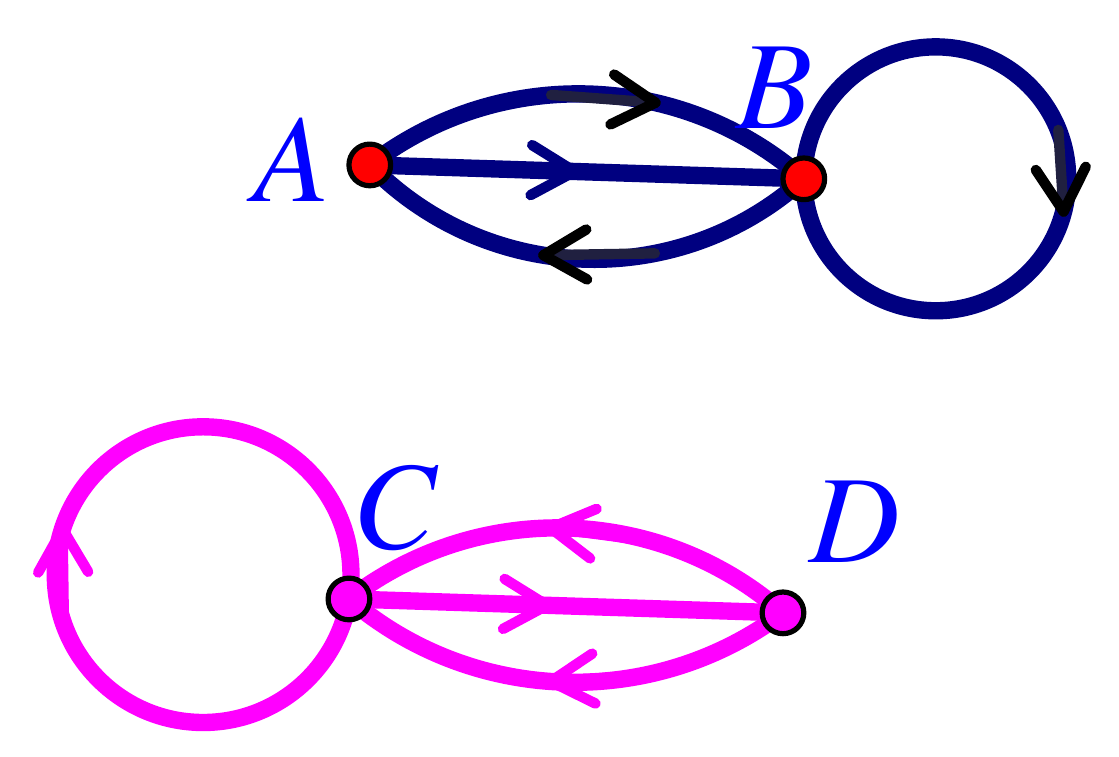}} \quad
 \subfigure[] {\includegraphics [width=.10\textwidth]{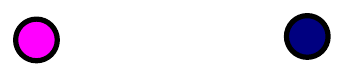}}
     \end{center}
    \caption{The framework (a) has $\mathcal{C}_{2v}$ symmetry, and the 2-directed quotient $\mathcal{C}_{2v}$-gain graph (b) which is independent, and maximal rank.  If we double the graph (c) with $\mathcal{C}_{2v}$ symmetry, it remains full rank in the orbit matrix, with the gain graph (d) and $\mathcal{C}_{2v}$-Assur block graph (e)}
    \label{fig:D_2}
    \end{figure}

However it is not pinned rigid, as it is unpinned when non-symmetric motions are allowed, with rotations around the origin, as well as translations.    With two copies  Figure~\ref{fig:D_2}(c), this is far from rigid once motions breaking the symmetry are permitted.

\end{Example}




\subsection{\SG-isostatic graphs with combined components}\label{subsec:comp}

We can combine components with different properties, some with isostatic graphs, some with redundant graphs, some with flexible graphs.

\begin{Example}  Consider the $2$-dimensional framework with mirror symmetry shown  in Figure~\ref{fig:Composite} (a), with its gain graph Figure~\ref{fig:Composite}(b) and  its $\mathcal{C}_s$-Assur block graph  in Figure~\ref{fig:Composite}(c). The underlying graph of the framework in Figure~\ref{fig:Composite}(a) has a generically dependent lower component,  a generically isostatic component (the middle) and a generically flexible upper component.  Even the lower component is also  not generically pinned rigid (it  does not have enough pins).   
\begin{figure}[ht]
    \begin{center}
  \subfigure[] {\includegraphics [width=.25\textwidth]{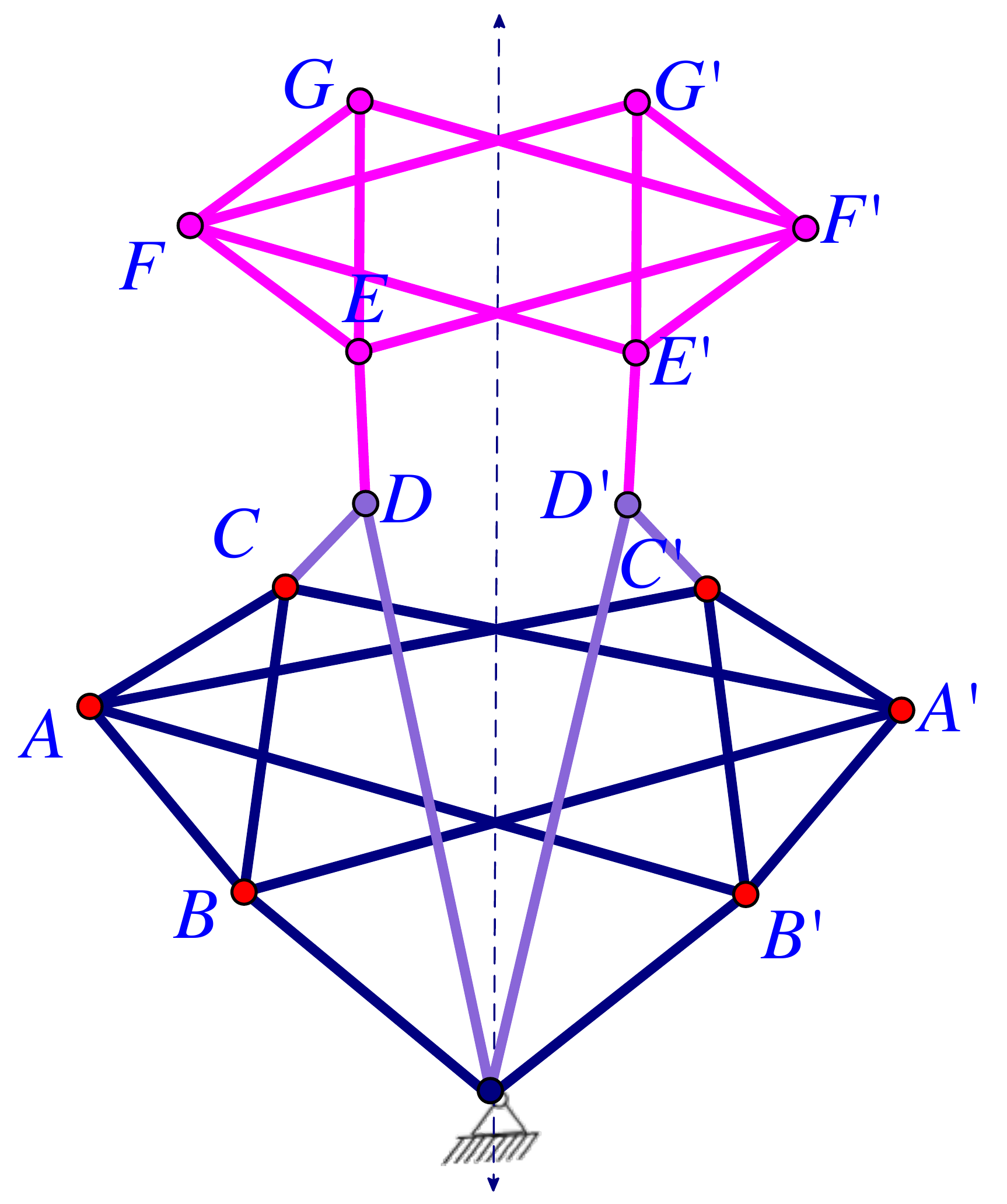}} \quad\quad
\subfigure[] {\includegraphics [width=.13\textwidth]{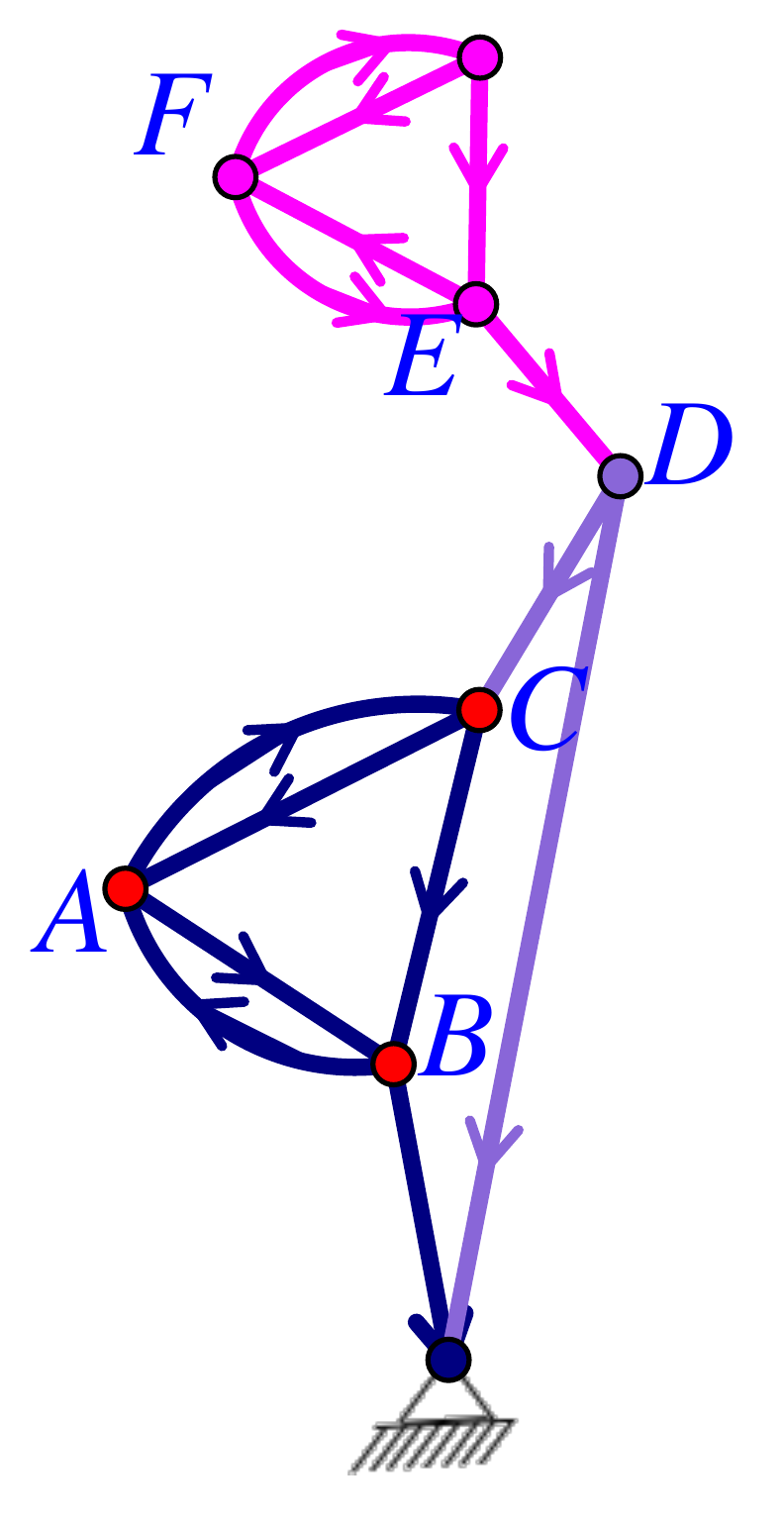}}\quad\qquad
  \subfigure[] {\includegraphics [width=.13\textwidth]{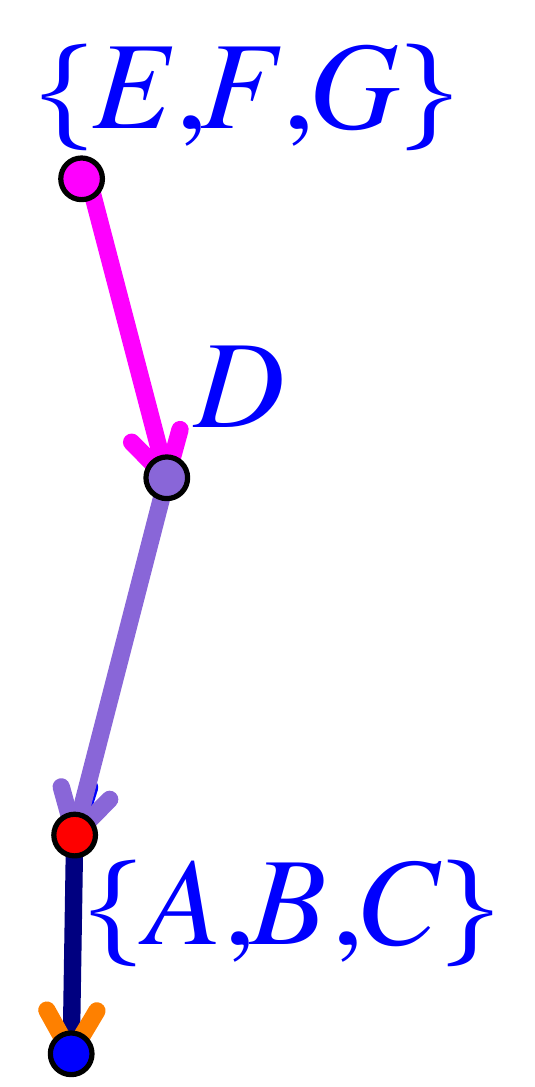}}
      \end{center}
    \caption{A mirror symmetric framework in the plane (a),  with its $\mathcal{C}_s$-directed gain graph (b), and the associated $\mathcal{C}_s$-Assur block graph (c) has redundant components and  flexible components as well as too few pins, when symmetry is relaxed.}
    \label{fig:Composite}
    \end{figure}

     Such composite structures can be synthesized by composing various \SG-Assur graphs with the same symmetry. We might even compose several components with different symmetries, provided the attachments from the upper component(s) to the lower ones had the symmetry of the upper components.  We have only scratched the surface of what can happen and what can be designed.
\end{Example}

\section{Inductive Constructions}
\label{sec:Inductions}

Inductive constructions can be a tool for synthesis of mechanisms, including synthesis of larger Assur components.  We present the basic idea for \SG-Assur decompositions, and the reader can pull this back to new results without symmetry, when \SG~ is the identity group. The reader interested in inductive constructions can consult \cite{N&R}.

The following inductive constructions  on quotient gain graphs, called \emph{extensions}, preserve $(2,3,m)$-gain-sparsity.
The first two operations are generalizations of the well-known Henneberg operations \cite{W1}.

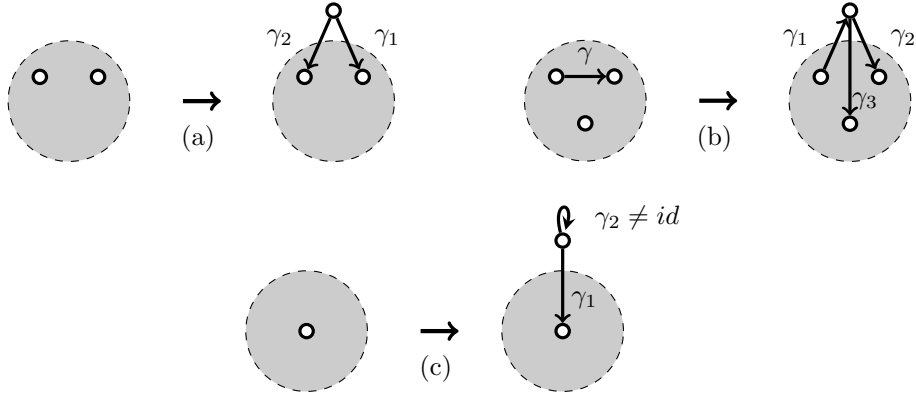
\begin{figure}[htp]
\begin{center}
\begin{tikzpicture}[very thick,scale=1]
\tikzstyle{every node}=[circle, draw=black, fill=white, inner sep=0pt, minimum width=5pt];
\filldraw[fill=black!20!white, draw=black, thin, dashed](0,0)circle(0.8cm);
\node (p1) at (40:0.5cm) {};
\node (p2) at (140:0.5cm) {};
  \draw[ultra thick, ->](1.5,0)--(2,0);
   \node [draw=white, fill=white] (a) at (1.7,-0.5)  {(a)};
\end{tikzpicture}
\hspace{0.3cm}
\begin{tikzpicture}[very thick,scale=1]
\tikzstyle{every node}=[circle, draw=black, fill=white, inner sep=0pt, minimum width=5pt];
\filldraw[fill=black!20!white, draw=black, thin, dashed](0,0)circle(0.8cm);
\node (p1) at (40:0.5cm) {};
\node (p2) at (140:0.5cm) {};
\node (p3) at (90:1.2cm) {};
\draw [->](p3)--(p1);
\draw [->](p3)--(p2);
\node [draw=white, fill=white] (a) at (50:1.1cm)  {$\gamma_1$};
\node [draw=white, fill=white] (a) at (130:1.1cm)  {$\gamma_2$};
\end{tikzpicture}
\hspace{1.35cm}
\begin{tikzpicture}[very thick,scale=1]
\tikzstyle{every node}=[circle, draw=black, fill=white, inner sep=0pt, minimum width=5pt];
\filldraw[fill=black!20!white, draw=black, thin, dashed](0,0)circle(0.8cm);
\node (p1) at (40:0.5cm) {};
\node (p2) at (140:0.5cm) {};
\node (p4) at (270:0.3cm) {};
 \draw [<-](p1)--(p2);
\draw[ultra thick, ->](1.5,0)--(2,0);
\node [draw=black!20!white, fill=black!20!white] (a) at (90:0.6cm)  {$\gamma$};
   \node [draw=white, fill=white] (a) at (1.7,-0.5)  {(b)};
\end{tikzpicture}
\hspace{0.3cm}
\begin{tikzpicture}[very thick,scale=1]
\tikzstyle{every node}=[circle, draw=black, fill=white, inner sep=0pt, minimum width=5pt];
\filldraw[fill=black!20!white, draw=black, thin, dashed](0,0)circle(0.8cm);
\node [draw=black!20!white, fill=black!20!white] (a) at (0:0.2cm)  {$\gamma_3$};
\node (p1) at (40:0.5cm) {};
\node (p2) at (140:0.5cm) {};
\node (p3) at (90:1.2cm) {};
\node (p4) at (270:0.3cm) {};
\draw [->](p3)--(p1);
\draw [<-](p3)--(p2);
\draw [->](p3)--(p4);
\node [draw=white, fill=white] (a) at (50:1.1cm)  {$\gamma_2$};
\node [draw=white, fill=white] (a) at (130:1.1cm)  {$\gamma_1$};
\end{tikzpicture}
\vspace{0.1cm}

\begin{tikzpicture}[very thick,scale=1]
\tikzstyle{every node}=[circle, draw=black, fill=white, inner sep=0pt, minimum width=5pt];
\filldraw[fill=black!20!white, draw=black, thin, dashed](0,0)circle(0.8cm);
\node (p4) at (90:0cm) {};
\draw[ultra thick, ->](1.5,0)--(2,0);
   \node [draw=white, fill=white] (a) at (1.7,-0.5)  {(c)};
\end{tikzpicture}
\hspace{0.3cm}   \begin{tikzpicture}[very thick,scale=1]
\tikzstyle{every node}=[circle, draw=black, fill=white, inner sep=0pt, minimum width=5pt];
\filldraw[fill=black!20!white, draw=black, thin, dashed](0,0)circle(0.8cm);
\node (p3) at (90:1.2cm) {};
\node (p4) at (90:0cm) {};
\draw [->](p3)--(p4);
\node [draw=black!20!white, fill=black!20!white] (a) at (55:0.5cm)  {$\gamma_1$};
\node [draw=white, fill=white] (a) at (57:1.8cm)  {$\gamma_2\neq id$};
\path
(p3) edge [loop above,->, >=stealth,shorten >=1pt,looseness=20] (p3);
\end{tikzpicture}
\vspace{-0.3cm}
\end{center}
\vspace{-0.2cm}
\caption{(a) 0-extension, where the  new edges may be parallel.
(b) 1-extension, where the removed edge may be a loop and the
new edges may be parallel. Note that for the gains $\gamma_1$ and $\gamma_2$, we have $\gamma_1 \cdot \gamma_2=\gamma$.
(c) loop-1-extension.}
\label{fig:inductive}
\end{figure}

Let $(H,\psi)$ be a $\Gamma$-gain graph with $H=(\tilde{V}, \tilde{E})$. The \emph{0-extension} adds a new vertex \(\tilde{v}\) and two new non-loop edges
$\te_{1}$ and $\te_{2}$ to $H$
such that the new edges are incident to $\tilde{v}$ and the other end-vertices are
two not necessarily distinct vertices of $\tilde{V}$.
If $\te_{1}$ and $\te_{2}$ are not parallel, then their labels can be arbitrary.
Otherwise the labels are assigned such that $\psi(\te_{1})\neq \psi(\te_{2})$,
assuming that $\te_1$ and $\te_2$ are directed to $\tilde{v}$ (see Fig.\ref{fig:inductive} (a)).

The \emph{1-extension} deletes an edge of $(H,\psi)$ and adds a new vertex and three new edges to $(H,\psi)$.
First, one chooses an edge $\te$ of $H$ (which will be deleted) and a vertex $\tilde z$ of $H$ which may be an end-vertex of $\te$.
Then one subdivides $\te$, with a new vertex $\tilde v$ and new edges $\te_1$ and $\te_2$, such that the tail of $\te_1$ is the tail
of $\te$ and the head of $\te_2$ is the head of $\te$. The gains of the new edges are assigned so that $\psi(e_1)\cdot \psi(e_2)=\psi(e)$.
 Finally, we add a third new edge, $e_3$, to $H$. This edge is oriented from $\tilde v$ to $\tilde z$ and its gain is such that every 2-cycle $\te_i \te_j$, if it exists, is unbalanced.

The \emph{loop 1-extension} (see Fig.\ref{fig:inductive} (c)). adds a new vertex $\tilde{v}$ to $H$
and connects it to a vertex $\tilde{z}\in \tilde{V}$ by a new edge with any label.
It also adds a new loop $\tilde{l}$ incident to $\tilde{v}$ with $\psi(\tilde{l})\neq id$.

In the covering graph these operations can be seen as graph operations that preserve
the underlying symmetry. Some of them can be recognized as performing standard -
non-symmetric - Henneberg operations simultaneously \cite{BS3,jkt,schtan,N&S}.


\begin{figure}[htp]
\begin{center}
\begin{tikzpicture}[very thick,scale=1]
\tikzstyle{every node}=[circle, draw=black, fill=white, inner sep=0pt, minimum width=5pt];
  \node [draw=white, fill=white] (c) at (0,-1.4)  {$\quad$};
\filldraw[fill=black!20!white, draw=black, thin, dashed](0,0)circle(1.3cm);
\node (p1) at (80:1cm) {};
\node (p2) at (140:1cm) {};
\node (p3) at (200:1cm) {};
\node (p4) at (260:1cm) {};
\node (p5) at (320:1cm) {};
\node (p6) at (20:1cm) {};

\node (p1a) at (100:1cm) {};
\node (p2a) at (160:1cm) {};
\node (p3a) at (220:1cm) {};
\node (p4a) at (280:1cm) {};
\node (p5a) at (340:1cm) {};
\node (p6a) at (40:1cm) {};

\draw [->, ultra thick](2,0)--(2.5,0);

\end{tikzpicture}
\hspace{0.3cm}
\begin{tikzpicture}[very thick,scale=1]
\tikzstyle{every node}=[circle, draw=black, fill=white, inner sep=0pt, minimum width=5pt];
\filldraw[fill=black!20!white, draw=black, thin, dashed](0,0)circle(1.3cm);
\node (p1) at (80:1cm) {};
\node (p2) at (140:1cm) {};
\node (p3) at (200:1cm) {};
\node (p4) at (260:1cm) {};
\node (p5) at (320:1cm) {};
\node (p6) at (20:1cm) {};

\node (p1a) at (100:1cm) {};
\node (p2a) at (160:1cm) {};
\node (p3a) at (220:1cm) {};
\node (p4a) at (280:1cm) {};
\node (p5a) at (340:1cm) {};
\node (p6a) at (40:1cm) {};

\node (p1n) at (98:1.5cm) {};
\node (p2n) at (158:1.5cm) {};
\node (p3n) at (218:1.5cm) {};
\node (p4n) at (278:1.5cm) {};
\node (p5n) at (338:1.5cm) {};
\node (p6n) at (38:1.5cm) {};

\draw [->] (p1n)--(p1);
\draw [->] (p1n)--(p1a);
\draw [->] (p2n)--(p2);
\draw [->] (p2n)--(p2a);
\draw [->] (p3n)--(p3);
\draw [->] (p3n)--(p3a);
\draw [->] (p4n)--(p4);
\draw [->] (p4n)--(p4a);
\draw [->] (p5n)--(p5);
\draw [->] (p5n)--(p5a);
\draw [->] (p6n)--(p6);
\draw [->] (p6n)--(p6a);

\end{tikzpicture}\\
(a)
\vspace{0.3cm}
\\
\begin{tikzpicture}[very thick,scale=1]
\tikzstyle{every node}=[circle, draw=black, fill=white, inner sep=0pt, minimum width=5pt];
\filldraw[fill=black!20!white, draw=black, thin, dashed](0,0)circle(1.5cm);
\node (p1) at (70:1.2cm) {};
\node (p2) at (130:1.2cm) {};
\node (p3) at (190:1.2cm) {};
\node (p4) at (250:1.2cm) {};
\node (p5) at (310:1.2cm) {};
\node (p6) at (10:1.2cm) {};

\node (p1a) at (110:1.2cm) {};
\node (p2a) at (170:1.2cm) {};
\node (p3a) at (230:1.2cm) {};
\node (p4a) at (290:1.2cm) {};
\node (p5a) at (350:1.2cm) {};
\node (p6a) at (50:1.2cm) {};

\node (p1i) at (98:0.6cm) {};
\node (p2i) at (158:0.6cm) {};
\node (p3i) at (218:0.6cm) {};
\node (p4i) at (278:0.6cm) {};
\node (p5i) at (338:0.6cm) {};
\node (p6i) at (38:0.6cm) {};
\draw  [<-] (p1a)--(p1);

\draw [<-] (p2a)--(p2);

\draw [<-] (p3a)--(p3);

\draw [<-] (p4a)--(p4);

\draw [<-] (p5a)--(p5);

\draw [<-] (p6a)--(p6);

\draw [->, ultra thick](2,0)--(2.5,0);
\end{tikzpicture}
\hspace{0.3cm}
\begin{tikzpicture}[very thick,scale=1]
\tikzstyle{every node}=[circle, draw=black, fill=white, inner sep=0pt, minimum width=5pt];
\filldraw[fill=black!20!white, draw=black, thin, dashed](0,0)circle(1.5cm);
\node (p1) at (70:1.2cm) {};
\node (p2) at (130:1.2cm) {};
\node (p3) at (190:1.2cm) {};
\node (p4) at (250:1.2cm) {};
\node (p5) at (310:1.2cm) {};
\node (p6) at (10:1.2cm) {};

\node (p1a) at (110:1.2cm) {};
\node (p2a) at (170:1.2cm) {};
\node (p3a) at (230:1.2cm) {};
\node (p4a) at (290:1.2cm) {};
\node (p5a) at (350:1.2cm) {};
\node (p6a) at (50:1.2cm) {};

\draw [<-] (p1a)--(p1);

\draw [<-] (p2a)--(p2);

\draw [<-] (p3a)--(p3);

\draw [<-] (p4a)--(p4);

\draw [<-] (p5a)--(p5);

\draw [<-] (p6a)--(p6);

\node (p1n) at (90:1.13cm) {};
\node (p2n) at (150:1.13cm) {};
\node (p3n) at (210:1.13cm) {};
\node (p4n) at (270:1.13cm) {};
\node (p5n) at (330:1.13cm) {};
\node (p6n) at (30:1.13cm) {};

\draw [->] (p1)--(p1n);

\draw [->] (p2)--(p2n);

\draw [->] (p3)--(p3n);

\draw [->] (p4)--(p4n);

\draw [->] (p5)--(p5n);

\draw [->] (p6)--(p6n);

\node (p1i) at (98:0.6cm) {};
\node (p2i) at (158:0.6cm) {};
\node (p3i) at (218:0.6cm) {};
\node (p4i) at (278:0.6cm) {};
\node (p5i) at (338:0.6cm) {};
\node (p6i) at (38:0.6cm) {};

\draw [->] (p1n)--(p1i);

\draw  [->]  (p2n)--(p2i);

\draw  [->]  (p3n)--(p3i);

\draw  [->]  (p4n)--(p4i);

\draw [->]  (p5n)--(p5i);

\draw  [->]  (p6n)--(p6i);

\end{tikzpicture}
\\
(b)
\\\hspace{0.3cm}
\begin{tikzpicture}[very thick,scale=1]
\tikzstyle{every node}=[circle, draw=black, fill=white, inner sep=0pt, minimum width=5pt];
\filldraw[fill=black!20!white, draw=black, thin, dashed](0,0)circle(1.3cm);
\node (p1i) at (90:1cm) {};
\node (p2i) at (150:1cm) {};
\node (p3i) at (210:1cm) {};
\node (p4i) at (270:1cm) {};
\node (p5i) at (330:1cm) {};
\node (p6i) at (30:1cm) {};
  \node [draw=white, fill=white] (c) at (0,-1.6)  {$\quad$};
\draw [->, ultra thick](2,0)--(2.5,0);
\end{tikzpicture}
\hspace{0.3cm}
\begin{tikzpicture}[very thick,scale=1]
\tikzstyle{every node}=[circle, draw=black, fill=white, inner sep=0pt, minimum width=5pt];
\filldraw[fill=black!20!white, draw=black, thin, dashed](0,0)circle(1.3cm);

\node (p1i) at (90:1cm) {};
\node (p2i) at (150:1cm) {};
\node (p3i) at (210:1cm) {};
\node (p4i) at (270:1cm) {};
\node (p5i) at (330:1cm) {};
\node (p6i) at (30:1cm) {};

\node (p1n) at (98:1.7cm) {};
\node (p2n) at (158:1.7cm) {};
\node (p3n) at (218:1.7cm) {};
\node (p4n) at (278:1.7cm) {};
\node (p5n) at (338:1.7cm) {};
\node (p6n) at (38:1.7cm) {};

\draw[->]  (p1n)--(p1i);

\draw[->]  (p2n)--(p2i);

\draw[->]  (p3n)--(p3i);

\draw[->]  (p4n)--(p4i);

\draw[->]  (p5n)--(p5i);

\draw[->]  (p6n)--(p6i);

\draw[->]   (p1n)--(p2n);

\draw[->]  (p2n)--(p3n);

\draw[->]  (p3n)--(p4n);

\draw[->]  (p4n)--(p5n);

\draw [->]  (p5n)--(p6n);

\draw[->]  (p6n)--(p1n);

\end{tikzpicture}
\\
(c)
\vspace{-0.3cm}
\end{center}
\vspace{-0.2cm}
\caption{(a) 0-extension (with $\gamma_1=\gamma_2=id$), (b) 1-extension  (with $\gamma_1=\gamma_2=\gamma_3=id$), (c) loop-1-extension  (with $\gamma_1=id$ and $\gamma_2=C_6$) in the covering graph.}
\label{fig:0extension_lifted}
\end{figure}
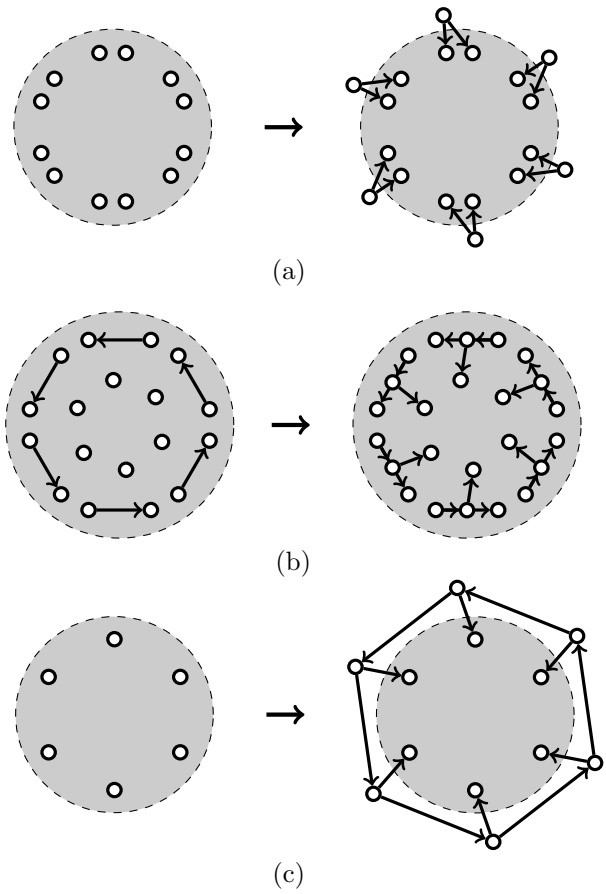

\subsection{Plane symmetry groups under free actions}

It is easy to observe that a 0-extension or loop-1-extension will create exactly one new component in the \SG-Assur decomposition and that this component will be the single new vertex in the gain graph. Moreover, this component will be above the component its edges join to in the partial order. For the 1-extension there are more possibilities as we describe in the next lemma. It is possible for a 1-extension to preserve the decomposition and the partial order, to preserve the decomposition but make incomparable components become comparable, to add a new single vertex component or to merge several components into a larger one.

\begin{Example}
Before proving this we illustrate the effect a 1-extension can have on the graph in Figure \ref{fig:C4}. In Figure \ref{fig:C4Induct} (a) we see a 1-extension applied completely within the component $A$ of the gain graph. This preserves the \SG-Assur decomposition. In (b) we see a 1-extension subdividing an edge between components resulting in a new single vertex component. In (c) we illustrate how two \SG-Assur components can be merged into one. This is easily observed by noting that the subgraph, of the gain graph, induced by $\{A,C,D\}$ has become strongly connected.
\end{Example}

\begin{figure}[ht]
    \begin{center}
  \subfigure[] {\includegraphics [width=.22\textwidth]{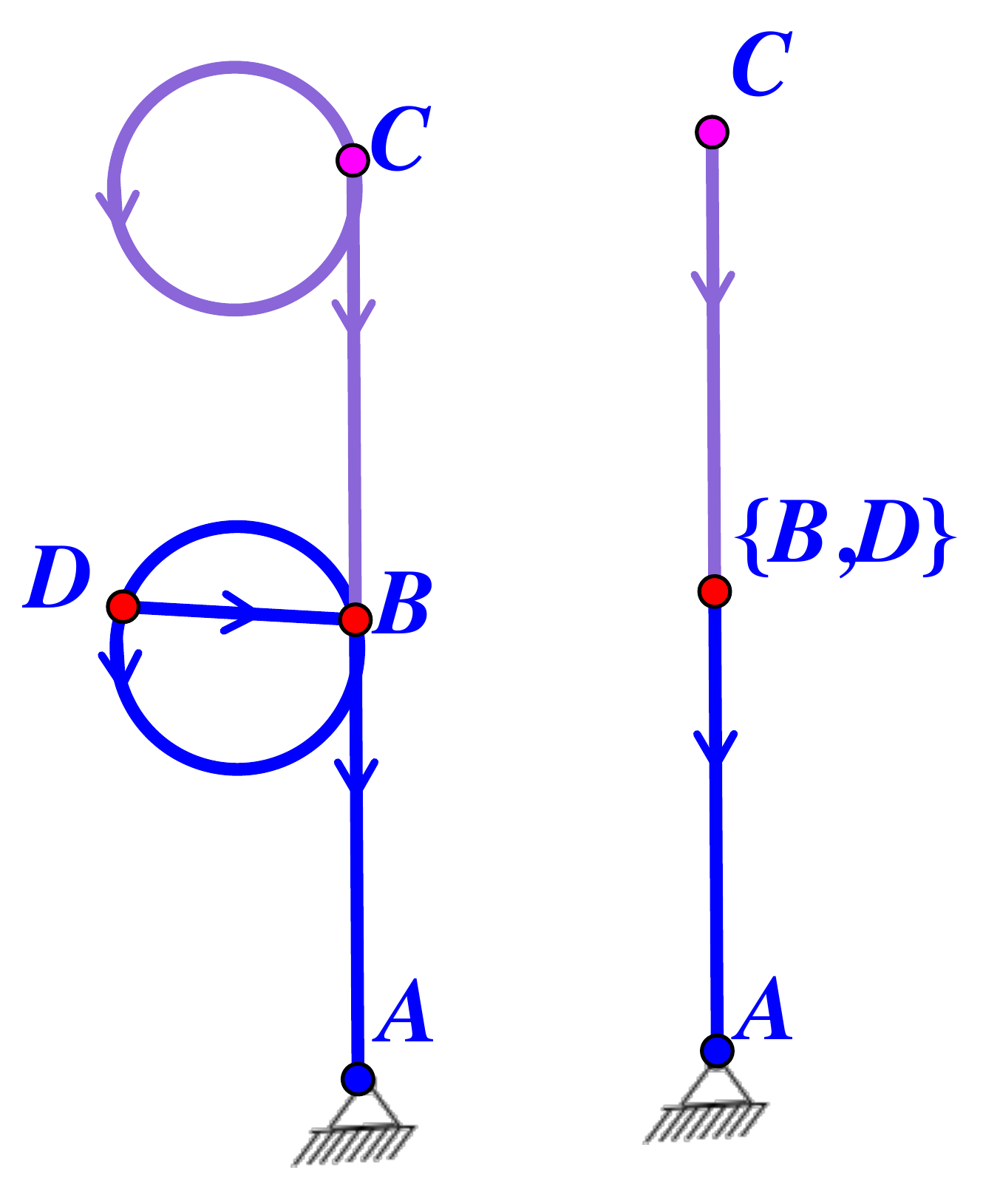}} \quad\quad\quad\quad
  \subfigure[] {\includegraphics [width=.18\textwidth]{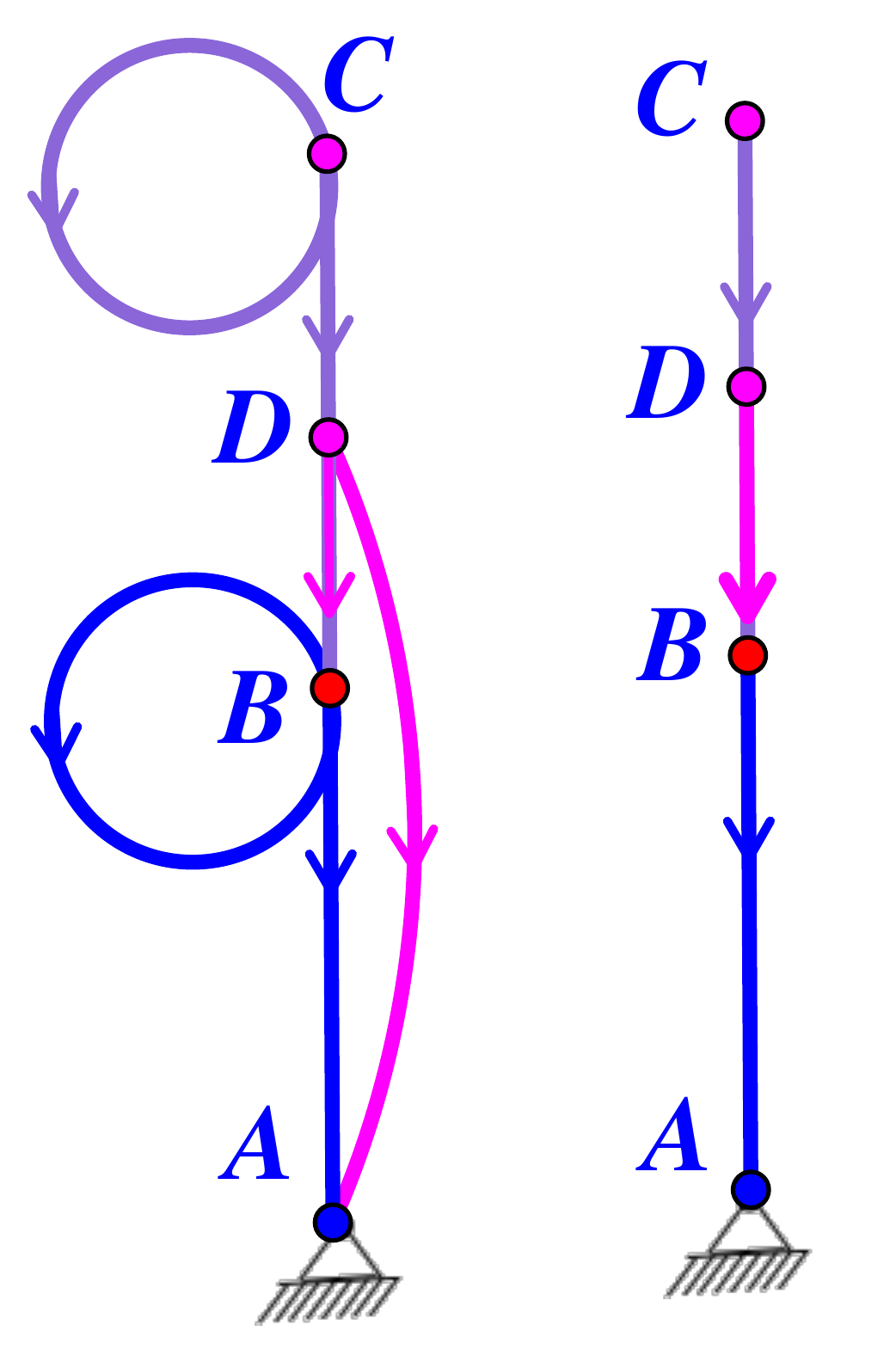}} \quad\quad\quad\quad
  \subfigure[] {\includegraphics [width=.19\textwidth]{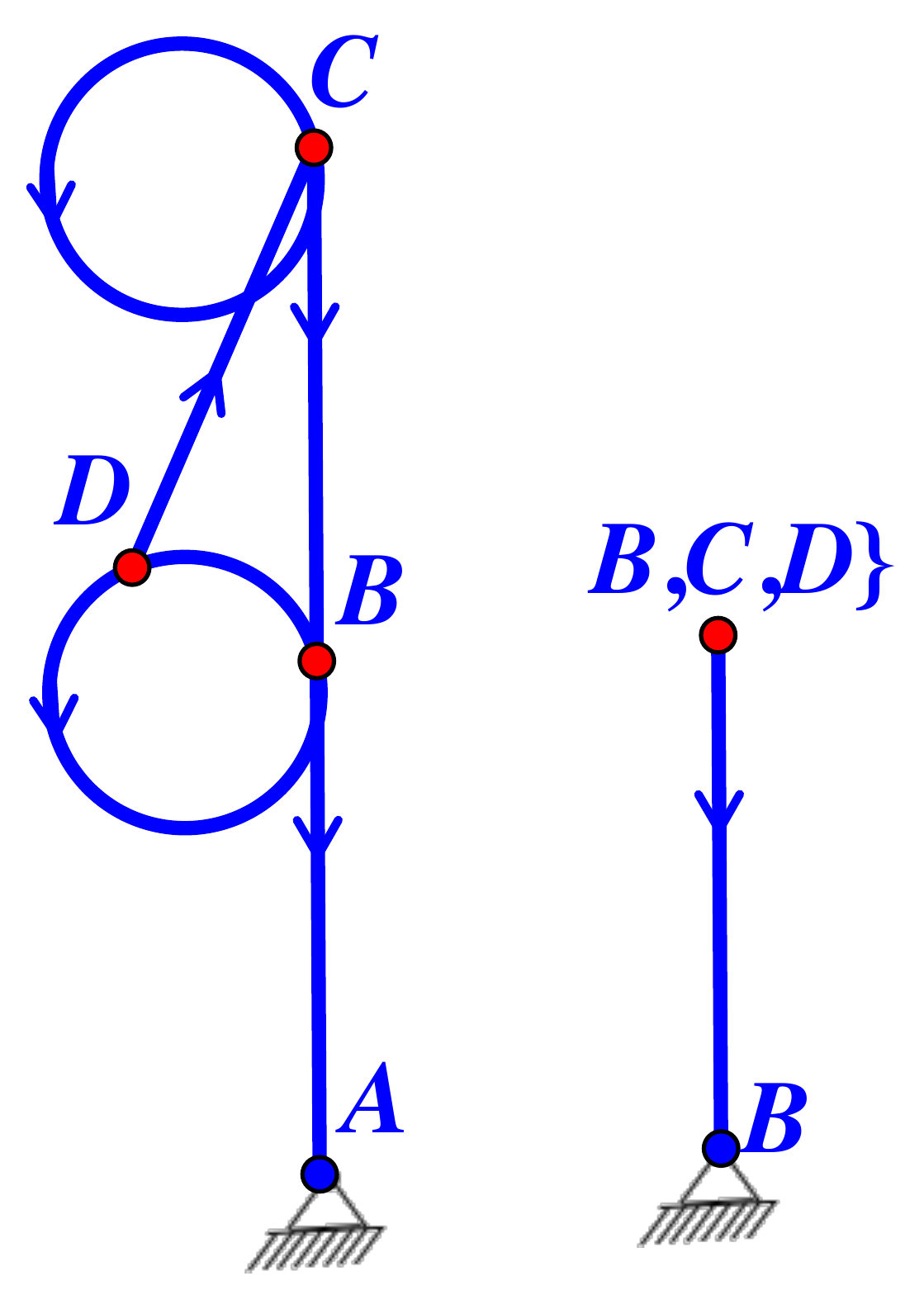}}\quad
    \subfigure[] {\includegraphics [width=.35\textwidth]{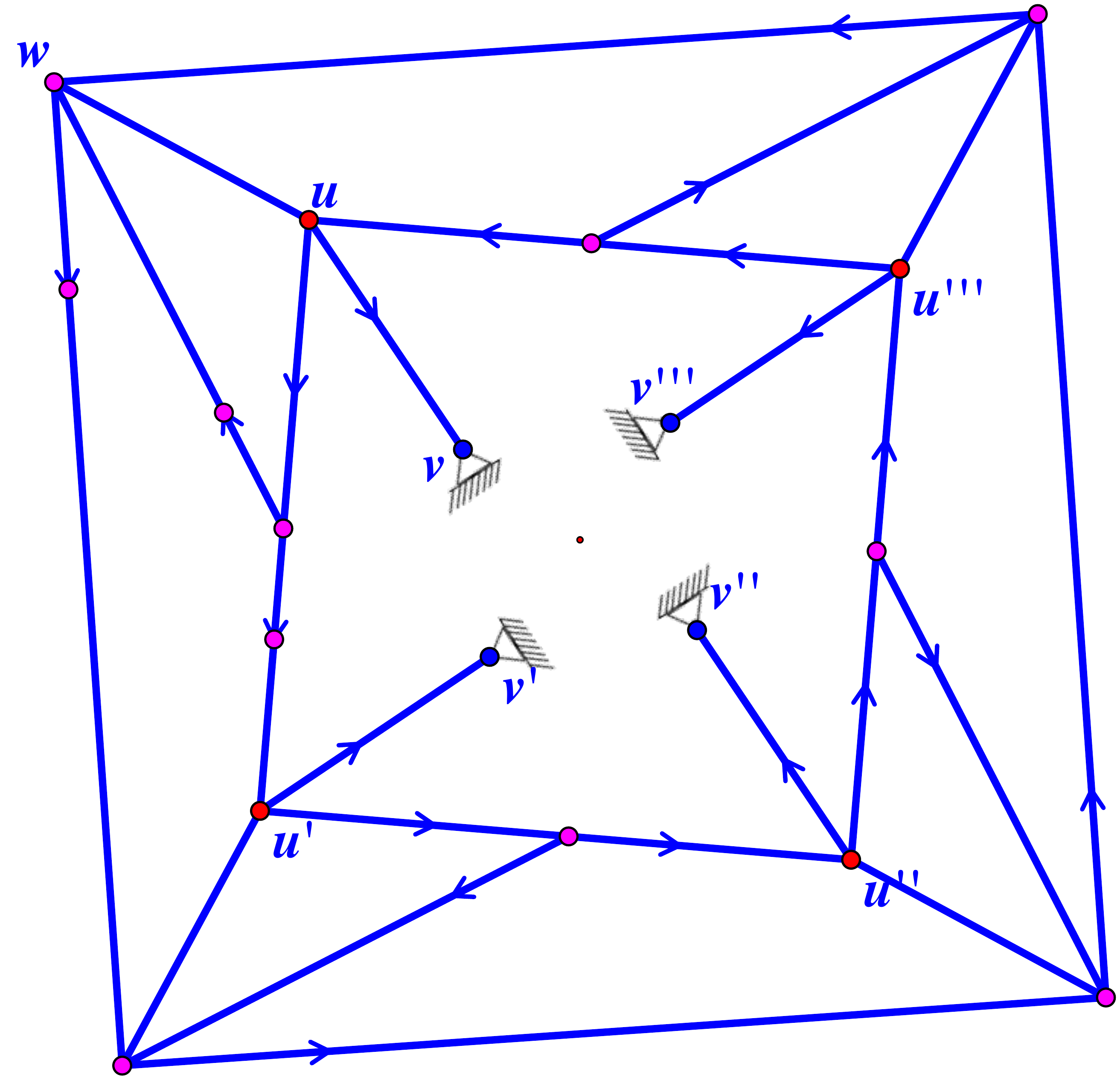}}
    \end{center}
    \caption{Three versions of the $1$-extension in the gain graph (a,b,c), illustrating Lemma~\ref{lem:1ext} - and their impact on the Assur decomposition.  (d) shows the induced graph $\hat{G}$ for figure (c).}
    \label{fig:C4Induct}
    \end{figure}

\begin{Lemma}
\label{lem:1ext}
Let \SG~be a plane symmetry group. Further, let \SG~act freely on the vertices and edges of a pinned \SG-isostatic graph $\hat G$ with associated gain graph $\hat H$ and \SG-Assur components $\hat H_1,\hat H_2,\dots, \hat H_n$ where $i<j$ implies $\hat H_i$ is not above $\hat H_j$ in the partial order ($i,j \in \{1,2,\dots, n\}$). Let $\hat H'$ be formed from $\hat H$ by a 1-extension deleting the edge $xy$ and adding a new vertex $v$ and edges $vx,vy,vw$. Then the covering graph $\hat G'$ of $\hat H'$ is pinned \SG-isostatic. Moreover
\begin{enumerate}
\item the \SG-Assur decomposition and its partial order are preserved if and only if (1) $xy \in \hat H_k$ and $w \in \hat H_k$, (2) $x \in V(\hat H_k)$, $y\in V(\hat H_j)$, $k>j$ and $w\in \hat H_k$ or (3) $x \in V(\hat H_k)$, $y\in V(\hat H_j)$, $k>j$ and $w\in \hat H_i$ for $i<k$ ($\hat H_i$ and $\hat H_k$ are comparable).
\item the \SG-Assur decomposition contains one new single vertex component if and only if (1) $x \in V(\hat H_k)$, $y\in V(\hat H_j)$, $k>j$ and $w\in \hat H_j$ or (2) $x \in V(\hat H_k)$, $y\in V(\hat H_j)$ and $k>j$ and $w\in \hat H_i$ ($\hat H_i$ and $\hat H_k$ are incomparable).
\item several comparable components in the \SG-Assur decomposition become one large component if and only if (1) $xy  \in E(\hat H_k)$ and $w\in \hat H_i$ $i>k$ ($\hat H_i$ and $\hat H_k$ are comparable) or (2) $x \in V(\hat H_k)$, $y\in V(\hat H_j)$ and $k>j$ and $w\in \hat H_i$ for $i>k$ ($\hat H_i$ and $\hat H_k$ are comparable).
\end{enumerate}
\end{Lemma}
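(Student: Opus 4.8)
The plan is two-fold: first confirm that $\hat G'$ is again pinned \SG-isostatic, and then translate the claim about the \SG-Assur decomposition into a statement about strongly connected components that can be settled by a short reachability analysis. For the first step, a 1-extension removes one edge and adds a new vertex with three edges, so $|\hat E'|=|\hat E|+2=2|\hat I|+2=2|\hat I'|$. To see that the remaining counts of Theorem~\ref{thm:Scounts} survive, I would argue exactly as in its proof: replace the pinned vertices of $\hat H$ (and of $\hat H'$) by the quotient \SG-gain graph of a non-pinned \SG-isostatic gadget on the same pinned-vertex set, obtaining non-pinned gain graphs $H^{\ast}$ and $(H')^{\ast}$; the 1-extension on $\hat H$ is then a genuine 1-extension on $H^{\ast}$ (the vertex $w$, and possibly $x$ or $y$, may lie in the gadget), and since the 1-extension preserves $(2,3,1)$-gain-tightness, Theorem~\ref{thm:symmetry_reflection} shows the covering graph of $(H')^{\ast}$ is \SG-isostatic, i.e.\ $\hat G'$ is pinned \SG-isostatic. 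By Theorem~\ref{thm:decomp} together with Corollary~\ref{cor:strong}, the \SG-Assur decomposition of $\hat G'$ with its partial order equals the strongly connected component decomposition of \emph{any} \SG-directed orientation of $\hat H'$, with the order on its condensation, so it suffices to work with one convenient orientation.

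Fix an \SG-directed orientation $D$ of $\hat H$. Using Corollary~\ref{cor:strong} I may assume $xy$ is oriented $x\to y$ with $x$ lying in the component $\hat H_k$ that is not strictly below the component of $y$: when $x,y$ lie in different components this is the forced orientation of $xy$, and when they lie in a common component one reverses a directed cycle through $xy$, which exists by strong connectedness and changes neither out-degrees nor the SCCs. Now define $D'$ on $\hat H'$ by deleting $x\to y$ and adding $x\to v$, $v\to y$, $v\to w$. Then $\deg^+_{D'}(x)=\deg^+_{D}(x)=2$, the new vertex $v$ has out-degree $2$, and no other out-degree changes, so $D'$ is an \SG-directed orientation of $\hat H'$.

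The core is then a reachability computation in $D'$: the only arc into $v$ is $x\to v$, and the only arcs out of $v$ are $v\to y$ and $v\to w$. Hence a vertex $z$ reaches $v$ iff $z$ reaches $x$ in $D$, and $v$ reaches $z$ iff $y$ or $w$ reaches $z$; in particular $v$ is absorbed into an existing strongly connected component iff $y$ or $w$ reaches $x$, and since the component of $y$ is strictly below $\hat H_k$, this happens iff the component of $w$ is weakly above $\hat H_k$. This yields three regimes. (i) If $v$ is absorbed into $\hat H_k$ and the arc $v\to w$ stays inside $\hat H_k$ or points to a component already below $\hat H_k$, then no old component is merged and no new comparability between old components is created: the decomposition and its partial order are preserved — the listed ``preserved'' subcases. (ii) If $v$ is not absorbed, it becomes a new one-vertex component inserted just below $\hat H_k$ and above the components of $y$ and $w$, with no merging — the ``new single vertex component'' subcases. (iii) If $v$ is absorbed into $\hat H_k$ but $v\to w$ points to a component $\hat H_i$ strictly \emph{above} $\hat H_k$, then $v\to w$ closes a directed circuit together with a directed path from $\hat H_i$ down to $\hat H_k$, so $\hat H_i$, $\hat H_k$ and every component between them coalesce — the ``merging'' subcases. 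Matching each possible location of $(x,y,w)$ relative to $\hat H_1,\dots,\hat H_n$ against these three outcomes, and checking that the conditions listed in the lemma are also sufficient, finishes the proof.

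The mechanical part — writing down $D'$ and computing reachability in the condensation — is routine once the orientation is fixed. The delicate points are the bookkeeping ones: verifying that the stated conditions on the positions of $x$, $y$ and $w$ are genuinely exhaustive and fall into the correct regime, and pinning down once and for all what ``the \SG-Assur decomposition and its partial order are preserved'' should mean when a new one-vertex component is inserted (namely: no two old components merge and no new comparability relation among old components is created). Establishing both directions of each ``if and only if'' cleanly, in particular separating the ``preserved'' from the ``new single vertex'' regime in the borderline configurations where $x$ and $y$ lie in different components, is where the care is required.
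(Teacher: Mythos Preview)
Your proposal is correct and follows essentially the same route as the paper: both arguments cite the preservation of \SG-isostaticity under 1-extension, fix an \SG-directed orientation with $xy$ oriented $x\to y$, extend it to $\hat H'$ via $x\to v$, $v\to y$, $v\to w$, and then read off the \SG-Assur decomposition from the strongly connected components using Theorem~\ref{thm:decomp}. The paper only works out subcase a(i) explicitly and appeals to ``similar reasoning'' for the rest (also noting that subcases a(iv) and b(v) preserve components but alter comparability), so your systematic reachability analysis and your caveat about the bookkeeping being the delicate part are well placed.
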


\begin{proof}
That the covering graph is \SG-isostatic follows from \cite[Lemma 6.1]{jkt}.

There are two cases, with many sub-cases, for how the 1-extension may be applied.

\textbf{Case a} $xy  \in E(\hat H_k)$.
Subcases: i $w\in \hat H_k$; ii $w\in \hat H_i$ $i>k$ ($\hat H_i$ and $\hat H_k$ are comparable); iii $w\in \hat H_i$ $i<k$ ($\hat H_i$ and $\hat H_k$ are comparable); iv $w\in \hat H_i$ ($\hat H_i$ and $\hat H_k$ are incomparable).

\textbf{Case b} $x \in V(\hat H_k)$, $y\in V(\hat H_j)$ and $k>j$.
Subcases: i $w\in \hat H_k$; ii $w\in \hat H_j$; iii $w\in \hat H_i$ $i<k$ ($\hat H_i$ and $\hat H_k$ are comparable); iv $w\in \hat H_i$ $i>k$ ($\hat H_i$ and $\hat H_k$ are comparable); v $w\in \hat H_i$ ($\hat H_i$ and $\hat H_k$ are incomparable).

For case a i: without loss we may assume $xy$ is directed from $x$ to $y$. Direct the new edges from $x$ to $v$, $v$ to $y$ and $v$ to $w$. It is immediate that this makes $\hat H_k +v$ strongly connected (since $\hat H_k$ was) and including edges to components below it gives us a \SG-directed orientation. It follows from Theorem \ref{thm:decomp} that the \SG-Assur decomposition is the same.

Applying similar reasoning in each other case reveals that we are in the relevant case of the lemma except for cases a iv and b v which maintain the decomposition but change the partial order by making incomparable components comparable. In particular the choice of directions illustrated in Figures \ref{fig:inductive} and \ref{fig:0extension_lifted} show we always have a \SG-orientation of $\hat H'$.
\end{proof}

It is possible to state an analogue of the lemma above for X-replacement (this is defined, for example, in \cite{N&R}). However there are many more cases.
There are numerous additional operations including vertex splitting \cite{Wvsplit} and vertex-to-$K_4$ moves \cite{NOP2} that preserve the counts required and have previously been adapted to symmetric settings \cite{N&S}. These moves are more intricate from our perspective since the moves include more freedom for new edges. Hence it is harder to track the directions on these new edges and preserve the \SG-orientation. When that is possible similar case by case analysis will reveal the effect of the operation on the components of a \SG-Assur decomposition.


\subsection{$d$-dimensions and non-fixed actions}

The 0-, 1- and loop-1-extensions can be generalised to arbitrary dimensions. For example the analogue of 0-extension in dimension $d$ is simply to add a new vertex \(\tilde{v}\) and $d$ new non-loop edges
$\te_{1},\te_{2},\dots, \te_{d}$ taking care with the gain labels in the obvious way. Similarly a $d$-dimensional 1-extension removes a single edge and adds in a vertex of degree $d+1$ (with new edges and gains similarly chosen as specified explicitly for the $d=2$ case).

In dimension 3 we can repeat Lemma \ref{lem:1ext}, with many more cases, to control exactly when a $3$-dimensional 1-extension preserves components or reduces the number of components, etc. Moreover we can consider non-free actions. The quantity dim$U_{\bp(x)}$ is either $0,1,2,3$ so by choosing  $d$-dimensional 1-extensions for the appropriate $d\in \{0,1,2,3\}$ we can repeat the 2 and 3-dimensional arguments for non-free actions.

\section{Extensions and Conclusions} \label{sec:conclusions}

In the same way that orbit matrices have proven useful in a number of broader settings, the analysis extends to other settings where we can create square pinned orbit matrices.

\subsection{Extensions to `anti-symmetric' orbit matrices}

It was shown in \cite{BS2} that the rigidity matrix $R(G, \bp)$ of a \SG-symmetric framework $(G, \bp)$ can be
transformed into a block-decomposed form, where each block $R_i(G,\bp)$ corresponds to an irreducible
representation $\rho_i$ of the group $\mathcal{S}$. This breaks up the rigidity analysis of $(G, \bp)$ into a number
of independent subproblems \cite{FGsymmax,BS2}. In fact, the fully \SG-symmetric rigidity properties of $(G, \bp)$ are described
by the block matrix $R_1(G, \bp)$ corresponding to the trivial irreducible representation $\rho_1$ of $\mathcal{S}$. Thus, as shown in \cite{BSWWorbit}, this block matrix
is equivalent to the orbit matrix defined in Definition~\ref{orbitmatrixdef}.

In the recent paper \cite{schtan}, an orbit matrix was established for \emph{each} of the blocks $R_i(G, \bp)$ in the case where the group is abelian, and
these new tools were successfully used to characterize \SG-generic infinitesimally rigid graphs (i.e., graphs $G$ with the property that \SG-generic realisations of $G$ do not have \emph{any} non-trivial infinitesimal motions) in the plane for the groups  $\mathcal{C}_s$, $\mathcal{C}_2$ and $\mathcal{C}_3$.

Since the structure of the `anti-symmetric' orbit matrices is similar to the structure of the orbit matrix from  Definition~\ref{orbitmatrixdef}, we expect that the methods and results established in this paper can  be extended to these other orbit matrices in a straightforward fashion. This would allow us to analyse symmetric frameworks for
infinitesimal motions and stresses that break certain symmetries, but preserve others. However, note that an `anti-symmetric' infinitesimal motion typically does not extend to a finite motion \cite{KG1,BS2}.

\subsection{Extensions to matrices for other constraint systems}

All of the techniques (and even examples) of this paper extend directly to spherical linkages, which are studied in settings such as \cite{sphere} Chapters 7-10.  Papers such as \cite{BSWWorbit} carry out the transfer for all orbit matrices, including the original rigidity matrix (where the group is the identity).  There is no complication to transferring these transfer processes to pinned frameworks, and the \SG-Assur decompositions will also transfer.
All the techniques also extend to cones - provided the cone point (or the cone point and its symmetric images) has the appropriate symmetry.

Periodic frameworks in the plane have been studied by a number of groups.  Whether the lattice is fixed \cite{Ross}, partially variable or fully flexible \cite{M&Tper, B&S} there is a natural rigidity matrix which can be made square by pinning. Moreover periodic frameworks have been understood combinatorially using gain graphs.  Hence there is potential to use the techniques in this paper to generate Assur decompositions when  one vertex in the fixed lattice can substitute for the ground.

In \cite{NOP,NOP2} frameworks in 3-dimensions supported on fixed surfaces have been studied and symmetric analogues of Theorem \ref{thm:symmetry_reflection} have been obtained \cite{N&S}. The rigidity matrix has $3|V|$ columns and when the framework is isostatic $2|V|-k+|V|$ rows (where $k$ is the number of isometries of $\mathbb{R}^3$ admitted by the surface). When the surface admits no isometries of $\mathbb{R}^3$ then no pinning is required. However unlike the situation in Figure \ref{fig:D_2}, all frameworks on such a surface can be considered as pinned. Otherwise it seems like these frameworks should be decomposable into Assur components by pinning $k$ degrees of freedom to make the matrix square and then applying the techniques in this paper.

In \cite[Subsection 5.1]{3directed} it was outlined how to extend the equivalence of Assur decompositions to various alternative types of frameworks allowing bodies, bars, joints, pins, hinges, etc.  All of these kinds of mechanisms occur in the mechanical engineering literature.   All of the techniques and results of this paper will have analogous decompositions under symmetry, since the symmetry-adapted analogues of the basic rigidity results have recently been developed.

In \cite{LeeSid} general CAD systems in 3-dimensions were investigated with constraint matrices which are analogs of the rigidity matrices for bodies and bars.  It is natural to consider pinned CAD systems (essentially make one of the bodies the ground) and to also consider systems of symmetric CAD constraints.  We anticipate all of the decompositions described here will again extend to these more general sets of constraints.


%
%

\subsection{Uniquely stressed graphs}
\label{subsec:circuits}

In \cite{SSW1} many combinatorial properties of 2-Assur graphs were developed by considering minimal dependencies in the rigidity matrix. Since the rigidity matrix induces a linear matroid known as the rigidity matroid, these minimal dependencies are known as circuits.


This did not generalize to Assur graphs in 3-space.  In this subsection we conjecture that the symmetry analogue works for certain plane groups and again breaks down in higher dimensions.

Let us define a \emph{\SG-circuit} to be a graph $G$ for which the rows of $\mathcal{O}( G, \psi,  \bp)$ (with $\bp$ \SG-regular) induce a circuit in the forced-\SG-symmetric rigidity matroid. Then $\hat{G}$ is a \SG-circuit if and only if there is a unique vector in the cokernel of the orbit matrix which is non-zero in every coordinate. In other words $G$ is a \SG-circuit if and only if it has a unique \SG-symmetric self-stress which is non-zero on all edges.

Let $\hat{G}^-$ be the graph formed by collapsing all vertices in $P$ to a single vertex $c$ (and every edge of the form $xy$ for $x\in I,y\in P$ is replaced by the corresponding edge $xc$).
A key necessary condition for $\hat{G}^-$ to be a circuit when $\hat{G}$ is \SG-Assur is that the degree of freedom of the `ground' under $\cal{S}$ is $1$ larger than the degree of freedom of a single vertex fixed by the group $\cal{S}$.  This observation leads us to consider 3 candidates, although 2 of them (mirror symmetry or rotational symmetry in 3-space) are ruled out since flexible circuits such as the double banana translate to the symmetric setting. Consider Figure \ref{fig:Weakly}: the $\mathcal{C}_3$-gain graph, with ground shrunk, becomes \SG-dependent but not a circuit.

This leaves rotation symmetry in the plane where a full ground has $1$ degree of freedom, while a single vertex fixed on the axis has no degrees of freedom. Figure \ref{fig:Circuits} illustrate this, see also Figures~\ref{fig:desargues} and \ref{fig:desargues2}.

\begin{Conjecture}
Let $\mathcal{C}_n$, $n\geq 2$, be a rotation group, in the plane, centred at the origin. Let $(\hat{G},\hat \bp)$ be pinned $\mathcal{C}_n$-regular. For $\hat{G}^-$ choose $\hat \bp^-$ to equal $\hat \bp$ on all inner vertices and $\hat \bp^-(c)=(0,0)$. Then $\hat{G}$ is $\mathcal{C}_n$-Assur if and only if $\hat{G}^-$ is a $\mathcal{C}_n$-circuit.
\end{Conjecture}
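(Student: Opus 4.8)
The heart of the statement is the ``degree-of-freedom balance'' noted above: under $\mathcal{C}_n$ (a rotation group in the plane centred at the origin) a full pinned ground carries a one-dimensional space of fully $\mathcal{C}_n$-symmetric trivial motions (rotation about the origin), while the single vertex $c$ placed at the origin carries none, so collapsing all pins onto $c$ drops the rank of the constraint matrix by exactly one --- precisely the gap between \emph{minimal pinned $\mathcal{C}_n$-isostatic} ($\mathcal{C}_n$-Assur) and \emph{minimal dependent} ($\mathcal{C}_n$-circuit). To make this precise I would compare the pinned orbit matrix $A:=\mathcal{O}_{pin}(\hat H,\psi,\tilde\bp)$ of $\hat G$ with the orbit matrix $B:=\mathcal{O}(\hat H^-,\psi,\tilde\bp^-)$ of $\hat G^-$, where $\hat H^-$ is the quotient $\mathcal{C}_n$-gain graph of $\hat G^-$. (I assume the usual reductions: $\mathcal{C}_n$ acts freely on the inner vertices of $\hat G$, there are no edges joining two pins, and no isolated inner vertices; the general fixed-vertex case follows with the tools of Subsection~\ref{subsec:fixed}.) Since $c$ sits at the origin it contributes no columns to $B$ (Remark~\ref{rem:fixorb}), so $A$ and $B$ have the same $2|\hat I|$ columns; the rows for internal edges coincide, and the row of $B$ for a pin-edge $(\tilde u,\tilde p)$ is obtained from that of $A$ by setting $\tilde\bp(\tilde p)=\mathbf 0$, hence equals $\tilde\bp(\tilde u)$ in the $\tilde u$-block. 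The infinitesimal rotation $w$ about the origin, $w(\tilde u)=R^{\perp}\tilde\bp(\tilde u)$, is a fully $\mathcal{C}_n$-symmetric trivial motion of $\hat G^-$ that fixes $c$; using that each $\tau(\psi_e)$ commutes with the $90^{\circ}$ rotation $R^{\perp}$ one checks $Bw=0$, and $w$ is nonzero at every inner orbit at a $\mathcal{C}_n$-regular $\tilde\bp$, so $\operatorname{rank}B\le 2|\hat I|-1$. (In fact $B=\mathcal{O}_{pin}(\hat H,\psi,\tilde\bp')$ for the degenerate placement $\tilde\bp'$ sending all pins to the origin, so $\operatorname{rank}A\ge\operatorname{rank}B$.)

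\textbf{Reformulation.} By definition $\hat G$ is $\mathcal{C}_n$-Assur iff $A$ is square and invertible ($\hat G$ pinned $\mathcal{C}_n$-isostatic) and $\hat G$ has no proper pinned $\mathcal{C}_n$-isostatic subgraph containing an inner vertex; and $\hat G^-$ is a $\mathcal{C}_n$-circuit iff its edge set is dependent while every proper edge subset is independent. The bridge is the ``collapse the pins'' correspondence: subgraphs of $\hat H$ correspond to subgraphs of $\hat H^-$ meeting $\tilde c$ (inner-only subgraphs of $\hat H$ to subgraphs of $\hat H^-$ avoiding $\tilde c$), preserving edge counts and with each inner orbit contributing $2$ columns on both sides. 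Two combinatorial inputs then do the work: (i) Theorem~\ref{thm:Scounts} applied to the pinned $\mathcal{C}_n$-isostatic graph $\hat G$ --- every inner-only subgraph of $\hat H$ is $(2,3,1)$-gain-sparse and every pinned subgraph $H'$ of $\hat H$ has $|E'|\le 2|I'|$; and (ii) the Laman-type characterisation of the forced-$\mathcal{C}_n$-symmetric rigidity matroid in the plane with the centre fixed (extending Theorem~\ref{thm:symmetry_reflection}; see \cite{jkt} and the discussion after Theorem~\ref{thm:Scounts}), from which a circuit of that matroid meeting $\tilde c$ is ``tight'', using exactly $2$ edges per inner orbit it touches (a degree-$1$ inner vertex carries no self-stress), while a circuit avoiding $\tilde c$ is an ordinary $(2,3,1)$-circuit --- hence ruled out inside $\hat H^-$ by (i).

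\textbf{The two implications.} For $(\Leftarrow)$: if $\hat G^-$ is a $\mathcal{C}_n$-circuit then by (ii) it meets $\tilde c$ and touches every inner orbit, so $|\hat E|=|\hat E^-|=2|\hat I|$ and $A$ is square; if $\operatorname{rank}A<2|\hat I|$ then $\hat G$ has an over-sparse inner-only subgraph (excluded by (i)) or a pinned subgraph $H'$ with $|E'|>2|I'|$, and in the latter case the collapse of $H'$ is a proper dependent subgraph of $\hat H^-$ (the origin vertex re-admits the rotation, so its independence bound lies $2$ below its column count), contradicting circuit-ness; thus $A$ is invertible. The same collapse-and-count argument shows any proper pinned $\mathcal{C}_n$-isostatic subgraph of $\hat G$ would collapse to a proper dependent subgraph of $\hat H^-$, again impossible; hence $\hat G$ is $\mathcal{C}_n$-Assur. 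For $(\Rightarrow)$: if $\hat G$ is $\mathcal{C}_n$-Assur then $A$ is invertible, so $|\hat E^-|=2|\hat I|>\operatorname{rank}B$ and $\hat E^-$ is dependent; were $\hat E^-$ not a circuit it would contain a proper circuit $T$, which by (i)--(ii) meets $\tilde c$ and (taking the violating subgraph supplied by (ii) and applying the count in (i)) contains a pinned subgraph $\hat K$ of $\hat H$ with $|E(\hat K)|=2|I(\hat K)|$; the sub-sub-counts of $\hat K$ are inherited from $\hat G$, so $\hat K$ is pinned $\mathcal{C}_n$-isostatic, and it is proper in $\hat H$ since $|E(\hat K)|\le|T|<|\hat E|$, contradicting minimality. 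Hence $\hat G^-$ is a $\mathcal{C}_n$-circuit.

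\textbf{Main obstacle.} The genuine gap is ingredient (ii): the excerpt states the Laman-type theorem only for \emph{free} plane actions (Theorem~\ref{thm:symmetry_reflection}), whereas here one needs its extension to $\mathcal{C}_n$-rotation with a fixed centre, and specifically the fact that every circuit of that matroid meeting the fixed vertex is tight. This is expected to follow from the gain-count methods of \cite{jkt} (cf.\ the remark after Theorem~\ref{thm:Scounts}), but it must be set up with care. A secondary, routine difficulty is the bookkeeping of the collapse map --- loops, multiple pin-edges at a single inner vertex (which cannot occur in a $\mathcal{C}_n$-Assur graph with $|\hat I|\ge 2$, since then $\{\tilde u\}$ together with its two pin-edges is already pinned $\mathcal{C}_n$-isostatic), isolated pins, and the base case $|\hat I|=1$. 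As an alternative to citing (ii), one can instead argue by induction on $|\hat I|$: the matroid of $\hat G^-$ is invoked on a proper subgraph only through $\hat K$, whose collapse is again of the form $(\cdot)^-$, so the inductive hypothesis applies verbatim.
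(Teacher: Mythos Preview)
The statement you are attempting to prove is listed in the paper as a \emph{Conjecture}, and the paper gives no proof. Immediately after stating it, the authors write: ``The conjecture seems to require a version of Theorem~\ref{thm:Scounts} for arbitrary group actions. Since there is little theory for \SG-circuits that we could make use of, we have not exerted much effort in trying to prove it by alternate means.'' So there is nothing to compare your proposal against --- the paper explicitly leaves this open.

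That said, your sketch is a sensible attack on the problem, and you have correctly located the genuine obstruction: your ingredient (ii), the Laman-type characterisation of the forced-$\mathcal{C}_n$ rigidity matroid in the plane when the centre is fixed, is exactly the ``version of Theorem~\ref{thm:Scounts} for arbitrary group actions'' that the authors say is missing. Your setup (comparing $A$ and $B$, exhibiting the rotation $w$ in $\ker B$, and tracking the collapse map on subgraphs) is the right framework, and the degree-of-freedom heuristic you articulate is precisely the one the paper flags just before the conjecture. But until (ii) is available, neither direction of your argument goes through: in $(\Leftarrow)$ you need it to conclude that the collapsed over-count subgraph is genuinely dependent in the orbit matroid of $\hat G^-$, and in $(\Rightarrow)$ you need it to extract a tight pinned subgraph $\hat K$ from a proper circuit $T$. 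Your inductive workaround at the end is an interesting idea, but as written it still invokes the circuit structure of a collapsed graph at the inductive step, so it does not obviously sidestep (ii).

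In short: your proposal is not a proof, but it is an honest and well-aimed outline that pinpoints the same gap the authors do.
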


The conjecture seems to require a version of Theorem \ref{thm:Scounts} for arbitrary group actions. Since there is little theory for \SG-circuits that we could make use of, we have not exerted much effort in trying to prove it by alternate means.

\begin{figure}[ht]
    \begin{center}
  \subfigure[] {\includegraphics [width=.30\textwidth]{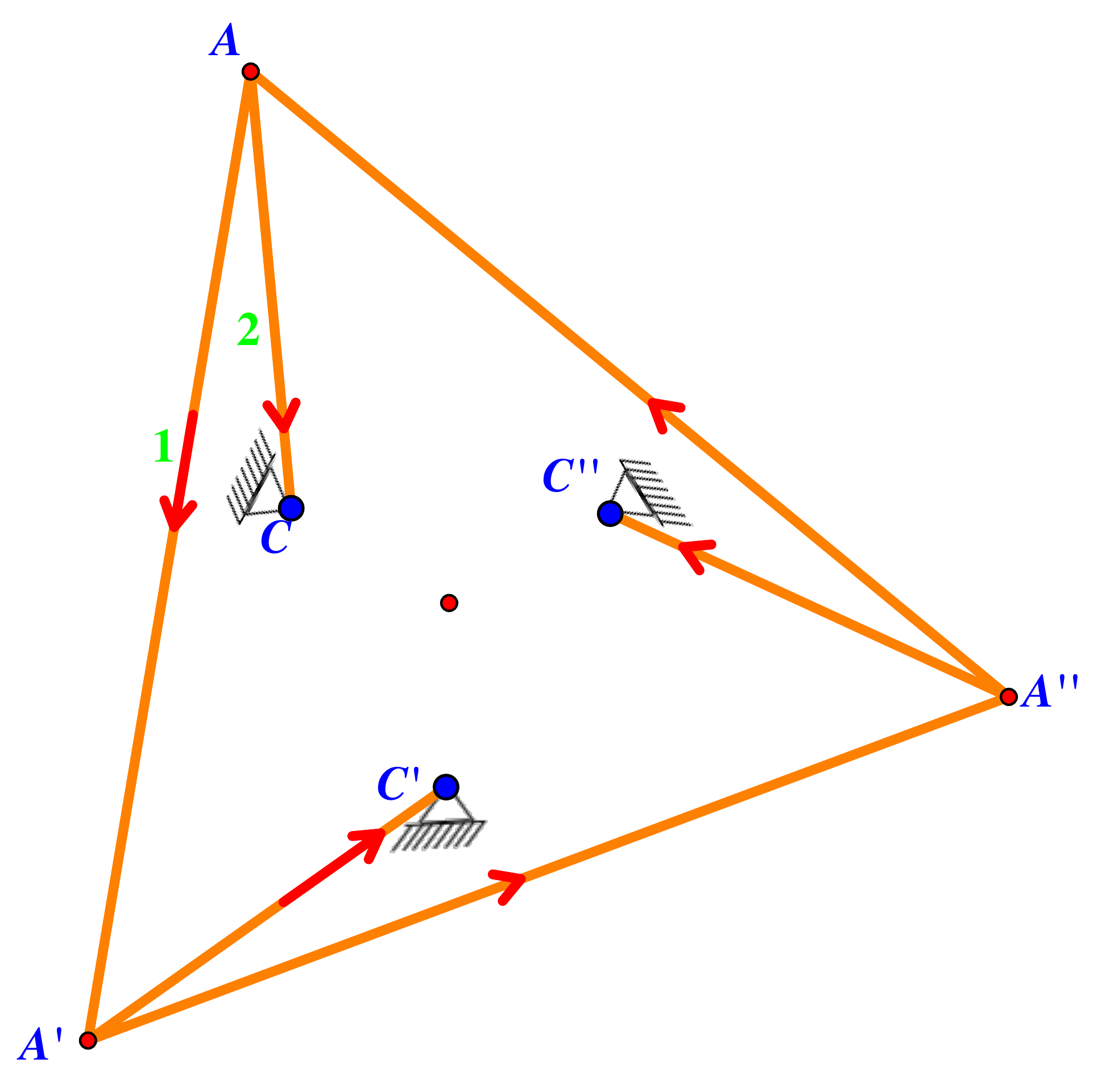}} \quad
\subfigure[] {\includegraphics [width=.14\textwidth]{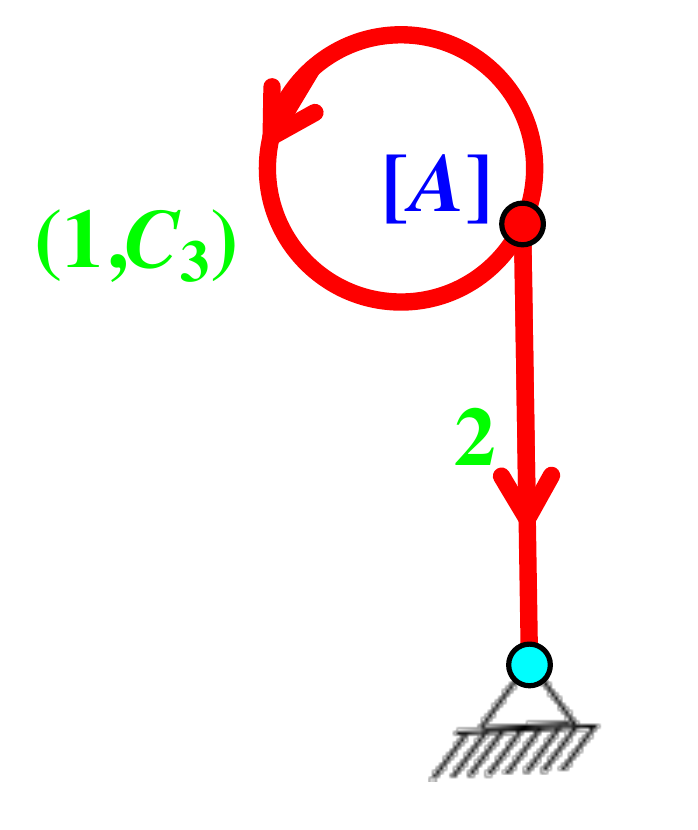}}\quad
  \subfigure[] {\includegraphics [width=.30\textwidth]{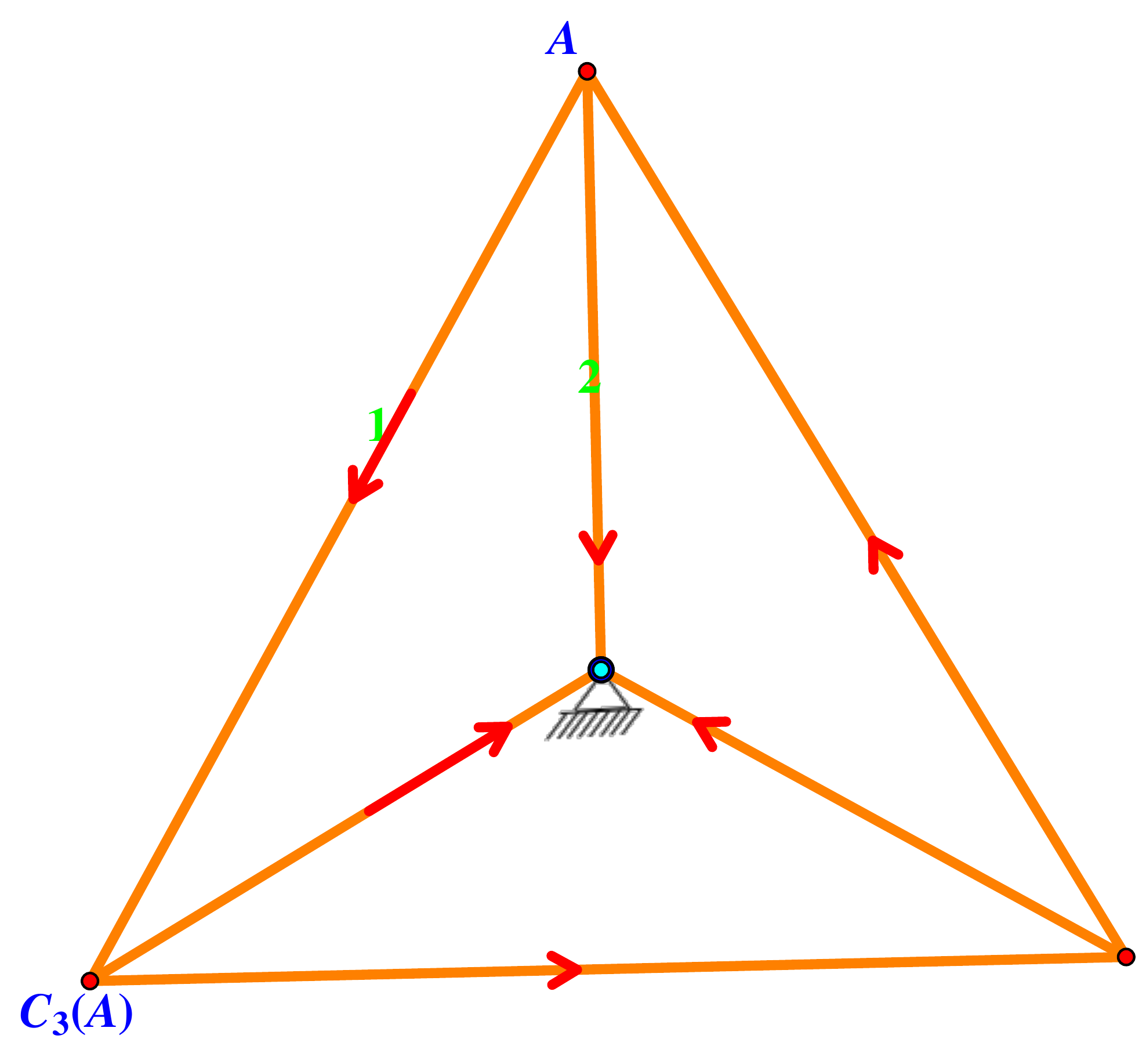}} \quad
\subfigure[] {\includegraphics [width=.14\textwidth]{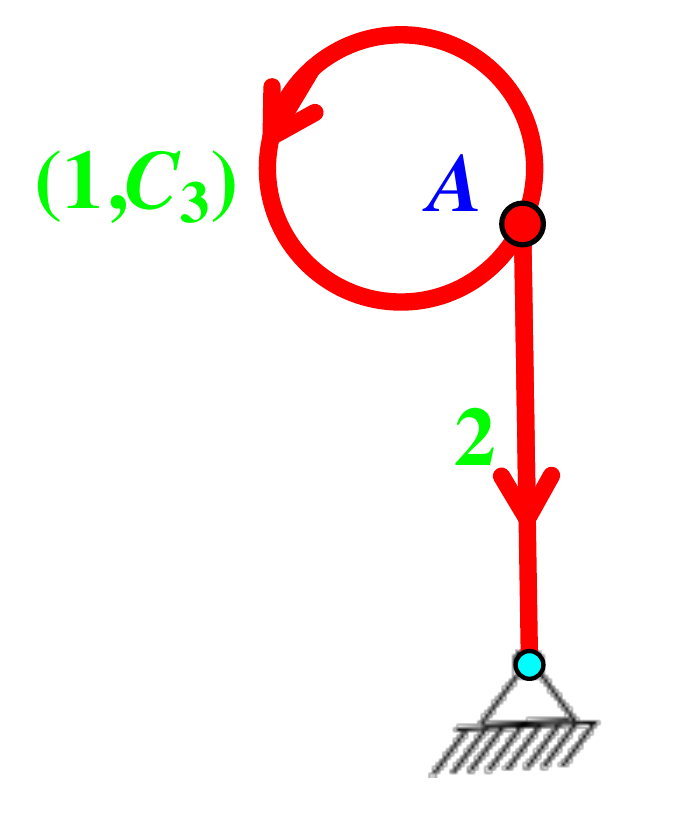}}\quad
      \end{center}
    \caption{A pinned framework in the plane with $\mathcal{C}_3$ symmetry (a), which is  $\mathcal{C}_3$-Assur (a,b), and becomes a  $\mathcal{C}_3$-circuit when the ground is shrunk to a point (c,d).}
    \label{fig:Circuits}
    \end{figure}

\subsection*{Acknowledgements}

Research supported in part by a grant from NSERC (Canada) to Walter Whiteley.


%


\bibliographystyle{mdpi}
\makeatletter
\renewcommand\@biblabel[1]{#1. }
\makeatother

\end{document}